\setlist[enumerate]{label={(\roman*)}}
\theoremstyle{plain}
\newtheorem{theorem}{Theorem}
\newtheorem{corollary}[theorem]{Corollary}
\newtheorem{lemma}[theorem]{Lemma}
\newtheorem{proposition}[theorem]{Proposition}
\theoremstyle{definition}
\newtheorem{definition}[theorem]{Definition}
\newtheorem{example}[theorem]{Example}
\newtheorem{construction}[theorem]{Construction}
\theoremstyle{remark}
\newtheorem{remark}{Remark}
\newtheorem{acknowledgment}[theorem]{Acknowledgment}
\numberwithin{theorem}{section}
\newcommand{\mSpec}[1]{\operatorname{mSpec}(#1)}
\newcommand{\Filt}[1]{\operatorname{Filt}(#1)}
\newcommand{\card}{\mbox{\rm{card\,}}}
\newcommand{\ssum}{\mbox{\rm{sum\,}}}
\newcommand{\Sum}{\mbox{\rm{Sum\,}}}
\newcommand{\add}{\mbox{\rm{add\,}}}
\newcommand{\Add}{\mbox{\rm{Add\,}}}
\newcommand{\Gen}{\mbox{\rm{Gen\,}}}
\newcommand{\End}{\mbox{\rm{End\,}}}
\newcommand{\Soc}[2]{\mbox{\rm{Soc}}_{#1}(#2)}
\newcommand{\im}{\mbox{\rm{Im\,}}}
\newcommand{\Hom}[3]{\operatorname{Hom}_{#1}(#2,#3)}
\newcommand{\Ext}[4]{\operatorname{Ext}^{#1}_{#2}(#3,#4)}
\newcommand{\Tor}[4]{\mbox{\rm{Tor}}_{#1}^{#2}(#3,#4)}
\newcommand{\Ctrtor}[4]{\operatorname{Ctrtor}_{#1}^{#2}(#3,#4)}
\newcommand{\rfmod}[1]{\mbox{\rm{mod}--}{#1}}
\newcommand{\rmod}[1]{\mbox{\rm{Mod}--}{#1}}
\newcommand{\lfmod}[1]{{#1}\mbox{--\rm{mod}}}
\newcommand{\lmod}[1]{{#1}\mbox{--\rm{Mod}}}
\newcommand{\ModR}{\text{Mod-}R}
\newcommand{\ldiscr}[1]{{#1}\mbox{--\rm{Discr}}}
\newcommand{\rcontra}[1]{{\mathrm{Contra\text{\rm--}}}{#1}}
\newcommand{\rfcontra}[1]{\mbox{\rm{contra}--}{#1}}
\newcommand{\pd}[1]{\mbox{\rm{proj.dim\,}}#1}
\newcommand{\Ker}[1]{\mbox{\rm{Ker}}(#1)}
\begin{document}

\title{Closure properties of $\protect\varinjlim\mathcal C$}

\author{Leonid Positselski}

\address[Leonid Positselski]{%
Institute of Mathematics, Czech Academy of Sciences \\
\v Zitn\'a~25, 115~67 Prague~1 \\ Czech Republic}
\email{positselski@math.cas.cz}

\author{Pavel P\v{r}\'{\i}hoda}
\address[Pavel P\v{r}\'{\i}hoda]{Charles University, Faculty of Mathematics
and Physics, Department of Algebra \\
Sokolovsk\'{a} 83, 186 75 Prague 8, Czech Republic}
\email{prihoda@karlin.mff.cuni.cz}

\author{Jan Trlifaj}
\address[Jan Trlifaj]{Charles University, Faculty of Mathematics
and Physics, Department of Algebra \\
Sokolovsk\'{a} 83, 186 75 Prague 8, Czech Republic}
\email{trlifaj@karlin.mff.cuni.cz}

	\begin{abstract} Let $\mathcal C$ be a class of modules and $\mathcal L = \varinjlim \mathcal C$ the class of all direct limits of modules from $\mathcal C$. The class $\mathcal L$ is well understood when $\mathcal C$ consists of finitely presented modules: $\mathcal L$ then enjoys various closure properties. Our first goal here is to study the closure properties of $\mathcal L$ in the general case when $\mathcal C \subseteq \rmod R$ is arbitrary. Then we concentrate on two important particular cases, when $\mathcal C = \add M$ and $\mathcal C = \Add M$, for an arbitrary module $M$. 
	
In the first case, we prove that $\varinjlim \add M = \{ N \in \rmod R \mid \exists F \in \mathcal F _S: N \cong F \otimes_S M \}$ where $S = \End M$, and $\mathcal F _S$ is the class of all flat right $S$-modules. In the second case, $\varinjlim \Add M = \{ \mathfrak F \odot _{\mathfrak S} M \mid \mathfrak F \in \mathcal F _{\mathfrak S} \}$ where $\mathfrak S$ is the endomorphism ring of $M$ endowed with the finite topology, $\mathcal F _{\mathfrak S}$ is the class of all right $\mathfrak S$-contramodules that are direct limits of direct systems of projective right $\mathfrak S$-contramodules, and $\mathfrak F \odot _{\mathfrak S} M$ is the contratensor product of the right $\mathfrak S$-contramodule $\mathfrak F$ with the discrete left $\mathfrak S$-module $M$.
	
For various classes of modules $\mathcal D$, we show that if $M \in \mathcal D$ then $\varinjlim \add M = \varinjlim \Add M$ (e.g., when $\mathcal D$ consists of pure projective modules), but the equality for an arbitrary module $M$ remains open. Finally, we deal with the question of whether $\varinjlim \Add M = \widetilde{\Add M}$ where $\widetilde{\Add M}$ is the class of all pure epimorphic images of direct sums of copies of a module $M$. We show that the answer is positive in several particular cases (e.g., when $M$ is a tilting module over a Dedekind domain), but it is negative in general.   
  \end{abstract}

\date{\today}

\thanks{Research supported by GA\v CR 20-13778S. The first-named author's research is also supported by RVO:~67985840.}

\maketitle

\tableofcontents

\section*{Introduction}

Direct limits provide one of the key constructions for forming large modules from families of small ones. In the case when the small modules are taken from a class of finitely presented modules, classic theorems of Lenzing et al.\ make it possible to describe completely the resulting class of large modules. However, if we start with a class, $\mathcal C$, consisting of arbitrary modules, then the structure of the class $\mathcal L = \varinjlim \mathcal C$ is much less clear: for example, $\mathcal L$ need not be closed under direct limits.   

Our first goal here is to investigate which closure properties of the class $\mathcal C$ carry over to $\mathcal L$. Then we will characterize the class $\mathcal L$ for two particular instances: when $\mathcal C$ is the class of all, and all finite, direct sums of copies of a single (infinitely generated) module $M$. The first characterization relies on the well-known equivalence between the category $\add M$ of all direct summands of finite direct sums of copies of $M$ and $(\rfmod S)_{\mathrm{proj}}$, the category of all finitely generated projective right $S$-modules, where $S$ is the endomorphism ring of $M$. The second characterization is based on a recently discovered equivalence \cite{PS} between the category $\Add M$ of all direct summands of arbitrary direct sums of copies of $M$ and $(\rcontra{\mathfrak S})_{\mathrm{proj}}$, the category of projective right contramodules over $\mathfrak S$, the endomorphism ring of $M$ endowed with the finite topology.

We will prove that in many cases, e.g., when $\mathcal C$ consists of small or pure projective modules, particular injective or Pr\" ufer modules, the classes $\varinjlim \add \mathcal C$ and $\varinjlim \Add \mathcal C$ coincide. However, whether this is true in general, remains an open problem. 

We will also characterize the class $\varinjlim\Add P$ when $P$ is a projective module in terms of its trace ideal. Another problem addressed here is the question of whether $\varinjlim \Add M = \widetilde{\Add M}$ where $\widetilde{\Add M}$ denotes the class of all pure epimorphic images of direct sums of copies of a module $M$. We will give a positive answer in several particular cases, e.g., when $M$ is an (infinitely generated) tilting module over a Dedekind domain. However, we will show that the answer is negative in general, even if the class $\Add M$ is closed under direct limits: we will construct an example of a countably generated flat module $M$ such that $\Add M = \varinjlim \Add M \subsetneq \widetilde{\Add M}$.            

 Let us say a few more words about the applications of contramodules
to the study of the class $\varinjlim\Add M$ and to
the $\varinjlim\add M$ versus $\varinjlim\Add M$ question.
 The notions of a flat module and a \emph{flat
contramodule}~\cite{PR,Pproperf,BPS} play a key role in the descriptions
of the classes $\varinjlim\add M$ and $\varinjlim\Add M$, respectively.
 The classical Govorov--Lazard theorem~\cite{Gov,La0} describes
the flat modules as the direct limits of projective modules, or even
more precisely, as the direct limits of finitely generated free modules.
 The analogous assertion is \emph{not} true for contramodules,
generally speaking, and we present a counterexample.

 Still it is not known whether every direct limit of projective
contramodules is a direct limit of finitely generated projective
(or finitely generated free) contramodules.
 When this holds for the topological endomorphism ring $\mathfrak S$
of a module $M$, it follows that $\varinjlim\add M=\varinjlim\Add M$.
 In particular, this observation is applicable to some Pr\"ufer-type 
modules $M$, or more generally, to modules $M$ whose topological
endomorphism ring $\mathfrak S$ admits a dense left noetherian
subring $S$ such that the induced topology on $S$ is a left Gabriel
topology with a countable base of ideals generated by central elements.
 It is important here that ideals generated by central elements in
noetherian rings have the \emph{Artin--Rees property}, which allows to
prove that the underlying $S$-modules of flat $\mathfrak S$-contramodules
are flat.

 We also prove that, for any module $M$, both the classes
$\varinjlim\add M$ and $\varinjlim\Add M$ are \emph{deconstructible}
(i.e.\ every module from the respective class is filtered by modules
of bounded size from the same class).
 The assumption that all flat $\mathfrak S$-contramodules are direct
limits of projective ones, for $\mathfrak S=\End M$, allows to improve
the cardinality estimate for deconstructibility of $\varinjlim\Add M$.
 In order to obtain the improved cardinality estimate, we study
homological properties of the class of all flat contramodules and
its natural subclass of so-called $1$-strictly flat contramodules.
 Under a mild assumption (that all flat contramodules are $1$-strictly
flat), we show that the class of all flat $\mathfrak S$-contramodules
is closed under (transfinite) extensions and kernels of epimorphisms,
and that it is quasi-deconstructible modulo the class of all so-called
contratensor-negligible contramodules.

\medskip

\section{Preliminaries}\label{prelim}

Let $R$ be a ring and let $\rmod R$ ($\rfmod R$) denote the class of all (all finitely presented) right $R$-modules. Let $\mathcal C$ be any class of modules closed under finite direct sums.

\medskip
The key subject of this paper is the class $\mathcal L = \varinjlim \mathcal C$ of all modules $M \in \rmod R$ for which there exists a direct system $\mathfrak D = (C_i, f_{ji} \mid i \leq j \in I )$ in $\rmod R$ with $C_i \in \mathcal C$ for all $i \in I$, such that $M$ is the direct limit of $\mathfrak D$. That is, $(M, f_i (i \in I))$ is the colimit of the diagram $\mathfrak D$ in $\rmod R$. We will use the notation of $M = \varinjlim C_i$ or $M = \varinjlim \mathfrak D$. 

That $M = \varinjlim C_i$ can equivalently be expressed as an internal property of the diagrams

\[
\xymatrix{& {M} \\ {C_{i}} \ar[rr]^{f_{ji}} \ar[ur]^{f_{i}} &  & {C_j} \ar[ul]_{f_{j}}}
\]

\noindent namely, as the conjunction of the following three conditions  

\begin{enumerate}  
\item[{(C1)}] $f_i = f_j f_{ji}$ for all $i \leq j \in I$, 
\item[{(C2)}] $M = \bigcup_{i \in I} \im f_i$, and 
\item[{(C3)}] $\Ker {f_i} \subseteq \bigcup_{i \leq j \in I} \Ker {f_{ji}}$ for all $i \in I$. 
\end{enumerate}

Also, $M = \varinjlim C_i$ is equivalent to the existence of a short exact sequence of the form 
$$(\ast) \quad 0 \to K \hookrightarrow \bigoplus_{i \in I} C_i \overset{\pi}\to M \to 0$$
where $\pi \restriction C_i = f_i$ for each $i \in I$, and $K = \Ker \pi = \langle x - f_{ji}(x) \mid x \in C_i \,\&\, i \leq j \in I \rangle$. This sequence is pure exact (in Lemma \ref{general} below, we will see that it is even locally split).

For more details and basic properties of direct limits, we refer to \cite[\S 2.1]{GT}. 

\begin{remark}\label{inverse} Condition (C3) has the easy corollary that if all the morphisms $f_{ji}$ in the direct system $\mathcal D$ are monomorphisms, then so are all the $f_i$ ($i \in I$). In Theorem \ref{describe} below, we will however prove that if $\mathcal C$ is closed under arbitrary direct sums, then we can always w.l.o.g.\ assume that all the morphisms $f_{ji}$ ($i \leq j \in I$) are split epimorphisms.   

It is worth noting that while the definition of a direct limit admits the equivalent internal formulation as above, this is not true of its category theoretic dual, that is, of the notion of an inverse limit of an inverse system of modules. 

The duals of conditions (C1) and (C2) do hold for inverse limits. The dual of condition (C3) holds when $I$ is countable and all the morphisms $f_{ij}$ in the inverse system are epimorphisms (in which case also all the morphisms $f_i$ are epimorphisms), but it fails in general. Using the existence of Aronzsajn trees, one can construct a well-ordered inverse system of modules $\mathfrak {I} = (C_\alpha, f_{\alpha \beta} \mid \alpha \leq \beta <  \aleph_1 )$ all of whose morphisms $f_{\alpha \beta}$ are non-zero epimorphisms, but the inverse limit  $M = \varprojlim \mathfrak {I}$ is $0$, whence $f_\alpha = 0$ for each $\alpha < \aleph_1$, cf.\ \cite{B} or \cite[6.39]{GT}. 

Moreover, the dual exact sequence to ($\ast$), expressing the inverse limit of an inverse system of modules as a submodule of the direct product of these modules, is not pure in general, cf.\ \cite[6.33]{GT}.  
\end{remark}

\medskip
 
For a class of modules $\mathcal D$, we will denote by $\Sum \mathcal D$ and $\ssum \mathcal D$ the class of all, and all finite, direct sums of copies of modules from $\mathcal D$, respectively. Further, $\Add \mathcal D$ and $\add \mathcal D$ will denote the class of all direct summands of modules in $\Sum \mathcal D$ and $\ssum \mathcal D$, respectively. If $\mathcal D$ consists of a single module $M$, we will write $\Sum M$ instead of $\Sum \{ M \}$, and similarly for $\ssum M$, $\Add M$, and $\add M$.  

For example, for $M = R$, $\Sum M$ and $\ssum M$ are the classes of all free, and finitely generated free, modules, and $\Add M$ and $\add M$ the classes of all projective, and finitely generated projective, modules, respectively. Note that this example shows that $\Sum \add M$ may be a proper subclass of $\Add M$ - this happens exactly in the case when there exists a (countably generated) projective module that does not decompose into a direct sum of finitely generated projective modules.

Of course, $\ssum M = \add M$ when the endomorphism ring of $M$ is local (cf.\ \cite[12.7]{AF}). However, even if $\ssum M \subsetneq \add M$, always $\varinjlim \add M = \varinjlim \ssum M$, and similarly for $\Add M$ and $\Sum M$. This simplifies the study of the direct limit closures in these cases. More in general, we have the following easy observation:

\begin{lemma}\label{summands} Let $R$ be a ring and $\mathcal E$ be any class of modules. Let $\mathcal E ^\prime$ denote the class of all direct summands of the modules in $\mathcal E$. Then $\varinjlim \mathcal E ^\prime = \varinjlim \mathcal E$.
\end{lemma}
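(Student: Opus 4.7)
The inclusion $\varinjlim\mathcal E\subseteq\varinjlim\mathcal E'$ is immediate since $\mathcal E\subseteq\mathcal E'$, so the work is entirely in the reverse direction. Given $M=\varinjlim(D_i,f_{ji}\mid i\le j\in I)$ with each $D_i\in\mathcal E'$, I would choose for every $i\in I$ a module $E_i\in\mathcal E$ together with morphisms $\iota_i\colon D_i\to E_i$ and $p_i\colon E_i\to D_i$ satisfying $p_i\iota_i=\mathrm{id}_{D_i}$, exhibiting $D_i$ as a direct summand of $E_i$. The plan is then to lift the given direct system on the $D_i$'s to a direct system on the $E_i$'s whose direct limit is still $M$.

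Concretely, I would define transition maps $g_{ji}\colon E_i\to E_j$ by $g_{ji}=\iota_j f_{ji}p_i$. The cocycle condition $g_{kj}g_{ji}=g_{ki}$ for $i\le j\le k$ reduces, using $p_j\iota_j=\mathrm{id}_{D_j}$ and the cocycle condition for the $f_{ji}$'s, to the identity $\iota_k f_{kj}f_{ji}p_i=\iota_k f_{ki}p_i$, so $(E_i,g_{ji}\mid i\le j\in I)$ is indeed a direct system in $\mathcal E$. Setting $h_i=f_ip_i\colon E_i\to M$, the relation $h_jg_{ji}=f_jp_j\iota_jf_{ji}p_i=f_jf_{ji}p_i=f_ip_i=h_i$ shows that the morphisms $h_i$ are compatible with the new transition maps.

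It then remains to verify that $(M,h_i\mid i\in I)$ is a colimit of this new system, which I would do by checking the internal conditions (C1)--(C3) recalled above. Condition (C1) was just verified, and (C2) is immediate because $h_i(E_i)=f_ip_i(E_i)=f_i(D_i)$, so $\bigcup_{i\in I}\im h_i=\bigcup_{i\in I}\im f_i=M$. For (C3), given $x\in\Ker h_i$ we have $p_i(x)\in\Ker f_i$, whence by (C3) for the original direct system there exists $j\ge i$ with $f_{ji}(p_i(x))=0$; applying $\iota_j$ yields $g_{ji}(x)=0$, so $x\in\Ker g_{ji}$, as required.

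The only subtlety is the third condition, where the section--retraction identity $p_i\iota_i=\mathrm{id}_{D_i}$ does not enter directly; what matters is rather that $p_i$ faithfully transports the kernel condition from $D_i$ down to $E_i$, and that $\iota_j$ is a monomorphism so that vanishing after composition with $\iota_j$ implies vanishing in $E_j$. Once (C1)--(C3) are in place, we conclude $M=\varinjlim(E_i,g_{ji})\in\varinjlim\mathcal E$, completing the proof.
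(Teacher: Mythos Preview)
Your approach is essentially the paper's, but there is a gap. With your definition $g_{ji}=\iota_j f_{ji} p_i$ for \emph{all} $i\le j$, one gets $g_{ii}=\iota_i p_i$, which is the idempotent projection onto the $D_i$-summand, not $\mathrm{id}_{E_i}$. Hence $(E_i,g_{ji}\mid i\le j\in I)$ is not a direct system, and the characterization (C1)--(C3) does not apply as stated. Your verification of (C3) tacitly relies on the formula $g_{ji}=\iota_j f_{ji} p_i$ even at $j=i$, so it is checking the wrong maps.

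The obvious repair is to keep $g_{ji}=\iota_j f_{ji} p_i$ for $i<j$ and set $g_{ii}=\mathrm{id}_{E_i}$; the cocycle condition then does hold. But now (C3) can fail: if $I$ has a greatest element $m$ and $D_m\subsetneq E_m$, pick $0\ne x\in\ker p_m$. Then $h_m(x)=f_m p_m(x)=0$, yet $g_{mm}(x)=x\ne0$ and there is no $j>m$ available. So $M$ is not the direct limit of your system in this case (indeed, that direct limit is $E_m$, not $D_m\cong M$). The paper's proof addresses exactly this edge case separately: when $I$ has a greatest element $k$, one has $M\cong D_k$, a direct summand of $E_k\in\mathcal E$, and then realizes $M$ as the countable direct limit of the chain $E_k\xrightarrow{\iota_k p_k}E_k\xrightarrow{\iota_k p_k}\cdots$ (cf.\ Remark~\ref{chain}). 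Once you add this case distinction, your argument is complete and coincides with the paper's.
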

\begin{proof} It suffices to prove that $\varinjlim \mathcal E ^\prime \subseteq \varinjlim \mathcal E$. Let $L \in \varinjlim \mathcal E ^\prime$, that is, there is a direct system of modules $\mathfrak D = ( E^\prime_i , f_{ji} \mid i \leq j \in I)$ with $E ^\prime_i \in \mathcal E ^\prime$, such that $\varinjlim \mathfrak D = (L, f_i (i \in I))$. For each $i \in I$, there exist modules $E_i \in \mathcal E$ and $E ^{\prime \prime}_i \in \mathcal E'$ such that  $E ^\prime_i \oplus E ^{\prime \prime}_i = E_i$.

If the poset $I$ has a maximal element $k$, then $L \cong E^\prime_k$, and $L$ is a countable direct limit of copies of $E_k$ (cf.\ Remark \ref{chain} below). If $I$ has no maximal element, we consider the direct system $\mathfrak C = ( E_i , g_{ji} \mid i \leq j \in I)$ with $g_{ji} \restriction E ^\prime_i = f_{ji}$ and $g_{ji} \restriction E ^{\prime \prime}_i = 0$. Then $\varinjlim \mathfrak C = (L, g_i (i \in I))$, where $g_i \restriction E ^\prime_i = f_i$ and $g_i \restriction E ^{\prime \prime}_i = 0$, whence $L \in \varinjlim \mathcal E$. 
\end{proof} 

Another easy, but important fact which holds for any class of modules $\mathcal E$, is that if $\mathcal L = \varinjlim \add \mathcal E$ is closed under direct limits, then, since $\Sum \mathcal E \subseteq \mathcal L$, also $\mathcal L =\varinjlim \Sum \mathcal E = \varinjlim \Add \mathcal E$, by Lemma \ref{summands}. Similarly, if $\mathcal L$ is closed under direct summands, then $\Add \mathcal E \subseteq \mathcal L$.  

\begin{remark}\label{chain} For any class of modules $\mathcal E$, we have the following implications: $\mathcal E$ is closed under arbitrary direct limits (i.e., $\mathcal E = \varinjlim \mathcal E$) implies that $\mathcal E$ is closed under countable direct limits, and that in turn implies that $\mathcal E$ is closed under direct summands. The latter implication holds because each direct summand $D$ of a module $E \in \mathcal E$ is a direct limit of a countable chain $E \overset{\pi}\to E \overset{\pi}\to ...$, where $\pi : E \to E$ is the identity on $D$ and zero on a (fixed) complement of $D$ in $E$.

These implications cannot be reversed in general: if $\mathcal E$ is the class of all projective modules over a non-right perfect ring $R$, so $R$ contains a strictly decreasing chain of principal left ideals $( Ra_i...a_0 \mid i < \omega )$, then by the classic Bass' Theorem P, if $M$ denotes the direct limit of the countable direct system $R \overset{f_0}\to R \overset{f_1}\to \dots$ where $f_i : R \to R$ is the left multiplication by $a_i$ for each $i < \omega$, then $M$ is not projective. Also, if $\mathcal E$ denotes the class of all countably presented modules over any ring, then $\mathcal E$ is closed under countable direct limits, but not under arbitrary ones (and even not under arbitrary direct sums). 

However, it is open whether if $\mathcal E = \varinjlim \add \mathcal C$ for a class of modules $\mathcal C$, and $\mathcal E$ is closed under direct summands, then $\mathcal E = \varinjlim \mathcal E$ (cf.\ Problem 3 in Section \ref{problems}). 
\end{remark} 

\medskip
   
A module $M$ is called \emph{self-small}, if for each (or equivalently, each countable) set $X$ and each $f \in \Hom RM{M^{(X)}}$, there exists a finite subset $F \subseteq X$ such that $\im f \subseteq M^{(F)}$. Moreover, $M$ is \emph{small}, if for each (or equivalently, each countable) system of modules $( N_\alpha \mid \alpha < \kappa )$ and each $f \in \Hom RM{\bigoplus_{\alpha < \kappa} N_\alpha}$, there exists a finite subset $F \subseteq \kappa$ such that $\im f \subseteq \bigoplus_{\alpha \in F} N_\alpha$. Note that the latter just says that the covariant $\Hom RM{-}$ functor commutes with arbitrary direct sums (so in the terminology of category theory, $M$ is a \emph{compact object} in $\rmod R$). 

For example, each finitely generated module over any ring is small, and each torsion-free module of finite rank over any commutative domain is self-small. However, if $M$ decomposes into an infinite direct sum of non-zero submodules, then $M$ is not self-small. Similarly, no countably, but not finitely, generated module is small. 

\medskip

For each $n \geq 0$, we will denote by $\mathcal P_n$, $\mathcal I _n$, and $\mathcal F _n$ the class of all modules of projective, injective, and flat dimension $\leq n$, respectively. 

\medskip

For a class of modules $\mathcal C$, we will denote by $\mathcal C ^\perp$ the right Ext-orthogonal class $\Ker{\Ext 1R{\mathcal C}{-}} = \{ M \in \rmod R \mid \Ext 1RCM = 0 \hbox{ for all } C \in \mathcal C \}$, and $\mathcal C ^{\perp_\infty} = \bigcap_{i \geq 1} \Ker{\Ext iR{\mathcal C}{-}}$. Similarly $^\perp \mathcal C = \Ker{\Ext 1R{-}{\mathcal C}}$ and $\mathcal C ^\intercal = \Ker{\Tor 1R{\mathcal C}{-}}$. For a class of left $R$-modules $\mathcal D$, we define $^\intercal \mathcal D = \Ker{\Tor 1R{-}{\mathcal D}}$. If $\mathcal C = \{ M \}$ for a module $M$, we write simply $M ^\perp$ in place of $\{ M \}^\perp$, and similarly for the other Ext- and Tor-orthogonal classes.  

A pair of classes of modules $\mathfrak C = (\mathcal A, \mathcal B)$ is a \emph{cotorsion pair} in case $\mathcal A = {}^\perp \mathcal B$ and $\mathcal B = \mathcal A ^\perp$. The cotorsion pair is called \emph{hereditary} if moreover $\Ext iRAB = 0$ for all $i > 1$, $A \in \mathcal A$, and $B \in \mathcal B$. The class $\Ker{\mathfrak C} = \mathcal A \cap \mathcal B$ is called the \emph{kernel} of $\mathfrak C$.   

\medskip

A module $T \in \rmod R$ is an (infinitely generated) \emph{tilting module} provided that $T$ has finite projective dimension, $\Ext iRT{T^{(X)}} = 0$ for all $i \geq 1$ and all sets $X$, and there is a finite exact sequence $0 \to R \to T_0 \to \dots \to T_k \to 0$ such that $T_i \in \Add T$ for each $i \leq k$. If $T$ is tilting, then there is the associated cotorsion pair $\mathfrak C = (\mathcal A, \mathcal B)$, such that $\mathcal B = T^{\perp_\infty}$. $\mathfrak C$ is called the \emph{tilting cotorsion pair}, and $\mathcal B$ the \emph{tilting class}, induced by $T$. Moreover, $\mathcal A = \Filt {\mathcal A ^{\leq \omega}}$, $\mathcal B = ({\mathcal A}^{< \omega})^\perp$, and $\Add T = \Ker{\mathfrak C} = \mathcal A \cap \mathcal B$. In particular, the class $\mathcal B$ is definable, and $\mathcal A \subseteq \mathcal P _n$ provided that $\pd T \leq n$. In the latter case, $T$ is called an \emph{$n$-tilting module}, and $\mathfrak C$ ($\mathcal B$) an \emph{$n$-tilting cotorsion pair} (\emph{$n$-tilting class}). For basic properties of tilting modules, we refer to \cite[Chap.\ 13]{GT}. 

\medskip

Let $\mathcal A$ be a class of modules. A homomorphism $f : A \to M$ is an \emph{$\mathcal A$-precover} of a module $M$ in case $A \in \mathcal A$, and each homomorphism from a module $A^\prime \in \mathcal A$ to $M$ factorizes through $f$. If $f$ is moreover right minimal, i.e., $f$ factorizes through itself only by an automorphism, then $f$ is an \emph{$\mathcal A$-cover} of $M$. If each module $M \in \rmod R$ has an $\mathcal A$-precover ($\mathcal A$-cover), then $\mathcal A$ is called a \emph{precovering} (\emph{covering}) class.

\medskip  

Let $\mathcal A$ be a class of modules and $M$ be a module. Then $M$ is \emph{$\mathcal A$-filtered} provided that there is a chain of submodules of $M$, $( M_\alpha \mid \alpha \leq \sigma )$, such that $M_0 = 0$, $M_{\alpha + 1}/M_\alpha$ is isomorphic to an element of $\mathcal A$ for each $\alpha < \sigma$, $M_\alpha = \bigcup_{\beta < \alpha} M_\beta$ for each limit ordinal $\alpha \leq \sigma$, and $M_\sigma = M$. The class of all $\mathcal A$-filtered modules is denoted by $\Filt {\mathcal A}$.    

Let $\kappa$ be an infinite cardinal. We will denote by $\mathcal A ^{< \kappa}$ and $\mathcal A ^{\leq \kappa}$ the class of all $< \kappa$-presented, and $\leq \kappa$-presented modules from $\mathcal A$. The class $\mathcal A$ is said to be \emph{$\kappa$-deconstructible} provided that $\mathcal A \subseteq \Filt {\mathcal A ^{< \kappa}}$. If moreover each module in $\mathcal A$ is isomorphic to a direct sum of $< \kappa$-presented modules from $\mathcal A$, then $\mathcal A$ is called \emph{$\kappa$-decomposable}. 

$\mathcal A$ is \emph{deconstructible} provided that $\mathcal A$ is $\kappa$-deconstructible for some infinite cardinal $\kappa$. Moreover, $\mathcal A$ is \emph{decomposable} provided that it is $\kappa$-decomposable for some infinite cardinal $\kappa$. For example, the class $\mathcal P _0$ of all projective modules is decomposable, as it is $\aleph_1$-decomposable, by a classic theorem of Kaplansky. However, most classes of modules encountered in homological algebra are not decomposable, but they often are deconstructible. For example, for each $n \geq 0$, the classes $\mathcal P_n$ and $\mathcal F _n$ are deconstructible over any ring $R$, cf.\ \cite[\S 8.1]{GT}. 
         
A class of modules $\mathcal A$ is \emph{closed under transfinite extensions} provided that $\mathcal A = \Filt {\mathcal A}$. In this case, $\mathcal A$ is closed under extensions and arbitrary direct sums. For example, for any class of modules $\mathcal B$, the class $^\perp \mathcal B$ is closed under transfinite extensions by the Eklof Lemma \cite[6.2]{GT}.  

Note that if $\mathcal S$ is any set of modules, then the class $\Filt {\mathcal S}$ is precovering, cf.\ \cite[7.21]{GT}. Hence, any deconstructible class of modules closed under transfinite extensions is precovering.  

\medskip

\section{Closure under direct sums and extensions}\label{one}

Let $\mathcal C$ be a class of modules closed under finite direct sums. The class $\mathcal L = \varinjlim \mathcal C$ is well-understood in the case when $\mathcal C$ consists of finitely presented modules: 

\begin{lemma}\label{lenzing} Let $R$ be a ring, $\mathcal C \subseteq \rfmod R$, and $\mathcal L = \varinjlim \mathcal C$. 
\begin{enumerate}
\item The class $\mathcal L$ is closed under arbitrary direct sums and direct limits, pure submodules and pure epimorphic images, and $\mathcal L \cap \rfmod R = \add \mathcal C$. Moreover, $\mathcal L$ is closed under pure extensions.
\item Assume moreover that $\mathcal C$ is closed under direct summands, extensions, $R \in \mathcal C$, and $\mathcal C$ consists of FP$_2$-modules. Then $\mathcal L = {}^\intercal (\mathcal C ^\intercal)$, whence $\mathcal L$ is a covering class of modules which is closed under transfinite extensions, and $\mathcal L$ is $\kappa^+$-deconstructible for $\kappa = \card R + \aleph_0$.
\end{enumerate}
\end{lemma}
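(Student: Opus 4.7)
I would use throughout the classical Lenzing/Crawley-Boevey criterion: for $\mathcal C \subseteq \rfmod R$ closed under finite direct sums, a module $L$ lies in $\mathcal L = \varinjlim \mathcal C$ if and only if every morphism $N \to L$ with $N$ finitely presented factors through some $C \in \mathcal C$. Closure under direct sums then follows because a morphism from a finitely presented module into $\bigoplus_\alpha L_\alpha$ factors through a finite subsum, reducing to the assumed closure of $\mathcal C$ under finite direct sums. Closure under direct limits is an iterated-colimit computation. The equality $\mathcal L \cap \rfmod R = \add \mathcal C$ follows by applying the criterion to the identity of a finitely presented $N \in \mathcal L$: the identity factors through some $C \in \mathcal C$, making $N$ a direct summand of $C$. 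Closure under pure submodules and pure epimorphic images also reduces to the criterion: a pure exact sequence permits morphisms from finitely presented modules to be lifted along pure epimorphisms and transported across pure monomorphisms. For closure under pure extensions $0 \to L' \to L \to L'' \to 0$ with $L', L'' \in \mathcal L$, I would write $L'' = \varinjlim C_i$ with $C_i \in \mathcal C$ and take pullbacks $E_i = L \times_{L''} C_i$; each gives a pure exact sequence $0 \to L' \to E_i \to C_i \to 0$, which splits since $C_i$ is finitely presented, yielding $E_i \cong L' \oplus C_i$. Combining a direct-limit presentation of $L'$ by $\mathcal C$-modules with these split extensions produces the desired direct system in $\mathcal C$ with colimit $L$.

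\textbf{Plan for part (ii).} The inclusion $\mathcal L \subseteq {}^\intercal(\mathcal C^\intercal)$ is immediate, since $\Tor{1}{R}{-}{D}$ commutes with direct limits for every $D$. For the reverse inclusion, the FP$_2$ hypothesis is decisive: each $\Tor{1}{R}{C}{-}$ with $C \in \mathcal C$ commutes with arbitrary direct products, so $\mathcal C^\intercal$ is a definable class of left modules admitting a test-set of cardinality $\leq \kappa = \card R + \aleph_0$. A Bazzoni--Herbera style argument, using closure of $\mathcal C$ under extensions and direct summands together with $R \in \mathcal C$, then yields for every $M \in {}^\intercal(\mathcal C^\intercal)$ a filtration whose consecutive quotients lie in $\mathcal C^{\leq \kappa}$, which already gives $\kappa^+$-deconstructibility. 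To identify such a filtration with a direct-limit presentation by $\mathcal C$, I would invoke the theorem that $\Filt{\mathcal C} \subseteq \varinjlim \mathcal C$ whenever $\mathcal C \subseteq \rfmod R$ is closed under extensions and direct summands -- a direct-limit-along-the-filtration construction in the spirit of Lenzing. Closure of $\mathcal L$ under transfinite extensions is then automatic from the Tor-orthogonal description via the Eklof lemma. The covering property follows by combining deconstructibility and closure under transfinite extensions (which give precovering) with closure under direct limits from part (i), via the Enochs covering theorem.

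\textbf{Main obstacle.} The crux of the proof is the inclusion ${}^\intercal(\mathcal C^\intercal) \subseteq \varinjlim \mathcal C$ in part (ii). The FP$_2$ hypothesis is used essentially twice -- to obtain definability of $\mathcal C^\intercal$ and a set-theoretic generator of small size -- but the subsequent step of converting an abstract $\mathcal C$-filtration into a genuine direct-limit presentation requires the nontrivial $\Filt{\mathcal C} \subseteq \varinjlim \mathcal C$ principle. The containment $R \in \mathcal C$ is indispensable here, both to root the filtration at free modules and to ensure the compatibility needed for the Bazzoni--Herbera set-theoretic machinery.
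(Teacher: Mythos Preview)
Your plan for part~(i) is correct and matches the paper's approach: the paper cites Lenzing's classical work for the first batch of closure properties and gives, for the closure under pure extensions, exactly the pullback-and-split argument you describe (writing the quotient as $\varinjlim C_i$, pulling back the pure epimorphism to obtain split sequences $0 \to X \to Z_i \to C_i \to 0$, and then using the already-established closure under direct sums and direct limits to conclude $Z = \varinjlim Z_i \in \mathcal L$).

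Your plan for part~(ii) contains a genuine gap. The claim that every $M \in {}^\intercal(\mathcal C^\intercal)$ admits a filtration with consecutive quotients in $\mathcal C$ (or in ``$\mathcal C^{\leq \kappa}$'', which for a class of finitely presented modules is just $\mathcal C$ itself) is false. Take $R = \mathbb Z$ and $\mathcal C$ the class of finitely generated free abelian groups: this $\mathcal C$ is closed under extensions and direct summands, contains $R$, and consists of FP$_\infty$-modules, so all hypotheses are satisfied. Here $\mathcal C^\intercal = \lmod{\mathbb Z}$, so ${}^\intercal(\mathcal C^\intercal)$ is the class of torsion-free abelian groups, and $\mathbb Q \in {}^\intercal(\mathcal C^\intercal)$. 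But $\mathbb Q$ has no $\mathcal C$-filtration: any nonzero $M_1 \cong \mathbb Z$ inside $\mathbb Q$ would force $M_2$ to be an extension of $\mathbb Z$ by $\mathbb Z$, hence of rank~$2$, which cannot embed in~$\mathbb Q$. Thus your route through $\Filt{\mathcal C} \subseteq \varinjlim \mathcal C$ cannot close the reverse inclusion. What deconstructibility of the Tor-orthogonal class actually yields is a filtration with consecutive quotients in $({}^\intercal(\mathcal C^\intercal))^{\leq \kappa}$, and at that point one still needs to know that these quotients lie in $\varinjlim \mathcal C$ --- which is precisely the inclusion you are trying to establish, so the argument is circular.

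The paper does not reprove part~(ii) but refers to the original sources. The argument there establishes ${}^\intercal(\mathcal C^\intercal) \subseteq \varinjlim \mathcal C$ directly via the Lenzing factoring criterion you invoked in part~(i), using the FP$_2$ hypothesis and the Tor-vanishing condition to produce the required factorizations of maps from finitely presented modules through objects of~$\mathcal C$; the $\kappa^+$-deconstructibility and closure under transfinite extensions then follow \emph{from} the Tor-orthogonal description $\mathcal L = {}^\intercal(\mathcal C^\intercal)$, rather than serving as a step towards it.
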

\begin{proof} (i) Except for the last claim, these properties of $\mathcal L$ follow from the classic work of Lenzing \cite{Len}, see also \cite[2.13]{GT}.

For the last claim, let $(\ast \ast) \quad 0 \to X \to Z \overset{\rho}\to Y \to 0$ be a pure exact sequence with $X, Y \in \mathcal L$. 
Let $\mathfrak C = ( C_i , f_{ji} \mid i \leq j \in I)$ be a direct system with $C_i \in \mathcal C$ such that $\varinjlim \mathfrak C = (Y, f_i (i \in I))$. Taking pullbacks of $\rho$ and $f_i$ ($i \in I$), we obtain a direct system of short exact sequences $0 \to X \to Z_i \to C_i \to 0$ ($i \in I$) whose direct limit is the sequence $(\ast \ast)$. 

Since the pullback of a pure epimorphism is again a pure epimorphism, and $\mathcal C \subseteq \rfmod R$, we infer that for each  $i \in I$, the sequence $0 \to X \to Z_i \to C_i \to 0$ splits. So $Z_i \cong X \oplus C_i \in \mathcal L$ for all $i \in I$. Then $Z = \varinjlim Z_i \in \mathcal L$, too, because $\mathcal L$ is closed under direct limits when $\mathcal C \subseteq \rfmod R$. 

(ii) This was proved in \cite{AT}, see also \cite[6.19 and 8.40]{GT}.   
\end{proof}

\begin{remark}\label{genlenzing} The tools developed in the sequel will allow us to extend Lemma \ref{lenzing}(i) to the case when $\mathcal C$ is an arbitrary class consisting of pure projective modules -- see Corollary \ref{pureprojlim} below.
\end{remark}

For a general class of modules $\mathcal C$ closed under finite direct sums, $\mathcal L$ may fail some of the closure properties mentioned above. However, the closure under arbitrary direct sums always holds. In the particular case when $\mathcal C$ is closed under \emph{arbitrary} direct sums, this follows from the characterization of direct limits given in Theorem \ref{describe} below. However, \ref{describe} may fail in the general setting of classes closed only under finite direct sums -- see Example \ref{unitreg} below. Here we give a direct proof for the general setting employing the internal characterization of $\varinjlim$: 

\begin{proposition}\label{directs} The class $\mathcal L$ is closed under arbitrary direct sums.
\end{proposition}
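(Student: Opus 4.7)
The plan is to verify closure under direct sums directly from the internal characterization \textup{(C1)--(C3)} of direct limits. Let $(M_\lambda)_{\lambda \in \Lambda}$ be any family of modules from $\mathcal L$. For each $\lambda$, fix a direct system $\mathfrak D_\lambda = (C_i^\lambda, f_{ji}^\lambda \mid i \leq j \in I_\lambda)$ with $C_i^\lambda \in \mathcal C$ and structural maps $f_i^\lambda \colon C_i^\lambda \to M_\lambda$ realising $M_\lambda = \varinjlim \mathfrak D_\lambda$.

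The key construction is a suitable index set. Let
\[
I = \{(F,\varphi) \mid F \subseteq \Lambda \text{ finite},\ \varphi\colon F \to \textstyle\bigsqcup_{\lambda} I_\lambda,\ \varphi(\lambda) \in I_\lambda\},
\]
ordered by $(F,\varphi) \leq (G,\psi)$ iff $F \subseteq G$ and $\varphi(\lambda) \leq \psi(\lambda)$ in $I_\lambda$ for all $\lambda \in F$. Using that each $I_\lambda$ is directed, one sees that $I$ is directed: given $(F_1,\varphi_1),(F_2,\varphi_2)$, take $G = F_1 \cup F_2$ and, for each $\lambda \in G$, choose $\psi(\lambda) \in I_\lambda$ dominating whichever of $\varphi_1(\lambda),\varphi_2(\lambda)$ are defined. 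Set $C_{(F,\varphi)} = \bigoplus_{\lambda \in F} C_{\varphi(\lambda)}^\lambda$, which lies in $\mathcal C$ because $\mathcal C$ is closed under \emph{finite} direct sums (this is where the hypothesis on $\mathcal C$ is used). For $(F,\varphi) \leq (G,\psi)$ define $g_{(G,\psi),(F,\varphi)} \colon C_{(F,\varphi)} \to C_{(G,\psi)}$ componentwise by $f_{\psi(\lambda),\varphi(\lambda)}^\lambda$ on the $\lambda$-summand (for $\lambda \in F$) followed by the inclusion into $C_{(G,\psi)}$. Similarly, define $g_{(F,\varphi)} \colon C_{(F,\varphi)} \to \bigoplus_\lambda M_\lambda$ componentwise from the $f_{\varphi(\lambda)}^\lambda$.

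It then remains to check conditions \textup{(C1)--(C3)} for the system $\mathfrak C = (C_{(F,\varphi)}, g_{(G,\psi),(F,\varphi)})$ with the cone $g_{(F,\varphi)}$ into $\bigoplus_\lambda M_\lambda$. Condition \textup{(C1)} is immediate from the componentwise definition and \textup{(C1)} for each $\mathfrak D_\lambda$. For \textup{(C2)}, any $x \in \bigoplus_\lambda M_\lambda$ has finite support $F$; for each $\lambda \in F$ apply \textup{(C2)} for $\mathfrak D_\lambda$ to find $\varphi(\lambda) \in I_\lambda$ and $y_\lambda \in C_{\varphi(\lambda)}^\lambda$ with $f_{\varphi(\lambda)}^\lambda(y_\lambda) = x_\lambda$; then $g_{(F,\varphi)}\bigl((y_\lambda)_\lambda\bigr) = x$. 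For \textup{(C3)}, given $(F,\varphi) \in I$ and $x = (x_\lambda)_{\lambda \in F} \in \Ker g_{(F,\varphi)}$, each $x_\lambda$ lies in $\Ker f_{\varphi(\lambda)}^\lambda$; applying \textup{(C3)} in each $\mathfrak D_\lambda$ gives $\psi(\lambda) \geq \varphi(\lambda)$ in $I_\lambda$ killing $x_\lambda$, and then $x \in \Ker g_{(F,\psi),(F,\varphi)}$.

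There is essentially no hard step: the whole proof is bookkeeping around building the product-like directed index set $I$ and checking the three conditions pointwise. The only substantive ingredient is that $\mathcal C$ is closed under \emph{finite} direct sums, which is exactly what allows the objects $C_{(F,\varphi)}$ to belong to $\mathcal C$; if the index set $\Lambda$ were also allowed to contribute infinitely to each diagram vertex, the argument would break, which explains why the parallel approach via a single short exact sequence of the form $(\ast)$ requires $\mathcal C$ to be closed under arbitrary direct sums (as will be exploited in Theorem~\ref{describe}).
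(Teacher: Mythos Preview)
Your proof is correct and follows essentially the same approach as the paper: both construct the directed index set of pairs (finite subset of $\Lambda$, choice of index in each component system), form the corresponding finite direct sums in $\mathcal C$, and verify (C1)--(C3) componentwise. The only differences are notational (you use a function $\varphi$ where the paper uses a tuple $\bar i$).
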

\begin{proof} 
Let $(L_\alpha \mid \alpha < \kappa )$ be a sequence of modules from $\mathcal L$ and put $L = \bigoplus_{\alpha < \kappa} L_\alpha$. 

For each $\alpha < \kappa$, let $\mathfrak C _\alpha = (C_{\alpha, i}, f_{\alpha, j, i} \mid i \leq j \in I_\alpha )$ be a direct system of modules from $\mathcal C$ witnessing that $L_\alpha \in \mathcal L$, i.e., $(L_\alpha, f_{\alpha, i} (i \in I_\alpha))$ is the colimit of the diagram $\mathfrak C _\alpha$ in $\rmod R$ (where $f_{\alpha, i} : C_{\alpha, i} \to L_\alpha$). 

Let $F$ be a finite subset of $\kappa$, $F = \{ \alpha_1, \dots, \alpha_m \}$. Since $\mathcal C$ is closed under finite direct sums, 
the module $C_{F,\bar i} = \bigoplus_{k \leq m} C_{\alpha _k, i_k}$ belongs to $\mathcal C$ for each $m$-tuple of indices $\bar i = (i_1, \dots i_m ) \in I_{\alpha_1} \times \dots \times I_{\alpha_m}$. 

Let $G$ be a finite subset of $\kappa$ containing $F$, so $G = \{ \alpha_1, \dots, \alpha_n \}$ for some $n \geq m$ and let  $\bar j = (j_1, \dots j_n ) \in I_{\alpha_1} \times \dots \times I_{\alpha_n}$ be such that $\bar i \leq \bar j$, i.e., $i_k \leq j_k$ for all $k \leq m$. Define $h = f_{G,F,\bar j,\bar i} : C_{F,\bar i} \to C_{G,\bar j}$ by $h \restriction C_{\alpha _k, i_k} = f_{\alpha _k, j_k, i_k}$ for all $k \leq m$. 

Let $\mathfrak C = ( C_{F,\bar i}, f_{G,F, \bar j, \bar i} )$ where $F$ and $G$ run over all pairs of finite subsets of $\kappa$ such that $F \subseteq G$, and $\bar i$ and $\bar j$ over all $m$-tuples and $n$-tuples, with $m = \card F$, $n = \card G$ and $\bar i \leq \bar j$ as above. Then $\mathfrak C$ is a direct system of modules from $\mathcal C$. 

We will show that $L = \varinjlim \mathfrak C$ by verifying conditions (C1)-(C3) in the given setting. For $F$ and $\bar i$ as above, we define $h_{F, \bar i} : C_{F,\bar i} \to L$ by $h_{F, \bar i} = \bigoplus_{k \leq m} f_{\alpha _k, i_k}$. Note that $h_{F, \bar i} = h_{G, \bar j} f_{G,F,\bar j,\bar i}$ for $F \subseteq G$ and  $\bar i \leq \bar j$, because the equality holds when restricted to each $C_{\alpha _k, i_k}$ ($k \leq m$). Thus condition (C1) holds.

For conditions (C2) and (C3), we have to prove that $L = \bigcup \hbox{Im}(h_{F, \bar i})$, and that for each $x \in C_{F,\bar i}$, $h_{F, \bar i}(x) = 0$ implies the existence of $G \supseteq F$ and $\bar j \geq \bar i$ such that $f_{G,F,\bar j,\bar i}(x) = 0$.

The first claim is clear, since $\hbox{Im}(h_{F, \bar i}) = \sum_{k \leq m} \hbox{Im}(f_{\alpha _k, i_k})$ and $\bigcup_{i \in I_{\alpha}} \hbox{Im}(f_{\alpha,i}) = L_{\alpha}$ for each $\alpha < \kappa$.

The assumption of the second claim says that $\bigoplus_{k \leq m} f_{\alpha _k, i_k} (x) = 0$. So for each $k \leq m$, there exists $j_k \in I_{\alpha_k}$ such that the $i_k$-th component of $x$ is mapped to zero by $f_{\alpha _k, j_k, i_k}$. Take $G = F$ and $\bar j = (j_1,\dots,j_m)$. Then $f_{G,F,\bar j,\bar i} (x) = 0$, q.e.d.           
\end{proof}  

We will also make use of the following 

\begin{lemma}\label{holmjorg} $($\cite[Theorem 2.5]{HJ}$)$ Let $\mathcal D$ be a class of modules closed under pure epimorphic images. Then $\mathcal D$ is a covering class, iff $\mathcal D$ is closed under arbitrary direct sums.
\end{lemma}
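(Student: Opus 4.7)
The plan is to prove both implications; the forward direction is short, while the reverse is the substantial one.

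For $(\Rightarrow)$, suppose $\mathcal D$ is covering and let $(D_i)_{i\in I}$ be any family in $\mathcal D$. Take a $\mathcal D$-cover $\pi\colon\tilde D\to\bigoplus_{i\in I}D_i$. Each canonical inclusion $\iota_i\colon D_i\hookrightarrow\bigoplus_{i\in I}D_i$ has domain in $\mathcal D$, so the precover property supplies $\sigma_i\colon D_i\to\tilde D$ with $\pi\sigma_i=\iota_i$. These assemble via the universal property of the direct sum into a single morphism $\sigma\colon\bigoplus_{i\in I}D_i\to\tilde D$ satisfying $\pi\sigma=\mathrm{id}$, exhibiting $\bigoplus_{i\in I}D_i$ as a direct summand---and hence a pure epimorphic image---of $\tilde D\in\mathcal D$; the hypothesis on $\mathcal D$ then forces $\bigoplus_{i\in I}D_i\in\mathcal D$.

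For $(\Leftarrow)$, first bootstrap direct-sum closure to closure under arbitrary direct limits: for any direct system $(D_i,f_{ji})$ in $\mathcal D$, the pure exact sequence $(\ast)$ of Section~\ref{prelim} presents $\varinjlim D_i$ as a pure epimorphic image of $\bigoplus_iD_i\in\mathcal D$, so $\varinjlim D_i\in\mathcal D$. With $\mathcal D$ now closed under direct limits, the Enochs--Xu theorem reduces the covering property to producing, for each module $M$, a $\mathcal D$-precover. Fix $M$, set $\kappa:=(\card R+\card M+\aleph_0)^+$, and let $\mathcal S_M$ be a set of representatives of iso-classes of pairs $(D,f)$ with $D\in\mathcal D$, $\card D<\kappa$, and $f\in\Hom RDM$. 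Direct-sum closure yields $\bar D:=\bigoplus_{(D,f)\in\mathcal S_M}D\in\mathcal D$, and the canonical assembled morphism $\bar f\colon\bar D\to M$ is the intended precover.

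The main obstacle lies in verifying that every $g\colon E\to M$ with $E\in\mathcal D$ factors through $\bar f$. Since closure under pure epimorphic images does \emph{not} entail closure under pure submodules, one cannot simply replace $E$ by a bounded-size pure submodule mapping onto $g(E)$. Following the original approach of \cite{HJ}, one writes $E$ as a directed union of $<\kappa$-generated pieces, pushes each along $g$ into $M$, and exploits the direct-limit closure of $\mathcal D$ together with a purification step---producing pure-epimorphic quotients in $\mathcal D$ of controlled size---to assemble the partial factorizations into a factorization of $g$ through a single element of $\mathcal S_M$. Combined with the Enochs--Xu passage from precovers to covers, this completes the argument.
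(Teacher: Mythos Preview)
The paper gives no proof of this lemma at all; it simply cites \cite[Theorem~2.5]{HJ} and moves on. So there is nothing in the paper to compare your argument against---your task was effectively to reproduce the Holm--J{\o}rgensen proof.

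Your $(\Rightarrow)$ is clean and correct. For $(\Leftarrow)$, the reduction to precovering via closure under direct limits and the Enochs--Xu theorem is the right architecture. The gap is in your last paragraph: you correctly flag the obstacle, but what follows is a hand-wave, not a proof, and the sketch you give is on the wrong track. Writing $E$ as a directed union of $<\kappa$-generated submodules does not help, since those submodules need not lie in~$\mathcal D$, and ``assembling partial factorizations'' is not how the argument goes.

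What is actually needed is a single lemma: given $g\colon E\to M$ with $E\in\mathcal D$, there exists a pure submodule $P\subseteq E$ with $P\subseteq\ker g$ and $|E/P|<\kappa$. Then $D:=E/P$ is a pure-epimorphic image of $E$, hence $D\in\mathcal D$; it has $|D|<\kappa$; and $g$ factors as $E\twoheadrightarrow D\to M$, so $(D,\,D\to M)$ is (up to isomorphism) an element of $\mathcal S_M$ and $g$ factors through~$\bar f$. The point is that one needs a pure \emph{quotient} of $E$ of bounded size through which $g$ factors, i.e., a large pure submodule of $E$ sitting inside $\ker g$---not a small pure submodule of $E$. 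This is the content of the key lemma in~\cite{HJ}, and it requires its own argument (a purification ``from below'' inside $\ker g$, controlling the size of the cokernel). Without it, your precover candidate $\bar f$ is not shown to be a precover.
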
  

Proposition \ref{directs} and Lemma \ref{holmjorg} yield 

\begin{corollary}\label{holm} Assume that the class $\mathcal L$ is closed under pure epimorphic images. Then $\mathcal L$ is a covering class.
\end{corollary}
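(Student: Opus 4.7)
The plan is essentially to invoke the two immediately preceding results and observe that their hypotheses are satisfied. First, I would note that by Proposition \ref{directs}, the class $\mathcal L = \varinjlim \mathcal C$ is always closed under arbitrary direct sums, with no extra hypothesis on $\mathcal C$ beyond closure under finite direct sums. The standing assumption of the corollary supplies the other needed property, namely closure of $\mathcal L$ under pure epimorphic images.

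With both properties in hand, Lemma \ref{holmjorg} applies to $\mathcal D = \mathcal L$: a class closed under pure epimorphic images is covering if and only if it is closed under arbitrary direct sums. Since both conditions hold for $\mathcal L$, we immediately conclude that $\mathcal L$ is a covering class of $\rmod R$, which is exactly the statement of the corollary.

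There is essentially no obstacle here, as the corollary is a direct combination of Proposition \ref{directs} and Lemma \ref{holmjorg}; the only point worth stressing in the write-up is that Proposition \ref{directs} requires nothing beyond the blanket assumption (closure of $\mathcal C$ under finite direct sums), so the sole nontrivial hypothesis of Corollary \ref{holm} is the one explicitly stated.
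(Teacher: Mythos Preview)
Your proposal is correct and takes essentially the same approach as the paper: the paper's proof is simply the one-line observation that Proposition~\ref{directs} and Lemma~\ref{holmjorg} together yield the corollary.
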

 
Here is another closure property that is passed from $\mathcal C$ to $\mathcal L$ in general:

\begin{lemma}\label{fact} Assume that $\mathcal C$ is closed under homomorphic images. Then $\mathcal L$ coincides with the class of all homomorphic images of arbitrary direct sums of modules from $\mathcal C$. In particular, $\mathcal L$ is closed under homomorphic images and direct limits, and it is a covering class; the $\mathcal L$-cover of a module $M$ is the embedding $T \hookrightarrow M$ where $T$ is the trace of $\mathcal C$ in $M$. 

Moreover, $\mathcal L$ consists of $\mathcal C$-filtered modules. If $\mathcal C$ is $\kappa$-deconstructible for some infinite cardinal $\kappa$ (e.g., if $\mathcal C$ has a representative set $S$ of objects up to isomorphism such that $\card S < \kappa$), then $\mathcal L$ is $\kappa$-deconstructible.   
\end{lemma}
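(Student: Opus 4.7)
Denote by $\mathcal H$ the class of all homomorphic images of arbitrary direct sums of modules from $\mathcal C$. The main step is to show $\mathcal L = \mathcal H$; once this is established, all remaining assertions follow rather directly.

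For the inclusion $\mathcal L \subseteq \mathcal H$, I would invoke the canonical presentation $(\ast)$ recalled in Section \ref{prelim}: if $M = \varinjlim(C_i, f_{ji})$, then there is a short exact sequence $0 \to K \to \bigoplus_{i \in I} C_i \overset\pi\to M \to 0$, exhibiting $M$ as a homomorphic image of a direct sum of modules from $\mathcal C$. Conversely, given an epimorphism $\pi \colon \bigoplus_{i \in I} C_i \to M$ with $C_i \in \mathcal C$, I would let $F$ range over the finite subsets of $I$ ordered by inclusion, and set $M_F = \pi\bigl(\bigoplus_{i \in F} C_i\bigr)$. Since $\mathcal C$ is closed under finite direct sums and under homomorphic images, each $M_F$ lies in $\mathcal C$, and the inclusions $M_F \hookrightarrow M_G$ for $F \subseteq G$ form a direct system whose colimit (= union of images) is $M$. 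Hence $M \in \mathcal L$.

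Closure of $\mathcal L$ under homomorphic images is then immediate from the $\mathcal H$-description: composing an epi $\bigoplus C_i \to L$ with an epi $L \to N$ yields $N \in \mathcal H = \mathcal L$. Closure under direct limits follows by combining this with Proposition \ref{directs}: given a direct system $(L_\alpha)$ in $\mathcal L$ with direct limit $L$, the direct sum $\bigoplus L_\alpha$ is in $\mathcal L$ by \ref{directs}, and $L$ is a homomorphic image of $\bigoplus L_\alpha$, so $L \in \mathcal L$. For the covering property, I would take $T$ to be the trace of $\mathcal C$ in $M$, i.e.\ the sum of the images of all homomorphisms $C \to M$ with $C \in \mathcal C$; then $T$ is a homomorphic image of the direct sum of these $C$'s, hence $T \in \mathcal L$. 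For any $L \in \mathcal L$ presented as an epi $\bigoplus C_i \to L$, the image of a morphism $L \to M$ is a sum of images of $C_i \to M$, hence contained in $T$; so every $L \to M$ with $L \in \mathcal L$ factors through the inclusion $T \hookrightarrow M$, which is therefore an $\mathcal L$-precover. Right minimality is automatic since $T \hookrightarrow M$ is monic: any endomorphism of $T$ that composes with the inclusion back to the inclusion must be the identity.

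For the filtration claim, I would well-order the index set of an epi $\pi \colon \bigoplus_{\alpha < \lambda} C_\alpha \to M$ and set $M_\alpha = \pi\bigl(\bigoplus_{\beta < \alpha} C_\beta\bigr)$; this gives a continuous chain with $M_0 = 0$, $M_\lambda = M$, and $M_{\alpha+1}/M_\alpha$ a homomorphic image of $C_\alpha$, hence in $\mathcal C$. For the deconstructibility statement, if $\mathcal C \subseteq \Filt{\mathcal C^{<\kappa}}$, I would refine each successive quotient $M_{\alpha+1}/M_\alpha \in \mathcal C$ by a $\mathcal C^{<\kappa}$-filtration and concatenate, producing a $\mathcal C^{<\kappa}$-filtration of $M$; since $\mathcal C^{<\kappa} \subseteq \mathcal L^{<\kappa}$, this shows $\mathcal L \subseteq \Filt{\mathcal L^{<\kappa}}$. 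The bracketed example is the trivial case in which $\mathcal C$ itself already equals (up to isomorphism) its $<\kappa$-presented part. No serious obstacle is anticipated; the only point requiring some care is verifying that the successive subquotients in the filtration really lie in $\mathcal C$, which is where the hypothesis of closure under homomorphic images is used.
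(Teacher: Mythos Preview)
Your proof is correct and largely parallels the paper's. Two points of difference are worth noting. First, for the covering claim the paper invokes Corollary~\ref{holm} (itself resting on the Holm--J{\o}rgensen criterion, Lemma~\ref{holmjorg}) to conclude that $\mathcal L$ is covering, and only then identifies the cover; your direct verification that the trace inclusion $T\hookrightarrow M$ is an $\mathcal L$-cover is more self-contained and bypasses that machinery. Second, and more notably, for the $\mathcal C$-filtration the paper builds the chain by a transfinite greedy recursion---at each successor step choosing some $C\in\mathcal C$ with $C\subseteq L$ but $C\nsubseteq L_\alpha$, setting $L_{\alpha+1}=L_\alpha+C$, and checking that $L_{\alpha+1}/L_\alpha\cong C/(C\cap L_\alpha)\in\mathcal C$ and that $L_{\alpha+1}\in\mathcal L$---whereas you simply well-order the summands of a presenting epimorphism $\pi\colon\bigoplus_{\alpha<\lambda}C_\alpha\twoheadrightarrow M$ and take partial images. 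Your construction is shorter and avoids the inductive bookkeeping; the paper's has the incidental feature of exhibiting each filtration term as a member of $\mathcal L$, though that is not needed for the conclusion (and is in any case immediate from your $M_\alpha=\pi(\bigoplus_{\beta<\alpha}C_\beta)\in\mathcal H=\mathcal L$ as well).
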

\begin{proof} Assume that $L \in \mathcal L$,  i.e., $(L, f_i (i \in I))$ is the direct limit of a direct system consisting of modules from $\mathcal C$. Then $L$ is a (pure) epimorphic image of a direct sum of modules from $\mathcal C$, cf.\ \cite[2.9]{GT}. Conversely, let $f$ be an epimorphism $f : \bigoplus_{i \in I} C_i \to M$ with $C_i \in \mathcal C$ for each $i \in I$. If $I$ is finite, then $M \in \mathcal C$ by our assumption on $\mathcal C$. Otherwise consider the $\subseteq$-directed set $J$ of all finite subsets of $I$, and for each $S \in J$, let $D_S = f(\bigoplus_{i \in S} C_i)$. By our assumption on $\mathcal C$, $D_S \in \mathcal C$, and $M$ is the directed union of the direct systems of its submodules $( D_S \mid S \in J )$, so $M \in \mathcal L$. 

That $\mathcal L$ is a covering class now follows by Corollary \ref{holm}. Any homomorphism $f$ from a module $L \in \mathcal L$ into a module $M$ satisfies $\im f \in \mathcal L$, whence $\im f \subseteq T$ by the above. It follows that $T \hookrightarrow M$ is a $\mathcal L$-cover of $M$.        

Next we show that each module $L \in \mathcal L$ is $\mathcal C$-filtered. Indeed, a $\mathcal C$-filtration $(L _\alpha \mid \alpha \leq \sigma )$ of $L$, such that $L_\alpha \in \mathcal L$ for each $\alpha \leq \sigma$, is obtained as follows: $L_0 = 0$; if $L_\alpha \in \mathcal L$ is defined and $L_\alpha \neq L$, then using the fact (proved above) that $L$ is a directed union of modules from $\mathcal C$, we find a $C \in \mathcal C$ such that $C \subseteq L$, but $C \nsubseteq L_\alpha$. Let $L_{\alpha + 1} = L_\alpha + C \subseteq L$. Also $L_\alpha$ is a directed union of modules from $\mathcal C$, say $L_\alpha = \bigcup_{i \in I} C_i$, and $C_i + C \in \mathcal C$ for each $i \in I$ by our assumption on $\mathcal C$, so $L_{\alpha + 1}$ is the directed union of the modules $C_i + C \in \mathcal C$, whence $L_{\alpha + 1} \in \mathcal L$. Moreover, $L_{\alpha + 1}/L_\alpha \cong C/(C \cap L_\alpha) \in \mathcal C$. If $L_\beta \subsetneq L$ for all $\beta < \alpha$ and $\alpha$ is a limit ordinal, then we define $L_\alpha = \bigcup_{\beta < \alpha} L_\beta$. Then $L_\alpha \in \mathcal L$ since $\mathcal L$ is closed under direct limits. 

The assumption of the final claim says each module $C \in \mathcal C$ is filtered by $< \kappa$-presented modules from $\mathcal C$. By the above, the same holds for the modules in the class $\mathcal L$, so the class $\mathcal L$ is also $\kappa$-deconstructible. 
\end{proof}   

\begin{example}\label{1-tilt} (a) Let $R$ be a ring and $T$ a $1$-tilting module and $\mathcal C$ be the class of all homomorphic images of finite direct sums of copies of $T$. Then $\mathcal L = \Gen T$ is the tilting class induced by $T$, see e.g.\ \cite[14.2]{GT}. By Lemma \ref{fact}, $\mathcal L$ is closed under direct limits and it is deconstructible. Also, $\mathcal L$ is closed under transfinite extensions, and $\mathcal L$ is a covering class.
  
(b) Let $R$ be an integral domain with the quotient field $Q$. Let $\mathcal C$ be the class of all homomorphic images of finite direct sums of copies of $Q$. By Lemma \ref{fact}, $\mathcal L$ is the class of all h-divisible modules (= homomorphic images of arbitrary direct sums of copies of $Q$ = homomorphic images of injective modules), and by Lemma \ref{fact}, $\mathcal L$ is deconstructible. Since $\mathcal L$ is the class of all cosyzygies of all modules, $^\perp \mathcal L = \mathcal P _1$, whence $(^\perp \mathcal L)^\perp = \mathcal D$ is the 1-tilting class of all divisible modules (cf.\ \cite[9.1(a)]{GT}). 

So $\mathcal L$ is a $1$-tilting class (i.e., (b) is a particular instance of (a)), iff $\mathcal L = \mathcal D$. By \cite[\S VII.2, Theorem 2.8]{FS}, the latter happens, iff $R$ is a Matlis domain (i.e., $Q$ has projective dimension $1$). Notice that this is further equivalent to the class $\mathcal L$ being closed under extensions: indeed, the closure is clear when $\mathcal L = \mathcal D$. Conversely, if $Q$ has projective dimension $> 1$, then there exists a module $M$ such that $0 \neq \Ext 2RQM \cong \Ext 1RQ{E(M)/M}$, so there is a non-split short exact sequence $0 \to E(M)/M \to N \overset{\pi}\to Q \to 0$. Here, $E(M)/M$ and $Q$ are h-divisible, but $X$ is not: otherwise, there is an epimorphism $\rho : Q^{(X)} \to N$, whence $\pi \rho$ is a split $Q$-epimorphism, and $\pi$ splits, too, a contradiction.      
\end{example}

The proof of Proposition \ref{directs} is motivated by the simple fact that infinite direct sums are directed unions of their finite subsums, where all the maps involved are split monomorphisms. However, as shown in part (a) of the following example, the converse is not true in general: even if we assume that $\mathcal C$ is closed under extensions and direct summands, all the maps $f_{ji}$ in a direct system $\mathfrak C = (C_i, f_{ji} \mid i \leq j \in I)$ are split monomorphisms, and so are all the maps $f_i$ in the direct limit $(L, f_i (i \in I))$ of $\mathfrak C$, the module $L$ need not be a direct sum of the modules from $\mathcal C$. 

\begin{example}\label{injectives} (a) Let $\kappa$ be an infinite cardinal and $R$ a ring of cardinality $\leq \kappa$ which is not right noetherian. Let $\mathcal C$ be the class of all injective modules of cardinality $\leq 2^\kappa$. 

We claim that for each module $M$ of cardinality $\leq 2^\kappa$, the injective hull $E(M)$ of $M$ satisfies $E(M) \in \mathcal C$. To see this, let $D(M)$ denote the divisible hull of $M$ (viewed as an abelian group). Then $D(M)$ has cardinality $\leq 2^\kappa$, and we have the homomorphisms $M \cong \Hom RRM \subseteq \Hom {\mathbb Z}RM \subseteq \Hom{\mathbb Z}R{D(M)} = H$. Since $D(M)$ is an injective $\mathbb Z$-module and $R$ is a flat left $R$-module, the module $H$ is injective (see e.g.\ \cite[2.16(c)]{GT}). Moreover, $H$ has cardinality $\leq (2^\kappa)^\kappa = 2^\kappa$. So $H \in \mathcal C$, whence also $E(M) \in \mathcal C$, and the claim is proved. 

Let $L$ be any injective module. By the claim above, $L$ is the directed union of a direct system of split monomorphisms, $\mathfrak C = (C_i, f_{ji} \mid i \leq j \in I)$, where $\{ C_i \mid i \in I\} \subseteq \mathcal C$ is the set of all injective submodules of $L$ of cardinality $\leq 2^\kappa$. Note that all the maps $f_i$ ($i \in I$) in the colimit $(L, f_i (i \in I))$ of $\mathfrak C$ are split monomorphisms, too. 

Since $R$ is not right noetherian, the Faith-Walker theorem \cite[25.8]{AF} yields an injective module $L$ such that $L$ is not a direct sum of $\leq 2^\kappa$-generated (injective) modules. By the above, $L$ is a directed union of a direct system $\mathfrak C$ of split monomorphisms of modules from $\mathcal C$ such that also all the morphisms $f_i$ are split monomorphisms, but $L$ is not a direct sum of modules from $\mathcal C$.     

(b) We have just proved that the class $\mathcal L = \varinjlim \mathcal C$ contains all injective modules. By Proposition \ref{directs}, $\mathcal L$ contains all direct sums of injective modules (some of these are not injective, because $R$ is not right noetherian). However, in this generality, it is not exactly clear which modules the class $\mathcal L$ contains. There are two cases where we can give a complete answer: 

If $R$ is right hereditary, then the class $\mathcal C$ is closed under homomorphic images, so Lemma \ref{fact} applies, and $\mathcal L$ is the class of all homomorphic images of arbitrary direct sums of injective modules. 

If $R$ is right self-injective, then $\mathcal C$ contains all finitely generated projective modules, whence $\mathcal L$ contains all flat modules. So if $R$ is moreover von Neumann regular, then $\mathcal L = \rmod R$.     
\end{example} 
 
As suggested by Example \ref{1-tilt}(b), closure under extensions is a more subtle problem. In the general setting, we have

\begin{lemma}\label{exts} Assume that $\mathcal C$ is closed under extensions. Then the class $\mathcal L$ is closed under extensions of modules from $\mathcal C$. That is, if $X \in \mathcal C$, $Y \in \mathcal L$, and there is an exact sequence $$(\dagger) \quad 0 \to X \to Z \overset{\rho}\to Y \to 0,$$ 
then $Z \in \mathcal L$.
\end{lemma}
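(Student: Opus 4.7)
The plan is to mimic the pullback argument used in the last part of Lemma~\ref{lenzing}(i), but now exploiting the hypothesis that $\mathcal C$ is closed under extensions (in place of $\mathcal C\subseteq\rfmod R$, which was used there to force the pulled-back sequences to split).

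First, write $Y = \varinjlim \mathfrak C$ for some direct system $\mathfrak C = (C_i, f_{ji} \mid i\le j\in I)$ of modules from $\mathcal C$, with colimit cocone $f_i\colon C_i \to Y$. For each $i\in I$, form the pullback of $\rho$ along $f_i$:
\[
\xymatrix{
0 \ar[r] & X \ar@{=}[d] \ar[r] & Z_i \ar[d] \ar[r]^{\rho_i} & C_i \ar[d]^{f_i} \ar[r] & 0 \\
0 \ar[r] & X \ar[r] & Z \ar[r]^{\rho} & Y \ar[r] & 0.
}
\]
Since $X\in\mathcal C$, $C_i\in\mathcal C$, and $\mathcal C$ is closed under extensions, the upper row belongs to $\mathcal C$, i.e.\ $Z_i\in\mathcal C$.

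Next, I would assemble the $Z_i$ into a direct system indexed by $I$: for $i\le j$, use the universal property of the pullback $Z_j$ with the pair of maps $Z_i\to Z$ and $f_{ji}\rho_i\colon Z_i\to C_j$ (which are compatible, since $\rho$ applied to $Z_i\to Z$ equals $f_i\rho_i=f_jf_{ji}\rho_i$) to obtain a canonical map $g_{ji}\colon Z_i\to Z_j$. Standard diagram chases show that $(Z_i, g_{ji})$ is a direct system, that the inclusions $X\hookrightarrow Z_i$ assemble to a morphism of direct systems from the constant system at $X$, and that the maps $Z_i\to Z$ form a compatible cocone.

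Finally, since direct limits are exact in $\rmod R$, applying $\varinjlim$ to the direct system of short exact sequences $0\to X\to Z_i\to C_i\to 0$ (with constant left term) yields a short exact sequence
\[
0\longrightarrow X\longrightarrow \varinjlim_{i\in I} Z_i\longrightarrow Y\longrightarrow 0.
\]
The Five Lemma applied to the induced map from this sequence to the given sequence $(\dagger)$ shows that $Z\cong\varinjlim Z_i$. As each $Z_i\in\mathcal C$, we conclude $Z\in\mathcal L$.

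There is no real obstacle here: both the closure of $\mathcal C$ under extensions (needed to keep $Z_i$ in $\mathcal C$) and the exactness of $\varinjlim$ in $\rmod R$ (needed to identify $Z$ with $\varinjlim Z_i$) are precisely what make the argument go through. The only point to be a bit careful about is verifying functoriality of the pullback construction so that $(Z_i, g_{ji})$ really is a direct system and the comparison map $\varinjlim Z_i\to Z$ is well defined.
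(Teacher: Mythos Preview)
Your proof is correct and follows essentially the same approach as the paper: both take pullbacks of $\rho$ along the colimit maps $f_i$ to obtain a direct system of short exact sequences $0\to X\to Z_i\to C_i\to 0$, note that $Z_i\in\mathcal C$ by closure under extensions, and identify $Z$ with $\varinjlim Z_i$. The paper's proof is simply a more terse version of yours, explicitly referencing the argument for Lemma~\ref{lenzing}(i) just as you do.
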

\begin{proof} The proof is similar to the one for Lemma \ref{lenzing}(i): Let $\mathfrak C = ( C_i , f_{ji} \mid i \leq j \in I)$ be a direct system with $C_i \in \mathcal C$ such that $\varinjlim \mathfrak C = (Y, f_i (i \in I))$. Taking pullbacks of $\rho$ and $f_i$ ($i \in I$), we obtain a direct system of short exact sequences $0 \to X \to Z_i \to C_i \to 0$ ($i \in I$) whose direct limit is the sequence $(\ast)$. Since $\mathcal C$ is closed under extensions, $Z_i \in \mathcal C$ for each $i \in I$, so $Z = \varinjlim Z_i \in \mathcal L$.   
\end{proof}

However, the version of Lemma \ref{exts} with swapped roles of $X$ and $Y$ fails in general. In particular, $\mathcal L$ need not be closed under extensions even if $\mathcal C$ is:

\begin{example}\label{complred} Let $R$ be a commutative semiartinian von Neumann regular ring of Loewy length $\alpha \geq 2$. Let $(\Soc {\beta}{R} \mid \beta \leq \alpha)$ be the socle sequence of $R$. Let $\mathcal C$ be the class of all finitely generated completely reducible modules. Then $\mathcal C$ is closed under finite direct sums, direct summands, and extensions (the latter holds because all simple modules are injective, so all extensions in $\mathcal C$ split, see e.g.\ \cite[p.216]{AF}). Moreover, $\mathcal L = \varinjlim \mathcal C$ is the class of all completely reducible modules; in particular, $\mathcal L$ is closed under direct limits. 

Consider the short exact sequence $$0 \to \Soc{1}{R} \to \Soc{2}{R} \overset{\pi}\to \Soc{2}{R}/\Soc{1}{R} \to 0.$$ Let $X$ be any non-zero finitely generated submodule of $\Soc{2}{R}/\Soc{1}{R}$, $Y = \Soc{1}{R}$, and $Z = \pi^{-1}(X) \subseteq \Soc{2}{R}$. Then we have the short exact sequence $$0 \to Y \to Z \to X \to 0,$$ where  $X \in \mathcal C$, $Y \in \mathcal L$, but $Z \notin \mathcal L$, because $\Soc {1}{Z} = \Soc{1}{R} = Y \subsetneq Z$. In particular, $\mathcal L$ is not closed under (pure) extensions.        
\end{example}

Notice that in Example \ref{complred}, the class $\mathcal C$ consists of finitely generated modules, but $\mathcal C \nsubseteq \rfmod R$. 

The pullback argument employed in the proofs of Lemmas \ref{lenzing}(i) and \ref{exts} gives yet another positive case: 

\begin{lemma}\label{exte} Assume that $\mathcal C \subseteq \rfmod R$, and $\Ext 1RCL = 0$ for all $C \in \mathcal C$ and $L \in \mathcal L$. Then $\mathcal L$ is closed under extensions. 
\end{lemma}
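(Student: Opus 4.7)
The plan is to mimic the pullback construction used in the proofs of Lemmas \ref{lenzing}(i) and \ref{exts}, and then invoke the finite presentability assumption at the very end to close up the argument.

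Take an exact sequence $0 \to X \to Z \overset{\rho}\to Y \to 0$ with $X,Y \in \mathcal L$, and pick a direct system $\mathfrak C = (C_i, f_{ji} \mid i \leq j \in I)$ with $C_i \in \mathcal C$ whose colimit is $(Y, f_i\,(i \in I))$. Pulling back $\rho$ along each $f_i$ produces a direct system of short exact sequences $0 \to X \to Z_i \to C_i \to 0$; by exactness of direct limits in $\rmod R$ (or a five-lemma argument on the induced map into the original sequence), its colimit reproduces $0 \to X \to Z \to Y \to 0$, so in particular $Z = \varinjlim Z_i$.

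Now comes the key step where the Ext-vanishing hypothesis enters: since $C_i \in \mathcal C$ and $X \in \mathcal L$, we have $\Ext 1R{C_i}X = 0$ by assumption, so each pullback sequence splits and $Z_i \cong X \oplus C_i$. As $\mathcal C \subseteq \mathcal L$ trivially and $\mathcal L$ is closed under arbitrary direct sums by Proposition \ref{directs}, each $Z_i$ lies in $\mathcal L$.

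Finally I would use the standing hypothesis $\mathcal C \subseteq \rfmod R$: by Lemma \ref{lenzing}(i), the class $\mathcal L$ is then closed under arbitrary direct limits, so $Z = \varinjlim Z_i \in \mathcal L$, as desired. There is no real obstacle here; the only point to be careful about is that the splittings $Z_i \cong X \oplus C_i$ need not be compatible with the transition maps $Z_i \to Z_j$, but this is harmless because we do not try to exhibit $Z$ directly as a direct limit from $\mathcal C$ — we merely need $Z_i \in \mathcal L$ and then rely on the Lenzing-type closure of $\mathcal L$ under direct limits provided by the finite-presentation hypothesis on $\mathcal C$.
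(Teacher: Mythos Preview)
Your proof is correct and follows exactly the approach the paper indicates: pull back along a direct system for $Y$, use the Ext-vanishing to split each pullback so that $Z_i\cong X\oplus C_i\in\mathcal L$, and then invoke the closure of $\mathcal L$ under direct limits granted by $\mathcal C\subseteq\rfmod R$ (Lemma~\ref{lenzing}(i)). The paper does not spell this out but simply points to the pullback arguments of Lemmas~\ref{lenzing}(i) and~\ref{exts}; your write-up is precisely that argument, including the correct remark that the (possibly incompatible) splittings are irrelevant once one uses the Lenzing-type closure of $\mathcal L$.
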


Further, we have

\begin{proposition}\label{ext2} Assume that $\mathcal C$ is closed under extensions and $\mathcal C$ consists of FP$_2$-modules. Then the class $\mathcal L$ is closed under extensions.
\end{proposition}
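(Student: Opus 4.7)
The plan is to combine the pullback technique already used in Lemmas \ref{lenzing}(i) and \ref{exts} with the characteristic property of FP$_2$-modules, namely that $\Ext 1RD{-}$ commutes with direct limits whenever $D$ is FP$_2$. Throughout, I assume an exact sequence
$$0 \to X \to Z \overset{\rho}\to Y \to 0$$
with $X, Y \in \mathcal L$, and aim to show $Z \in \mathcal L$.

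First I would perform a reduction. Write $Y = \varinjlim_{j \in J} D_j$ with $D_j \in \mathcal C$ and structural maps $g_j : D_j \to Y$. Taking pullbacks of $\rho$ and each $g_j$ produces a direct system of short exact sequences $0 \to X \to Z_j \to D_j \to 0$ ($j \in J$) with direct limit equal to the original sequence, so $Z = \varinjlim_j Z_j$. Since $\mathcal C$ consists of finitely presented modules, Lemma \ref{lenzing}(i) tells us that $\mathcal L$ is closed under direct limits, so it suffices to show that each $Z_j$ lies in $\mathcal L$. This reduces the proposition to the following claim: if $X \in \mathcal L$ and $D \in \mathcal C$, then any extension $0 \to X \to Z \to D \to 0$ satisfies $Z \in \mathcal L$.

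To prove the claim, fix a direct system $(C_i, f_{ki} \mid i \leq k \in I)$ in $\mathcal C$ with $X = \varinjlim_i C_i$ and colimit maps $f_i : C_i \to X$. Because $D$ is FP$_2$, the functor $\Ext 1RD{-}$ commutes with direct limits, so the canonical map
$$\varinjlim_{i \in I} \Ext 1RD{C_i} \;\longrightarrow\; \Ext 1RDX$$
is an isomorphism. Hence the class $\eta \in \Ext 1RDX$ representing $Z$ is the image of some $\eta_0 \in \Ext 1RD{C_{i_0}}$ along $\Ext 1RD{f_{i_0}}$. Choose a representative extension
$$0 \to C_{i_0} \to W_0 \to D \to 0$$
of $\eta_0$; since $\mathcal C$ is closed under extensions, $W_0 \in \mathcal C$.

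Finally, for each $j \in I$ with $j \geq i_0$, form the pushout $W_j$ of $f_{j,i_0} : C_{i_0} \to C_j$ and $C_{i_0} \hookrightarrow W_0$. Each $W_j$ sits in a short exact sequence $0 \to C_j \to W_j \to D \to 0$ and therefore belongs to $\mathcal C$. The universal property of pushouts provides compatible transition maps $W_j \to W_k$ for $i_0 \leq j \leq k$, so $(W_j)_{j \geq i_0}$ is a direct system in $\mathcal C$, together with a cocone to $Z$ coming from the pushout $Z = W_0 \sqcup_{C_{i_0}} X$. Taking $\varinjlim_{j \geq i_0}$ of the short exact sequences $0 \to C_j \to W_j \to D \to 0$ (which is exact since direct limits are exact in $\rmod R$) gives $0 \to X \to \varinjlim W_j \to D \to 0$, and comparing this with the original sequence via the five lemma yields $\varinjlim W_j \cong Z$. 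Thus $Z \in \varinjlim \mathcal C = \mathcal L$, completing the proof.

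The main technical point is the use of the FP$_2$ hypothesis to realize the extension class $\eta$ as coming from a single index $i_0$; once that is established, the pushout propagation along the direct system $(C_j)_{j \geq i_0}$ is formally dictated by functoriality, and identification of the colimit with $Z$ is a routine five-lemma argument.
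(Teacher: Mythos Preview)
Your proof is correct and follows essentially the same route as the paper's: reduce via pullbacks to the case where the quotient lies in $\mathcal C$ (using that $\mathcal L$ is closed under direct limits since $\mathcal C\subseteq\rfmod R$), then invoke the FP$_2$ hypothesis to write the extension as a direct limit of extensions with all three terms in $\mathcal C$. The paper's proof is terser---it simply asserts that $(\ddagger)$ is the direct limit of a system $0\to Y_i\to Z_i\to X\to0$---while you spell out the pushout construction explicitly over the cofinal subsystem $j\ge i_0$; this extra detail is welcome and clarifies what the paper leaves implicit.
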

\begin{proof} Since $\mathcal L$ is closed under direct limits for $\mathcal C \subseteq \rfmod R$, by the proof of Lemma \ref{exts}, we only have to show that if $X \in \mathcal C$, $Y \in \mathcal L$, and there is an exact sequence 
$$(\ddagger) \quad 0 \to Y \to Z \to X \to 0,$$ then $Z \in \mathcal L$. 

By assumption, there exists a direct system $\mathfrak D = (Y_i, f_{ji} \mid i \leq j \in I )$ with $Y_i \in \mathcal C$ for all $i \in I$, such that $Y = \varinjlim Y_i$. Since $X$ is FP$_2$, \cite[6.6]{GT} yields that the canonical group homomorphism $\varinjlim \Ext 1RX{Y_i} \to \Ext 1RXY$ is an isomorphism. Hence $(\ddagger)$ is the direct limit of a direct system of short exact sequences $0 \to Y_i \to Z_i \to X \to 0$. By assumption $Z_i \in \mathcal C$ for each $i \in I$, whence $Z \in \mathcal L$. 
\end{proof}

\medskip

\section{Closure under direct limits and
the class $\protect\varinjlim\add M$}
    
For a class of modules $\mathcal A$, we will denote by $\widetilde{\mathcal A}$ the class of all pure epimorphic images of the modules from $\mathcal A$ (cf.\ \cite[8.37]{GT}). This class comes up naturally in our context in the case when $M$ is \emph{$\sum$-pure split}, i.e., each pure embedding $N \subseteq M^\prime$ with $M^\prime \in \Add M$ splits. Note that each $\sum$-pure injective module is $\sum$-pure split, cf.\ \cite[2.32]{GT}, and the converse is true e.g. when $R$ is left hereditary and $M$ is a tilting module by \cite[5.6]{AST}.   

First, we have the following observations:

\begin{lemma}\label{observe} Assume that $\mathcal A \subseteq \rmod R$ is closed under arbitrary direct sums. Then $\varinjlim \mathcal A \subseteq \widetilde{\mathcal A}$, and $\widetilde{\mathcal A}$ is a covering class closed under direct limits.
\end{lemma}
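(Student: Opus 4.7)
The statement breaks into three claims, and my plan is to treat them in turn, reusing the pure exact sequence $(\ast)$ from Section \ref{prelim} and two standard stability properties of pure epimorphisms, namely that pure epimorphisms are closed under composition and under arbitrary direct sums.

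First, for the inclusion $\varinjlim \mathcal A \subseteq \widetilde{\mathcal A}$, I would take any $L \in \varinjlim \mathcal A$, represented as the direct limit of a system $(C_i,f_{ji} \mid i \leq j \in I)$ with $C_i \in \mathcal A$. The sequence $(\ast)$ gives a pure exact sequence
$$0 \to K \to \bigoplus_{i \in I} C_i \to L \to 0.$$
Since $\mathcal A$ is closed under arbitrary direct sums, $\bigoplus_{i \in I} C_i \in \mathcal A$, so $L$ is a pure epimorphic image of a module in $\mathcal A$ and hence $L \in \widetilde{\mathcal A}$.

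Next, for closure of $\widetilde{\mathcal A}$ under direct limits, I would start with a direct system $(M_i, g_{ji} \mid i \leq j \in I)$ of modules in $\widetilde{\mathcal A}$ with limit $M$, and for each $i$ choose a pure epimorphism $\pi_i \colon A_i \twoheadrightarrow M_i$ with $A_i \in \mathcal A$. Then $\bigoplus_i \pi_i \colon \bigoplus_i A_i \to \bigoplus_i M_i$ is a pure epimorphism, and so is the canonical map $\bigoplus_i M_i \to M$ coming from $(\ast)$ applied to $M = \varinjlim M_i$. Composing, I get a pure epimorphism $\bigoplus_i A_i \twoheadrightarrow M$ whose source lies in $\mathcal A$ (again by the closure of $\mathcal A$ under direct sums), so $M \in \widetilde{\mathcal A}$.

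Finally, for covering I would invoke Lemma \ref{holmjorg}. It remains to check that $\widetilde{\mathcal A}$ is closed under pure epimorphic images and under arbitrary direct sums. The first follows from the fact that the composition of two pure epimorphisms is a pure epimorphism: if $A \twoheadrightarrow M$ is pure with $A \in \mathcal A$ and $M \twoheadrightarrow N$ is pure, then $A \twoheadrightarrow N$ is pure and hence $N \in \widetilde{\mathcal A}$. The second follows because a direct sum of pure epimorphisms is pure and $\mathcal A$ is closed under direct sums, exactly as in the previous paragraph. There is no real obstacle in any of the three parts; the whole argument is a routine assembly of the definition of $\widetilde{\mathcal A}$, the pure exact presentation $(\ast)$ of a direct limit, and the permanence properties of pure epimorphisms, together with the black-box application of Lemma \ref{holmjorg}.
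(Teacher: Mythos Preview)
Your proof is correct and follows the same route as the paper, which simply writes ``This follows by Lemma~\ref{holmjorg}.'' You have spelled out in detail the steps the paper leaves implicit: the use of the pure exact presentation $(\ast)$ to get $\varinjlim\mathcal A\subseteq\widetilde{\mathcal A}$ and closure under direct limits, and the verification that $\widetilde{\mathcal A}$ is closed under pure epimorphic images and direct sums so that Lemma~\ref{holmjorg} applies.
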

\begin{proof} This follows by Lemma \ref{holmjorg}.
\end{proof} 

Let us stress that the inclusion $\varinjlim \mathcal A \subseteq \widetilde{\mathcal A}$ is strict in general by Example \ref{Bergman} below (however, see Problem 3 in section \ref{problems}).

\begin{lemma}\label{sigma}
Let $M$ be a $\sum$-pure split module. Then $\Add M = \varinjlim \Add M = \widetilde{\Add M}$ is a covering class. Moreover, if $\varinjlim \add M$ is closed under direct summands, then also $\varinjlim \add M = \Add M$. 
\end{lemma}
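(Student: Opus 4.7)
My plan is to unpack the definitions and combine three ingredients already available in the excerpt: Lemma \ref{observe}, the pure short exact sequence $(\ast)$ from the preliminaries, and the defining property of $\Sigma$-pure split modules. First I would record the tautological inclusions $\Add M \subseteq \varinjlim \add M \subseteq \varinjlim \Add M$ (the former because $\add M\subseteq\Add M$, the latter because every module is the colimit of the one-element system on itself, up to identifying $\add M \subseteq \varinjlim\add M$ via constant systems).

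The heart of the argument is the chain
$$\varinjlim \Add M \;\subseteq\; \widetilde{\Add M} \;\subseteq\; \Add M.$$
The first inclusion is precisely Lemma \ref{observe} applied to $\mathcal{A} = \Add M$, which is closed under arbitrary direct sums by definition. For the second inclusion, which is where the $\Sigma$-pure split hypothesis does all the work, I would take $N \in \widetilde{\Add M}$ together with a pure epimorphism $\pi\colon M' \twoheadrightarrow N$ with $M' \in \Add M$; then $K=\Ker\pi$ is a pure submodule of $M'$, and by the $\Sigma$-pure split hypothesis this pure embedding splits, exhibiting $N$ as a direct summand of $M'$ and hence placing $N$ in $\Add M$. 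Combining the inclusions yields $\Add M = \varinjlim \Add M = \widetilde{\Add M}$, and the covering property of this common class is then immediate from Lemma \ref{observe} (or, alternatively, from Corollary \ref{holm} applied to $\mathcal{L}=\varinjlim\Add M$, which is closed under pure epimorphic images because it equals $\widetilde{\Add M}$).

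For the final clause, assume $\varinjlim \add M$ is closed under direct summands. The inclusion $\varinjlim \add M \subseteq \Add M$ has already been established, so only the reverse inclusion requires work. I would note that every module in $\Sum M$ is the directed union of its finite subsums, each lying in $\ssum M \subseteq \add M$; thus $\Sum M \subseteq \varinjlim \add M$. Since every module in $\Add M$ is by definition a direct summand of a module in $\Sum M$, the assumed closure of $\varinjlim \add M$ under direct summands finishes the proof. No single step looks like a genuine obstacle here; the only substantive input beyond routine bookkeeping is the $\Sigma$-pure split hypothesis, which is precisely tailored to convert ``pure epimorphic image of $\Add M$'' into ``direct summand of $\Add M$''.
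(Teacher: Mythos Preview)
Your argument is correct and follows essentially the same route as the paper: the chain $\Add M \subseteq \varinjlim \Add M \subseteq \widetilde{\Add M} \subseteq \Add M$ (the last step via the $\Sigma$-pure split hypothesis), Lemma~\ref{observe} for the covering property, and $\Sum M \subseteq \varinjlim \add M$ plus closure under summands for the final clause. One small slip: in your opening sentence the inclusion $\Add M \subseteq \varinjlim \add M$ is \emph{not} tautological (and your parenthetical justifications do not match the inclusions as written); fortunately you never use this inclusion in the main chain, and for the ``Moreover'' part you correctly re-derive it under the extra hypothesis, so the logic is unaffected.
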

\begin{proof} We always have $\Add M \subseteq \varinjlim \Add M \subseteq \widetilde{\Add M}$. By the assumption, pure epimorphic images of modules from $\Add M$ are their direct summands, whence $\widetilde{\Add M} \subseteq \Add M$. The covering property follows by Lemma \ref{observe}, and the final claim from the fact that $\Sum M \subseteq \varinjlim \add M$.
\end{proof}

Now, we arrive at the first main result of this paper characterizing the class $\varinjlim \add M$ for an arbitrary module $M$:

\begin{theorem}\label{limadd} Let $R$ be a ring, $M$ be a module and $S = \End {M_R}$. Then $\varinjlim \add M$ coincides with the class of all modules of the form $F \otimes_S M$ where $F$ is a flat right $S$-module. 
\end{theorem}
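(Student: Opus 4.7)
The strategy is to combine the classical equivalence $\add M \simeq \add S_S$ (where $\add S_S$ is the class of finitely generated projective right $S$-modules) with the Govorov--Lazard theorem characterizing flat modules as direct limits of finitely generated projectives. The functors implementing the equivalence are $\Hom_R(M,-)\colon \add M \to \add S_S$ and $-\otimes_S M\colon \add S_S \to \add M$, and the counit $\varepsilon_N\colon \Hom_R(M,N)\otimes_S M \to N$ is a natural isomorphism for every $N\in\add M$ (it is an isomorphism for $N=M$, and extends to $\add M$ by additivity and the fact that both functors are additive and preserve direct summands).

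For the inclusion $\supseteq$, let $F$ be a flat right $S$-module. By Govorov--Lazard, one can write $F=\varinjlim P_i$ with the $P_i$ finitely generated free (hence projective) right $S$-modules and transition maps $g_{ji}\colon P_i\to P_j$. Set $N_i=P_i\otimes_S M\in\add M$ and let $f_{ji}=g_{ji}\otimes_S M$. Since $-\otimes_S M$ commutes with arbitrary direct limits,
\[
F\otimes_S M \;\cong\; \varinjlim (P_i\otimes_S M) \;=\; \varinjlim N_i \;\in\; \varinjlim\add M.
\]

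For the inclusion $\subseteq$, take $N\in\varinjlim\add M$, say $N=\varinjlim N_i$ for a direct system $\mathfrak D=(N_i,f_{ji}\mid i\le j\in I)$ in $\add M$. Set $P_i=\Hom_R(M,N_i)$, which is a finitely generated projective right $S$-module, and let the transition morphism $g_{ji}\colon P_i\to P_j$ be induced by $f_{ji}$. Put $F=\varinjlim P_i$; this is a flat right $S$-module by Govorov--Lazard. Using again that tensor commutes with direct limits together with the natural isomorphism $\varepsilon_{N_i}\colon P_i\otimes_S M\xrightarrow{\cong} N_i$ recalled above, one obtains a chain of natural isomorphisms
\[
F\otimes_S M \;=\; (\varinjlim P_i)\otimes_S M \;\cong\; \varinjlim(P_i\otimes_S M) \;\stackrel{\varinjlim\varepsilon_{N_i}}{\cong}\; \varinjlim N_i \;=\; N,
\]
which shows that $N$ is of the desired form.

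The only delicate point is the verification that $\varepsilon_{N_i}$ is an isomorphism and compatible with the direct system. Compatibility is automatic from naturality of the counit, and the isomorphism statement itself requires no more than the two observations that $\varepsilon_M\colon S\otimes_S M\to M$ is the canonical isomorphism and that $\Hom_R(M,-)$ and $-\otimes_S M$ are additive; there is no need to assume $M$ is small or to relate $F$ with $\Hom_R(M,N)$ directly (which would fail without smallness). Once this is in hand, both inclusions are formal consequences of Govorov--Lazard and the commutation of tensor products with direct limits.
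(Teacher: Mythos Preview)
Your proof is correct and follows essentially the same approach as the paper's: both arguments use the equivalence between $\add M$ and finitely generated projective right $S$-modules, then transport direct systems across via $-\otimes_S M$ and invoke Govorov--Lazard. The only stylistic difference is that the paper first reduces to $\varinjlim\ssum M$ via Lemma~\ref{summands} and works with finitely generated \emph{free} $S$-modules (making the correspondence explicit through matrices over $S$), whereas you work directly with $\add M$ and finitely generated projective $S$-modules using the counit $\varepsilon_N$; this is the same argument in slightly more categorical language.
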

\begin{proof} First, notice that $\varinjlim \add M = \varinjlim \ssum M$ by Lemma \ref{summands}. Consider an arbitrary direct system of the form $\mathfrak D = ( M^{n_i} , g_{ji} \mid i \leq j \in I)$ where $n_i < \omega$ for each $i \in I$, and let $\varinjlim \mathfrak D = (L, g_i (i \in I))$.  

For all $i \leq j \in I$, $g_{ji}$ can be represented by an $n_j \times n_i$ matrix $H_{ji}$ with entries in $S$ as follows: for each $k < n_i$ and $l < n_j$, the element of $S$ occurring in the $l$th row and the $k$th column of the matrix $H_{ji}$ is the restriction of $g_{ji}$ to the $k$th copy of $M$ in $M^{n_i}$ composed with the canonical projection on to the $l$th copy of $M$ in $M^{n_j}$. 

Since $\mathfrak D$ is a direct system of modules, $\mathfrak E = ( S^{n_i} , h_{ji} \mid i \leq j \in I)$, where $h_{ji}$ is represented by the matrix $H_{ji}$ defined above for all $i \leq j \in I$, is a direct system of finitely generated free right $S$-modules. Let $(F, h_i (i \in I))$ be the direct limit of $\mathfrak E$ in $\rmod S$.   

Applying the functor $- \otimes_S M$ (which commutes with direct limits), we infer that $F \otimes_S M$ is the direct limit of the direct system $\mathfrak E \otimes_S M = ( S^{n_i} \otimes_S M, h_{ji} \otimes_S M \mid i \leq j \in I)$ in $\rmod R$. The latter system is isomorphic to the original direct system $\mathfrak D = ( M^{n_i}, g_{ji} \mid i \leq j \in I)$, since $M^{n_i} \cong S^{n_i} \otimes_S M$ and $g_{ji} \in \Hom R{M^{n_i}}{M^{n_j}}$ is the homomorphism corresponding to $h_{ji}$ in the isomorphism $\Hom S{S^{n_i}}{S^{n_j}} \cong \Hom R{M^{n_i}}{M^{n_j}}$ for all $i \leq j \in I$. Thus $F \otimes _S M \cong L$. 

Conversely, each flat right $S$-module $F$ is a direct limit of finitely generated free right $S$-modules, and tensoring by $- \otimes _S M$, we get that $F \otimes _S M$ is a direct limit of modules from $\ssum M$.      
\end{proof}

\begin{remark}\label{remk} If $\mathcal E$ is any class of modules closed under finite direct sums and direct limits, and Vop\v enka's principle (VP) holds, then there is a subset $\mathcal S \subseteq \mathcal E$ such that $\mathcal E = \varinjlim \mathcal S$. Let $M = \bigoplus_{S \in \mathcal S} S$. Then $\mathcal E = \varinjlim \ssum M = \varinjlim \Sum M$. In particular, VP implies that \emph{all} classes of modules closed under finite direct sums and direct limits are of the form $\varinjlim \add M$ for some module $M$, that is, of the form described in Theorem \ref{limadd}. For more details, see \cite[\S3]{El}.  
\end{remark}

Theorem \ref{limadd} has the following  

\begin{corollary}\label{deconstr} Let $R$ be a ring and $M$ be a module. Then the class $\varinjlim \add M$ is deconstructible.
\end{corollary}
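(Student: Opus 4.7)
The plan is to deduce deconstructibility of $\varinjlim\add M$ from the deconstructibility of the class $\mathcal{F}_S$ of flat right $S$-modules, transported through the tensor product functor $-\otimes_S M$ via Theorem~\ref{limadd}.

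First, recall the well-known fact that $\mathcal{F}_S$ is $\lambda^+$-deconstructible for $\lambda = \card S + \aleph_0$: every flat right $S$-module $F$ admits a filtration $(F_\alpha \mid \alpha \leq \sigma)$ with each $F_{\alpha+1}/F_\alpha$ flat and of cardinality $\leq \lambda$ (see e.g.\ \cite[\S 8.1]{GT}). By Theorem~\ref{limadd}, every $N \in \varinjlim\add M$ has the form $N = F \otimes_S M$ for some $F \in \mathcal{F}_S$. Fix such a filtration $(F_\alpha)$ of $F$ and apply the functor $-\otimes_S M$ to obtain a candidate chain $(F_\alpha \otimes_S M \mid \alpha \leq \sigma)$ of subgroups of $F \otimes_S M$.

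Next I would verify that this is an honest filtration. Because each quotient $F_{\alpha+1}/F_\alpha$ is flat, the short exact sequence
$$0 \to F_\alpha \to F_{\alpha+1} \to F_{\alpha+1}/F_\alpha \to 0$$
is pure exact in $\rmod S$, so it remains exact after tensoring with the left $S$-module $M$. Hence $F_\alpha \otimes_S M$ injects into $F_{\alpha+1} \otimes_S M$, and the quotient equals $(F_{\alpha+1}/F_\alpha) \otimes_S M$. Inductively, each $F_\alpha \otimes_S M$ embeds into $F \otimes_S M$ as an $R$-submodule. For a limit ordinal $\alpha \leq \sigma$, we have $F_\alpha = \varinjlim_{\beta<\alpha} F_\beta$, and since tensor products commute with direct limits, $F_\alpha \otimes_S M = \bigcup_{\beta<\alpha}(F_\beta \otimes_S M)$ inside $F \otimes_S M$. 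Thus $(F_\alpha \otimes_S M)$ is a continuous chain of submodules of $N$ with union $N$ and with consecutive quotients isomorphic to $(F_{\alpha+1}/F_\alpha) \otimes_S M$.

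Finally, each consecutive quotient $(F_{\alpha+1}/F_\alpha) \otimes_S M$ lies in $\varinjlim\add M$ by another application of Theorem~\ref{limadd} (the module $F_{\alpha+1}/F_\alpha$ is flat over $S$). Its cardinality as an $R$-module is bounded by $\lambda \cdot \card M$, since a generating set of $F_{\alpha+1}/F_\alpha$ of cardinality $\leq \lambda$ yields a set of simple tensors that generates $(F_{\alpha+1}/F_\alpha) \otimes_S M$ as an abelian group, and hence also as an $R$-module. Setting $\kappa = (\lambda \cdot \card M \cdot \card R + \aleph_0)^+$, every such quotient is $<\kappa$-presented, so $\varinjlim\add M \subseteq \Filt{(\varinjlim\add M)^{<\kappa}}$, proving $\kappa$-deconstructibility.

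The only step requiring care is the preservation of exactness at successor stages and of unions at limit stages when we apply $-\otimes_S M$; both are handled by the flatness of the filtration quotients (for purity) together with the exactness of direct limits. The cardinality bookkeeping is routine but must account for $M$ itself, which is the reason $\kappa$ depends on $\card M$ rather than just on $\card S$.
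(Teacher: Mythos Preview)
Your proof is correct and follows essentially the same approach as the paper's: transport the $\kappa^+$-deconstructibility of flat right $S$-modules through the functor $-\otimes_S M$ using Theorem~\ref{limadd}, with flatness of the successive quotients ensuring that the tensored chain remains a filtration. The only difference is bookkeeping: the paper uses $\Tor_1^S(F_{\alpha+1}/F_\alpha,M)=0$ where you invoke purity (equivalent here), and the paper obtains the slightly sharper bound $\lambda = (\card S + \aleph_0)\cdot\tau$ with $\tau$ the minimal size of a generating-and-relations presentation of $M_R$, whereas your bound involves $\card M$ and $\card R$; both suffice for deconstructibility.
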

\begin{proof}
Let $S = \End {M_R}$ and $\kappa = \card S + \aleph_0$. Then the class of all flat right $S$-modules $\mathcal F _0$ is $\kappa^+$-deconstructible (see e.g.\ \cite[6.23]{GT}), so each $F \in \mathcal F _0$ is the union of a continuous chain $( F _\alpha \mid \alpha \leq \sigma )$ of flat modules such that the consecutive factors $F_{\alpha + 1}/F_\alpha$ are flat and $\leq \kappa$-presented for all $\alpha < \sigma$. Since $\Tor 1S{F_{\alpha + 1}/F_\alpha}M = 0$ for each $\alpha < \sigma$, $F \otimes _S M$ is the union of a continuous chain $( F _\alpha \otimes _S M \mid \alpha \leq \sigma )$ of (right $R$-) modules such that the consecutive factors $(F_{\alpha + 1} \otimes _S M)/(F_\alpha \otimes _S M)$ are $\leq \lambda$-presented for each $\alpha < \sigma$, where $\lambda = \kappa . \tau$, and $\tau$ is the minimal cardinality of the union of a set of generators and a set of relations of the right $R$-module $M$. In view of Theorem \ref{limadd}, this implies that the class $\mathcal L = \varinjlim \add M$ is $\lambda^+$-deconstructible. 
\end{proof}

Later on, in Theorem \ref{lim-Add}, we will prove an analog of Theorem \ref{limadd} for the class $\varinjlim \Add M$ employing (some) flat $\mathfrak S$-contramodules and the contratensor product functor $- \odot _{\mathfrak S} M$, where $\mathfrak S$ is the endomorphism ring of $M$ endowed with the finite topology. For an analog of Corollary \ref{deconstr} for the class $\varinjlim \Add M$, see Corollary \ref{AddM-deconstr}, and Section \ref{deconstructibility}.

The following example goes back to \cite{AT} -- see also \cite[2.4]{GT}.  It is based on a construction, pioneered in \cite{GS}, of large $\aleph_1$-free modules over a discrete valuation domain (DVD) that possess only trivial endomorphisms, see \cite[20.19]{GT}. The main point of the example is that it presents a module $M$ such that the class $\mathcal L = \varinjlim \add M$ is not closed under direct summands, and hence $\mathcal L$ is not closed under countable direct limits, cf.\ Remark \ref{chain} (for another example of this phenomenon, see Example \ref{Bergman} below): 
  
\begin{example}\label{indec} Let $R$ be a countable DVD with the quotient field $Q$. By \cite[20.19]{GT}, for each infinite cardinal $\mu$ such that $\mu ^{\aleph_0} = \mu$ there exists an $\aleph_1$-free module $M$ of rank $\mu^+$ such that $S = \End {M_R} = R$, i.e., the only endomorphisms of $M$ are multiplications by elements of $R$. 

Let $\mathcal C = \add M$. By Theorem \ref{limadd}, $Q ^{(\mu^+)} \cong E(M) \in \varinjlim \mathcal C$, because $E(M) \cong Q \otimes _R M$ and $Q$ is a flat module. We will show that $Q \notin \varinjlim \mathcal C$; this will prove that $\varinjlim \mathcal C$ is not closed under direct summands. Indeed, if $0 \neq N \in \varinjlim \mathcal C$, then $N \cong F \otimes _R M$ for a non-zero flat (= torsionfree) module $F$ by Theorem \ref{limadd}. Since $R$ is a domain, $R \subseteq F$, whence $M \subseteq N$, and $N$ has rank $\geq \mu^+ > \aleph_0$. Thus, $\varinjlim \mathcal C$ contains no non-zero modules of countable rank; in particular, $Q \notin \varinjlim \mathcal C$.    

Next we show that $M$ is a self-small module: Let $X$ be a set and let $f \in \Hom RM{M^{(X)}}$. For each $x \in X$, denote by $\pi_x : M^{(X)} \to M$ the canonical projection on the $x$th component. Since $R \cong \End {M_R}$, for each $x \in X$, there exists $r_x \in R$ such that $\pi_x f \in \End {M_R}$ is the multiplication by $r_x$. In particular, if $0 \neq m \in M$, then the $x$th projection of $f(m)$ equals $m.r_x$. Since $M$ is torsion-free and $f(m) \in M^{(F)}$ for a finite subset $F \subseteq X$, necessarily $r_x = 0$ for all $x \in X \setminus F$, whence $\im f \subseteq M^{(F)}$, q.e.d. 

Since $M$ is self-small, $\mathcal L = \varinjlim \Add M$ (this will be proved for arbitrary self-small modules in Corollary \ref{selfsmall} below).               
\end{example}

\medskip

Note also that the class $\varinjlim \add M$ need not be closed under pure extensions, even though the class of all flat right $S$-modules, appearing in the characterization of $\varinjlim \add M$ in Theorem \ref{limadd}, is always closed under extensions. We have already seen this phenomenon in Example \ref{complred} above (where $\varinjlim \mathcal C = \varinjlim \add M$ was the class of all completely reducible modules, for $M$ = the direct sum of a representative set of all simple modules). Here is yet another example, over a DVD:   

\begin{example}\label{limext} Let $R = \mathbb Z _p$ be the localization of $\mathbb Z$ at a prime $p$, and $M = \mathbb J _p$ be the $p$-adic completion of $R$. Then $S = \End {M_R} \cong \mathbb J _p$ is the ring of all $p$-adic integers, so $\mathcal L$ is the class of all flat (= torsion-free) $\mathbb J_p$-modules, but viewed as $\mathbb Z _p$-modules.  

Notice that $\mathcal C = \add M = \{ \mathbb J _p ^n \mid n < \aleph_0 \}$ is closed under extensions in $\rmod {\mathbb Z _p}$, because $\Ext 1{\mathbb Z _p}{\mathbb J _p}{\mathbb J _p} = 0$ (as $\mathbb J _p$ is both a flat and a pure injective $\mathbb Z _p$-module).

We claim that $\Ext 1{\mathbb Z _p}{\mathbb J _p}{\mathbb J _p^{(\omega)}} \neq 0$. In order to verify the claim, consider the short exact sequence $0 \to \mathbb Z _p \to \mathbb J _p \to D \to 0$ in $\rmod {\mathbb Z _p}$ where $D$ is an uncountable direct sum of copies of $\mathbb Q$. Applying the functor $\Hom {\mathbb Z _p}{-}{\mathbb J _p^{(\omega)}}$, we obtain the long exact sequence 
$$0 = \Hom {\mathbb Z _p}{D}{\mathbb J _p^{(\omega)}} \to \Hom {\mathbb Z _p}{\mathbb J _p}{\mathbb J _p^{(\omega)}} \overset{\phi}\to 
\Hom {\mathbb Z _p}{\mathbb Z _p}{\mathbb J _p^{(\omega)}} \to$$  
$$\to \Ext 1{\mathbb Z _p}{D}{\mathbb J _p^{(\omega)}} \overset{\varphi}\to \Ext 1{\mathbb Z _p}{\mathbb J _p}{\mathbb J _p^{(\omega)}} \to \Ext 1{\mathbb Z _p}{\mathbb Z _p}{\mathbb J _p^{(\omega)}} = 0.$$
The restriction map $\phi$ is clearly surjective, whence $\varphi$ is an isomorphism. As $\mathbb J _p^{(\omega)}$ is not pure injective, and hence not cotorsion as a $\mathbb Z _p$-module, $\Ext 1{\mathbb Z _p}{\mathbb Q}{\mathbb J _p^{(\omega)}} \neq 0$, and the claim follows.

By the claim above, there is a non-split short exact sequence 
$$0 \to \mathbb J _p^{(\omega)} \overset{f}\to N \overset{g}\to \mathbb J _p \to 0$$ 
in $\rmod {\mathbb Z _p}$, whose outer terms belong to $\mathcal L$. It remains to prove that $N \notin \mathcal L$, i.e., the $\mathbb Z _p$-module structure on $N$ does not extend to a $\mathbb J _p$-module structure making $N$ a torsion-free $\mathbb J _p$-module. If so, then $N$ is a directed union of copies of free $\mathbb J _p$-modules of finite rank. Since $\mathbb J _p$ is a reduced $\mathbb Z _p$-module, 
$\Hom {\mathbb Z _p}{\mathbb J _p}{\mathbb J _p} = \Hom {\mathbb J _p}{\mathbb J _p}{\mathbb J _p}$, whence $g$ is a $\mathbb J _p$-homomorphism. Similarly, as $N$ is reduced, also $f$ is a $\mathbb J _p$-homomorphism. As $\mathbb J _p$ is a free $\mathbb J _p$-module, the short exact sequence above splits in $\rmod {\mathbb J _p}$, and hence in $\rmod {\mathbb Z _p}$, a contradiction.  

Let us finish by noting that the fact that $\mathbb J _p$ is a reduced $\mathbb Z _p$-module similarly implies that 
$\Hom {\mathbb Z _p}{\mathbb J _p}{\mathbb J _p ^{(X)}} = \Hom {\mathbb J _p}{\mathbb J _p}{\mathbb J _p ^{(X)}}$ for any set $X$, whence $\mathbb J _p$ is a self-small $\mathbb Z _p$-module, and $\varinjlim{\add \mathbb J _p} = \varinjlim{\Add \mathbb J _p}$ by Corollary \ref{selfsmall}.  
\end{example}

Next, we apply the results above to the particular setting of rings of quotients:

\begin{corollary}\label{Goldie} Let $R$ be a semiprime right Goldie ring and $Q$ be its classical right quotient ring. Then  $\rmod Q$ is a full subcategory of $\rmod R$, and as a right $R$-module, $Q$ satisfies $\varinjlim{\add Q} = \varinjlim{\Add Q} = \widetilde{\Add Q} = \rmod Q$.
\end{corollary}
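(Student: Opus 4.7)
The proof rests on two classical facts: by Goldie's theorem, $Q$ is semisimple artinian, and every element of $Q$ has the form $rs^{-1}$ with $r\in R$ and $s\in R$ a regular element. I first verify the full-subcategory claim: given an $R$-linear map $f\colon M\to N$ between right $Q$-modules and a regular element $s\in R$, the identity $f(ms^{-1})\cdot s=f(m)$ combined with invertibility of $s$ in $N$ forces $f(ms^{-1})=f(m)s^{-1}$, and extending $R$-linearly in $r$ shows $f$ is $Q$-linear. In particular, every $R$-linear idempotent on a $Q$-module is $Q$-linear, so any direct summand in $\rmod R$ of a $Q$-module inherits a (unique) right $Q$-module structure.

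Since $Q$ is semisimple artinian, every (resp.\ every finitely generated) right $Q$-module is a direct summand of some $Q^{(X)}$ (resp.\ of some $Q^n$ with $n<\omega$). Combined with the previous paragraph, this gives $\Add Q=\rmod Q$, while $\add Q$ consists exactly of the finitely generated right $Q$-modules. Since every $Q$-module is the directed union of its finitely generated $Q$-submodules, I obtain the chain
\[
\rmod Q\subseteq\varinjlim\add Q\subseteq\varinjlim\Add Q\subseteq\widetilde{\Add Q},
\]
the last inclusion coming from Lemma \ref{observe}.

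The main step is to prove the reverse inclusion $\widetilde{\Add Q}\subseteq\rmod Q$. Let $0\to K\to Q^{(X)}\to M\to 0$ be pure exact in $\rmod R$; since $Q_R$ is flat and pure quotients of flat modules are flat, $M$ is flat. Tensoring the short exact sequence $0\to R\to Q\to Q/R\to 0$ of left $R$-modules with $M$ and using flatness of $M$ (which kills $\Tor 1RM{Q/R}$), I conclude that the canonical map $M\to M\otimes_R Q$ is injective. To see it is also surjective, I tensor the pure exact sequence with $Q/R$ (purity preserves exactness) and use that $Q\otimes_R(Q/R)=0$, which follows from $R\to Q$ being a ring epimorphism and hence $Q\otimes_R Q\cong Q$; this forces $M\otimes_R(Q/R)=0$. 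Thus $M\cong M\otimes_R Q$ carries a right $Q$-module structure, so $M\in\rmod Q$. This last step is the main obstacle, as it combines the flatness of $Q$ over $R$ (needed to propagate flatness through pure quotients and to obtain injectivity of $M\to M\otimes_R Q$) with the ring-epimorphism property of $R\hookrightarrow Q$ (needed for surjectivity, via $Q\otimes_R(Q/R)=0$); the earlier steps are routine once Goldie's theorem is in hand.
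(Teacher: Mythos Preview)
Your proof is correct, but it proceeds rather differently from the paper's argument.

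The paper first cites Stenstr\"om for the full-subcategory claim (via the fact that $R\to Q$ is a perfect right localization, hence a ring epimorphism), whereas you prove this directly from the form $q=rs^{-1}$ of elements of~$Q$. For $\varinjlim\add Q=\rmod Q$, the paper applies Theorem~\ref{limadd}: since $\rmod Q$ is full in $\rmod R$ one has $\End Q_R=Q$, so $\varinjlim\add Q=\{F\otimes_Q Q\mid F\text{ flat in }\rmod Q\}=\rmod Q$ (as $Q$ is semisimple). You instead observe that $\add Q$ consists exactly of the finitely generated $Q$-modules and that every $Q$-module is a directed union of such. For $\widetilde{\Add Q}\subseteq\rmod Q$, the paper quotes \cite[7.11 and 7.13]{GW} (using that $Q_R$ is flat and divisible), while you argue hands-on: $M$ is flat as a pure quotient of the flat module $Q^{(X)}$, so $M\hookrightarrow M\otimes_R Q$, and $M\otimes_R(Q/R)=0$ because purity and $Q\otimes_R(Q/R)=0$ force it; hence $M\cong M\otimes_R Q$ is a $Q$-module.

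Your route is more elementary and self-contained; it bypasses both Theorem~\ref{limadd} and the external references to Stenstr\"om and Goodearl--Warfield. The paper's route, on the other hand, illustrates Corollary~\ref{Goldie} as a direct application of Theorem~\ref{limadd}, which is one of the main results of the paper, so in context it is the more natural choice. One minor remark: when you write ``let $0\to K\to Q^{(X)}\to M\to0$ be pure exact'', you are implicitly using that a pure epimorphism from some $A\in\Add Q$ onto $M$ can be replaced by one from a free $Q$-module; this is harmless since $A$ is a summand of some $Q^{(X)}$ and the composite $Q^{(X)}\twoheadrightarrow A\twoheadrightarrow M$ is still a pure epimorphism.
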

\begin{proof} By the classic Goldie's Theorem, $Q$ is completely reducible, and $Q$ is the maximal right quotient ring of $R$ which is a perfect right localization of $R$ (see e.g.\ \cite[4.6.2]{L} and \cite[XII.2.6]{S}). Thus $\rmod Q$ is a full subcategory of $\rmod R$ by \cite[XI.1.2]{S}, and $\varinjlim{\add Q} = \rmod Q$ by Theorem \ref{limadd}. Moreover, $Q$ is a flat and divisible module, hence also $\widetilde{\Add Q} = \rmod Q$ by \cite[7.11 and 7.13]{GW}.
\end{proof}      

The last example in this section exhibits a case when $\rmod Q = \varinjlim{\add Q} = \varinjlim{\Add Q} \subsetneq \widetilde{\Add Q}$, but $\rmod Q$ is not closed under direct limits in $\rmod R$:  

\begin{example}\label{Bergman} Let $K$ be a field and let $R$ denote the $K$-subalgebra consisting of all eventually constant sequences in the $K$-algebra $Q = K^\omega$. Then $R$ is a hereditary von Neumann regular ring semiartinian ring whose each ideal is countably generated, $Q = E(R)$, and $Q$ is the maximal quotient ring of $R$. The Loewy length of $R$ is $2$, its socle sequence being $0 \subseteq K^{(\omega)} \subseteq R$. Moreover, $Q$ is a von Neumann regular right self-injective ring by \cite[1.24]{G} (see also \cite[\S\S 4.3-4.5]{L} and \cite[\S3]{T}). We claim that the $R$-module $Q$ satisfies $\rmod Q = \varinjlim{\add Q} = \varinjlim{\Add Q} \subsetneq \widetilde{\Add Q} = \Gen Q_R$.

Since $Q$ is von Neumann regular and $\End {Q_R} = Q$, Theorem \ref{limadd} gives $\varinjlim{\add Q} = \rmod Q$. Notice that here, $\rmod Q$ is \emph{not} a full subcategory of $\rmod R$, because the embedding $R \hookrightarrow Q$ is not a ring epimorphism (cf.\ \cite[XI.1.4]{S}). Since $Q/R$ is a singular module while $Q$ is non-singular, we have $\Hom R{Q/R}{Q^{(X)}} = 0$ for any set $X$, whence $\Hom RQ{Q^{(X)}} = \Hom QQ{Q^{(X)}}$. It follows that $Q$ is a self-small module, whence $\varinjlim{\add Q} = \varinjlim{\Add Q}$ by Corollary \ref{selfsmall}. That $\varinjlim {\Add Q} = \rmod Q$ can also be seen from the fact that $R$-homomorphisms between elements of $\Sum Q$ are $Q$-homomorphisms, so direct limits of elements of $\Sum Q$ are the same whether computed in $\rmod Q$ or $\rmod R$. 

Since $R$ is von Neumann regular, $\widetilde{\Add Q} = \Gen Q_R$. Note that $\Soc {Q}{Q} = \Soc {R}{Q} = K^{(\omega)}$. So the simple module $K \cong R/\Soc {R}{Q} \in \widetilde{\Add Q}$, because $R/\Soc {R}{Q}$ is a direct summand in the completely reducible module $Q/\Soc {Q}{Q} \cong K^{2^\omega}$. It is well-known that $Q$-submodules of $Q/\Soc {Q}{Q}$ correspond 1-1 to filters on $\omega$ containing the Fr\' echet filter. So $Q/\Soc {Q}{Q}$ contains no minimal, and hence no finite $K$-dimensional $Q$-submodules. Thus the one-$K$-dimensional module $K \cong R/\Soc {Q}{Q} \in \Gen Q_R \setminus \rmod Q$. It follows that $\rmod Q$ is not closed under direct summands in $\rmod R$, hence $\add Q \subsetneq \varinjlim{\add Q}$, and $\varinjlim{\add Q}$ is not closed under direct limits in $\rmod R$. 
\end{example}

It may even happen for an $R$-module $M$ that the class $\Add M$ is closed under direct limits, but it is not closed under pure epimorphic images in $\rmod R$. That is, $\Add M = \varinjlim{\Add M} \subsetneq \widetilde{\Add M}$. An example of a ring $R$ and a countably presented indecomposable flat $R$-module $M$ with this property will be constructed in Section~\ref{npureq}.         

\medskip

\section{Local splitting and a characterization of
$\protect\varinjlim$ for classes closed under direct sums}

We start by recalling the definition of a locally split monomorphism going back to Azumaya \cite{Az} (see also \cite{Z}):

\begin{definition}\label{recall} A monomorphism $\nu \in \Hom RXY$ is called \emph{locally split} in case for each finite subset $F \subseteq X$ there exists $\rho_F \in \Hom RYX$ such that $\rho_F \nu \restriction F = id_F$. A short exact sequence $0 \to X \overset{\nu}\to Y \to Z \to 0$ is said to be \emph{locally split} provided that $\nu$ is a locally split monomorphism. 
\end{definition}

It is easy to see that each locally split monomorphism is pure. The converse fails in general: 

\begin{example}\label{vNr} Let $R$ be a von Neumann regular ring which is not completely reducible. By \cite[Theorem]{O}, there exists a non-injective cyclic module $C$ (e.g., $C = R$ when $R$ is not right self-injective). Consider the short exact sequence $\mathcal E : 0 \to C \to E(C) \to E(C)/C \to 0$. Then $\mathcal E$ is pure, because $E(C)/C$ is a flat module (in fact, each module is flat as $R$ is von Neumann regular), but $\mathcal E$ is not locally split, because $C$ is finitely generated and $\mathcal E$ does not split. 
\end{example} 

Our interest in locally split short exact sequences comes from the fact that the short exact sequence ($\ast$) expressing a direct limit as a homomorphic image of a direct sum is always locally split. This was proved in \cite[2.1]{GG} for the particular case of linearly ordered systems of modules. Here we consider arbitrary direct systems: 

\begin{lemma}\label{general} Let $\mathfrak M = (M_i, f_{ji} \mid i \leq j \in I )$ be a direct system of modules, and $(M, f_i (i \in I))$ be its direct limit in $\rmod R$. Then the canonical presentation ($\ast$) of $M$, $0 \to K = \Ker \pi \overset{\sigma}\hookrightarrow \bigoplus_{i \in I} M_i \overset{\pi}\to M \to 0$, is locally split. 

In fact, $K$ is a directed union of a direct system $( K_F \mid F \in \mathcal F )$ of its submodules, where $\mathcal F$ is the set of all finite subsets of $I$ that contain a greatest element, and for each $F \in \mathcal F$ with the greatest element $j_F \in F$, $K_F \oplus M_{j_F} = \bigoplus_{i \in F} M_i$. 
\end{lemma}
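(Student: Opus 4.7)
The plan is to explicitly construct the directed family $(K_F)_{F \in \mathcal F}$ described in the statement and then deduce the local splitting as an easy consequence. The key observation is: if $F \in \mathcal F$ has greatest element $j_F$, then the map
$$\rho_F \colon \bigoplus_{i \in F} M_i \longrightarrow M_{j_F}, \qquad (x_i)_{i \in F} \longmapsto \sum_{i \in F} f_{j_F, i}(x_i),$$
is a retraction onto the summand $M_{j_F}$, because $f_{j_F, j_F} = \mathrm{id}_{M_{j_F}}$. Setting $K_F := \Ker{\rho_F}$ yields the required decomposition $K_F \oplus M_{j_F} = \bigoplus_{i \in F} M_i$ for free.

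Next I would verify $K_F \subseteq K$: the identity $f_i = f_{j_F} \circ f_{j_F, i}$ valid for $i \leq j_F$ shows that $\pi$ restricted to $\bigoplus_{i \in F} M_i$ equals $f_{j_F} \circ \rho_F$, so $\rho_F(y) = 0$ forces $\pi(y) = 0$. The family $(K_F)_{F \in \mathcal F}$ is upward directed under inclusion: given $F_1, F_2 \in \mathcal F$, choose $j \in I$ above both $j_{F_1}$ and $j_{F_2}$ and set $F := F_1 \cup F_2 \cup \{j\}$ with $j_F := j$; the cocycle identity $f_{j, i} = f_{j, j_{F_k}} \circ f_{j_{F_k}, i}$ then immediately yields $K_{F_k} \subseteq K_F$ for $k = 1, 2$.

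To establish $K = \bigcup_{F \in \mathcal F} K_F$, I would take an arbitrary $k \in K$ supported in a finite set $S \subseteq I$ and invoke the standard criterion for vanishing in a directed colimit (a consequence of (C2) and (C3)): the equality $\sum_{i \in S} f_i(k_i) = 0$ in $M$ forces the existence of some $j \in I$ dominating $S$ with $\sum_{i \in S} f_{j, i}(k_i) = 0$ in $M_j$. Taking $F := S \cup \{j\}$ with $j_F := j$ gives $\rho_F(k) = 0$, so $k \in K_F$.

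Finally, the local splitting of $(\ast)$ drops out immediately: for any finite subset $G \subseteq K$, pick some $F \in \mathcal F$ with $G \subseteq K_F$ and define the retraction $\rho_G \colon \bigoplus_{i \in I} M_i \to K$ as the composition of the coordinate projection onto $\bigoplus_{i \in F} M_i$ with the idempotent $p_F \colon \bigoplus_{i \in F} M_i \twoheadrightarrow K_F$ whose kernel is $M_{j_F}$. Both projections fix every element of $G$, hence $\rho_G \sigma \restriction G = \mathrm{id}_G$. I do not foresee a serious obstacle; the only minor subtlety is the compatibility $K_F \subseteq K_{F'}$ whenever $F \subseteq F'$ in $\mathcal F$, which the cocycle relations on the transition maps handle automatically.
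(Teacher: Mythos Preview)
Your proof is correct and follows essentially the same approach as the paper: both arguments hinge on the retraction $\rho_F = (f_{j_F,i})_{i \in F}\colon \bigoplus_{i\in F}M_i\to M_{j_F}$ and the resulting complement $K_F$ of $M_{j_F}$. The only cosmetic difference is that you \emph{define} $K_F$ as $\Ker\rho_F$ (so the decomposition is automatic and you must argue $K=\bigcup K_F$ via the direct-limit vanishing criterion), whereas the paper defines $K_F$ as the submodule generated by $\{x-f_{ji}(x)\mid i\le j\in F\}$ (so $K=\bigcup K_F$ is automatic and the decomposition must be checked); the two definitions agree.
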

\begin{proof} First, recall that $\pi \restriction M_i = f_i$ for each $i \in I$, and $K$ is generated by the set $G = \{ x - f_{ji}(x) \mid x \in M_i \,\&\, i \leq j \in I \}$. 

For each $F \in \mathcal F$, $K_F$ is defined as the submodule of $\bigoplus_{i \in F} M_i$ generated by the set $G_F = \{ x - f_{j i}(x) \mid i, j \in F, i \leq j, x \in M_i \}$. Clearly, if $F, F^\prime \in \mathcal F$ and $F \subseteq F^\prime$, then $K_F \subseteq K_{F^\prime}$, and $K$ is the directed union of the $K_F$ ($F \in \mathcal F$). 

We claim that $K_F \oplus M_{j_F} = \bigoplus_{i \in F} M_i$. Indeed, since $i \leq j_F$ for each $i \in F$, $K_F + M_{j_F}$ contains the module $M_i$ for each $i \in F$, whence $K_F + M_{j_F} = \bigoplus_{i \in F} M_i$.  

Let $y \in K_F \cap M_{j_F}$. Let $f \in \Hom R{\bigoplus_{i \in F} M_i}{M_{j_F}}$ be defined by $f \restriction M_i = f_{j_F,i}$ for each $i \in F$. This is possible since $i \leq j_F$ for each $i \in F$. On the one hand, $y \in M_{j_F}$ and $f_{j_F,j_F} = id_{M_{j_F}}$, so $f(y) = y$. On the other hand, $y \in K_F$, so $f(y) = 0$, because $f(g) = 0$ for each $g \in G_F$. Hence $K_F \cap M_{j_F} = 0$. 

Thus $K_F \oplus \bigoplus_{i \in (I \setminus F) \cup \{ j_F \}} M_i = \bigoplus_{i \in I} M_i$. Let $\rho _F \in \Hom R{\bigoplus_{i \in I} M_i}{K_F}$ denote the projection on to $K_F$ in this decomposition. Then $\rho _F \sigma \restriction K_F = id_{K_F}$. Since each finite subset of $K$ is contained in $K_F$ for some $F \in \mathcal F$, the short exact sequence ($\ast$) is locally split.  
\end{proof}

\begin{remark}\label{quasisplit} In the setting of Lemma \ref{general}, the local splitting of the monomorphism $\sigma$ can also be proved by showing that $\sigma$ is a direct limit of split monomorphisms with the same codomain (that is, $\sigma$ is a \emph{quasi-split monomorphism} in the sense of \cite[\S 4]{BPS}), see Proposition 4.1 and Lemmas 4.3 and 4.4 in the recent paper \cite{BPS}. In fact, \cite[Proposition 4.1]{BPS} also gives an alternative proof for Theorem \ref{describe} below.
\end{remark} 

\begin{definition}\label{locally} Let $\mathcal A$ be a class of modules. Denote by $\overline{\mathcal A}$ the class of all modules $M$ such that there is a short exact sequence $0 \to K \hookrightarrow A \to M \to 0$ where $A \in \mathcal A$, and $K$ is a directed union of a direct system consisting of direct summands of $A$ with complements in $\mathcal A$ (that is, $K$ is the directed union of a direct system $( K_F \mid F \in \mathcal F )$ such that for each $F \in \mathcal F$, $K_F$ is a direct summand of $A$, and $A/K_F \in \mathcal A$). Notice that each such short exact sequence is locally split, hence pure, so $\overline{\mathcal A} \subseteq \widetilde{\mathcal A}$.
\end{definition}

For classes of modules $\mathcal A$ closed under arbitrary direct sums, Lemma \ref{general} yields a surprising description of the modules in the class $\varinjlim \mathcal A$ as direct limits of direct systems consisting of split epimorphisms:  

\begin{theorem}\label{describe} Let $\mathcal A$ be a class of modules closed under arbitrary direct sums. Then $\varinjlim \mathcal A = \overline{\mathcal A}$. 

Moreover, $\varinjlim \mathcal A$ coincides with the class of all modules $M$ such that there exists a direct system $\mathfrak D = ( A_F, \pi_{F^\prime F} \mid F \subseteq F^\prime \in \mathcal F )$ such that $A_F \in \mathcal A$ for each $F \in \mathcal F$, $M = \varinjlim \mathfrak D$, and $\pi_{F^\prime F} : A_F \to A_{F^\prime}$ is a split epimorphism for all $F \subseteq F^\prime \in \mathcal F$.   
\end{theorem}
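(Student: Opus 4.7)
The plan is to close a three-way cycle of inclusions
\[
\varinjlim \mathcal A \;\subseteq\; \overline{\mathcal A} \;\subseteq\; \mathcal A^{\mathrm{sp}} \;\subseteq\; \varinjlim \mathcal A,
\]
where $\mathcal A^{\mathrm{sp}}$ denotes the class of modules arising as direct limits of direct systems in $\mathcal A$ whose transition maps are split epimorphisms. The last inclusion is immediate from the definition of a direct limit.

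For $\varinjlim \mathcal A \subseteq \overline{\mathcal A}$, given $M = \varinjlim_{i \in I} M_i$ with each $M_i \in \mathcal A$, I set $A := \bigoplus_{i \in I} M_i$, which lies in $\mathcal A$ by closure under arbitrary direct sums, and apply Lemma~\ref{general} to the canonical short exact sequence $0 \to K \to A \to M \to 0$. That lemma exhibits $K$ as a directed union $K = \bigcup_{F \in \mathcal F} K_F$ with $K_F \oplus M_{j_F} = \bigoplus_{i \in F} M_i$. Adjoining the trivially complementary summand $\bigoplus_{i \in I \setminus F} M_i$ yields a decomposition
\[
A \;=\; K_F \;\oplus\; \bigl( M_{j_F} \oplus \textstyle\bigoplus_{i \in I \setminus F} M_i \bigr),
\]
whose second summand lies in $\mathcal A$, again by closure under direct sums. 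Hence $M \in \overline{\mathcal A}$.

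For $\overline{\mathcal A} \subseteq \mathcal A^{\mathrm{sp}}$, I start from a presentation $0 \to K \to A \to M \to 0$ as in Definition~\ref{locally}, with $A = K_F \oplus A_F$, $A_F \in \mathcal A$, and $K = \bigcup_{F \in \mathcal F} K_F$. Since taking quotients commutes with directed unions of submodules, $M = A/K = \varinjlim_F A/K_F$, and the projection $p_F : A \twoheadrightarrow A_F$ induces an isomorphism $A_F \cong A/K_F$. Transporting the canonical quotient transition maps along these isomorphisms, I obtain, for $F \subseteq F'$, a map $\pi_{F'F} = p_{F'} \circ \iota_F : A_F \to A_{F'}$, where $\iota_F$ is the inclusion of $A_F$ into $A$ and $p_{F'}$ is the projection attached to the decomposition $A = K_{F'} \oplus A_{F'}$. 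The key computation is that $p_F \circ \iota_{F'} : A_{F'} \to A_F$ is a section of $\pi_{F'F}$: for $y \in A_{F'}$, writing $y = k + a$ with $k \in K_F$ and $a \in A_F$, the inclusion $K_F \subseteq K_{F'}$ forces $k \in K_{F'}$, whence $p_{F'}(a) = p_{F'}(y - k) = y$. This realises $M$ as the direct limit of a split-epimorphism system in $\mathcal A$, so $M \in \mathcal A^{\mathrm{sp}}$.

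The main (and essentially only nontrivial) obstacle is the verification that the induced transition maps between the chosen complements $A_F$ are split epimorphisms; everything else follows from Lemma~\ref{general}, the closure of $\mathcal A$ under arbitrary direct sums, and the compatibility of direct limits with directed unions of submodules.
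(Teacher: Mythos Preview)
Your proof is correct and follows the same overall structure as the paper: both use Lemma~\ref{general} to obtain the presentation $0\to K\to A\to M\to 0$ with $K=\bigcup_F K_F$ and complements $A_F\in\mathcal A$, then identify $M$ as the direct limit of the $A_F\cong A/K_F$. The difference lies in how the splitting of the transition maps is established. The paper goes back into the explicit construction from the proof of Lemma~\ref{general}, computing with the finite subsums $\bigoplus_{i\in F}M_i$ to exhibit $K_{F'}/K_F$ as a concrete direct summand of $A/K_F$. You instead give a coordinate-free argument that works for \emph{any} presentation as in Definition~\ref{locally}: from $K_F\subseteq K_{F'}$ alone you deduce that $p_F\circ\iota_{F'}$ is a section of $p_{F'}\circ\iota_F$. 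Your route is cleaner, avoids unpacking the proof of Lemma~\ref{general}, and incidentally proves the slightly stronger statement that every $\overline{\mathcal A}$-presentation (not just the canonical one) yields a split-epimorphism system; the paper's route is more explicit about what the splittings look like in the canonical case.
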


\begin{proof} By Lemma \ref{general}, if $M \in \varinjlim \mathcal A$, then the canonical presentation of $M$ has the form $0 \to K \hookrightarrow A \to M \to 0$ where $A \in \mathcal A$, and $K$ is the directed union of a direct system $( K_F \mid F \in \mathcal F )$ of direct summands of $A$ with complements in $\mathcal A$. So $M \in \overline{\mathcal A}$, and $M \cong A/K$ is the direct limit of the direct system $\mathfrak D = (A_F, \pi_{F^\prime F} \mid F \subseteq F^\prime \in \mathcal F )$, where $A_F = A/K_F \in \mathcal A$, and $\pi_{F^\prime F} \in \Hom R{A_F}{A_{F^\prime}}$ is the canonical projection modulo $K_{F^\prime}/K_F$ which is a split epimorphism for all $F \subseteq F^\prime \in \mathcal F$. 

Indeed, in the notation of the proof of Lemma \ref{general}, $K_F \oplus \bigoplus_{i \in (F^\prime \setminus F) \cup \{ j_F \}} M_i = \bigoplus_{i \in F^\prime} M_i$. Hence $K_{F^\prime} = K_F \oplus (K_{F^\prime} \cap \bigoplus_{i \in (F^\prime \setminus F) \cup \{ j_F \}} M_i)$. Also $A = \bigoplus_{i \in I} M_i = K_{F^\prime} \oplus \bigoplus_{i \in (I \setminus F^\prime) \cup \{ j_{F^\prime} \}} M_i$. Thus $A_F = A/K_F = K_{F^\prime}/K_F \oplus \bigoplus_{i \in (I \setminus F^\prime) \cup \{ j_{F^\prime} \}} M_i$, and $K_{F^\prime}/K_F \cong K_{F^\prime} \cap \bigoplus_{i \in (F^\prime \setminus F) \cup \{ j_F \}} M_i \in \add \mathcal A$.   

Conversely, if $M \in \overline{\mathcal A}$, then $M \cong A/\bigcup_{F \in \mathcal F} K_F = \varinjlim \mathfrak D$ for a direct system $\mathfrak D$ as above, whence $M \in \varinjlim \mathcal A$. 
\end{proof}

The next example shows that closure of $\mathcal A$ under arbitrary direct sums is essential for Theorem \ref{describe} to hold; the closure under finite direct sums is not sufficient:

\begin{example}\label{unitreg} Let $R$ be a unit regular ring which is not completely reducible (see \cite[Chap.\ 4]{G}). By \cite[Corollary 2.16]{G}, $R$ possesses an infinite set of non-zero pairwise orthogonal idempotents, $E = \{ e_i \mid i < \omega \}$. Let $I$ be the right ideal of $R$ generated by $E$, and $\mathcal A$ be the class of all finitely generated free modules. Then $\mathcal A$ is closed under finite direct sums, the sequence $0 \to I \to R \to R/I \to 0$ is quasi-split, $R/I \in \varinjlim \mathcal A = \rmod R$, but $R/I \notin \overline{\mathcal A}$. 

In order to prove the latter claim, assume that $R/I \in \overline{\mathcal A}$. Then there is a short exact sequence $0 \to K \hookrightarrow R^n \to R/I \to 0$ for some $0 < n < \omega$, where $K$ is the directed union of a direct system $( K_F \mid F \in \mathcal F )$ such that for each $F \in \mathcal F$, $K_F$ is a direct summand in $R^n$, and $R^n/K_F \cong R^{n_K}$ for some $n_K < \omega$. 

By \cite[Theorem 4.5]{G}, the unit regularity of $R$ implies (in fact, is equivalent to) cancellation for direct sums of finitely generated projective modules. Thus $n_K \leq n$ and $K_F$ is a free module of rank $n - n_K$. By Schanuel's Lemma, $K \oplus R \cong R^n \oplus I$, whence $K$ is a countably infinitely generated projective module. So $\mathcal F$ contains a cofinal chain $( F_i \mid i < \omega )$ which yields the chain of submodules $K_{F_0} \subseteq K_{F_1} \subseteq \dots \subseteq K_{F_i} \subseteq K_{F_{i+1}} \subseteq \dots \subseteq K = \bigcup_{i < \omega} K_{F_i}$. Since $R$ is von Neumann regular, $K_{F_i}$ is a direct summand in $K_{F_{i+1}}$ for each $i < \omega$. As the ranks of the free modules $K_{F_i}$ are bounded by $n$, the chain of submodules has to stabilize at some $m < \omega$. Then $K = K_{F_m}$ is finitely generated, a contradiction.    
\end{example}

\medskip

\section{$\protect\varinjlim\add\mathcal D$ versus
$\protect\varinjlim\Add\mathcal D$} \label{versus}

In this section, we consider the question of when $\varinjlim \add \mathcal D$ equals $\varinjlim \Add \mathcal D$ (or, equivalently, $\varinjlim \ssum \mathcal D$ equals $\varinjlim \Sum \mathcal D$ by Lemma \ref{summands}) for a class of modules $\mathcal D$.  

The equality is trivial when $\mathcal D$ is closed under arbitrary direct sums, and easy to prove in the case when $\mathcal D = \{ M \}$ for a self-small module $M$: one can refine the original direct system $\mathfrak D$ consisting of modules from $\Sum M$ into a direct system consisting of modules from $\ssum M$ making use of appropriate restrictions of the maps from $\mathfrak D$. A similar argument works for an arbitrary class $\mathcal D$ consisting of small modules. Lemma~\ref{small2} goes another step further in this direction.

\begin{lemma} \label{change}
  Let $\mathcal C$ be a class of modules. Consider a direct system $\mathfrak D = (M_i, f_{ji} \mid i \leq j \in I)$ with direct limit 
 $(M,f_i \mid i \in I)$. If $I$ does not have the largest element and $f_{ji}$ factors through an object of $\mathcal C$
 for every $i < j \in I$, then $M \in \varinjlim \mathcal C$.
\end{lemma}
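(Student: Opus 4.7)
The plan is to build a new direct system indexed by pairs and consisting of objects of $\mathcal C$, whose direct limit is canonically isomorphic to $M$. By hypothesis, for each pair $i<j$ in $I$ we can factor $f_{ji}=g_{ji}\circ h_{ji}$, where $h_{ji}\colon M_i\to C_{ji}$, $g_{ji}\colon C_{ji}\to M_j$, and $C_{ji}\in\mathcal C$. The key combinatorial device will be the index set $J:=\{(i,j)\in I\times I \mid i<j\}$ equipped with the partial order $(i,j)\preceq(i',j')$ iff $(i,j)=(i',j')$ or $j\le i'$. Using that $I$ is directed and has no largest element, I would check that $J$ is directed: given $(i_1,j_1),(i_2,j_2)\in J$, pick $k\in I$ above $j_1,j_2$, then pick $l\in I$ with $l>k$ (available by the no-largest-element hypothesis), and observe that $(k,l)$ dominates both pairs.

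Next, I would define the new direct system $\mathfrak D'=(C_{ji},\psi_{(i',j'),(i,j)})$ by setting
\[
\psi_{(i',j'),(i,j)} \;=\; h_{j'i'}\circ f_{i'j}\circ g_{ji}\colon C_{ji}\longrightarrow C_{j'i'}
\]
whenever $(i,j)\prec(i',j')$ (so that $j\le i'$ and $f_{i'j}$ makes sense, with $f_{jj}=\mathrm{id}_{M_j}$ allowed). Functoriality of this assignment reduces, via the identity $g_{j_2i_2}\circ h_{j_2i_2}=f_{j_2i_2}$, to the direct-system identity $f_{i_3j_2}\circ f_{j_2i_2}\circ f_{i_2j_1}=f_{i_3j_1}$ in the original system $\mathfrak D$; this is a short routine check.

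I then assemble a cocone $\{\,f_j\circ g_{ji}\colon C_{ji}\to M\,\}_{(i,j)\in J}$ over $\mathfrak D'$. Compatibility follows from $f_{j'}\circ f_{j'i'}=f_{i'}$ and $f_{i'}\circ f_{i'j}=f_j$, together with $g_{j'i'}\circ h_{j'i'}=f_{j'i'}$. The induced map $\varinjlim \mathfrak D'\to M$ is what I need to show is an isomorphism, by verifying conditions (C2) and (C3) for $\mathfrak D'$ directly. For (C2), given $i\in I$, pick any $j>i$ (possible since $I$ has no largest element); then the image of $C_{ji}$ in $M$ contains $f_j(g_{ji}(h_{ji}(M_i)))=f_j(f_{ji}(M_i))=f_i(M_i)$, so the union of images covers $M=\bigcup_i f_i(M_i)$. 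For (C3), if $x\in C_{ji}$ and $f_j(g_{ji}(x))=0$, apply condition (C3) for $\mathfrak D$ to $g_{ji}(x)\in M_j$: there exists $k\ge j$ with $f_{kj}(g_{ji}(x))=0$. Pick $l>k$ (again using no largest element); then $(k,l)\succ(i,j)$ and
\[
\psi_{(k,l),(i,j)}(x)\;=\;h_{lk}\bigl(f_{kj}(g_{ji}(x))\bigr)\;=\;0.
\]

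The only subtle step is the construction of the index set $J$ with a transitive order that is still directed: a naive ordering by the first coordinate would fail to give a well-defined $\psi$ or fail transitivity, and here the no-largest-element hypothesis is used twice — once to make $J$ directed and once to supply the auxiliary index $l>k$ needed to verify (C3). Everything else reduces to bookkeeping that follows from the factorization $f_{ji}=g_{ji}\circ h_{ji}$ and the internal description (C1)–(C3) of direct limits recalled in Section~\ref{prelim}.
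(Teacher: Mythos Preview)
Your proof is correct and essentially identical to the paper's own argument: the index set $J$ (the paper calls it $U$), the partial order $(i,j)\preceq(i',j')\Leftrightarrow (i,j)=(i',j')$ or $j\le i'$, the transition maps $h_{j'i'}\circ f_{i'j}\circ g_{ji}$ (the paper writes $\alpha_v f_{i'j}\beta_u$), and the verification of (C1)--(C3) all match. The only differences are notational.
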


\begin{proof}
Let $U = \{(i,j) \mid i < j \in I\}$. Note that $U \neq \emptyset$ since $I$ does not contain the largest element. 
Define a partial order on $U$ by $(i,j) \preceq (i',j')$ if either $(i,j) = (i',j')$ or $j \leq i'$. Note that $(U,\preceq)$ is 
directed. Indeed, if $(i,j),(i',j') \in U$, then there exists $i'' \in I$ such that $j, j'\leq i''$ and since $i''$ is not the 
largest element of $I$, there exists $j'' \in I$ such that $(i'',j'') \in U$. Then $(i,j),(i',j') \preceq (i'',j'')$.

By the assumption, for every $u = (i,j) \in U$ there exist $C_u \in \mathcal C$, $\alpha_u \in \Hom R{M_i}{C_u}$ 
and $\beta_u \in \Hom R{C_u}{M_j}$ such that $f_{ji} = \beta_u \alpha_u$. 

For $u = (i,j) \preceq v = (i',j') \in U$ define $g_{vu} = 1_{C_u}$ if $v = u$ and $g_{vu} = \alpha_v f_{i'j} \beta_u \in \Hom R{C_u}{C_v}$
if $u\prec v$. Note that if $u = (i,j) \prec v = (i',j') \prec w = (i'',j'')$, then 
$g_{wv}g_{vu} = \alpha_{w}f_{i''j'} \beta_v \alpha_v f_{i',j} \beta_u = 
\alpha_{w} f_{i''j'} f_{j'i'}f_{i'j} \beta_u  = g_{wu}$. 

Hence we can define the direct system $\mathfrak E = (C_u,g_{vu} \mid u \preceq v \in U)$. For $u = (i,j) \in U$ denote $r(u) = j$.
We claim that $\varinjlim \mathfrak E = (M, f_{r(u)}\beta_u \mid u \in U)$. Let us check conditions (C1) - (C3).

(C1) If $u = (i,j) \prec v = (i',j') \in U$ then $f_{j'}\beta_v g_{vu} = f_{j'}\beta_v\alpha_vf_{i'j} \beta_{u} = 
f_{j'}f_{j'j}\beta_u = f_{j} \beta_u$.

(C2) For $i \in I$ there exists $j \in I$ such that $u = (i,j) \in U$. Then $f_{j}\beta_u\alpha_u = f_i$ and 
$\im f_i \subseteq \im f_{r(u)}\beta_u$. In particular, $M = \cup_{u \in U} \im f_{r(u)}\beta_u$.

(C3) Let $x \in \Ker{f_{r(u)}\beta_u}$ for some $u = (i,j) \in U$. Then $\beta_u(x) \in \Ker{f_j}$, hence there exists 
$i' \geq j \in I$ such that $f_{i'j}\beta_u(x) = 0$. If $i' < j' \in I$ and $v = (i',j')$, then $g_{vu}(x) = 0$.
\end{proof}

\begin{lemma} \label{small2}
Let $\mathcal C$ be a class of small modules. If $\mathcal D \subseteq \Add \mathcal C$ then $\varinjlim \Add \mathcal D = 
\varinjlim \add \mathcal D$.
\end{lemma}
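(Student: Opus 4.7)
The inclusion $\varinjlim\add\mathcal D\subseteq\varinjlim\Add\mathcal D$ is immediate from $\add\mathcal D\subseteq\Add\mathcal D$. For the converse, take $M\in\varinjlim\Add\mathcal D$ and write $M=\varinjlim(N_i,f_{ji})_{i\in I}$ with $N_i\in\Add\mathcal D$; by appending a cofinal copy of $\mathbb N$ to $I$ (with identity transitions on a terminal object, if present) we may assume $I$ has no maximum. The plan is to apply Lemma \ref{change} with the class $\mathcal C$ appearing there taken to be $\add\mathcal D$, which reduces the problem to exhibiting, for each pair $i<j\in I$, a factoring of $f_{ji}$ through some object of $\add\mathcal D$.

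The first reduction is the standard extension-by-zero: pick for each $i$ a decomposition $N_i\oplus Q_i=P_i=\bigoplus_{l\in L_i}D_{i,l}$ with $D_{i,l}\in\mathcal D$, and extend $f_{ji}$ to $g_{ji}\colon P_i\to P_j$ by the rule $g_{ji}|_{N_i}=\iota_{N_j}\circ f_{ji}$ and $g_{ji}|_{Q_i}=0$; the direct limit of $(P_i,g_{ji})$ is still $M$, and now each $P_i\in\Sum\mathcal D$. To factor $g_{ji}$ through $\add\mathcal D$ one exploits the smallness of $\mathcal C$: each $D_{i,l}\in\Add\mathcal C$ is a summand of some $\bigoplus_{m\in M_{i,l}}C_{i,l,m}$ with $C_{i,l,m}\in\mathcal C$ small, and smallness implies that the composite $C_{i,l,m}\hookrightarrow\bigoplus_m C_{i,l,m}\twoheadrightarrow D_{i,l}\hookrightarrow P_i\xrightarrow{g_{ji}}P_j=\bigoplus_{n\in L_j}D_{j,n}$ has image in $\bigoplus_{n\in G_{i,l,m}}D_{j,n}$ for a finite subset $G_{i,l,m}\subseteq L_j$. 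From these local finite supports one assembles $A\in\add\mathcal D$ together with maps $\alpha\colon P_i\to A$ and $\beta\colon A\to P_j$ realising $g_{ji}=\beta\alpha$, after which Lemma \ref{change} yields $M\in\varinjlim\add\mathcal D$.

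\textbf{Main obstacle.} The delicate step is the assembly of $A$: the union $\bigcup_{l,m}G_{i,l,m}$ over the potentially infinite set of $(l,m)$-pairs need not be finite, so $A$ cannot simply be taken to be the finite subsum of $P_j$ swept out by the image of $g_{ji}$. The resolution is to refine the direct system $(P_i,g_{ji})$ by passing to a cofinal index set whose entries keep track, for each $i$, of a finite selection of $(l,m)$-pairs, and to move $j$ further along $I$ so that the finitely many $\mathcal D$-summands of $P_j$ actually involved in the (refined) transition become stabilised. The smallness of $\mathcal C$ is precisely what makes such a stabilisation possible, since each $\mathcal D$-piece of the source is locally presented by small $\mathcal C$-pieces whose images into $P_j$ are controlled; once the refined transitions are seen to factor through $\add\mathcal D$, Lemma \ref{change} delivers the conclusion.
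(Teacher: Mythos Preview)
Your strategy is exactly the paper's: extend by zero into $\Sum$-type objects, refine to a system of small pieces, and apply Lemma~\ref{change}. The part you label ``main obstacle'' is indeed the crux, and your resolution sketch is pointing in the right direction but is too vague to count as a proof; in particular the word ``stabilised'' and the phrase ``move $j$ further along $I$'' suggest you do not yet have the precise mechanism. There is also a small misalignment: your $(l,m)$-pairs index the $C_{i,l,m}$, which are summands of a module \emph{containing} $P_i$, not of $P_i$ itself, so they do not directly give a subsystem of $(P_i,g_{ji})$.

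The paper resolves both issues by collapsing your two extension steps into one. Starting (via Lemma~\ref{summands}) from $M_i\in\Sum\mathcal D$, choose $M_i'$ with $N_i:=M_i\oplus M_i'=\bigoplus_{t\in X_i}C_t\in\Sum\mathcal C$, and set $g_{ji}=\iota_j f_{ji}\pi_i$. Now refine by pairs $p=(i,F)$ with $F\subseteq X_i$ finite, putting $N_p=\bigoplus_{t\in F}C_t\in\ssum\mathcal C$; since each $N_p$ is small, the relation ``$i<j$ and $g_{ji}(N_p)\subseteq N_q$'' makes this directed, and the limit is still the original module. For $p\prec q$ the transition $h_{qp}=\pi_q\iota_j f_{ji}\pi_i\iota_p$ visibly factors through $M_i=\bigoplus_t D_t\in\Sum\mathcal D$; and because $N_p$ is small, $\pi_i\iota_p(N_p)$ lands in a finite subsum $\bigoplus_{t\in F_p}D_t\in\ssum\mathcal D$. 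No stabilisation or further movement of $j$ is needed: smallness of the refined \emph{domain} gives the factorisation through $\ssum\mathcal D$ immediately, for every $p\prec q$. Lemma~\ref{change} then finishes.
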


\begin{proof}
 By Lemma \ref{summands}, it is sufficient to prove $ \varinjlim \Sum \mathcal D \subseteq \varinjlim \ssum \mathcal D$.
So consider a direct system of the form $\mathfrak D = ( M_i , f_{ji} \mid i \leq j \in I)$
, where $M_i \in \Sum \mathcal D$ for every $i \in I$, and its direct limit $\varinjlim \mathfrak D = (N, f_i (i \in I))$.

If $I$ contains the largest element $i$, then $N \simeq M_i \in \Sum \mathcal D$ is contained in $\varinjlim \ssum \mathcal D$
by Proposition \ref{directs}. Therefore we may assume that $I$ has no largest element.

For every $i \in I$ there exists a module $M_i'$ such that $M_i \oplus M_i' \in \Sum \mathcal C$. Let $N_i := M_i \oplus M_i' = 
\bigoplus_{t \in X_i} C_t$, where $C_t \in \mathcal C$. For every $i \in I$ denote $\pi_i \in \Hom R{N_i}{M_i}$ and 
$\iota_i \in \Hom R{M_i}{N_i}$ the canonical projection and embedding related to the decomposition $N_i = M_i \oplus M_i'$. 
 
For $i < j \in I$ define $g_{ji} \in \Hom R {N_i}{N_j}$ by  
$g_{ji} \restriction M_i = \iota _j f_{ji}$ and $g_{ji}\restriction {M_i'} = 0$, that is, $g_{ji} = \iota_j f_{ji} \pi_i$.
Also define $g_{ii} = 1_{N_i}$ for every $i \in I$.
Note that the direct system $\mathfrak E = ( N_i, g_{ji} \mid i \leq j \in I)$ has limit 
$(N, g_i (i \in I))$, where $g_i\restriction M_i = f_i$ and $g_i \restriction M_i' = 0$. 

Let $P$ be the set of all pairs $p = (i,F)$ such that $i \in I$ and $F$ is a finite subset of $X_i$. For $p = (i,F) \in P$, we let 
$N_p = \oplus_{t \in F} C_t$. Note that $N_p$ is a direct summand of $N_i$, let $\iota_{p} \in \Hom R{N_p}{N_i}$ and 
$\pi_p \in \Hom R{N_i}{N_p}$ be the corresponding embedding and projection.
Also, if $q = (j,G) \in P$, then we define $p \preceq q$, iff either $p = q$ or $i < j$ in $I$ and $f_{ji}(N_p) \subseteq N_q$. Since every $N_p$ is small, for all $p_1 = (i_1,F_1) \in P$ and $p_2 = (i_2,F_2) \in P$, there exist $j \in I$ and a finite subset $G \subseteq X_j$ such that $q = (j,G) \in P$ satisfies $p_1 \preceq q$  and $p_2 \preceq q$. Thus $(P,\preceq)$ is a directed poset. 

For $p = (i,F) \preceq q = (j,G) \in P$, let $h_{qp} = g_{ji} \restriction N_p$. Then $\mathfrak F = ( N_p , h_{qp} \mid p \preceq q \in P)$ is a direct system of modules from $\ssum \mathcal C$. For each $p = (i,F) \in P$, we let $h_p = g_i \restriction N_p$. Then it is easy to see that $\varinjlim \mathfrak F = (N, h_p (p \in P))$ whence $N \in \varinjlim \mathcal \ssum \mathcal C$.

Note that if $p = (i,F) \prec q = (j,G) \in P$, then 
$h_{qp} = \pi_q g_{ji}\iota_p = \pi_q\iota_j f_{ji}\pi_i \iota_p$. In particular, $h_{qp}$ factors through $M_i \in \Sum \mathcal D$.
Let $M_i = \bigoplus_{t \in Y_i} D_t$, where $D_t \in \mathcal D$. Since $N_p$ is small, there exists $F_p \subseteq Y_i$
finite such that $\pi_i\iota_p (N_p) \subseteq \bigoplus_{t \in F_p} D_t$. 
It follows that $h_{qp}$ factors through a module in $\ssum \mathcal D$. 
Note that $P$ has no largest element, so 
Lemma \ref{change} implies  $N \in \varinjlim \ssum \mathcal D$. 
\end{proof}

\begin{corollary}\label{selfsmall} 
\begin{enumerate}
\item Let $M$ be a self-small module. Then $\varinjlim \add M = \varinjlim \Add M$.
\item Let $\mathcal D$ be a class consisting of small modules. Then $\varinjlim \add \mathcal D = \varinjlim \Add \mathcal D$. In particular, $\varinjlim \add M = \varinjlim \Add M$ whenever $M$ is an arbitrary direct sum of small modules. 
\item Let  $\mathcal D$ be a class  of pure projective modules, then $\varinjlim \Add \mathcal D = \varinjlim \add \mathcal D$. 
\end{enumerate}
\end{corollary}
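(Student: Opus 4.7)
Proof plan for Corollary \ref{selfsmall}.

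The common strategy is to exhibit each module in $\varinjlim \Add \mathcal{D}$ as a direct limit of a suitably refined system drawn from $\ssum \mathcal{D}$ (equivalently $\add \mathcal{D}$, by Lemma \ref{summands}). Parts (ii) and (iii) will essentially be immediate applications of Lemma \ref{small2}, while Part (i) requires a small modification of its proof to work with the weaker self-smallness hypothesis.

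For Part (ii), the first assertion follows by setting $\mathcal C = \mathcal D$ in Lemma \ref{small2}: the hypotheses that $\mathcal C$ consists of small modules and $\mathcal D \subseteq \Add \mathcal C$ both hold trivially. For the moreover clause, write $M = \bigoplus_{t \in T} C_t$ with each $C_t$ small, set $\mathcal C = \{C_t \mid t \in T\}$ and $\mathcal D = \{M\}$; then $M \in \Sum \mathcal C \subseteq \Add \mathcal C$, so Lemma \ref{small2} yields $\varinjlim \Add M = \varinjlim \add M$.

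For Part (iii), recall that every pure projective module is a direct summand of a direct sum of finitely presented modules, and every finitely presented (indeed, every finitely generated) module is small, as noted in the Preliminaries. Thus with $\mathcal C$ the class of all finitely presented modules, we have $\mathcal D \subseteq \Add \mathcal C$ and $\mathcal C$ consists of small modules, so Lemma \ref{small2} applies.

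Part (i) is the main point of real work: self-smallness of $M$ only controls homomorphisms out of $M$ into copies of $M$, so $M$ need not be small and we cannot invoke Lemma \ref{small2} directly. The plan is to rerun the argument of Lemma \ref{small2} in this restricted setting. Given a direct system $\mathfrak D = (N_i, f_{ji} \mid i \leq j \in I)$ with $N_i = M^{(X_i)} \in \Sum M$ and direct limit $(N, f_i)$, we may assume $I$ has no greatest element (otherwise $N \in \Sum M \subseteq \varinjlim \add M$ by Proposition \ref{directs}). Index finite subsums by $P = \{(i,F) \mid i \in I,\ F \subseteq X_i \text{ finite}\}$ with $N_{(i,F)} = M^{(F)} \in \ssum M$, and declare $(i,F) \preceq (j,G)$ iff $(i,F)=(j,G)$ or $i<j$ and $f_{ji}(M^{(F)}) \subseteq M^{(G)}$. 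The key step is to verify that $(P,\preceq)$ is directed: given $(i_1, F_1), (i_2, F_2) \in P$, pick $j \in I$ above $i_1$ and $i_2$; then for each of the finitely many copies of $M$ in $M^{(F_k)}$, self-smallness of $M$ forces its image under $f_{j i_k}$ into a finite subsum of $M^{(X_j)}$, and the union $G$ of these finitely many finite subsets of $X_j$ is finite, yielding $(j,G) \in P$ above both $(i_k, F_k)$. Defining $h_{qp} = f_{ji} \restriction M^{(F)}$ for $p = (i,F) \preceq q = (j,G)$ produces a direct system in $\ssum M$, and one checks as in Lemma \ref{small2} that its colimit is $(N, f_i \restriction M^{(F)})$, so $N \in \varinjlim \add M$. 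The hard part is precisely this cofinality check, which is where self-smallness is used in exactly the form it is available.
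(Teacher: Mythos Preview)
Your proposal is correct and follows the same strategy as the paper, which simply says to apply Lemma~\ref{small2} with $\mathcal C=\{M\}$, $\mathcal C=\mathcal D$, and $\mathcal C=\rfmod R$ for (i), (ii), (iii) respectively. For (ii) and (iii) your argument matches the paper exactly. For (i) you are actually more careful than the paper: the hypothesis of Lemma~\ref{small2} demands that $\mathcal C$ consist of \emph{small} modules, which a merely self-small $M$ need not satisfy, so strictly speaking the paper's one-line invocation is a slight abuse. Your observation that in the special case $\mathcal C=\mathcal D=\{M\}$ the proof of Lemma~\ref{small2} only ever maps finite powers of $M$ into direct sums of copies of $M$---so that self-smallness is exactly what is needed to verify directedness of $(P,\preceq)$---is precisely the justification the paper leaves implicit.
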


\begin{proof}
Apply Lemma \ref{small2} for $\mathcal C = \{M\}$, $\mathcal C = \mathcal D$ and $\mathcal C = \rfmod R$, respectively.
\end{proof}

 The next proposition is a generalization of
Corollary~\ref{selfsmall} to $\eta$-self-small and $\eta$-small modules,
where $\eta$~is an infinite cardinal.
 Let us say that a module $M$ is \emph{$\eta$-self-small} if for each
set $X$ and each $f \in \Hom RM{M^{(X)}}$, there exists a subset
$F \subseteq X$ of the cardinality $\card F\le\eta$ such that
$\im f \subseteq M^{(F)}$.
 Similarly, $M$ is \emph{$\eta$-small} if for each system of
modules $( N_\alpha \mid \alpha < \kappa )$ and each $f \in \Hom RM{\bigoplus_{\alpha < \kappa} N_\alpha}$, there exists a subset
$F \subseteq \kappa$ of the cardinality $\card F\le\eta$ such that
$\im f \subseteq \bigoplus_{\alpha \in F} N_\alpha$. 

 In particular, any $\leq \eta$-generated module is $\eta$-small,
and any $\eta$-small module is $\eta$-self-small.

 Given a class of modules $\mathcal D$, let us denote by
$\mathcal D^{(\eta)}$ the class of all direct sums
$\bigoplus_{\alpha<\eta}D_\alpha$ of families of modules
$(D_\alpha\in\mathcal D\mid \alpha<\eta)$.

\begin{proposition}\label{eta-selfsmall} 
\begin{enumerate}
\item Let $M$ be an $\eta$-self-small module. Then
$\varinjlim \add M^{(\eta)} = \varinjlim \Add M^{(\eta)}=
\varinjlim \Add M$.
\item Let $\mathcal D$ be a class consisting of $\eta$-small modules.
Then $\varinjlim \add \mathcal D^{(\eta)} = \varinjlim
\Add\mathcal D^{(\eta)} = \varinjlim \Add \mathcal D$.
In particular, $\varinjlim \add M^{(\eta)} =
\varinjlim \Add M^{(\eta)} = \varinjlim \Add M$ whenever $M$ is
an arbitrary direct sum of $\eta$-small modules. 
\end{enumerate}
\end{proposition}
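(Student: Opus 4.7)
The plan is to adapt the proof of Lemma~\ref{small2} to the transfinite setting, replacing finite subsets with subsets of cardinality $\leq\eta$. The trivial inclusions $\varinjlim\add\mathcal D^{(\eta)} \subseteq \varinjlim\Add\mathcal D^{(\eta)} \subseteq \varinjlim\Add\mathcal D$ hold because every module from $\mathcal D^{(\eta)}$ lies in $\Add\mathcal D$, so $\Add\mathcal D^{(\eta)}\subseteq\Add\mathcal D$; and analogously for~(i). Thus the substance is to establish $\varinjlim\Add\mathcal D\subseteq\varinjlim\add\mathcal D^{(\eta)}$, and analogously $\varinjlim\Add M\subseteq\varinjlim\add M^{(\eta)}$.

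For (ii), I would start from a direct system $\mathfrak D = (M_i, f_{ji} \mid i\leq j\in I)$ with $M_i = \bigoplus_{t\in X_i}D_t$, $D_t\in\mathcal D$, having direct limit $(N, f_i \mid i\in I)$. If $I$ has a largest element $i_0$, then $N\cong M_{i_0}$ is the directed union of its subsums indexed by subsets of $X_{i_0}$ of cardinality $\leq\eta$, each lying in $\add\mathcal D^{(\eta)}$, so $N\in\varinjlim\add\mathcal D^{(\eta)}$. Otherwise, let $P$ be the set of pairs $p = (i, F)$ with $i\in I$ and $F\subseteq X_i$, $\card F\leq\eta$, and set $M_p = \bigoplus_{t\in F}D_t$; after padding by an arbitrary fixed element of $\mathcal D^{(\eta)}$ if needed, $M_p\in\add\mathcal D^{(\eta)}$. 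Order $P$ by $p = (i,F) \preceq q = (j,G)$ iff $p = q$, or $i < j$ in $I$ and $f_{ji}(M_p) \subseteq M_q$. Setting $h_{qp}$ to be the restriction of $f_{ji}$ to $M_p$ and $h_p = f_i \restriction M_p$, the conditions (C1)-(C3) from Section~\ref{prelim} are verified just as in the proof of Lemma~\ref{small2}, yielding $N = \varinjlim (M_p, h_{qp} \mid p\preceq q\in P) \in \varinjlim\add\mathcal D^{(\eta)}$.

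The key technical step is checking that $(P,\preceq)$ is directed. Given $p_k = (i_k, F_k) \in P$ for $k = 1, 2$, pick $j\in I$ with $j\geq i_1, i_2$. The crucial observation is that a direct sum $\bigoplus_{\alpha<\eta}D_\alpha$ of $\eta$-small modules is again $\eta$-small, because for any map into $\bigoplus_{\beta<\kappa}N_\beta$ each summand's image is supported on a set of cardinality $\leq\eta$ and the union of $\eta$ such supports has cardinality $\leq\eta\cdot\eta = \eta$. Hence each $M_{p_k}$ is $\eta$-small, so $f_{ji_k}(M_{p_k}) \subseteq \bigoplus_{t\in G_k}D_t$ for some $G_k\subseteq X_j$ with $\card{G_k}\leq\eta$; taking $G = G_1\cup G_2$ and $q = (j, G)$ gives $p_1, p_2 \preceq q$.

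For (i), the same construction applies with $M_p = M^{(F)} \in \add M^{(\eta)}$, but since only $\eta$-self-smallness of $M$ is available, the directedness step must be argued componentwise: for each $t\in F_k$, the restriction of $f_{ji_k}$ to the $t$-th copy of $M$ inside $M^{(X_{i_k})}$ has image inside $M^{(G_{k,t})}$ for some $G_{k,t}\subseteq X_j$ with $\card{G_{k,t}}\leq\eta$, whence $f_{ji_k}(M^{(F_k)}) \subseteq M^{(\bigcup_{t\in F_k}G_{k,t})}$ with index set of cardinality $\leq\eta\cdot\eta=\eta$. The final assertion of~(ii), that $\varinjlim\add M^{(\eta)} = \varinjlim\Add M$ for $M = \bigoplus_{t\in T}E_t$ with each $E_t$ $\eta$-small, is the special case $\mathcal D = \{E_t \mid t\in T\}$. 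I do not expect any genuine obstacle beyond bookkeeping; the only subtle points are the padding needed so that modules $M_p$ with $\card F <\eta$ actually lie in $\add\mathcal D^{(\eta)}$, and the separate treatment of the degenerate case when $I$ has a largest element, both of which are routine.
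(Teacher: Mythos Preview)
Your proof is correct and follows essentially the same approach as the paper's: both refine the direct system over $I$ to one indexed by pairs $(i,F)$ with $\card F\leq\eta$, using $\eta$-(self-)smallness componentwise to establish directedness of the refined poset. The paper's only variations are cosmetic: it assumes $0\in\mathcal D$ without loss of generality (so that $E_p\in\mathcal D^{(\eta)}$ directly, avoiding your padding) and uses the non-strict order $i\leq j$ in defining $\preceq$ (so the largest-element case requires no separate treatment).
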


\begin{proof}
 The argument is analogous to the proof of
Lemma~\ref{small2}, with suitable modifications.
 Let us prove part~(ii); part~(i) is similar to~(ii) but simpler.
 For convenience of notation and without loss of generality,
assume that $0\in\mathcal D$.
 In view of Lemma~\ref{summands}, it suffices to prove that
$N\in\varinjlim\Sum\mathcal D$ implies
$N\in\varinjlim\mathcal D^{(\eta)}$.
 Assume that we are given a direct system $\mathfrak D=
(D_i , f_{ji} \mid i \leq j \in I)$ such that
$D_i=\bigoplus_{\alpha<\kappa_i}D_{\alpha,i}$ with
$D_{\alpha,i}\in\mathcal D$ for every $\alpha<\kappa_i$, $i\in I$,
and $\varinjlim \mathfrak D = (N, f_i (i \in I))$.

 Let $P$ be the set of all pairs $p=(i,F)$ such that $i\in I$ and
$F\subseteq\kappa_i$ is a subset of the cardinality $\card F\le\eta$.
 For every $p=(i,F)\in P$, put $E_p=\bigoplus_{\alpha\in F}
D_{\alpha,i}$.
 We define a partial order~$\preceq$ on the set $P$ as follows:
if $q=(j,G)\in P$, then we say that $p\preceq q$ iff $i\le j$ in $I$
and $f_{ji}(E_p)\subseteq E_q$.
 Since $D_{\alpha,i}$ is $\eta$-small for all $\alpha<\kappa_i$,
for every $p_1=(i_1,F_1)\in P$ and $p_2=(i_2,F_2)\in P$ there exists
$q=(j,G)\in P$ such that $p_1\preceq q$ and $p_2\preceq q$.
 So $(P,\preceq)$ is a directed poset.

 For every $p=(i,F)\preceq q=(j,G)\in P$ we put $g_{qp}=f_{ji}
\restriction E_p$.
 Then $\mathfrak E=(E_p,g_{qp}\mid p\preceq q\in P)$ is a direct
system of modules from $\mathcal D^{(\eta)}$.
 For each $p=(i,F)\in P$, put $g_p=f_i\restriction E_p$.
 Then $\varinjlim\mathfrak E=(N, g_p (p\in P))$, hence
$N\in\varinjlim\mathcal D^{(\eta)}$.
\end{proof}

 The following corollary is a version of
Corollary~\ref{deconstr} for $\varinjlim\Add M$.

\begin{corollary} \label{AddM-deconstr}
 Let $R$ be a ring and $M$ be a module.
 Then the class $\varinjlim\Add M$ is deconstructible.
\end{corollary}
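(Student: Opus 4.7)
The plan is to reduce the statement to Corollary~\ref{deconstr} by passing to a sufficiently large direct power of $M$, and then invoke Proposition~\ref{eta-selfsmall}(i).

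First, I would observe that every module $M$ is $\eta$-self-small for some infinite cardinal $\eta$. Concretely, set $\eta = \card M + \aleph_0$. Given any set $X$ and any $f \in \Hom{R}{M}{M^{(X)}}$, each element $f(m) \in M^{(X)}$ has finite support $F_m \subseteq X$; hence the union $F = \bigcup_{m\in M} F_m$ has cardinality at most $\card M \cdot \aleph_0 = \eta$, and $\im f \subseteq M^{(F)}$. This verifies the $\eta$-self-small condition.

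Second, Proposition~\ref{eta-selfsmall}(i) now applies to $M$ and this cardinal~$\eta$ and yields
\[
\varinjlim \Add M \;=\; \varinjlim \add M^{(\eta)}.
\]
Setting $N = M^{(\eta)}$, the right-hand side is exactly $\varinjlim \add N$ for a single module $N$.

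Third, Corollary~\ref{deconstr} applied to the module $N$ asserts that $\varinjlim \add N$ is deconstructible. Combining the two equalities, $\varinjlim \Add M$ is deconstructible as well, which is the claim. There is essentially no obstacle here: the bulk of the work is already contained in Proposition~\ref{eta-selfsmall} and the previously established deconstructibility of $\varinjlim \add N$; the only new input is the trivial cardinality observation that any module is $\eta$-self-small for large enough~$\eta$. One can even read off an explicit cardinal bound: if $S' = \End{N_R}$ and $\tau$ is the minimal size of a presentation of $N$ as a right $R$-module, then the proof of Corollary~\ref{deconstr} gives $\lambda^+$-deconstructibility for $\lambda = (\card{S'} + \aleph_0)\cdot \tau$.
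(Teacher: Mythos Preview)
Your proof is correct and follows essentially the same route as the paper: both reduce to Corollary~\ref{deconstr} via Proposition~\ref{eta-selfsmall}(i) by replacing $M$ with $N=M^{(\eta)}$ for a cardinal $\eta$ witnessing $\eta$-self-smallness. The only cosmetic difference is that the paper takes $\eta$ to be the \emph{minimal} such cardinal (noting $\eta\le\tau$, the number of generators of $M$) in order to obtain a sharper explicit deconstructibility bound $\lambda^+$ with $\lambda=(\card S)^\eta\cdot\tau$, whereas you take the cruder $\eta=\card M+\aleph_0$; this does not affect the validity of the deconstructibility claim itself.
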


\begin{proof}
 Let $\eta$~be the minimal infinite cardinal for which the $R$-module
$M$ is $\eta$-self-small (or, if $M$ is self-small, put $\eta=1$).
 Clearly, $\eta\le\tau$, where $\tau$~is the minimal cardinality of
a set of generators of the right $R$-module~$M$.
 Consider the $R$-module $N=M^{(\eta)}$;
then, by Proposition~\ref{eta-selfsmall}(i), we have
$\varinjlim\Add M=\varinjlim\add N$. 

At this point, Corollary~\ref{deconstr} already yields the deconstructibility of the class $\varinjlim\Add M$. 
In the remaining part of the proof, we will give a more precise deconstructibility bound: 
Let $S=\End M_R$ and $T=\End N_R$; put $\kappa=\card S+\aleph_0$
and $\rho=\card T+\aleph_0$.
 Then $\rho=\kappa^\eta$ (since the elements of $T$ can be represented
as $\eta\times\eta$ matrices with the entries in~$S$; and while there
is some convergence condition on the columns of such matrices,
arbitrary rows are allowed).
By (the proof of) Corollary~\ref{deconstr}, the class $\varinjlim\Add M$ is
$\lambda^+$-deconstructible, where $\lambda=\rho.\tau$.
\end{proof}

The equality $\varinjlim \add M = \varinjlim \Add M$ may hold even if $M$ is an indecomposable non self-small module, such as $M = \mathbb Z _{p^\infty}$ (the Pr\" ufer $p$-group, for a prime integer $p$) over $R = \mathbb Z$. Indeed, $\mathbb Z _{p^\infty} =  \bigcup_{n < \omega} \mathbb Z g_n$ where $p g_0 = 0$ and $p g_{n+1} = g_n$ for all $n < \omega$. Considering $f \in \Hom{\mathbb Z}{\mathbb Z _{p^\infty}}{\mathbb Z _{p^\infty}^{(\omega)}}$ defined by $f(g_n) = (g_n,g_{n-1},\dots,g_0,0,\dots )$ for each $n < \omega$, we see that $\mathbb Z _{p^\infty}$ is not self-small. That $\varinjlim \add M = \varinjlim \Add M$ is a consequence of the following more general fact:

\begin{lemma}\label{injectlim} Let $R$ be a right noetherian ring and $\mathcal C$ be any class of injective modules. Then $\mathcal L = \varinjlim \add \mathcal C = \varinjlim \Add \mathcal C = \Add \mathcal C$. Moreover, $\mathcal L$ is deconstructible and closed under direct limits, hence $\mathcal L$ is a covering class. 
\end{lemma}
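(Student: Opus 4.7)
The plan is to exploit three classical consequences of the right noetherian hypothesis on $R$: (a)~the Bass--Papp theorem, by which arbitrary direct sums of injective right $R$-modules are injective; (b)~Matlis's structure theorem, by which every injective module decomposes as a direct sum of indecomposable injectives, each of which has local endomorphism ring; and (c)~Faith's theorem, by which every injective module over a right noetherian ring is $\Sigma$-pure injective, so that every pure submodule of an injective module is a direct summand.

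First I would establish $\varinjlim \Add \mathcal C \subseteq \Add \mathcal C$. Given $L \in \varinjlim \Add \mathcal C$, consider the canonical (pure) presentation $0 \to K \hookrightarrow \bigoplus_{i \in I} M_i \to L \to 0$ with $M_i \in \Add \mathcal C$ arising from the definition of direct limit. The module $\bigoplus_i M_i$ lies in $\Add \mathcal C$ (direct sums of modules from $\Add \mathcal C$ remain in $\Add \mathcal C$) and is injective by~(a); hence by~(c) the pure submodule $K$ is a direct summand, so $L$ is isomorphic to the complementary direct summand and thus $L \in \Add \mathcal C$. Combined with the trivial inclusion $\varinjlim \add \mathcal C \subseteq \varinjlim \Add \mathcal C$, this reduces the main equality to showing $\Add \mathcal C \subseteq \varinjlim \add \mathcal C$.

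This remaining inclusion is where I expect the main subtlety. Given $M \in \Add \mathcal C$, Matlis's theorem~(b) yields a decomposition $M = \bigoplus_\alpha E_\alpha$ into indecomposable injectives. Since $M$ is a direct summand of some $\bigoplus_i C_i$ with $C_i \in \mathcal C$, and each $C_i$ itself decomposes into indecomposable injectives with local endomorphism rings, the Krull--Schmidt--Azumaya theorem guarantees that each indecomposable summand $E_\alpha$ of $M$ is isomorphic to an indecomposable summand of some~$C_i$; hence $E_\alpha \in \add \mathcal C$. Since $\add \mathcal C$ is closed under finite direct sums, any direct sum of modules from $\add \mathcal C$ is the direct limit of its finite subsums, which lie in $\add \mathcal C$, so $M \in \Sum \add \mathcal C \subseteq \varinjlim \add \mathcal C$.

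For the final claims: deconstructibility of $\mathcal L = \Add \mathcal C$ follows from the fact that every module in $\mathcal L$ is a direct sum of indecomposable injectives of the form $E(R/P)$ whose cardinalities are bounded by a single cardinal $\kappa$ depending only on $R$, so $\mathcal L$ is in fact $\kappa^+$-decomposable; closure under direct limits is exactly the equality $\varinjlim \Add \mathcal C = \Add \mathcal C$ already established. For the covering property I would invoke Corollary~\ref{holm}: it suffices to check closure under pure epimorphic images, and any pure epimorphism from a module $M \in \Add \mathcal C$ (which is injective) splits by~(c), so its target is a direct summand of $M$ and therefore belongs to $\Add \mathcal C$.
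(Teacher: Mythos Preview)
Your proof is correct and reaches the same conclusions as the paper, but the route differs in two places worth noting.

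For $\varinjlim \Add \mathcal C \subseteq \Add \mathcal C$, the paper does not invoke $\Sigma$-pure injectivity. Instead, it applies Theorem~\ref{describe} (available since $\Sum \mathcal C$ is closed under arbitrary direct sums): any $N \in \varinjlim \Sum \mathcal C$ is isomorphic to $A/K$ with $A \in \Sum \mathcal C$ and $K$ a directed union of direct summands of~$A$. Each such summand is injective, so $K$ is a directed union of injectives and hence injective by Bass--Papp, whence $K$ splits off and $N \in \Add \mathcal C$. Your argument via the canonical pure presentation and the $\Sigma$-pure split property of injective modules over noetherian rings is equally valid and avoids the machinery of Theorem~\ref{describe}, at the cost of citing the (true, but less elementary) implication $\Sigma$-pure injective $\Rightarrow$ $\Sigma$-pure split.

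For $\Add \mathcal C \subseteq \varinjlim \add \mathcal C$, both you and the paper first reduce (via Matlis and Proposition~\ref{directs}) to showing that each indecomposable injective summand $N$ of a module in $\Add \mathcal C$ lies in $\add \mathcal C$. You then invoke Krull--Schmidt--Azumaya to match $N$ with an indecomposable summand of some $C_i \in \mathcal C$. The paper instead writes $N = E(R/I)$, observes that the finitely generated module $R/I$ embeds in a finite subsum $\bigoplus_{j \in G} C_j$, and concludes that $N$ is a summand of this finite subsum. Your argument is cleaner conceptually; the paper's is more elementary. One minor imprecision in your write-up: over a noncommutative noetherian ring the indecomposable injectives are not all of the form $E(R/P)$ for prime ideals~$P$; one should write $E(R/I)$ for suitable right ideals~$I$, or simply speak of indecomposable injectives.

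Your covering argument via Corollary~\ref{holm} and the $\Sigma$-pure split property makes explicit what the paper leaves implicit in the word ``hence''.
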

\begin{proof} Let $\mathcal A = \Sum \mathcal C$ and $N \in \varinjlim \mathcal A$. By Theorem \ref{describe}, $N \cong A/K$ where $A \in \mathcal A$ and $K$ is the directed union of a direct system $(K_F \mid F \in \mathcal F )$ of direct summands of $A$ with complements in $\mathcal A$. Since $R$ is right noetherian, $K$ is injective, whence $N \in \Add \mathcal C$. Thus $\varinjlim \Add \mathcal C = \Add \mathcal C$.

Assume there exists $N \in \Add \mathcal C \setminus \varinjlim \ssum \mathcal C$. Since $R$ is right noetherian, $N$ is a direct sum of indecomposable injective modules. By Proposition \ref{directs}, we can w.l.o.g.\ assume that $N$ is indecomposable, whence $N = E(R/I)$ for a right ideal $I$ of $R$. Since $N \oplus K = A = \bigoplus_{j \in J} C_j$ for some $C_j \in \mathcal C$ ($j \in J$), there is a finite subset $G \subseteq J$ such that $R/I \subseteq \bigoplus_{j \in G} C_j$. Then $N = E(R/I)$ is isomorphic to a direct summand of $\bigoplus_{j \in G} C_j$, whence $N \in \varinjlim \ssum \mathcal C$, a contradiction.  

Since $R$ is right noetherian, all modules in $\mathcal L = \Add \mathcal C$ are direct sums of indecomposable direct summands of the modules from $\mathcal C$. Hence $\mathcal L$ is deconstructible. By the above, $\mathcal L = \varinjlim \mathcal L$.      
\end{proof}

\begin{remark}\label{smallinj} Note that by Example \ref{Bergman} above, Lemma \ref{injectlim} fails in general for non-right noetherian rings. However, even for certain non-right noetherian rings, $\varinjlim \add \mathcal C = \varinjlim \Add \mathcal C$ for any class of injective modules $\mathcal C$. This occurs when $R$ is a right purely infinite ring (that is, $R$ contains a free module of infinite rank), or $R$ is a simple von Neumann regular ring which is not noetherian. The point is that over such rings, \emph{all} injective modules are small (cf.\ \cite[Example 2.8]{TPad}), whence Corollary \ref{selfsmall}(ii) applies.   
\end{remark}

\medskip

 Let $R$ be a ring and $t\in R$ be a central element.
 Denote by $R[t^{-1}]$ the localization of $R$ at the central
multiplicative subset $\{1,t,t^2,t^3,\dotsc\}\subset R$ generated
by~$t$.
 Consider the localization map $R\to R[t^{-1}]$, and denote
simply by $R[t^{-1}]/R$ its cokernel.
 Then the right $R$-module $R[t^{-1}]/R$ is a generalization of
the Pr\"ufer $p$-group (which can be constructed as
$\mathbb Z[p^{-1}]/\mathbb Z$).
 The $R$-module $R[t^{-1}]/R$ is usually not injective, but one still
has $\varinjlim \add M=\varinjlim\Add M$ under the assumptions of
the next proposition.
 Notice that both Lemma~\ref{injectlim} and
Proposition~\ref{prufer-inverted-element} deal with right modules,
however the ring $R$ is assumed to be \emph{right} noetherian in
the former but \emph{left} noetherian in the latter.

\begin{proposition} \label{prufer-inverted-element}
 Let $R$ be a left noetherian ring and $t\in R$ be a central element.
 Consider the right $R$-module $M=R[t^{-1}]/R$.
 Then\/ $\varinjlim\add(M)=\varinjlim\Add(M)$.
\end{proposition}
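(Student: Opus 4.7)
The plan is to apply the contramodule framework outlined in the introduction. Since $M=R[t^{-1}]/R=\varinjlim_{n} R/t^nR$ with transition maps given by multiplication by~$t$, the topological endomorphism ring $\mathfrak S=\End M_R$ endowed with the finite topology is canonically identified with the $t$-adic completion $\varprojlim_{n} R/t^nR$ of~$R$, carrying the $t$-adic topology. The canonical ring homomorphism $R\to\mathfrak S$ has dense image, and the induced topology on $R$ has the countable base $\{t^nR\mid n\ge 0\}$ consisting of two-sided ideals generated by the central element~$t$. Together with the hypothesis that $R$ is left noetherian, this places us exactly in the framework highlighted in the introduction.

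Next, I would invoke the equivalence between $\Add M$ and the category of projective right $\mathfrak S$-contramodules that underlies Theorem~\ref{lim-Add}, under which $\add M$ corresponds to the finitely generated projective contramodules. Via the contratensor product $-\odot_{\mathfrak S}M$, the desired equality $\varinjlim\add M=\varinjlim\Add M$ then reduces to the purely contramodule statement that every direct limit of projective right $\mathfrak S$-contramodules is in fact a direct limit of \emph{finitely generated} projective ones.

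The decisive step is a contramodule analogue of Govorov--Lazard in this setting: every flat right $\mathfrak S$-contramodule should be a direct limit of finitely generated projective contramodules. Here the Artin--Rees lemma for the ideal $(t)\subseteq R$, available because $R$ is left noetherian and $t$ is central, is used to show that the underlying right $R$-module of any flat $\mathfrak S$-contramodule is itself flat. Classical Govorov--Lazard then realizes this underlying $R$-module as a direct limit of finitely generated free $R$-modules, and the countability of the base $\{t^nR\}$ of the Gabriel topology allows one to lift such a presentation to a direct system of finitely generated free $\mathfrak S$-contramodules with the same direct limit.

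The main obstacle is precisely this last lifting step: promoting an approximating $R$-module level direct system to a contramodule level one while preserving the direct limit. Threading the Artin--Rees bounds through the countable tower $\{t^nR\}$ so that the contramodule structure is respected throughout is the technical heart of the argument, and the countability of the base of the Gabriel topology, together with the left noetherianness of $R$ and the centrality of $t$, are all used essentially here.
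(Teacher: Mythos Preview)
Your overall strategy is correct and matches the paper's framework: identify $\mathfrak S$ with the $t$-adic completion of $R$, use the equivalence between $\Add M$ and projective $\mathfrak S$-contramodules, and reduce to showing that direct limits of projective $\mathfrak S$-contramodules are direct limits of finitely generated ones. However, there are two inaccuracies worth flagging.

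First, the identification $M=\varinjlim_n R/t^nR$ and $\mathfrak S=\varprojlim_n R/t^nR$ requires $t$ to be a nonzero-divisor in $R$. The paper handles this by first passing to $R/J$, where $J$ is the ideal of $t$-torsion elements; since $R[t^{-1}]=(R/J)[t^{-1}]$, this is harmless and $R/J$ remains left noetherian, but it should be said.

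Second, and more substantively, your description of the ``lifting step'' misallocates the role of Artin--Rees. The Artin--Rees property is used \emph{only} to prove that the underlying $R$-module of a flat $\mathfrak S$-contramodule $\mathfrak F$ is flat (via the fact that $\mathfrak F=\varprojlim_n \mathfrak F/\mathfrak F t^n$ with each $\mathfrak F/\mathfrak F t^n$ flat over $R/t^nR$). It plays no role in the lifting itself. The lifting is entirely formal: the left adjoint $\Delta_\sigma\colon\rmod R\to\rcontra{\mathfrak S}$ of the forgetful functor preserves direct limits and sends finitely generated free $R$-modules to finitely generated free $\mathfrak S$-contramodules, so applying $\Delta_\sigma$ to a Govorov--Lazard presentation of the underlying $R$-module $F$ gives a direct system of finitely generated free contramodules with limit $\Delta_\sigma(F)$. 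The remaining point is that $\Delta_\sigma(F)\cong\mathfrak F$, and this follows because the forgetful functor $\rcontra{\mathfrak S}\to\rmod R$ is fully faithful---a fact that uses countability of the base and the Gabriel property of the $t$-adic topology, but \emph{not} Artin--Rees. The paper's direct proof (via Corollary~\ref{contramodules-flat-as-modules-cor}) actually bypasses the lifting altogether: once one knows that $\mathfrak C\otimes_R K\cong\mathfrak C\odot_{\mathfrak S}K$ for all $\mathfrak C$ and discrete $K$, and that the underlying $R$-module of $\mathfrak F$ is flat, one simply compares Theorem~\ref{lim-Add} with Theorem~\ref{limadd}.
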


 More generally, one can consider localizations by countable central
multiplicative subsets.
 Given a multiplicative subset $T$ consisting of central elements in $R$,
form the localization $T^{-1}R$ and denote by $T^{-1}R/R$ the cokernel
of the localization map $R\to T^{-1}R$.

\begin{proposition} \label{prufer-inverted-subset}
 Let $R$ be a left noetherian ring and $T\subset R$ be a countable
multiplicative subset consisting of (some) central elements in~$R$.
 Consider the right $R$-module $M=T^{-1}R/R$.
 Then\/ $\varinjlim\add(M)=\varinjlim\Add(M)$.
\end{proposition}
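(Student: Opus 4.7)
The plan is to exhibit $M = T^{-1}R/R$ as a Prüfer-type module whose topological endomorphism ring meets the conditions highlighted in the introduction, and then to invoke the machinery developed elsewhere in the paper to conclude. First I would enumerate $T = \{t_n \mid n<\omega\}$ (with $t_0 = 1$) and set $s_n = t_0 t_1 \cdots t_n \in T$, so that $s_n \mid s_{n+1}$ and the $s_n$ form a cofinal sequence in $T$ under divisibility. Writing $M_n = s_n^{-1}R/R \subseteq M$, centrality of $s_n$ gives a right $R$-module isomorphism $R/s_nR \xrightarrow{\sim} M_n$, $r+s_nR \mapsto s_n^{-1}r + R$, and $M = \bigcup_{n<\omega} M_n$ is the direct limit of the system $R/s_nR \to R/s_{n+1}R$ of multiplications by $t_{n+1}$.

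Second, I would identify $\mathfrak S = \End M_R$ with the finite topology. Since $M_n = \{x \in M \mid x s_n = 0\}$, restriction yields $\mathfrak S = \varprojlim_n \Hom_R(R/s_nR, M_n) = \varprojlim_n R/s_nR$, and the right multiplication map $R \to \mathfrak S$ is dense, with the induced topology on $R$ having the countable base of two-sided ideals $\{R s_n \mid n<\omega\}$, each generated by a central element. The filter, two-sidedness, and Gabriel axioms are straightforward to verify: the chain $R s_0 \supseteq R s_1 \supseteq \ldots$ gives the filter and finite-intersection properties; two-sidedness handles conjugation; and for the final Gabriel axiom, for any $r \in R$ the left ideal $\{x \in R \mid xr \in R s_n\}$ contains $R s_n$ itself. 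Since $R$ is left noetherian by hypothesis, we have placed $M$ exactly in the framework announced in the introduction: \emph{$\mathfrak S$ admits a dense left noetherian subring $S = R$ such that the induced topology on $S$ is a left Gabriel topology with a countable base of ideals generated by central elements.}

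Third, I would invoke the general principle stated in the introduction (to be proved in the body of the paper via the Artin–Rees property of ideals generated by central elements in noetherian rings): under exactly these hypotheses, every flat right $\mathfrak S$-contramodule is a direct limit of finitely generated projective right $\mathfrak S$-contramodules. Combining this with the forthcoming description $\varinjlim \Add M = \{\mathfrak F \odot_{\mathfrak S} M \mid \mathfrak F \in \mathcal F_{\mathfrak S}\}$ (the $\Add M$ analogue of Theorem~\ref{limadd}), any $N \in \varinjlim \Add M$ arises as $N = \mathfrak F \odot_{\mathfrak S} M$ with $\mathfrak F = \varinjlim_i \mathfrak F_i$ for finitely generated projective contramodules $\mathfrak F_i$. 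Since the contratensor product $-\odot_{\mathfrak S} M$ commutes with direct limits and sends finitely generated projective $\mathfrak S$-contramodules into $\add M$, we obtain $N = \varinjlim_i (\mathfrak F_i \odot_{\mathfrak S} M) \in \varinjlim \add M$. The reverse inclusion $\varinjlim \add M \subseteq \varinjlim \Add M$ is automatic.

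The main obstacle is the Artin–Rees step: showing that when the Gabriel topology on the dense left noetherian subring $R \subseteq \mathfrak S$ admits a countable base of ideals generated by central elements, the underlying $R$-module of a flat $\mathfrak S$-contramodule is flat, and consequently that every flat contramodule is a direct limit of finitely generated projective contramodules. The other delicate verification is that $\mathfrak F_i \odot_{\mathfrak S} M \in \add M$ whenever $\mathfrak F_i$ is a finitely generated projective $\mathfrak S$-contramodule, which should follow from the Positselski–Šťovíček equivalence $(\rcontra{\mathfrak S})_{\mathrm{proj}} \simeq \Add M$ together with the fact that this equivalence intertwines $-\odot_{\mathfrak S} M$ with the inclusion $\Add M \hookrightarrow \rmod R$ on finitely generated objects, which are precisely those in $\add M$.
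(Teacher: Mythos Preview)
Your overall strategy is correct and matches what the paper eventually packages as Corollary~\ref{gabriel-cor}: verify that the dense subring $R\subset\mathfrak S$ is left noetherian with a countable Gabriel base of centrally generated two-sided ideals, then invoke Theorem~\ref{gabriel-main-theorem} and Corollary~\ref{C-GL2-implies-cor}. The paper explicitly notes (just after Corollary~\ref{gabriel-cor}) that Proposition~\ref{prufer-inverted-subset} is a special case of that corollary. However, the paper's own proof in Section~\ref{genprufer} takes a more direct route: it checks conditions~(i) and~(ii) of Corollary~\ref{contramodules-flat-as-modules-cor} by hand, proving an Artin--Rees lemma for multiplicative subsets (Lemma~\ref{multiplicative-subset-Artin-Rees}) and a tailored flatness-of-projective-limits result (Lemma~\ref{multiplicative-subset-noetherian-flatness}), rather than appealing to the general Gabriel-topology theorem. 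Your approach is cleaner once the general machinery is in place; the paper's approach is more self-contained and develops the concrete lemmas that later feed into the general theorem.

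Two small gaps in your write-up. First, the identification $\mathfrak S\cong\varprojlim_n R/s_nR$ and the isomorphism $R/s_nR\cong M_n$ require the elements of $T$ to be nonzero-divisors in $R$; the paper handles this by first passing to $R/J$ where $J=\{r\in R\mid rt=0\text{ for some }t\in T\}$, noting that $T^{-1}R=T^{-1}(R/J)$. Without this reduction your description of the open ideals pulled back to $R$ is not quite $Rs_n$. Second, your ``final Gabriel axiom'' check only verifies axiom~(T3) (that $(Rs_n:r)\supseteq Rs_n$); the genuinely nontrivial axiom~(T4) still needs an argument. The paper cites \cite[Lemma~3.1]{Pcoun} or \cite[Lemma~2.3]{H} for this step, so you should either invoke one of these or give a short direct argument using multiplicativity of~$T$.
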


 The proofs of Propositions~\ref{prufer-inverted-element}
and~\ref{prufer-inverted-subset} use contramodule techniques.
 They will be given below in Section~\ref{genprufer} after
a preparation in Sections~\ref{contramodule-methods}--\ref{contra-versus}.

\medskip

\section{The case of projective modules}\label{projective}

In this section, we consider the particular case of projective modules. By Corollary~\ref{selfsmall}, 
$\varinjlim \add M = \varinjlim \Add M$ for a projective module $M$. In this particular case we can 
describe $\varinjlim \add M$ and show that this class is closed under direct summands and direct limits.

Let $I \subseteq R$ be an ideal, $F_1,F_2$ finitely generated free right $R$-modules, and $u \in {\rm Hom}_R(F_1,F_2)$.
We say that $u$ is {\em supported in} $I$ if $u(F_1) \subseteq F_2I$. If $F_1 = R^n, F_2 = R^m$ for some $n,m \in \mathbb{N}$, then 
$u$ is supported in $I$ if and only if $u$ is given by left multiplication of a matrix from ${\rm M}_{m,n}(I)$.

The following easy observation will be useful: if $F_1,F_2,F_3$ are finitely generated free right $R$-modules, $u \in \Hom R{F_1}{F_2}$, and $v \in \Hom R{F_2}{F_3}$, then 
$vu$ is supported in $I$ whenever either $u$ or $v$ is supported in $I$.

\begin{lemma} \label{extend}
Let $I$ be an ideal of $R$ and let $M \in \ModR$ satisfy $MI = M$. 
If $F$ is a finitely generated free right $R$-module and $f \in \Hom R{F}{M}$, then there exist
a finitely generated free right $R$-module $F'$, $g \in \Hom R{F'}{M}$ and $u \in \Hom R{F}{F'}$
supported in $I$ such that $f = gu$.
\end{lemma}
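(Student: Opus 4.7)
The plan is to use the hypothesis $MI=M$ applied to each of the finitely many basis images to write them as $I$-linear combinations of elements of $M$, then package these elements together into a single finitely generated free module $F'$.

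More concretely, I would first fix a basis $e_1,\dots,e_n$ of $F$, so that $F\cong R^n$, and set $m_i=f(e_i)\in M$ for $i=1,\dots,n$. Since $MI=M$, each $m_i$ lies in $MI$, so there exist a finite number of pairs $(x_{i,k},s_{i,k})$ with $x_{i,k}\in M$ and $s_{i,k}\in I$ such that $m_i=\sum_{k}x_{i,k}s_{i,k}$.

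Next, I would collect all the elements $x_{i,k}$ appearing across $i=1,\dots,n$ and $k$ into a single finite list $y_1,\dots,y_N\in M$. After reindexing (inserting zero coefficients where a given $y_j$ does not occur for a given~$i$), one can write $m_i=\sum_{j=1}^N y_j t_{i,j}$ with all $t_{i,j}\in I$. Set $F'=R^N$ with basis $\epsilon_1,\dots,\epsilon_N$, define $g\in\Hom R{F'}M$ by $g(\epsilon_j)=y_j$, and define $u\in\Hom R F{F'}$ by $u(e_i)=\sum_{j=1}^N \epsilon_j t_{i,j}$. Then $u$ is given by the matrix $(t_{i,j})$ with entries in $I$, so it is supported in $I$ in the sense of the paper's definition, and by construction $gu(e_i)=\sum_j y_j t_{i,j}=m_i=f(e_i)$ for all $i$, hence $f=gu$.

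There is no real obstacle here; this is essentially just the observation that a finite tuple of elements of $MI$ can be simultaneously expressed using a common finite set of generators from $M$ and coefficients from $I$. The only mildly nontrivial step is the bookkeeping involved in reindexing so that the same module $F'$ works uniformly for all basis vectors $e_i$, which amounts to allowing zero coefficients and enlarging the set of $y_j$'s to a common finite pool.
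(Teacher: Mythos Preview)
Your proof is correct and follows essentially the same approach as the paper's: both write each $f(e_i)$ as a finite $I$-combination of elements of $M$ and assemble these into a single finitely generated free module $F'$. The only cosmetic difference is that the paper first treats the rank-one case and then takes a block-diagonal direct sum $F'=R^{k_1}\oplus\dots\oplus R^{k_n}$, whereas you merge all the $x_{i,k}$'s into one list $y_1,\dots,y_N$ with zero coefficients inserted; these amount to the same construction.
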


\begin{proof}
First let us assume $F= R$, $f \colon r \mapsto mr$ for some $m \in M$. Write $m = \sum_{j = 1}^{k} m_{j}i_j$, where 
$m_1,\dots,m_k \in M$ and $i_1,\dots,i_k \in I$. 
Define $F':=R^{k}$, $u \colon r \mapsto (i_1,i_2,\dots,i_k)r, r \in R$ and $g \colon (r_1,\dots,r_k) \mapsto 
\sum_{j = 1}^k m_jr_j, (r_1,\dots,r_k) \in F'$. Obviously $f = gu$ and $u$ is supported in $I$.

In general, consider $F = R^n$ and its canonical basis $e_1,\dots,e_n$. For 
$j = 1,\dots, n$ set $m_j := f(e_j)$ and define $f_j \in \Hom R{R}{M}$ by 
$f_j(1) = m_j$. Apply the previous paragraph to find $u_j  \in \Hom R {R}{R^{k_j}}$ 
supported in $I$ and $g_j \in \Hom R{R^{k_j}}{M}$ such that $f_j = g_j u_j$. 
Now it is enough to set $F' = R^{k_1} \oplus R^{k_2} \oplus \cdots \oplus R^{k_n}$, 
$u = u_1 \oplus u_2 \oplus \cdots \oplus u_n$ and $g = g_1 \oplus g_2 \oplus \cdots \oplus g_n$.
Obviously $u$ is supported in $I$ and $f = gu$ since $gu(e_j) = m_j$ for every $1 \leq j \leq n$.
\end{proof}

By Govorov-Lazard theorem, every flat module $M$ is a direct limit of a direct system of finitely generated free modules. 
If $M = MI$ for an ideal $I$, then the diagram can be chosen such that all its morphisms which do not have 
to be identity are supported in $I$.
  
\begin{proposition} \label{diagram}
Let $I$ be an ideal of $R$ and let $M$ be a flat module satisfying $MI = M$. Then there exists a direct system 
$\mathfrak D = (F_p,f_{qp} \mid p \leq q \in P)$ of finitely generated free modules having all morphisms 
$f_{qp}, p<q \in P$ supported in $I$ such that $\varinjlim \mathfrak D = (M,f_i (i \in P))$. 
\end{proposition}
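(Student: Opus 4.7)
The plan is to deduce the statement from the Govorov--Lazard theorem together with Lemma~\ref{extend}. Govorov--Lazard provides a direct system $\mathfrak G=(G_s,g_{ts})_{s\le t\in S}$ of finitely generated free modules with $\varinjlim\mathfrak G=(M,g_s)$, but the transitions $g_{ts}$ are not supported in $I$ in general, and one cannot apply Lemma~\ref{extend} to them directly because its conclusion requires the target to satisfy $NI=N$, which fails for a free module unless $I=R$. The only maps that can be usefully factored through supported-in-$I$ morphisms are those with codomain $M$ itself.

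To circumvent this, I would introduce the category $\mathcal A_I$ whose objects are pairs $(F,f)$ with $F$ finitely generated free and $f\in\Hom R F M$, and whose morphisms $(F,f)\to (F',f')$ are either the identity (when the two objects coincide) or a homomorphism $v\colon F\to F'$ satisfying $f'v=f$ and $v(F)\subseteq F'I$. Then I would verify that $\mathcal A_I$ is a filtered category. Any two objects $(F_1,f_1),(F_2,f_2)$ admit a common upper bound by applying Lemma~\ref{extend} to $(f_1,f_2)\colon F_1\oplus F_2\to M$ using the hypothesis $MI=M$. Any two parallel morphisms $u,v\colon (F,f)\to (F',f')$ can be equalized: the elements $(u-v)(e_k)$ (with $e_k$ running over a basis of $F$) all lie in $\ker f'$, and Govorov--Lazard yields a factorization $f'=f''w_0$ through a finitely generated free module $F''$ with $w_0$ killing these elements; Lemma~\ref{extend} applied to $f''$ then produces a supported-in-$I$ refinement $w'\colon F''\to F'''$ with $f'''w'=f''$, and the composite $w'w_0$ is a supported-in-$I$ morphism of $\mathcal A_I$ that equalizes $u$ and $v$.

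Once $\mathcal A_I$ is shown to be filtered, $\operatorname{colim}_{\mathcal A_I}F=M$ follows from the obvious surjectivity (using the objects $(R,\,r\mapsto mr)$ for each $m\in M$) together with an injectivity check based on the same flatness-plus-Lemma~\ref{extend} pattern. The standard fact that every small filtered category admits a cofinal functor from a directed poset then converts this filtered colimit into a direct limit $\varinjlim_{p\in P}F_p$ over a directed poset $P$, yielding the system $\mathfrak D$ required by the proposition. The technical heart of the argument, and the main obstacle I anticipate, is the equalization axiom for $\mathcal A_I$, which is precisely where flatness of $M$ and Lemma~\ref{extend} both enter in an essential way.
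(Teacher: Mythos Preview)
Your argument is correct in outline, but it takes a different route from the paper. The paper does not build a new indexing category: it fixes a single Govorov--Lazard direct system $(F_p, f_{qp})_{p\le q\in P}$ with limit $(M,f_p)$ and then \emph{coarsens the order} on the same set $P$, declaring $p\le' q$ iff $p=q$ or ($p<q$ and $f_{qp}$ is supported in $I$). The only thing to check is that $(P,\le')$ is still directed. This is done by applying Lemma~\ref{extend} to the limit map $f_p\colon F_p\to M$ to get $f_p=hu$ with $u$ supported in $I$, factoring $h$ back through some $f_{q_0}$ in the original system (finite presentability of the intermediate free module), and then using (C3) to pass to $q\ge q_0$ where the two resulting maps $F_p\to F_{q_0}$ agree; this forces $f_{qp}$, and hence every $f_{rp}$ with $r\ge q$, to be supported in $I$. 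Conditions (C1)--(C3) for the system over $(P,\le')$ are then immediate.

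Your construction of $\mathcal A_I$ and its filteredness verification carry out essentially the same factor-through-$M$-then-push-back-into-free-modules mechanism, but packaged as the filtered-category axioms and followed by the general cofinality theorem to get a directed poset. The paper's version is more economical: no new objects or morphisms are introduced, there is no separate colimit identification step, and the passage from filtered category to directed poset is avoided entirely. Your version, on the other hand, makes the structure of ``supported-in-$I$ approximations of $M$'' explicit as a category in its own right, which may be conceptually cleaner if one wants to reuse it.
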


\begin{proof}
Since $M$ is flat, there exists a direct system $\mathfrak E = (F_p,f_{qp} \mid p\leq q \in P
)$ such that $\varinjlim \mathfrak E = (M,f_p \mid p \in P)$, where $F_p$ are finitely generated free modules. 
Define a new partial order on $P$ by 
$p \leq' q$ if either $p = q$ or $p < q$ and $f_{qp}$ is supported in $I$.

Let us prove that for every $p \in P$ there exists $q \in P$ such that 
$p \leq' r$ for every $q \leq r \in P$. Apply Lemma \ref{extend} to $f_p$ to find 
a finitely generated free $R$-module $F$, $u \in \Hom R{F_p}{F}$ supported in $I$, and
$h \in \Hom R{F}{M}$ such that $f_p = hu$. Since $h$ is a homomorphism from a finitely presented 
module into $M$, it factors through some $f_{q_0}$, that is, $h = f_{q_0}v$ for some 
$v \in \Hom R{F}{F_{q_0}}$. Of course, we may assume $p \leq q_0$. 

Note that $vu,f_{q_0 p} \in \Hom R{F_p}{F_{q_0}}$ and $f_{q_0}(vu - f_{q_0 p}) = f_p - f_p = 0$.
Since $F_p$ is finitely generated, there exists $q_0 \leq q \in P$ such that $f_{q q_0}(vu - f_{q_0 p}) = 0$.
Therefore $f_{qp} = f_{q q_0}f_{q_0 p} = f_{q q_0}vu$ is supported in $I$. Consequently, $f_{r p} = 
f_{r q}f_{q p}$ is supported in $I$ for every $q \leq r \in P$. It follows that $(P, \leq')$ is a directed poset.

Consider the direct system $\mathfrak D = \{F_p, f_{qp} \mid p \leq' q \in P \}$. It is now easy to verify 
conditions (C1) - (C3) to get $\varinjlim \mathfrak D = (M,f_p  (p \in P))$.
\end{proof}

Let $M$ be a right $R$-module. Then $I := \sum_{f \in \Hom R{M}{R}} \im f$ is an ideal of 
$R$ called the {\em trace ideal} of $M$. If $M$ is a projective $R$-module, then $I = I^2$ is the smallest 
ideal of $R$ satisfying $MI = M$.  

\begin{lemma} \label{factor}
Let $P$ be a projective $R$-module with trace ideal $I$. If $F_1,F_2$ are finitely generated free right 
$R$-modules and $u \in \Hom R{F_1}{F_2}$ is supported in $I$, then there exist a countably generated 
module $P_u \in {\rm add}(P)$ and homomorphisms $\alpha \in \Hom R{F_1}{P_u}$, $\beta \in \Hom R{P_u}{F_2}$
such that $u = \beta\alpha$.
\end{lemma}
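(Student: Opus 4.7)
My plan is to build the factorization directly from the definition of the trace ideal $I = \sum_{f \in \Hom R{P}{R}} \im f$, without any matrix computation. The key observation is that every element of $F_2 I$ is a finite sum of values $g(p)$ with $g \in \Hom R{P}{F_2}$ and $p \in P$: indeed, a typical generator of $F_2 I$ has the form $v \cdot i$ with $v \in F_2$ and $i = \sum_{k=1}^{s} f_k(p_k)$ for some $f_k \in \Hom R{P}{R}$, $p_k \in P$, and then $v \cdot i = \sum_k g_k(p_k)$, where $g_k \in \Hom R{P}{F_2}$ is defined by $g_k(p) = v \cdot f_k(p)$.

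After reducing to $F_1 = R^n$, applying this observation to $u(e_j) \in F_2 I$ for each basis element $e_j$ of $F_1$, I would fix for every $1 \le j \le n$ a decomposition
\[
u(e_j) = \sum_{k=1}^{N_j} g_{j,k}(p_{j,k}), \quad g_{j,k} \in \Hom R{P}{F_2},\ p_{j,k} \in P.
\]
Setting $P_u = P^{N}$ with $N = \sum_{j=1}^n N_j$ and coordinates indexed by pairs $(j,k)$, I would define $\alpha : R^n \to P_u$ by specifying $\alpha(e_j)$ to be the element of $P_u$ with entry $p_{j,k}$ in coordinate $(j,k)$ for $1 \le k \le N_j$, and $0$ in all other coordinates; and $\beta : P_u \to F_2$ by declaring its restriction to the $(j',k')$-th copy of $P$ to be $g_{j',k'}$. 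Then by construction $\beta\alpha(e_j) = \sum_{k=1}^{N_j} g_{j,k}(p_{j,k}) = u(e_j)$ for every~$j$, so $\beta\alpha = u$.

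I do not expect a real obstacle: the whole content lies in the elementary reformulation of membership in $F_2 I$ described above. The module $P_u = P^N$ belongs to $\ssum P \subseteq \add P$ and is in fact finitely generated, so the ``countably generated'' clause of the lemma is automatic. The proof thus establishes the slightly stronger statement that $P_u$ can always be chosen to be a finite direct sum of copies of $P$.
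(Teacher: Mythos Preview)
Your factorization $u = \beta\alpha$ through $P_u = P^N$ is correct, but the final sentence contains a genuine error: a finite direct sum $P^N$ of copies of $P$ is \emph{not} finitely generated in general, since $P$ itself need not be finitely generated. For instance, if $P$ is the countably-but-not-finitely generated projective ideal of Kaplansky's example over $C\langle 0,1\rangle$ (discussed later in the paper), then $P^N$ is not finitely generated for any $N\ge 1$. Worse, if $P$ is an uncountably generated projective module, your $P_u = P^N$ is not even countably generated, so the clause in the lemma is not satisfied.

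The fix is precisely what the paper supplies: invoke Kaplansky's theorem to write $P = \bigoplus_{\lambda\in\Lambda} P_\lambda$ with each $P_\lambda$ countably generated, so that $P^N = \bigoplus_{\lambda,i} P_\lambda$; then, since $F_1$ is finitely generated, the image of your $\alpha$ lands in a finite subsum $P_u'$, which is countably generated and lies in $\add P$. Replacing $P_u$ by $P_u'$ (and restricting $\beta$) completes the argument. The paper's own proof follows the same overall pattern but phrases the first step more abstractly: it cites \cite[Proposition~8.12]{AF} to get an epimorphism $P^{(\kappa)}\twoheadrightarrow F_2 I$, lifts $u$ through it by projectivity of $F_1$, and then applies Kaplansky. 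Your explicit construction of $\alpha$ and $\beta$ is a pleasant alternative to that first step, but it does not bypass the need for Kaplansky's theorem.
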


\begin{proof}
By \cite[Proposition 8.12]{AF}, $F_2I$ is generated by $P$. That is, there exist a cardinal $\kappa$
and an epimorphism $\beta' \in \Hom R{P^{(\kappa)}}{F_2I}$. Since $F_1$ is projective and $u$
is supported in $I$, there exists a homomorphism 
$\alpha' \colon F_1 \to P^{(\kappa)}$ such that $u = \beta' \alpha'$.
By the theorem of Kaplansky, $P = \oplus_{\lambda \in \Lambda} P_\lambda$, where $P_\lambda$ are countably generated modules. 
Since $F_1$ is finitely generated, $\alpha'$ factors through a finite subsum of $(\oplus_{\lambda \in \Lambda} P_{\lambda})^{(\kappa)}$. 
So there exist $P_u \in {\rm add}(P)$ countably generated and $\alpha\in \Hom R{F_1}{P_u}$, $\alpha'' \in \Hom R{P_u}{P^{(\kappa)}}$
such that $\alpha' = \alpha'' \alpha$. It remains to set $\beta:= \beta'\alpha''$.
\end{proof}

\begin{theorem} \label{main6}
Let $P$ be a projective module with trace ideal $I$. Then every flat module $M$ satisfying $MI = M$ is a direct 
limit of countably generated modules from $\add P$.
\end{theorem}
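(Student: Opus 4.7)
The plan is to combine the three preparatory results---Proposition~\ref{diagram}, Lemma~\ref{factor}, and Lemma~\ref{change}---into a single assembly argument. First, I apply Proposition~\ref{diagram} to the flat module $M$ and the ideal $I$, obtaining a direct system $\mathfrak D = (F_p, f_{qp} \mid p \leq q \in P)$ of finitely generated free right $R$-modules with $\varinjlim\mathfrak D \cong M$ and such that every transition map $f_{qp}$ with $p < q$ is supported in $I$.

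Next, for each pair $p < q$ in $P$, Lemma~\ref{factor} supplies a countably generated module $P_{qp} \in \add P$ through which $f_{qp}$ factors. Consequently every non-identity transition morphism in $\mathfrak D$ factors through an object of the class $\mathcal C$ of countably generated modules from $\add P$, which is exactly the hypothesis of Lemma~\ref{change}. Provided the indexing poset $P$ has no largest element, that lemma immediately yields $M \in \varinjlim \mathcal C$, which is the desired conclusion.

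The boundary case in which $P$ does have a largest element $p_0$ must be handled separately, since Lemma~\ref{change} explicitly assumes no maximum. In that case $M \cong F_{p_0}$ is a finitely generated free module $R^n$, and the equality $MI = M$ forces $I^n = R^n$, hence $I = R$. Thus $P$ generates $\rmod R$: any surjection $P^k \twoheadrightarrow R$ splits by projectivity of $R$, exhibiting $R$ as a direct summand of $P^k$, so $R \in \add P$ and hence $M \cong R^n \in \add P$. In particular $M$ is itself a countably generated module from $\add P$ and so trivially a direct limit of such.

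I do not anticipate any serious obstacle beyond the careful bookkeeping above: the combinatorial work of re-indexing a flat module's presenting system so that its non-trivial arrows become $I$-supported lies in Proposition~\ref{diagram}; the homological step of factoring such an arrow through a countably generated summand of a large power of $P$ lies in Lemma~\ref{factor}; and the categorical step of passing from a direct system to one built from the intermediate factoring objects lies in Lemma~\ref{change}. The only subtle point is the separate treatment of the degenerate index set case, which collapses to the elementary observation that a generator with trace $R$ already has $R$ as a summand of some finite power.
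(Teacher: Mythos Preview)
Your proposal is correct and follows essentially the same route as the paper's proof: apply Proposition~\ref{diagram} to obtain a direct system of finitely generated free modules with $I$-supported transition maps, use Lemma~\ref{factor} to factor each such map through a countably generated module in $\add P$, and then invoke Lemma~\ref{change}; the boundary case with a largest index is handled in the same way (there $I=R$, so $P$ is a generator and $M\in\add P$).
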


\begin{proof}
Since the statement is obviously true for $M = 0$, we may assume $M \neq 0$.
By Proposition~\ref{diagram}, there exists a direct system 
$\mathfrak D = (F_u,f_{vu} \mid u \leq v \in U)$ of finitely generated free modules and $f_{vu}$ supported in $I$
for every $u<v \in U$
such that $\varinjlim \mathfrak D = (M,f_u (u \in U))$. If $U$ contains the largest element $u$, then $M \simeq F_u$
and $I = R$, that is, $P$ is a generator. In this case $M$ is a finitely generated module in  $\add P$. 

If $U$ contains no largest element, we can apply Lemma~\ref{change} since, by Lemma \ref{factor}, 
$f_{vu}$ factors through a countably generated module from $\add P$ for every $u < v \in U$.
 \end{proof}

\begin{corollary}\label{project}
  Let $P$ be a projective module with trace ideal $I$. Then $\varinjlim \add P = \varinjlim \Add P = 
	\{M \in \ModR \mid M $ flat and $MI = M\}$ is closed under direct limits and direct summands.
\end{corollary}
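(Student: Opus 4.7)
The plan is to establish three claims: (a) the two $\varinjlim$ classes coincide, (b) both equal the class $\mathcal M := \{M \in \ModR \mid M \text{ flat and } MI = M\}$, and (c) $\mathcal M$ is closed under direct limits and direct summands. The hard inclusion in (b) is already available as Theorem~\ref{main6}, so most of the work is bookkeeping.

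First I would dispense with the equality $\varinjlim\add P = \varinjlim\Add P$: since a projective module is pure projective, this is an immediate instance of Corollary~\ref{selfsmall}(iii). So it suffices to identify $\varinjlim\add P$ with $\mathcal M$.

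For the inclusion $\varinjlim\add P \subseteq \mathcal M$, I would first observe that every $N \in \Add P$ satisfies $NI = N$. The key fact is that $PI = P$: by the dual basis lemma for projective modules, for each $p \in P$ there exist $p_\alpha \in P$ and $f_\alpha \in \Hom R P R$ with $p = \sum p_\alpha f_\alpha(p)$ and only finitely many $f_\alpha(p)$ nonzero, and each $f_\alpha(p) \in I$ by definition of the trace ideal; hence $P \subseteq PI$. This property passes to arbitrary direct sums and direct summands, so $NI = N$ for every $N \in \Add P$, hence for every $N \in \add P$. Now if $M = \varinjlim M_i$ with $M_i \in \add P$ and structure maps $f_i \colon M_i \to M$, then $M$ is flat as a direct limit of projectives, and $M = \sum_i \im f_i = \sum_i f_i(M_i I) \subseteq MI$, so $MI = M$. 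The reverse inclusion $\mathcal M \subseteq \varinjlim\add P$ is exactly the content of Theorem~\ref{main6}.

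Finally, for the closure assertions in (c): closure of $\mathcal M$ under direct limits follows because flatness is preserved under direct limits, and the identity $MI = M$ is preserved by the same computation as in the preceding paragraph. Closure of $\mathcal M$ under direct summands is equally direct: flatness passes to summands, and if $N = M \oplus M'$ satisfies $NI = N$, then $MI \oplus M'I = N = M \oplus M'$, and comparing the two decompositions via the projections forces $MI = M$. No step presents a serious obstacle — the substantive homological input has been absorbed into Proposition~\ref{diagram}, Lemma~\ref{factor}, and Theorem~\ref{main6}, and what remains is essentially verification that the algebraic condition $MI = M$ is well-behaved.
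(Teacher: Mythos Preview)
Your proof is correct and follows essentially the same approach as the paper's own proof: both invoke Corollary~\ref{selfsmall} for the equality of the two $\varinjlim$ classes, both use Theorem~\ref{main6} for the nontrivial inclusion, both observe $PI=P$ to obtain the easy inclusion, and both deduce the closure properties from the corresponding closure properties of flat modules and of the class $\{M\mid MI=M\}$. You simply spell out a few details (the dual basis argument for $PI=P$, and the explicit verifications for closure under direct limits and summands) that the paper leaves implicit.
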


\begin{proof}
The equality $\varinjlim \add P = \varinjlim \Add P$ was already proved in Corollary~\ref{selfsmall}. 
Theorem~\ref{main6} gives $\{M \in \ModR \mid M $ flat and $MI = M\} \subseteq \varinjlim \add P$.
On the other hand, since $PI = P$, every module $M \in \varinjlim \add P$ is flat and satisfies $MI = M$.
The last statement follows from the fact that the class of all flat modules is closed under 
direct summands and direct limits and the same is true for the class $\{M \in \ModR \mid MI = M\}$.
\end{proof}

The arguments used in the proof of Theorem~\ref{main6} can be used to extend Lenzing's results to classes of 
pure projective modules.

\begin{proposition} \label{pprojectives}
  Let $\mathcal C$ be a class of pure projective modules. Then $M \in \varinjlim \mathcal C$ if and only if 
	every $f \in \Hom R{X}{M}$ factors through a module of $\mathcal C$ whenever $X \in \rfmod R$.
\end{proposition}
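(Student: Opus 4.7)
The forward implication is a direct consequence of compactness of finitely presented modules in filtered colimits: if $M = \varinjlim_{i\in I} C_i$ with $C_i \in \mathcal C$ and $X \in \rfmod R$, then every $f\colon X\to M$ factors through some $C_i$, which lies in $\mathcal C$.

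For the converse, the plan is to adapt the strategy of Proposition~\ref{diagram}. Write $M = \varinjlim(F_u, f_{vu}\mid u\le v\in U)$ as a direct limit of a direct system of finitely presented modules via the canonical (Lenzing) presentation, arranging (by enlarging $U$ if necessary) for $(U,\le)$ to have no greatest element. The aim is to refine the partial order on $U$ so that all non-identity transitions factor through objects of $\mathcal C$, and then apply Lemma~\ref{change}. Given $u\in U$, use the hypothesis to factor $f_u = \beta\alpha$ with $C\in\mathcal C$, $\alpha\colon F_u\to C$, and $\beta\colon C\to M$. By Lemma~\ref{general}, the canonical epimorphism $\pi\colon\bigoplus_{u\in U} F_u \to M$ is locally split, in particular pure; pure projectivity of $C$ then produces a lift $\tilde\beta\colon C\to\bigoplus_{u\in U} F_u$ with $\pi\tilde\beta = \beta$. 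Since $F_u$ is finitely generated, $\tilde\beta\alpha(F_u)$ lies in a finite subsum $\bigoplus_{u'\in U_0} F_{u'}$. Picking $q\in U$ above all $u'\in U_0$ and letting $\sigma_q\colon\bigoplus_{u'\in U_0} F_{u'}\to F_q$ be the sum of the $f_{qu'}$, the map $\gamma := \sigma_q\tilde\beta\alpha\colon F_u\to F_q$ satisfies $f_q\gamma = \beta\alpha = f_u = f_q f_{qu}$. Since $F_u$ is finitely presented, condition (C3) for the Lenzing system yields $r_0\ge q$ with $f_{r_0 q}(\gamma - f_{qu}) = 0$, so that $f_{r_0 u} = (f_{r_0 q}\sigma_q\tilde\beta)\alpha$ factors through $C$; post-composing with further transitions preserves this for every $r\ge r_0$.

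With this in hand, I would define a coarser order $\le'$ on $U$ by setting $u\le' v$ iff $u = v$, or $u<v$ and $f_{vu}$ factors through some object of $\mathcal C$, exactly as in Proposition~\ref{diagram}. The preceding step makes $(U,\le')$ directed and without greatest element, and a short verification (apply the claim simultaneously to $u$ and to $v$, pick $r$ large for both, and subtract the generators $x - f_{ru}(x)$ and $f_{vu}(x) - f_{rv}(f_{vu}(x))$) shows that the extra relations in $(U,\le)$ compared to $(U,\le')$ are already contained in the kernel defining the colimit for $\le'$; hence $M = \varinjlim(F_u, f_{vu}\mid u\le' v)$. Lemma~\ref{change} then delivers $M\in\varinjlim\mathcal C$. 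The main obstacle I anticipate is the lifting step: this is the only point where pure projectivity of members of $\mathcal C$ is essentially used, and it is the interaction of pure projectivity of $C$ with the purity of the canonical direct-limit presentation that permits the reduction to a finite stage of the Lenzing system.
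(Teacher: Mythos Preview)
Your argument is correct and follows the same architecture as the paper: present $M$ as a direct limit of finitely presented modules, prove that for each $u$ all sufficiently late transitions $f_{ru}$ factor through an object of $\mathcal C$, refine to $\le'$, and invoke Lemma~\ref{change}. The one substantive difference is in how you establish the key claim. The paper uses the structural description of a pure projective $C$ as a direct summand of a direct sum $\bigoplus_t F_t$ of finitely presented modules, factors $\beta\alpha$ through a finite subsum $Q$, and then uses compactness of $Q$ to land in some $M_{j_0}$. You instead use pure projectivity of $C$ directly as a lifting property against the canonical pure epimorphism $\pi\colon\bigoplus_u F_u\to M$ from Lemma~\ref{general}, and then use finite generation of $F_u$ to cut down to a finite subsum. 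Both routes are natural; yours is arguably cleaner since it avoids unpacking the structure of $C$.

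Two small points to tighten. First, when you choose $q$, also require $q\ge u$ so that $f_{qu}$ is defined. Second, the map $\sigma_q\tilde\beta$ is not literally well-defined, since $\tilde\beta(C)$ need not lie in $\bigoplus_{u'\in U_0}F_{u'}$; insert the projection $\pi_0$ onto that finite subsum and write $f_{r_0 u}=(f_{r_0 q}\sigma_q\pi_0\tilde\beta)\alpha$, which is a genuine map $C\to F_{r_0}$ precomposed with~$\alpha$. With these cosmetic fixes the proof goes through.
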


\begin{proof}
  The direct implication holds for any class ${\mathcal C} \subseteq \rmod R$ (see for example \cite[Lemma~2.8]{GT}).
	
	Conversely, consider a direct system $\mathfrak D = (M_i, f_{ji} \mid i \leq j \in I)$ of finitely presented modules 
	and its colimit $\varinjlim \mathfrak D = (M,f_i(i \in I))$. Assume that for every $X \in \rfmod R$ and $f \in \Hom R{X}{M}$
	there exist $C \in \mathcal C$, $\alpha \in \Hom R{X}{C}$, $\beta \in \Hom R{C}{M}$ satisfying $f = \beta \alpha$.
	
	We claim that for every $i \in I$ there exists $i\leq j \in I$ such that $f_{ki}$ factors through an object of $\mathcal{C}$
	for every $j \leq k \in I$: Fix $i \in I$.
	By our assumption, $f_i = \beta \alpha$ for some $C \in \mathcal C$, $\alpha \in \Hom R{M_i}{C}$, $\beta \in \Hom R{C}{M}$.
	Since $C$ is pure projective, $C$ is a direct summand of $P = \oplus_{t \in T} F_t$, where $F_t \in \rfmod R$ for every $t\in T$.
	Obviously there are $\beta' \in \Hom R {C}{P}, h \in \Hom R {P}{M}$ such that $\beta = h\beta'$. Since $M_i$ is finitely generated, 
	there exists a finite set $T_0 \subseteq T$ such that $\im \beta'\alpha \subseteq Q = \oplus_{t \in T_0} F_t$. Let 
	$\pi_Q \in \Hom R{P}{Q}, \iota_Q \in \Hom R{Q}{P}$ satisfy $\pi_Q\iota_Q = 1_Q$. Then 
	$f_i = h\beta'\alpha = (h\iota_Q \pi_Q)(\beta'\alpha)$ is a factorization of $f_i$ through $Q$. Since $Q$ is finitely 
	presented, $h\iota_Q$ factors through some $f_{j_0}$, $j_0 \in  I$. Since $I$ is directed, we may assume  
	$j_0 \geq i$. Let $\gamma \in \Hom R{Q}{M_{j_0}}$ be such that $h\iota_Q = f_{j_0}\gamma$. 
	Therefore $f_i =f_{j_0} (\gamma \pi_Q\beta'\alpha)$. Let $\delta = \gamma\pi_Q\beta'\alpha \in \Hom R{M_i}{M_{j_0}}$ and note
	that $\delta$ factors through $C$. Since $f_{j_0}(\delta - f_{j_0i}) = f_i - f_i = 0$, by (C3), there exists $j_0 \leq j \in I$
	such that $f_{jj_0}(\delta - f_{j_0i}) = 0$. Finally, if $j \leq k \in I$, then $f_{ki} = f_{kj}f_{jj_0}f_{j_0i} = f_{kj_0}\delta$
	factors through $C$. This finishes the proof of the claim. 
	
	The rest of the proof is the same as in  Theorem~\ref{main6}: Define $(I,\leq')$ by $i \leq' j$ if either $i = j$ or $i < j$ and 
	$f_{ji}$ factors through an object of $\mathcal C$. The claim implies that $(I,\leq')$ is directed.
	It is easy to verify (C1)-(C3) to deduce that $\mathfrak E = (M_i,f_{ji} \mid i \leq' j \in I)$ satisfies
	$\varinjlim \mathfrak E = (M,f_i(i \in I))$. It remains to apply Lemma~\ref{change} to the direct system $\mathfrak E$ to get 
	$M \in \varinjlim \mathcal C$ (note that if $I$ contains the largest element $i$, then $M_i$ is a direct summand 
	of some $C \in \mathcal C$).
\end{proof}

The characterization of $\varinjlim \mathcal C$ obtained in Proposition~\ref{pprojectives} for $\mathcal C \subseteq \Add \rfmod R$ 
allows an extension of  Lemma~\ref{lenzing}(i) for $\varinjlim$ of a class of pure projective modules.

\begin{corollary}\label{pureprojlim}
Let $\mathcal C$ be a class of pure projective modules closed under finite direct sums. Then $\varinjlim \mathcal C$
is closed under direct limits, pure epimorphic images, pure submodules and pure extensions. 
\end{corollary}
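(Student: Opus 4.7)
The plan is to verify all four closure properties by means of the factorization criterion supplied by Proposition~\ref{pprojectives}: $M\in\varinjlim\mathcal C$ if and only if every map $X\to M$ with $X\in\rfmod R$ factors through some $C\in\mathcal C$. In each case, the task reduces to showing that the target module continues to satisfy this factorization criterion.

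For \emph{direct limits}, a map $X\to\varinjlim M_i$ with $X$ finitely presented factors through some $M_i$ by finite presentation, which in turn factors through $\mathcal C$ by the hypothesis on $M_i$. For \emph{pure epimorphic images} $\pi\colon M\to D$, purity of the kernel together with finite presentation of $X$ supplies a lift of any $f\colon X\to D$ to $\tilde f\colon X\to M$; factoring $\tilde f$ through $\mathcal C$ and composing with $\pi$ delivers the desired factorization of $f$.

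The key difficulty is closure under \emph{pure submodules}. Given $N\subseteq M$ pure, $M\in\varinjlim\mathcal C$, and $f\colon X\to N$ with $X$ f.p., first factor $\iota f=\beta\alpha$ through some $C\in\mathcal C$. Since $C$ is pure projective and hence a direct summand of $\bigoplus_t F_t$ with $F_t$ finitely presented, the image $\alpha(X)$ sits inside a finite subsum $Q_1$, yielding $\hat\alpha_1\colon X\to Q_1$ factoring through $C$, and a map $g_1^*\colon Q_1\to C\to M$ with $g_1^*\hat\alpha_1=\iota f$. Using purity of $N\subseteq M$ combined with the finite presentation of the cokernel $Q_1/\hat\alpha_1(X)$, the map $\pi g_1^*\colon Q_1\to M/N$ (which vanishes on $\hat\alpha_1(X)$) lifts through that cokernel to $t_1\colon Q_1\to M$ satisfying $t_1\hat\alpha_1=0$ and $\pi t_1=\pi g_1^*$; then $g_1^*-t_1$ has image in $N$ and yields $g_1\colon Q_1\to N$ with $g_1\hat\alpha_1=f$. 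Iterating this construction on each $g_n\colon Q_n\to N$ produces a chain $X=Q_0\xrightarrow{\hat\alpha_1}Q_1\xrightarrow{\hat\alpha_2}Q_2\to\cdots$ of finitely presented modules in which every transition map factors through an object of $\mathcal C$, together with compatible maps $g_n\colon Q_n\to N$. Lemma~\ref{change} then places $L:=\varinjlim Q_n$ inside $\varinjlim\mathcal C$; the $g_n$'s assemble into $g_\infty\colon L\to N$ with $f=g_\infty\circ(X\to L)$; and a final application of Proposition~\ref{pprojectives} to $L$ factors the canonical $X\to L$ through some $C^*\in\mathcal C$, producing the desired factorization of $f$ through $\mathcal C$ with target~$N$.

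Closure under \emph{pure extensions} follows by a variant of the same idea. Given $0\to A\to B\xrightarrow{\pi} D\to 0$ pure with $A,D\in\varinjlim\mathcal C$ and $f\colon X\to B$ finitely presented, factor $\pi f\colon X\to D$ through some $C\in\mathcal C$, obtaining $\alpha\colon X\to C$ and $\beta_D\colon C\to D$ with $\pi f=\beta_D\alpha$; pure projectivity of $C$ combined with purity of $A\subseteq B$ lifts $\beta_D$ to $\beta_B\colon C\to B$. The correction $\beta_B\alpha-f$ takes values in $A$ and, by $A\in\varinjlim\mathcal C$, factors through some $C'\in\mathcal C$; closure of $\mathcal C$ under finite direct sums combines the two contributions into a single factorization of $f$ through $C\oplus C'\in\mathcal C$, completing the argument.
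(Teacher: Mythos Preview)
Your proof is correct, and for direct limits, pure epimorphic images, and pure extensions it matches the paper's argument essentially verbatim (including the $C\oplus C'$ trick for extensions).

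For pure submodules, however, you take an unnecessary detour. You yourself observe that $\hat\alpha_1\colon X\to Q_1$ factors through $C$, say $\hat\alpha_1=\psi\alpha$ with $\alpha\colon X\to C$ and $\psi\colon C\to Q_1$. Once you have redirected $g_1^*$ to $g_1\colon Q_1\to N$ with $g_1\hat\alpha_1=f$, you are already done: $f=(g_1\psi)\circ\alpha$ is a factorization of $f$ through $C\in\mathcal C$ with target $N$. The paper's proof is precisely this single step (in their notation, $h=u'\beta'\alpha$ with $\alpha\colon X\to C$), using the same purity/finite-presentation lifting you perform. Your subsequent iteration building the chain $Q_0\to Q_1\to Q_2\to\cdots$, the appeal to Lemma~\ref{change} to place $L=\varinjlim Q_n$ in $\varinjlim\mathcal C$, and the final invocation of Proposition~\ref{pprojectives} are all correct but entirely superfluous. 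What your longer route buys is nothing extra; it merely re-derives, via a limiting process, the factorization that was already in hand after one step.
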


\begin{proof}
Consider a pure exact sequence of modules $$0 \to K \stackrel{\mu}{\to} L \stackrel{\varrho}{\to} M \to 0\,.$$
Assume first that $L \in \varinjlim \mathcal{C}$. If $X \in \rfmod{R}$ and $f \in \Hom R{X}{M}$, there exists
$g \in \Hom R{X}{L}$ such that $f = \varrho g$. By Proposition~\ref{pprojectives}, $g$ factors through an object 
of $\mathcal C$ and so does $f$. Therefore $M \in \varinjlim \mathcal{C}$.

Consider now $X \in \rfmod{R}$ and $h \in \Hom R{X}{K}$. Since $L \in \varinjlim \mathcal C$, there exist 
$C \in \mathcal C$, $\alpha \in \Hom R{X}{C},\beta \in \Hom R{C}{L}$ such that $\beta\alpha = \mu h$. In the proof of 
Proposition~\ref{pprojectives} we showed that there are $Q \in \rfmod{R}$, $\beta' \in \Hom R {C}{Q}$ and $u \in \Hom R{Q}{L}$ 
such that $\mu h = u \beta'\alpha$. We claim that there exists $u' \in \Hom R{Q}{K}$ such that $h = u'\beta'\alpha$
which gives a factorization of $h$ through $\mathcal C$. 

Consider the following commutative diagram with exact rows
$$\begin{CD}
0@>>> X/\Ker {\beta'\alpha}@>{\nu}>> Q @>\pi>> Q/ \im \beta'\alpha @>>> 0\\
@. @V{h'}VV @V{u}VV @V\overline{u}VV@.\\
0@>>> K @>\mu>> L  @> \varrho>> M @>>> 0,\\
\end{CD}$$
where $\nu(x+ \Ker {\beta'\alpha}) = \beta'\alpha(x),h'(x+\Ker {\beta'\alpha}) = h(x)$ for every $x\in X$, 
$\pi$ is the canonical projection and $\overline{u}$ is induced by $u$. Since the bottom sequence is pure 
and $Q/\im \beta'\alpha$ is finitely presented, $\overline{u}$ factors through $\varrho$. By \cite[Lemma~8.4]{FS},
there exists $u'\in \Hom R{Q}{K}$ such that $u'\nu = h'$. Compose this equality with the canonical projection 
of $X$ onto $X/\Ker{\beta'\alpha}$ to obtain $h = u'\beta'\alpha$.

Conversely, assume $K,M \in \varinjlim \mathcal C$. Let $f \in \Hom R{X}{L}$ for some finitely presented module $X$.
Then there exist $C_1 \in \mathcal C$, $\alpha \in \Hom R{X}{C_1}$, $\beta \in \Hom R{C_1}{M}$ such that 
$\varrho f = \beta\alpha$. Since $C_1$ is pure projective, there exists $\gamma \in \Hom R{C_1}{L}$ such that 
$\beta = \varrho \gamma$. Note that $\varrho (f - \gamma\alpha) = 0$, therefore there exists $\delta \in \Hom R{X}{K}$ such
that $\mu \delta = f - \gamma\alpha $. Since $K \in \varinjlim \mathcal C$, there are $C_2 \in \mathcal C$, 
$\varepsilon \in \Hom R{X}{C_2}, \eta \in \Hom R{C_2}{K}$ such that $\delta = \eta \varepsilon$.  
Note that $\left(\begin{array}{c}\alpha \\ \varepsilon \end{array}\right)  \in \Hom R{X}{C_1 \oplus C_2}$ and 
$(\gamma,\mu\eta) \in \Hom R{C_1 \oplus C_2}{L}$ give a factorization of $f$ through $C_1 \oplus C_2 \in \mathcal C$.
\end{proof}

We will finish this section by considering the case of commutative rings and countably generated projective modules that coincide with their trace ideals. A basic example of this kind, over the ring $C \langle 0,1 \rangle$ of all continuous real functions on the closed unit interval $\langle 0,1 \rangle$, was constructed by Kaplansky - we will further discuss the particular case of $C \langle 0,1 \rangle$ in Example \ref{exKap} below.
         
\begin{example}\label{pureid} Let $R$ be a commutative ring. By \cite[2.12]{HP1}, countably generated pure ideals of $R$ coincide with the trace ideals of countably generated projective modules. Moreover, by \cite[Lemme 2]{La}, they also coincide with the ideals of $R$ generated by countable sets $\{ f_n \mid n < \omega \}$ of elements of $R$ such that $f_{n+1}f_n = f_n$ for each $n < \omega$. 

Let $I$ be such an ideal of $R$ and $\{ f_n \mid n < \omega \}$ be a generating set of $I$ as above. Notice that since $R/I$ is a countably presented flat module, $R/I$ has projective dimension $\leq 1$, whence $I$ is projective. Notice that $I$ is its own trace ideal, so by Corollary \ref{project}, $\varinjlim \add I = \varinjlim \Add I = 	\{M \in \ModR \mid M $ flat and $MI = M\}$.  

Let $S = \End {I_R}$. Since $R$ is commutative, the map $\mu : r \to -.r$ is a ring homomorphism from $R$ into $S$ which induces a functor from $\rmod S$ into $\rmod R$ by restriction of scalars. Also the ring $S$ is commutative, because for all $s, s^\prime \in S$ and $n < \omega$,
$$ss^\prime (f_n) = ss^\prime (f_n.f_{n+1}) = s(f_{n+1}.s^\prime(f_n)) = s(f_{n+1}).s^\prime(f_n) = f_{n+1}.s(f_{n}).s^\prime(f_{n}) =$$ 
$$ = f_{n+1}.s^\prime(f_{n}).s(f_{n}) = \dots = s^\prime(s(f_n)).$$ 
Moreover, the restriction of $\mu : r \to -.r$ to $I$ is monic, since $r = f_n.r^\prime$ and $f_{n+1}.r = 0$ imply $r = 0$. Also, $s(x.x^\prime) = x.s(x^\prime)$ for all $x, x^\prime \in I$ and $s \in S$, whence $\mu(I)$ is an ideal in $S$, and $\mu \restriction I$ is an $S$-module isomorphism of $I$ on to $\mu(I)$. As $\mu(I)$ is generated by the set $\{ \mu(f_n) \mid n < \omega \}$, we infer from \cite[Lemme 2]{La} (or \cite[2.12]{HP1}) that $\mu(I)$ is a pure ideal in $S$, so $I \cong \mu(I)$ is a projective $S$-module.  

As $S$ is commutative, $S = \End {I_R} = \End {I_S}$. It follows that $R$- and $S$-homomorphisms between arbitrary direct sums of copies of $I$ coincide. Thus the class $\varinjlim \add I$ is the same whether computed in $\rmod R$ or $\rmod S$. 

The former way yields $\varinjlim \add I = \{ F \otimes_S I \mid \, F \hbox{ a flat $S$-module }\}$ by Theorem \ref{limadd}. By the Flat Test Lemma \cite[19.17]{AF}, $F \otimes_S I \cong F.I$ as $S$-modules. Since $I^2 = I$, and $F \otimes_S I$ is a flat $S$-module whenever $F$ is such, we see that $\varinjlim \add I$ coincides with the class of all flat $S$-modules $F$ such that $F.I = F$. This is exactly what Corollary \ref{project} gives when $\varinjlim \add I$ is computed the latter way, i.e., in $\rmod S$.     
\end{example}

Example \ref{pureid} applies to all pure ideals $I$ in the ring of all continuous real functions on $\langle 0 , 1 \rangle$. In this particular case, we will determine the structure of the endomorphism rings of these ideals:

\begin{example}\label{exKap} Let $R = C \langle 0,1 \rangle$ be the ring of all continuous functions from the closed unit interval $\langle 0,1 \rangle$ into $\mathbb R$ with the ring operations defined pointwise (see \cite{deM}, \cite{ON}, or \cite[\S 9]{GPR}). Let $I$ be a pure ideal in $R$. By \cite[4.1(a)]{V}, $I$ is countably generated, whence $I$ fits the setting of Example \ref{pureid}. By Corollary \ref{project}, $\varinjlim \add I = \varinjlim \Add I$ is the class consisting of all flat modules $M$ such that $M.I = M$.   

For $f \in R$ we will denote by $z(f)$ the (closed) zero set of $f$, i.e., $z(f) = \{ x \in \langle 0 , 1 \rangle \mid f(x) = 0 \}$, and by $s(f) = \langle 0 , 1 \rangle \setminus z(f)$ the (open) support of $f$. Recall \cite[4.1(a)]{V} that pure ideals in $R$ correspond 1-1 to closed subsets of $\langle 0 , 1 \rangle$: a pure ideal $I$ defines the closed subset $X_I = \bigcap_{f \in I} z(f)$ while a closed subset $X$ defines the pure ideal $I_X$ consisting of all $f \in R$ such that $z(f)$ contains some open neighborhood of $X$. (The basic example of Kaplansky mentioned above is the particular case when $X = \{ 0 \}$). 

\smallskip

We will now determine the structure of the ring $S = \End {I_R}$ for an arbitrary pure ideal $I$ in $R$. By \ref{pureid}, $I$ is a projective module. 

The case when $I$ is finitely generated is trivial: $I$ is then a free module of rank $1$ by \cite[9.6(1)]{GPR}, so $S \cong R$.  

If $I$ is not finitely generated, then by (the proof of) \cite[9.6(2)]{GPR}, there is a countable set $J$ such that $I = \bigoplus_{j \in J} I_j$ where for each $j \in J$, $I_j$ is an indecomposable countably, but not finitely generated pure ideal in $R$, $X_{I_j} = \langle 0 , 1 \rangle \setminus O_j$, and $\{ O_j \mid j \in J \}$ is a set of pairwise disjoint open intervals in $\langle 0 , 1 \rangle$. 

By \cite[Lemme 2]{La}, for each $j \in J$, there is a subset $\{ f_{j,n} \mid n < \omega \}$ in $I_j$ such that $f_{j,n+1}f_{j,n} = f_{j,n}$ for each $n < \omega$, and $I_j$ is the union of the strictly increasing chain of ideals $( f_{j,n} R \mid n < \omega )$. Since $I_j = I_{X_{I_j}}$, $f_{j,n}$ vanishes at an open neighborhood of $\langle 0 , 1 \rangle \setminus O_j$. In particular, $s(f_{j,m}) \subseteq O_j$. Since $O_j \cap O_k = \emptyset$, we have $f_{j,m}.f_{k,n} = 0$ for all $m,n < \omega$ and all $j \neq k \in J$. It follows that $\Hom R{I_j}{I_k} = 0$ for all $j \neq k \in J$, whence $S \cong \prod_{j \in J} \End {I_j}$. 

\smallskip

It remains to compute $S = \End {I_R}$ when $I$ is a pure ideal in $R$ such that $X_I$ is the complement of a single open interval $\emptyset \neq O_I \subsetneq \langle 0 , 1 \rangle$. So either $O_I = (a,b)$, or $O_I = (a,1 \rangle$, or  $O_I = \langle 0, b )$, where $0 \leq a < b \leq 1$. In the first case, when $O_I = (a,b)$, we choose a strictly decreasing sequence $\bar{a} = ( a_n \mid n < \omega )$ and a strictly increasing sequence $\bar{b} = ( b_n \mid n < \omega )$ such that $a_0 < b_0$, $a =  \inf_{n < \omega} a_n$, $b = \sup_{n < \omega} b_n$. In the second case of $O_I = (a,1 \rangle$, we chose $\bar{a}$ as above, but let $\bar b$ be the constant sequence $b_n = 1$ ($n < \omega$). Symmetrically, in the third case of $O_I = \langle 0 , b )$, we chose $\bar{b}$ as above, but let $\bar a$ be the constant sequence $a_n = 0$ ($n < \omega$).

For each $n < \omega$, let $f_n \in R$ be such that $s(f_n) = (a_{n+1},b_{n+1})$ in the first case, and $s(f_n) = (a_{n+1},b_{n+1}\rangle$ and $s(f_n) = \langle a_{n+1},b_{n+1})$ in the second and third cases, and moreover $f_n \restriction \langle a_n,b_n \rangle = 1$ in all three cases. Then $f_{n+1} f_n = f_n$, so $I^\prime = \sum_{n < \omega} f_n R$ is a pure ideal in $R$ by \cite[Lemma 2]{La}. Since $O_I = \bigcup_{n < \omega} s(f_n)$, we infer that $X_I = \bigcap_{n < \omega} z(f_n) = X_{I^\prime}$. Thus $I^\prime = I$.    

Notice that $I$ is not self-small: indeed, for each $n < \omega$, let $0 \neq h_n \in I$ be such that $s(h_n) \subseteq ( a_{n+1},a_n )$ in the first and second cases and $s(h_n) \subseteq ( b_n, b_{n+1} )$ in the third case. Then $h_n.f_m = 0$ for $m < n < \omega$ and $h_n.f_m = h_n$ for $n < m < \omega$. So $z : I \to I^{(\omega)}$ defined by $z(f_n) = (f_n,h_0,\dots,h_{n-1},h_n.f_n,0,\dots )$ satisfies $f_n z(f_{n+1}) = z(f_n)$ for each $n < \omega$, whence $z$ defines an $R$-homomorphism proving that $I$ not self-small. 

\smallskip

Denote by $T$ the ring of all continuous functions from $O_I$ into $\mathbb R$. We will prove that $S \cong T$.

For each $g \in S$, we define $\varphi (g) \in T$ by $\varphi (g) \restriction s(f_n)  = g(f_{n+1}) \restriction s(f_n)$ for each $n < \omega$. This is possible since for all $n+1 < m < \omega$, $g(f_m) \restriction s(f_n) = g(f_{n+1}) \restriction s(f_n)$, because $f_{n+1} = f_{n+1} . f_m$, whence $g(f_{n+1}) = f_{n+1} . g(f_m)$, and $f_{n+1} \restriction s(f_n) = 1$. Conversely, for $t \in T$, we define $\psi(t) \in S$ by $\psi(t)(f_n) = f_n . t$ at $s(f_n) \subsetneq O_I$, and $\psi(t)(f_n) = 0$ at $z(f_n)$. This is correct since $\psi(t)(f_n) \in f_n R \subseteq I$, $\psi(t)(f_{n+1}).f_n = f_{n+1}.t.f_n = f_n . t = \psi(t) (f_n)$ at $s(f_n)$, and also $\psi(t) (f_{n+1}).f_n = 0 = \psi(t)(f_n)$ at $z(f_n)$. 

Notice that for all $g \in S$ and $n < \omega$, $g(f_n) = f_n . g(f_{n+1}) = f_n . \varphi(g) = \psi(\varphi(g))(f_n)$ at $s(f_n)$ while all these maps vanish at $z(f_n)$, so $g = \psi \varphi (g)$. Conversely, if $t \in T$, then for each $n < \omega$, $\varphi \psi (t) \restriction s(f_n) = \psi(t) (f_{n+1}) \restriction  s(f_n) = f_{n+1} . t \restriction  s(f_n) = t \restriction s(f_n)$, whence $\varphi \psi (t) = t$. 

It follows that $\varphi$ and $\psi$ are mutually inverse ring isomorphisms of $S$ and $T$. Let $\mu : R \to S$ be the canonical ring homomorphism $\mu : r \to -.r$, and $\nu : R \to T$ be the restriction ring homomorphism $r \to r \restriction O_I$. Then $\mu = \psi \nu$, and $\nu = \varphi \mu$, so the following diagram is commutative:
\[
\xymatrix{{R} \ar@{<->}[d]_{id} \ar[r]^{\mu} & {S} \ar@<0,5ex>[d]^{\varphi} \\ {R} \ar[r]^{\nu} & {T} \ar@<0,5ex>[u]^{\psi}} 
\]
\end{example}

\medskip

\section{The tilting case}\label{tilting}

We will now consider the particular case of (infinitely generated) tilting modules $T$ in more detail. If $T$ is $0$-tilting, i.e., $T$ is a projective generator, then $R$ is isomorphic to a direct summand of $T^n$ for some $n > 0$, whence $\varinjlim{\add T} = \varinjlim{\Add T} = \widetilde{\Add T} = \mathcal F _0$. However, the situation is much less clear already for infinitely generated $1$-tilting modules. In order to cover the case of arbitrary $n$-tilting modules, it will be convenient to deal with a slightly more general setting: 
  
Recall \cite[13.20]{GT} that tilting cotorsion pairs are characterized as the hereditary cotorsion pairs $\mathfrak C = (\mathcal A, \mathcal B)$ such that the class $\mathcal B$ is closed under direct limits, and $\mathcal A \subseteq \mathcal P _n$ for some $n < \omega$. The more general setting that we will be interested in here will neither require $\mathfrak C$ to be hereditary, nor $\mathcal A$ to consist of modules of bounded projective dimension. In particular, $\mathcal A$ will be allowed to contain modules of infinite projective dimension. 

\medskip
We fix our general notation for the rest of this section as follows: \emph{$\mathfrak C = (\mathcal A, \mathcal B)$ will denote a cotorsion pair in $\rmod R$ such that the class $\mathcal B$ is closed under direct limits.} By \cite[5.4]{AST}, there is a module $K \in \rmod R$ such that $\Add K = \Ker{\mathfrak C} = \mathcal A \cap \mathcal B$. Also, by \cite[6.1]{Sa}, $\mathcal B = ({\mathcal A}^{\leq \omega})^\perp$, and $\mathcal B$ is a definable class of modules, hence $\mathcal B = \widetilde{\mathcal B}$, cf.\ \cite[6.9]{GT}. Moreover, by \cite[5.3]{Sa} and \cite[3.3]{AST}, there is an \emph{elementary cogenerator} $C$ for $\mathcal B$, that is, a pure injective module $C \in \mathcal B$ that cogenerates $\rmod R$, such that each module from $\mathcal B$ is a pure submodule in a product of copies of $C$, and $\widetilde{\mathcal A} = {}^\perp C$. 

Two modules $M$ and $M^\prime$ will be called \emph{equivalent} in case $\Add M = \Add M^\prime$.

\medskip
If $\mathfrak C$ is a tilting cotorsion pair induced by a tilting module $T$, then we can just take $K = T$, and $\Ker{\mathfrak C} = \Add T$ completely determines $\mathfrak C$, as $\mathcal B = (\Ker{\mathfrak C})^{\perp_{\infty}}$. In particular, different tilting cotorsion pairs are induced by non-equivalent tilting modules. However, in our general setting, it may happen that $\mathfrak C \neq \mathfrak C ^\prime$, even if $\Ker{\mathfrak C} = \Ker{\mathfrak C ^\prime}$: 

\begin{example}\label{IwanagaG} Let $R$ be an Iwanaga-Gorenstein ring of infinite global dimension (e.g., a commutative noetherian local Gorenstein ring which is not regular). Let $\mathcal G \mathcal P$ denote the class of all Gorenstein projective modules, and $\mathcal I$ the class of all modules of finite injective dimension. Since $R$ is right noetherian, $\mathcal I$ is closed under direct limits, see \cite[6.7]{GT}. Consider the cotorsion pairs $\mathfrak C = (\mathcal G \mathcal P, \mathcal I)$ and $\mathfrak C ^\prime = (\mathcal P _0, \rmod R)$. Then $\mathfrak C \neq \mathfrak C ^\prime$, but $\Ker{ \mathfrak C} = \Ker{\mathfrak C ^\prime} = \mathcal P_0$, cf.\ \cite[8.13]{GT}.
\end{example}  

In the general notation above, we have

\begin{lemma}\label{aht+} $\varinjlim \Add K = \overline{\Add K} \subseteq \widetilde{\Add K} = \widetilde{\mathcal A} \cap \mathcal B$. 

Moreover, $\widetilde{\Add K}$ is a covering class closed under extensions. 
\end{lemma}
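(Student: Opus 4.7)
My plan is to dispatch the four separate claims in order.

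The equality $\varinjlim \Add K = \overline{\Add K}$ is immediate from Theorem~\ref{describe} applied to $\Add K$, which is closed under arbitrary direct sums. The inclusion $\overline{\Add K} \subseteq \widetilde{\Add K}$ was already noted in Definition~\ref{locally}. For $\widetilde{\Add K} \subseteq \widetilde{\mathcal A} \cap \mathcal B$, I will use $\Add K \subseteq \mathcal A$ on one side and $\Add K \subseteq \mathcal B = \widetilde{\mathcal B}$ (since $\mathcal B$ is definable, hence closed under pure epimorphic images) on the other.

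The main step is the reverse inclusion $\widetilde{\mathcal A} \cap \mathcal B \subseteq \widetilde{\Add K}$. Given $M$ in this intersection, I first observe that since $\mathcal B = (\mathcal A^{\leq \omega})^\perp$ is generated by a set of modules (up to isomorphism), the cotorsion pair $\mathfrak C$ is complete by the Eklof--Trlifaj theorem. Hence there is a special $\mathcal A$-precover sequence $0 \to B \to A \overset{q}\to M \to 0$ with $A \in \mathcal A$ and $B \in \mathcal B$. Since $M \in \mathcal B$ and $\mathcal B$ is closed under extensions (being the right side of a cotorsion pair), I get $A \in \mathcal A \cap \mathcal B = \Add K$. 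It remains to upgrade $q$ to a pure epimorphism. From $M \in \widetilde{\mathcal A}$ I pick a pure epimorphism $p : A_0 \to M$ with $A_0 \in \mathcal A$. Since $\Ext 1R{A_0}{B} = 0$, the map $p$ lifts through $q$ to some $\tilde p : A_0 \to A$ with $q \tilde p = p$. Now any morphism from a finitely presented module $F$ to $M$ lifts first through $p$ (by purity) to a map $F \to A_0$ and then composes with $\tilde p$ to yield a lifting through $q$; hence $q$ is itself a pure epimorphism, and $M \in \widetilde{\Add K}$.

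For the "Moreover" claim, $\widetilde{\Add K}$ is closed under arbitrary direct sums (trivially) and under pure epimorphic images (as a composition of pure epimorphisms is pure), so by Lemma~\ref{holmjorg} it is a covering class. Closure under extensions follows from the equality $\widetilde{\Add K} = \widetilde{\mathcal A} \cap \mathcal B$ just established: both $\widetilde{\mathcal A} = {}^\perp C$ and $\mathcal B$ are closed under extensions (the former as an Ext-orthogonal class, the latter as the right side of a cotorsion pair).

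The hard part is the lifting step in the middle paragraph — the rest is essentially formal manipulation. A small subtlety worth checking is that the cotorsion pair generated by $\mathcal A^{\leq\omega}$ is genuinely complete, but this is standard since $\mathcal A^{\leq\omega}$ has a representative set.
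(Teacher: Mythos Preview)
Your proof is correct and follows essentially the same approach as the paper's: both take a special $\mathcal A$-precover $0 \to B \to A \to M \to 0$, observe $A \in \mathcal A \cap \mathcal B = \Add K$ since $M \in \mathcal B$, and then factor a pure epimorphism $A_0 \twoheadrightarrow M$ (with $A_0 \in \mathcal A$) through the precover to deduce that the precover itself is pure. The only differences are expository---you justify completeness of $\mathfrak C$ explicitly and spell out the purity argument via liftings of finitely presented modules, whereas the paper invokes the precovering property directly and leaves the purity implication to the reader.
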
 
\begin{proof} We have already noticed that the classes $\widetilde{\mathcal A} = {}^\perp C$ and $\mathcal B$ are closed under extensions and pure epimorphic images. In view of Lemma \ref{holmjorg} and Theorem \ref{describe}, it only remains to prove the inclusion $\widetilde{\mathcal A} \cap \mathcal B \subseteq \widetilde{\Add K}$.

Let $M \in \widetilde{\mathcal A} \cap \mathcal B$. Consider a special $\mathcal A$-precover $\rho$ of $M$, and the short exact sequence $0 \to B \to A \overset{\rho}\to M \to 0$. Since $M \in \mathcal B$, $A \in \mathcal A \cap \mathcal B = \Add K$. By the precovering property, the canonical presentation of $M$ as a pure epimorphic image of a module from $\mathcal A$ factorizes through $\rho$, whence $\rho$ is a pure epimorphism. Thus $M \in \widetilde{\Add K}$.    
\end{proof}

Of course, we always have $\varinjlim{\add K} \subseteq \varinjlim{\Add K} \subseteq \widetilde{\Add K}$. If these inclusions are equalities, then by the results above, the class $\mathcal L = \varinjlim{\add K}$ is a deconstructible class closed under direct limits and extensions, and $\mathcal L$ is covering. We will consider several instances when this occurs. The first one is an immediate corollary of Lemmas \ref{sigma} and \ref{selfsmall}:

\begin{corollary}\label{artinfg}
Let $K$ be a finitely generated $\sum$-pure split module (e.g., let $K$ be a finitely generated tilting module over an artin algebra). Then $\Add K = \varinjlim{\add K} = \varinjlim{\Add K} = \widetilde{\Add K}$. 
\end{corollary}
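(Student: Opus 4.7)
The plan is to assemble this corollary as an immediate combination of the two results referenced in the statement, namely Lemma \ref{sigma} and Corollary \ref{selfsmall}(i). The four-way chain $\Add K \subseteq \varinjlim \add K \subseteq \varinjlim \Add K \subseteq \widetilde{\Add K}$ always holds, so the task is only to pinch the inclusions together at both ends.

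First I would invoke the hypothesis that $K$ is finitely generated. As observed in the Preliminaries, every finitely generated module is small, and in particular self-small. Therefore Corollary \ref{selfsmall}(i) applies and yields the middle equality $\varinjlim \add K = \varinjlim \Add K$. Next, using the $\sum$-pure split hypothesis, Lemma \ref{sigma} directly gives $\Add K = \varinjlim \Add K = \widetilde{\Add K}$. Chaining these two statements collapses all four classes into one, proving the corollary.

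For the parenthetical example concerning a finitely generated tilting module $T$ over an artin algebra $R$, the only thing to verify is that the hypothesis of the corollary is met, i.e.\ that $T$ is $\sum$-pure split. Over an artin algebra every finitely generated module has finite length, hence is artinian; any artinian module is $\sum$-pure injective, and by \cite[2.32]{GT} every $\sum$-pure injective module is $\sum$-pure split. Thus $T$ qualifies and the equalities follow.

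There is really no obstacle to overcome: the corollary is purely a bookkeeping consequence of Lemma \ref{sigma} and Corollary \ref{selfsmall}(i), and the example is a one-line verification of the $\sum$-pure split hypothesis via $\sum$-pure injectivity of artinian modules.
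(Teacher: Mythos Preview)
Your proof is correct and matches the paper's approach exactly: the paper just declares the corollary immediate from Lemma~\ref{sigma} and Corollary~\ref{selfsmall}, which is precisely the combination you spell out. One small caveat: your opening assertion that the chain $\Add K \subseteq \varinjlim \add K \subseteq \varinjlim \Add K \subseteq \widetilde{\Add K}$ ``always holds'' is not justified at the first inclusion---whether $\Add K \subseteq \varinjlim \add K$ holds for arbitrary $K$ is not established in the paper and is tied to the open Problem~1---but fortunately your actual argument never uses that inclusion, so the proof is unaffected.
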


\medskip
We will now examine the problem of whether $\varinjlim \Add K = \widetilde{\Add K}$ in our general setting. Note that while we always have $\widetilde{\Add K} = \widetilde{\mathcal A} \cap \mathcal B$ by Lemma \ref{aht+}, and $\widetilde{\mathcal A} = \varinjlim (\mathcal A ^{< \omega})$ in case $K$ is tilting (see \cite[8.40]{GT}), the class $\mathcal B$, and hence also $\widetilde{\Add K}$, need not contain \emph{any} non-zero finitely generated modules, even if $K$ is tilting module of projective dimension $1$:

\begin{example}\label{i1} Let $R$ be a (commutative noetherian) regular local ring of Krull dimension $2$. Let $T$ be \emph{any} non-projective tilting module, and $(\mathcal A, \mathcal B)$ be the tilting cotorsion pair induced by $T$. Then $\mathcal B \subseteq \mathcal I _1$, so $\mathcal B \cap \rfmod R = \{ 0 \}$. Indeed, all non-zero finitely generated modules have injective dimension $2$. We refer to \cite[3.4]{PT} for more details. 

For an explicit instance of this phenomenon, let $\mathcal S$ be the set of all ideals of $R$ and $\mathcal A = \Filt {\mathcal S}$. Then $\mathcal A \subseteq \mathcal P_1$, and $(\mathcal A, \mathcal I_1)$ is a tilting cotorsion pair generated by a tilting module $T$ of projective dimension $1$. Since $R$ is a UFD, $\mathcal A ^{< \omega}$ is the class of all finitely generated torsion free modules, and $\varinjlim \mathcal A$ the class of all torsion-free modules. Hence $\widetilde{\Add T}$ is the class of all torsion free modules of injective dimension $\leq 1$, cf.\ \cite[5.4]{PT}.
\end{example}         

For countably presented modules in $\widetilde{\Add K}$, we have the following description:

\begin{lemma}\label{ctilde} Let $C \in ({\widetilde{\Add K}})^{\leq \omega}$. Then there exists a module $D \in \Add K$ such that $C \oplus D$ is a countable direct limit of modules from $\Add K$. 

In particular, if $\varinjlim \Add K$ is closed under direct summands then $C \in \varinjlim \Add K$.
\end{lemma}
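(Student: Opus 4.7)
The plan is to exhibit $C\oplus D$ as the direct limit of a countable chain of modules in $\Add K$. By $C\in\widetilde{\Add K}$ together with Lemma~\ref{aht+}, we may start from a pure short exact sequence
$$0 \to B \to A \overset{\pi}\to C \to 0$$
with $A\in\Add K$: a special $\mathcal A$-precover of $C$ exists since $\mathcal B$ is definable, and the hypothesis $C\in\mathcal B$ forces this precover to land in $\mathcal A\cap\mathcal B=\Add K$ and to be automatically a pure epimorphism. Write $C=\varinjlim_{n<\omega} C_n$ with $C_n$ finitely presented, $g_{n+1,n}\colon C_n\to C_{n+1}$ the transition maps, and $g_n\colon C_n\to C$ the structural maps to the colimit; by the purity of $\pi$ and the finite presentability of $C_n$, each $g_n$ lifts to $h_n\colon C_n\to A$ with $\pi h_n=g_n$.

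Setting $e_n:=h_{n+1}g_{n+1,n}-h_n\colon C_n\to A$, one has $\pi e_n=0$, so $e_n$ factors through $B$. Consider the direct system $(Y_n,\psi_n)_{n<\omega}$ with $Y_n=A\oplus C_n$ and transition maps
$$\psi_n(a,c)=(a+e_n(c),\,g_{n+1,n}(c)),$$
together with the compatible maps $\alpha_n(a,c)=(g_n(c),\,a-h_n(c))$ to $C\oplus A$. A short telescoping computation based on the identity $\sum_{k=n}^{m-1} e_k g_{k,n}=h_m g_{m,n}-h_n$ shows that the induced map $\varinjlim_n Y_n\to C\oplus A$ is an isomorphism.

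The main obstacle is that the modules $Y_n=A\oplus C_n$ are generally not in $\Add K$, since the $C_n$ are only finitely presented. To remedy this, I would introduce an auxiliary summand $D=A^{(\omega)}\in\Add K$ and modify the construction: set $X_n:=A\oplus A^{n+1}\in\Add K$ with transition maps $\phi_n\colon X_n\to X_{n+1}$ that shift the last $n+1$ copies of $A$ forward (the Eilenberg-swindle identification $A\oplus A^{(\omega)}\cong A^{(\omega)}$ producing the $A^{(\omega)}$ summand in the limit) while modifying the first $A$-factor by terms analogous to the discrepancies $e_n$ and using the lifts $h_n$ (producing the $C$ summand in the limit), so that $\varinjlim_n X_n\cong C\oplus A^{(\omega)}$. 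Equivalently, via Theorem~\ref{describe}, one may show $C\oplus D\in\overline{\Add K}$ by presenting the pure short exact sequence $0\to B\to A\oplus D\to C\oplus D\to 0$ and expressing $B\oplus 0\subseteq A\oplus D$ as a directed union of direct summands of $A\oplus D$ with complements in $\Add K$, the fresh copies of $A$ inside $D=A^{(\omega)}$ providing the room to absorb the countably generated submodule $B$ piece by piece. This replacement step is the delicate part of the argument, as the lifts $h_n$ need not be split monomorphisms, so the modified transition maps must be defined on all of $A$ rather than merely on $h_n(C_n)$. The final assertion then follows at once: if $\varinjlim\Add K$ is closed under direct summands, then $C$, being a direct summand of $C\oplus D\in\varinjlim\Add K$, belongs to $\varinjlim\Add K$.
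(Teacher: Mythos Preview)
Your proposal contains a genuine gap at precisely the point you yourself flag as delicate. The first half of your argument is correct: the direct system $Y_n = A \oplus C_n$ with the maps $\psi_n$ does have direct limit isomorphic to $C \oplus A$, via the telescoping identity you indicate. But, as you acknowledge, the $Y_n$ are not in $\Add K$, so this by itself proves nothing.

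Your proposed repair---replacing $Y_n$ by $X_n = A \oplus A^{n+1}$ with transition maps built from ``terms analogous to the discrepancies $e_n$''---is not actually defined. The maps $e_n\colon C_n \to A$ have domain $C_n$, not $A$, and the lifts $h_n\colon C_n \to A$ are neither split monomorphisms nor even monomorphisms in general, so there is no evident way to promote the data $(e_n, g_{n+1,n})$ to maps between copies of $A$ while retaining control of the direct limit. The alternative via Theorem~\ref{describe} (writing $B$ as a directed union of direct summands of $A \oplus A^{(\omega)}$ with complements in $\Add K$) is equally unjustified: nothing in your setup forces $B$ to admit such a decomposition, and $B$ need not even be countably generated, since $A$ may be arbitrarily large.

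The paper's proof takes a different route that avoids this obstacle entirely. Rather than starting from a special $\mathcal A$-precover of $C$ and lifting maps from arbitrary finitely presented modules through it, one uses that $C$, being countably presented and in $\widetilde{\mathcal A}$, is already a countable direct limit of modules $A_i \in \mathcal A^{\leq \omega}$ (this is \cite[3.4]{AST}). One then takes successive special $\mathcal B$-\emph{preenvelopes}: since each $A_i \in \mathcal A$, its special $\mathcal B$-preenvelope has middle term in $\mathcal A \cap \mathcal B = \Add K$. A pushout-and-preenvelope induction yields a direct system of short exact sequences $0 \to A_n \to K^{(\kappa_n)} \to A_n' \to 0$ with middle terms in $\Add K$ and right-hand connecting maps monomorphisms with cokernels in $\mathcal A$. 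The direct limit is then $0 \to C \to \varinjlim_n K^{(\kappa_n)} \to D \to 0$ with $D$ countably $\mathcal A$-filtered, hence $D \in \mathcal A$; since $C \in \mathcal B$, this sequence splits, and $D \in \mathcal A \cap \widetilde{\Add K} = \Add K$. The key difference from your approach is that by starting from a presentation of $C$ as a direct limit of modules \emph{already in $\mathcal A$} and going in the preenvelope direction, every term of the chain lies in $\Add K$ from the outset---no repair step is needed.
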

\begin{proof}  
First, by \cite[3.4]{AST}, $C$ is a Bass module over ${\mathcal A}^{\leq \omega}$, that is, $C = \varinjlim_{i < \omega} A_i$ for a countable direct system $(A_i, f_{i+1,i} \mid i < \omega )$, such that $A_i \in \mathcal A ^{\leq \omega}$ for each $i < \omega$. 

Consider the exact sequence $0 \to A_0 \overset{\nu_0}\to B_0 \to A_0^\prime \to 0$, where $\nu_0$ is a special $\mathcal B$-preenvelope of $A_0$. Then $B_0 \in \mathcal B$ and $A_0^\prime \in \mathcal A$, whence $B_0 \in \mathcal A \cap \mathcal B = \Add K$. Possibly adding an element of $\Add K$, we can w.l.o.g.\ assume that $B_0 = K^{(\kappa_0)}$ for a cardinal $\kappa_0 > 0$.

Taking the pushout of $\nu_0$ and $f_{10}$, we obtain the following commutative diagram

$$\begin{CD}
0@>>> A_0@>{\nu_0}>> K^{(\kappa_0)}@>>> A_0^\prime @>>> 0\\
@. @V{f_{10}}VV @VVV @| @.\\
0@>>> {A_1} @>>> M_0 @>>> A_0^\prime @>>> 0.\\
\end{CD}$$

As above, the special $\mathcal B$-preenvelope of $M_0$ induces an exact sequence $0 \to M_0 \to B_1 \to A_0^{\prime\prime} \to 0$ with $B_1 \in \mathcal B$ and $A_0^{\prime\prime} \in \mathcal A$. This yields another commutative diagram,    

$$\begin{CD}
0@>>> A_0@>{\nu_0}>> K^{(\kappa_0)}@>>> A_0^\prime @>>> 0\\
@. @V{f_{10}}VV @V{g_{10}}VV @V{h_{10}}VV @.\\
0@>>> {A_1} @>{\nu_1}>> B_1 @>>> A_1^\prime @>>> 0.\\
\end{CD}$$

where $h_{10}$ is a monomorphism and $A_1^\prime \in \mathcal A$, because the cokernel of $h_{10}$ is isomorphic to $B_1/M_0 \cong A_0^{\prime\prime}$. Thus $B_1 \in \Add K$, and again, w.l.o.g., $B_1 = K^{(\kappa_1)}$ for a cardinal $\kappa_1 > 0$. Proceeding by induction, we obtain a direct system of short exact sequences 

$$\begin{CD}
@.     \dots  @.    \dots  @. \dots     @.\\
@.     @VVV       @VVV   @VVV     @.\\
0@>>> A_n@>{\nu_n}>> K^{(\kappa_n)}@>>> A_n^\prime @>>> 0\\
@. @V{f_{n+1,n}}VV @V{g_{n+1,n}}VV @V{h_{n+1,n}}VV @.\\
0@>>> {A_{n+1}} @>{\nu_{n+1}}>>  K^{(\kappa_{n+1})}@>>> A_{n+1}^\prime @>>> 0.\\
@.     @VVV       @VVV   @VVV     @.\\
@.     \dots   @.   \dots @. \dots     @.
\end{CD}$$

where $h_{n+1,n}$ ($n < \omega$) are monomorphisms with cokernels in $\mathcal A$. Its direct limit is the sequence

$$0 \to C \to \varinjlim_{i < \omega} K^{(\kappa_i)} \to D  \to 0$$

with $D$ countably $\mathcal A$-filtered, hence $D \in \mathcal A$. Since $C \in \mathcal B$, the latter sequence splits, whence $D \in \mathcal A \cap \widetilde{\Add K} = \Add K$ by Lemma \ref{aht+}.
\end{proof} 

Though $\widetilde{\Add K}$ need not contain any non-zero finitely generated modules, in some cases, $\widetilde{\Add K} = \varinjlim {(\widetilde{\Add K})^{\leq \omega}}$. Moreover, in those cases one can express the modules in ${\widetilde{\Add K}}$ in a stronger form, as $\aleph_1$-directed unions, or $\aleph_1$-direct limits, of modules from ${(\widetilde{\Add K})^{\leq \omega}}$ rather than the ordinary directed unions or direct limits.

\begin{definition}\label{aleph1-directed} 
Let $M$ be a module. 

Let $\mathcal M$ be a set of countably presented submodules of $M$ satisfying that $\mathcal M$ is closed under unions of countable chains, and each countable subset of $M$ is contained in a member of $\mathcal M$. Then $\mathcal M$ is called an \emph{$\aleph_1$-dense} system of submodules of $M$.

Let $\mathcal S$ be a set of submodules of $M$ such that each countable subset of $\mathcal S$ is contained in a member of $\mathcal S$ and $M = \bigcup \mathcal S$. Then $M$ is an \emph{$\aleph_1$-directed union} of the direct system $(\mathcal S, \subseteq)$. For example, if $\mathcal M$ is an $\aleph_1$-dense system of submodules of $M$, then $M$ is the $\aleph_1$-directed union of the direct system $(\mathcal M, \subseteq)$.

Let $(I,\leq)$ be an \emph{$\aleph_1$-directed poset}, i.e., a directed poset such that for each countable subset $C \subseteq I$ there exists $d \in I$ such that $c \leq d$ for all $c \in C$. A direct system $\mathcal D$ of modules from a class $\mathcal C$ is called \emph{$\aleph_1$-directed} provided that the underlying poset of the system, $(I,\leq)$, is $\aleph_1$-directed. Then $L = \varinjlim \mathcal D$ is called an \emph{$\aleph_1$-direct limit} of modules from $\mathcal C$.           
\end{definition}

\begin{lemma}\label{gtilde}  
\begin{enumerate}
\item Assume that the ring $R$ is countable. Then each module $M \in {\widetilde{\Add K}}$ is an $\aleph_1$-directed union of a direct system of its submodules, $\mathcal M$, such that $\mathcal M \subseteq (\widetilde{\Add K})^{\leq \omega}$.
\item Assume that $\mathcal A \subseteq \mathcal P _1$, and $K$ is a direct sum of countably generated modules. Then each module $M \in {\widetilde{\Add K}}$ is an $\aleph_1$-direct limit of modules from $(\widetilde{\Add K})^{\leq \omega}$.
\end{enumerate}
\end{lemma}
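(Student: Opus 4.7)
The plan is to treat both parts via a Löwenheim--Skolem-type closure argument applied to a pure epimorphism $\pi : N \twoheadrightarrow M$ with $N \in \Add K$, which exists since $M \in \widetilde{\Add K}$. Replacing $N$ by $K^{(X)}$ (of which $N$ is a direct summand, with the projection composed with $\pi$ still a pure epimorphism), we may assume $\pi : K^{(X)} \twoheadrightarrow M$ with kernel $L$. For every countable $S \subseteq M$ we want to build a countably presented $M_0 \in \widetilde{\Add K}$ with $S \subseteq M_0 \subseteq M$, in such a way that the family of all such $M_0$ is closed under countable ascending unions; this will yield the $\aleph_1$-dense system in (i) and the $\aleph_1$-directed family of submodules whose $\aleph_1$-direct limit (with inclusion maps) is $M$ in (ii).

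For part (i), I would exploit the countability of $R$ in two ways. First, every submodule of $R^{(\omega)}$ is countable, so a countably generated $R$-module is automatically countably presented; thus it suffices to produce countably \emph{generated} submodules of $M$ lying in $\widetilde{\Add K}$. Second, there are only countably many pp-formulas over $R$, enabling an $\omega$-step dovetailing closure. Concretely, starting from a countable $Y_0 \subseteq X$ with $S \subseteq \pi(K^{(Y_0)})$, I would enlarge $Y_n$ to $Y_{n+1}$ by adjoining countably many indices so as to add, on the one hand, pp-witnesses from $K^{(X)}$ for the finite tuples in $L \cap K^{(Y_n)}$ (forcing, in the limit, $L \cap K^{(Y)}$ to be pure in $K^{(Y)}$ where $Y := \bigcup Y_n$) and, on the other hand, $\pi$-lifts of pp-witnesses from $M$ for finite tuples in $\pi(K^{(Y_n)})$ (forcing $\pi(K^{(Y)})$ to be pure in $M$). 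Since $K^{(Y)}$ is a direct summand of $K^{(X)}$, it lies in $\Add K$, and $M_Y := \pi(K^{(Y)})$ is a pure epimorphic image of $K^{(Y)}$, hence lies in $\widetilde{\Add K}$; by construction it is countably generated, hence countably presented, and contains $S$. Closure under countable ascending unions is immediate since each purity condition passes to colimits of $\omega$-chains.

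For part (ii), the decomposition $K = \bigoplus_{i\in I} K_i$ with $K_i$ countably generated lets us replace $\pi$ by $\phi : \bigoplus_{j \in J} K_{i_j} \twoheadrightarrow M$. For a countable $J_0 \subseteq J$, the subsum $P_{J_0} := \bigoplus_{j \in J_0} K_{i_j}$ is a direct summand of $\bigoplus_{j \in J} K_{i_j}$ lying in $\Add K$, and is countably generated. The same dovetailing closure (without needing countability of $R$, because each $K_i$ is countably generated) allows us to enlarge a given countable $J_0$ to one for which $\phi\restriction P_{J_0}$ is a pure epimorphism onto a pure submodule $M_{J_0}$ of $M$, placing $M_{J_0}$ in $\widetilde{\Add K}$. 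Here the hypothesis $\mathcal A \subseteq \mathcal P_1$ enters to secure the countable presentation of $M_{J_0}$: the pure short exact sequence $0 \to L \cap P_{J_0} \to P_{J_0} \to M_{J_0} \to 0$ has $P_{J_0} \in \mathcal P_1$, and by incorporating into the closure an additional step that bounds the generators of $L \cap P_{J_0}$ (possible because, in the $\mathcal P_1$-setting, the kernel of a pure epimorphism whose source is a direct sum of countably generated $\mathcal P_1$-modules admits a countably generated syzygy over any countably generated piece), one arrives at $M_{J_0} \in (\widetilde{\Add K})^{\leq\omega}$. The resulting family of $M_{J_0}$ is $\aleph_1$-directed under inclusion, and $M = \bigcup M_{J_0}$ is its $\aleph_1$-direct limit.

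The main obstacle in both parts is the simultaneous fulfilment of two pp-closure requirements living on opposite sides of $\pi$: the purity of $L \cap N_0$ in $N_0$ (so that $\pi\restriction N_0$ is a pure epimorphism) and the purity of $\pi(N_0)$ in $M$ (so that the image lies in $\mathcal B$). These conditions involve witnesses at different levels, but the back-and-forth dovetailing converges in $\omega$ steps because each round adds only countably many elements, bounded by countability of $R$ in (i) or by countable generation of $K_i$ in (ii). A more delicate point, specific to part (ii), is the countable-presentation of $M_{J_0}$, which requires extending the closure by one further step controlled by $\mathcal A \subseteq \mathcal P_1$; this is the only place where the $\mathcal P_1$-hypothesis is genuinely used beyond providing a pure epimorphism.
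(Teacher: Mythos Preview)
Your approach to part~(i) can likely be salvaged but is more laborious than necessary, and as written the index-level closure is not obviously countable (for countable $Y_n$ the set $L\cap K^{(Y_n)}$ over which you quantify finite tuples may be uncountable, since nothing bounds the size of $K$). The paper avoids the pure epimorphism entirely: by Lemma~\ref{aht+} one has $\widetilde{\Add K}=\widetilde{\mathcal A}\cap\mathcal B$, and both classes are closed under pure submodules (since $\widetilde{\mathcal A}={}^\perp C$ for a pure-injective $C$, and $\mathcal B$ is definable). As $R$ is countable, the family of all countable pure submodules of $M$ is then already an $\aleph_1$-dense system contained in $(\widetilde{\Add K})^{\leq\omega}$.

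Part~(ii) has a genuine gap. The assertion that the dovetailing works ``without needing countability of $R$, because each $K_i$ is countably generated'' is incorrect: making $M_{J_0}$ a \emph{pure} submodule of $M$ requires witnesses for all pp-formulas, and for uncountable $R$ there are uncountably many of these, so the closure escapes the countable regardless of how $K$ decomposes. You have also misidentified the role of $\mathcal A\subseteq\mathcal P_1$: it is not used to control presentations, but to force $\mathcal B$ to be closed under \emph{all} homomorphic images (via $\Ext 2RA{-}=0$ for $A\in\mathcal A$). The paper's argument is structurally different and does not aim for pure submodules of $M$ at all. It invokes \v Saroch's results on strict $\mathcal B$-stationarity to obtain, inside the kernel $N$ of the pure epimorphism $K^{(X)}\to M$, an $\aleph_1$-dense system $\mathcal D$ of countably presented submodules $D\subseteq N$ such that every map $D\to C$ extends to $N$. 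The $\aleph_1$-direct system is then formed by the quotients $\bigl(\bigoplus_{i\in Y}K_i\bigr)/D$ for countable $Y$ and $D\in\mathcal D$ with $D\subseteq\bigoplus_{i\in Y}K_i$; these are countably presented but need not embed in~$M$. Membership in $\widetilde{\mathcal A}={}^\perp C$ comes from the extension property of $\mathcal D$ together with $M\in{}^\perp C$, and membership in $\mathcal B$ from the closure under quotients just noted.
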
  
\begin{proof}  
(i) Since $\widetilde{\mathcal A} = {}^\perp C$ and $C$ is pure injective, $\widetilde{\mathcal A}$ is closed under pure submodules by (the proof of) \cite[3.6]{AST}. So is the definable class $\mathcal B$. Since $R$ is countable, for each $M \in \widetilde{\Add K} = \widetilde{\mathcal A} \cap \mathcal B$, all the countable pure submodules of $M$ form an $\aleph_1$-dense system, $\mathcal M$, such that $\mathcal M \subseteq (\widetilde{\Add K})^{\leq \omega}$, whence $M$ is an $\aleph_1$-directed union of the modules from $\mathcal M$.   

(ii) Let $M \in \widetilde{\Add K}$, so there is a pure exact sequence $0 \to N \to K^{(X)} \to M \to 0$ for a set $X$. Since $K^{(X)} \in \mathcal A$, $K^{(X)}$ is a strict $\mathcal B$-stationary module \cite[4.2]{Sa}, and so is $N$ by \cite[4.4]{Sa}. In particular, $N$ is strict $C$-stationary, where $C$ is an elementary cogenerator for $\mathcal B$, so $N$ possesses an $\aleph_1$-dense system, $\mathcal D$, consisting of strict $C$-stationary submodules of $N$, such that $\Hom RNC \to \Hom RUC$ is surjective whenever $U$ is a directed union of modules from $\mathcal D$, see \cite[5.4]{Sa}.   

By our assumption on $K$, the module $K^{(X)}$ is a direct sum of countably generated modules, say $K^{(X)} = \bigoplus_{i \in I} K_i$. The rest of the proof proceeds similarly as the proof of \cite[3.3]{AST}: 

Consider the poset $J$ consisting of all the pairs $(D,Y)$ where $D \in \mathcal D$, and $Y$ is a countable subset of $I$ such that $D \subseteq \bigoplus_{i \in Y} K_i$. Then $J$ with componentwise inclusions is an $\aleph_1$-directed poset, and $M$ is an $\aleph_1$-direct limit of the system $\mathcal M = \{ (\bigoplus_{i \in Y} K_i)/D \mid (D,Y) \in J \}$ consisting of countably presented modules. 

Notice that each $h \in \Hom RDC$ extends to some $h^\prime \in \Hom RNC$, and since $M \in {}^\perp C$, also to $K^{(X)}$, and hence to $\bigoplus_{i \in Y} K_i$. Thus $\mathcal M \subseteq {}^\perp C = \widetilde{\mathcal A}$. Since $\mathcal A \subseteq \mathcal P _1$, $\mathcal B$ is closed under homomorphic images, whence $\mathcal M \subseteq \mathcal B$. We conclude that $\mathcal M \subseteq ({\widetilde{\Add K})^{\leq \omega}}$. 
\end{proof}

\begin{remark}\label{vNreg} If we leave the setting of cotorsion pairs $\mathfrak C = (\mathcal A, \mathcal B)$ with the class $\mathcal B$  closed under direct limits, then $\Ker{\mathfrak C}$ need not equal $\Add K$ for any module $K$. Moreover, the class $\mathcal B$ (and hence $\Ker{\mathfrak C}$) need not contain any non-zero countably generated modules. So there is no analog of Lemma \ref{gtilde} in general. 

For example,  if $R$ is a simple von Neumann regular ring which is not artinian, the dimension of $R$ over its center is countable (e.g., $R$ is countable), and $\mathfrak C = (\rmod R,\mathcal I _0)$, then there are no non-zero countably generated modules in the class $\mathcal I _0$, cf.\ \cite[3.3]{Tams}. 
\end{remark}

Lemmas \ref{ctilde} and \ref{gtilde} yield

\begin{corollary}\label{tilde} Assume that the class $\varinjlim \Add K$ is closed under direct limits, and either $R$ is countable, or $K$ is a direct sum of countably generated modules and $\mathcal A \subseteq \mathcal P_1$. Then $\varinjlim{\Add K} = \widetilde{\Add K}$. 
\end{corollary}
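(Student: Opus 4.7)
The inclusion $\varinjlim\Add K\subseteq\widetilde{\Add K}$ is automatic from Lemma~\ref{aht+}, so the whole content of the statement is the reverse inclusion. My plan is to reduce the problem first to the countably presented part of $\widetilde{\Add K}$ via Lemma~\ref{gtilde}, and then to handle that countably presented part via Lemma~\ref{ctilde} together with the ``direct limits imply direct summands'' principle of Remark~\ref{chain}.

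The first step is to observe that, under the standing assumption that $\varinjlim\Add K$ is closed under direct limits, it is automatically closed under direct summands. Indeed, by Remark~\ref{chain}, any class closed under arbitrary direct limits is closed under countable direct limits, hence under direct summands (a summand of $E$ is recovered as the direct limit of the countable chain of copies of $E$ linked by a fixed idempotent endomorphism).

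The second step is to show that $(\widetilde{\Add K})^{\leq\omega}\subseteq\varinjlim\Add K$. Let $C\in(\widetilde{\Add K})^{\leq\omega}$. By Lemma~\ref{ctilde} there exists $D\in\Add K$ such that $C\oplus D$ is a countable direct limit of modules from $\Add K$, and consequently $C\oplus D\in\varinjlim\Add K$. By the first step, $\varinjlim\Add K$ is closed under direct summands, so $C$ itself lies in $\varinjlim\Add K$, as desired.

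The third and final step is to apply Lemma~\ref{gtilde}. Let $M\in\widetilde{\Add K}$. Under the hypothesis that $R$ is countable, Lemma~\ref{gtilde}(i) presents $M$ as the $\aleph_1$-directed union of a system of submodules drawn from $(\widetilde{\Add K})^{\leq\omega}$, which is in particular a direct limit of such modules. Under the alternative hypothesis that $K$ is a direct sum of countably generated modules and $\mathcal A\subseteq\mathcal P_1$, Lemma~\ref{gtilde}(ii) presents $M$ as an $\aleph_1$-direct limit of modules from $(\widetilde{\Add K})^{\leq\omega}$. In either case, by the second step the terms of the direct system all lie in $\varinjlim\Add K$, and using the standing assumption that $\varinjlim\Add K$ is closed under direct limits we conclude $M\in\varinjlim\Add K$. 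The only mildly delicate point in this argument is verifying that the ``direct limit'' appearing in Lemma~\ref{gtilde} genuinely qualifies as a direct limit for the purpose of closure, but in (i) this is the direct limit associated to the directed poset of submodules ordered by inclusion, and in (ii) it is a direct limit by construction, so no obstacle arises.
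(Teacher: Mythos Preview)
Your proof is correct and follows essentially the same approach as the paper: the paper simply states that the corollary is yielded by Lemmas~\ref{ctilde} and~\ref{gtilde}, and your three steps spell out precisely how these two lemmas combine (with the observation from Remark~\ref{chain} that closure under direct limits implies closure under direct summands, which activates the ``in particular'' clause of Lemma~\ref{ctilde}).
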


\medskip  
We finish this section by considering in more detail the case of modules over Dedekind domains. Since Dedekind domains are hereditary, all the cotorsion pairs $\mathfrak C = (\mathcal A, \mathcal B)$, such that the class $\mathcal B$ is closed under direct limits, are tilting. Moreover, all non-projective tilting modules are infinitely generated, and the only $\sum$-pure split tilting modules are the injective ones:  

Let $R$ be a Dedekind domain. Let $Q$ be the quotient field of $R$, so $Q/R \cong \bigoplus_{p \in \mSpec R} E(R/p)$ (cf.\cite[IV.3]{FS}). We will make use of the classification of tilting modules and classes known for this case: up to equivalence, tilting modules $T$ correspond 1-1 to subsets $P$ of $\mSpec R$ as follows. 

For $P \subseteq \mSpec R$, let $R_P$ be the (unique) module such that $R \subseteq R_P \subseteq Q$ and $R_P/R \cong \bigoplus_{p \in P} E(R/p)$. In particular, if $P = \mSpec R \setminus \{ q \}$ for a maximal ideal $q$, then $R_P = R_{(q)}$ is the localization of $R$ at $q$. For an arbitrary subset $P$ of $\mSpec R$, we have $R_P = \bigcap_{q \in \mSpec R \setminus P} R_{(q)}$; in particular, $R_P$ is a overring of $R$, and hence a Dedekind domain (cf.\ \cite[1.2]{C}).  

By \cite[14.30]{GT}, the tilting module corresponding to $P \subseteq \mSpec R$ is $T_P = R_P \oplus \bigoplus_{p \in P} E(R/p)$, the corresponding tilting class being $\mathcal B _P = \{ M \in \rmod R \mid M.p = M \hbox{ for all } p \in P \}$, the class of all $P$-divisible modules. (Notice that $T_P$ is countably generated, iff the set $P$ is countable.) The cotorsion pair generated by $T_P$ is $(\mathcal A_P, \mathcal B _P)$ where $\mathcal A _P = \Filt {\mathcal S _P}$, and $\mathcal S _P = \{ R/p \mid p \in P \} \cup \{ I \mid I \subseteq R \}$. Also, $\varinjlim \mathcal A _P = \varinjlim (\mathcal A _P ^{< \omega} ) = {}^\intercal (\mathcal S _P ^\intercal)$. 
    
\begin{theorem}\label{dedekind} Let $R$ be a Dedekind domain and $T \in \rmod R$ be a tilting module. Let $P$ be the subset of $\mSpec R$ such that $T$ is equivalent to $T_P$. Then $\varinjlim{\add T} = \varinjlim{\Add T} = \widetilde{\Add T} = \mathcal C _P$, where $\mathcal C _P$ is the class of all modules $M$ whose torsion part $T(M)$ is isomorphic to a direct sum of copies of $E(R/p)$ for $p \in P$, $M = T(M) \oplus N$, and $N$ is a torsion-free (= flat) $R_P$-module.     
\end{theorem}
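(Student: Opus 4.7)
The plan is to identify $\widetilde{\Add T_P}$ with $\mathcal C_P$ using the structure theory of modules over a Dedekind domain, and then to verify $\mathcal C_P\subseteq\varinjlim\add T_P$ by treating the torsion and torsion-free summands separately. Combined with the chain of inclusions $\varinjlim\add T_P\subseteq\varinjlim\Add T_P\subseteq\widetilde{\Add T_P}$, valid in general, this will yield the three claimed equalities.

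By Lemma~\ref{aht+}, $\widetilde{\Add T_P}=\widetilde{\mathcal A_P}\cap\mathcal B_P$, and since $T_P$ is tilting we have $\widetilde{\mathcal A_P}=\varinjlim(\mathcal A_P^{<\omega})$. A direct inspection of the generators $\mathcal S_P$ shows that $\mathcal A_P^{<\omega}$ consists of the finitely generated $R$-modules whose torsion part is $P$-torsion: the inclusion $\subseteq$ follows by induction along a filtration, using that the torsion submodule is left exact over a Dedekind domain and that each generator is either $P$-torsion or torsion-free; the converse uses the decomposition of a finitely generated module over a Dedekind domain as torsion-plus-projective and the filtration of $R/p^{k}$ by copies of $R/p$. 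Passing to direct limits, $\widetilde{\mathcal A_P}$ equals the class of all $M$ with $T(M)$ a $P$-torsion module.

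Now take $M\in\widetilde{\mathcal A_P}\cap\mathcal B_P$: the $P$-divisibility of $M$ localizes to $p$-divisibility of $M_{(p)}$ over the DVR $R_{(p)}$ for each $p\in P$, making $M_{(p)}$ injective, so its torsion part $T_p(M)$ is a direct sum of copies of $E(R/p)$. Summing over $p\in P$, $T(M)=\bigoplus_{p\in P}T_p(M)$ is a direct sum of copies of the various $E(R/p)$, hence injective, and therefore splits off; the torsion-free complement $N$ is $P$-divisible, and a standard argument (using that for $s\in R$ with $(s)$ supported in $P$ one has $sN=(s)N=N$, and multiplication by $s$ on the torsion-free $N$ is an automorphism) shows that $N$ canonically inherits a flat $R_P$-module structure. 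The converse inclusion $\mathcal C_P\subseteq\widetilde{\mathcal A_P}\cap\mathcal B_P$ is immediate, giving $\widetilde{\Add T_P}=\mathcal C_P$.

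To conclude, for $M=T(M)\oplus N\in\mathcal C_P$ I handle the two summands separately. Since $R$ is noetherian and $\{E(R/p)\mid p\in P\}\subseteq\add T_P$ is a class of injective modules, Lemma~\ref{injectlim} gives $T(M)\in\Add\{E(R/p)\mid p\in P\}=\varinjlim\add\{E(R/p)\mid p\in P\}\subseteq\varinjlim\add T_P$. For $N$, I apply the Govorov--Lazard theorem over $R_P$ to write $N=\varinjlim_i R_P^{n_i}$ in $\rmod{R_P}$; since every $R$-homomorphism between two $R_P$-modules is automatically $R_P$-linear, this is also a direct limit in $\rmod R$, and $R_P^{n_i}\in\add R_P\subseteq\add T_P$, so $N\in\varinjlim\add T_P$. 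Closure of $\varinjlim\add T_P$ under finite direct sums now yields $M\in\varinjlim\add T_P$. The principal technical obstacle is the identification of $\widetilde{\mathcal A_P}\cap\mathcal B_P$ with $\mathcal C_P$, namely the Dedekind-specific structural claim that such an $M$ splits as a direct sum of its $P$-torsion (forced to be a sum of $E(R/p)$'s) and a flat $R_P$-module; all remaining steps are direct applications of Govorov--Lazard and Lemma~\ref{injectlim}.
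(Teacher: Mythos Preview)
Your overall strategy matches the paper's: establish $\widetilde{\Add T_P}=\mathcal C_P$ via Lemma~\ref{aht+} and then show $\mathcal C_P\subseteq\varinjlim\add T_P$ by handling the torsion and torsion-free summands separately. Your identification of $\widetilde{\mathcal A_P}$ as the modules with $P$-torsion torsion part is correct and in fact streamlines the argument that $T_q(M)=0$ for $q\notin P$; the paper argues this via closure of $\varinjlim\mathcal A_P$ under submodules instead. Your use of Lemma~\ref{injectlim} for $T(M)$ and Govorov--Lazard over $R_P$ for $N$ are fine (the paper uses Proposition~\ref{directs} for the former, but either works).

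There is one genuine gap. Your parenthetical hint for why the torsion-free $P$-divisible module $N$ is an $R_P$-module only uses \emph{elements} $s\in R$ with $(s)$ supported in $P$, and hence only produces an $S^{-1}R$-module structure for the multiplicative set $S=R\setminus\bigcup_{q\notin P}q$. In general $S^{-1}R\subsetneq R_P$: if $R$ is a Dedekind domain whose class group contains an element of infinite order realized as $[p]$ for some $p\in P$ (such domains exist by Claborn's theorem), and say $P=\{p\}$, then no $p^k$ with $k\ge1$ is principal, so $S$ consists only of units and $S^{-1}R=R\subsetneq R_P$. Your argument would then place $N$ only in $\varinjlim\add R$, and $R\notin\add T_P$.

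The paper avoids this by tensoring the exact sequence $0\to R\to R_P\to\bigoplus_{p\in P}E(R/p)\to 0$ with the flat module $N$ and observing that $E(R/p)\otimes_R N=0$ because $N/p^nN=0$ for every $n$. Equivalently, you can run your argument with \emph{ideals} rather than elements: for any ideal $I$ supported in $P$ one has $IN=N$, and since $N$ is flat and $I$ invertible, the multiplication map $I\otimes_R N\to N$ is an isomorphism; hence $I^{-1}\otimes_R N\cong N$, and passing to the direct limit over all such $I$ gives $R_P\otimes_R N\cong N$. Either fix completes your proof.
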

\begin{proof} First, let $M \in \mathcal C _P$, so $M = T(M) \oplus N$ as above. Since $E(R/p) \in \add T$ for each $p \in P$, $T(M) \in \varinjlim \add T$ by Lemma \ref{directs}. Moreover, $N$ is a direct limit of a direct system of finitely generated free $R_P$-modules, whence also $N \in \varinjlim \add T$. This proves that $\mathcal C _P \subseteq \varinjlim \add T$.

It remains to prove that $\widetilde{\Add T} \subseteq \mathcal C _P$. Let $M \in \widetilde{\Add T} = (\varinjlim \mathcal A _P) \cap \mathcal B _P$ (by Lemma \ref{aht+}, as $\varinjlim (\mathcal A _P ^{< \omega} ) = {}^\intercal (\mathcal S _P ^\intercal)$). Let $T(M)$ be the torsion part of $M$. Then $T(M) = \bigoplus_{p \in \mSpec R} T_p$ where $T_p$ denotes the $p$-torsion part of $T(M)$ for each $p \in \mSpec R$ (see e.g.\ \cite[IV.3]{FS}). 

Since $T(M) \subseteq_* M$, we infer that $T_p \in \widetilde{\Add T}$ for all $p \in \mSpec R$, because both $\varinjlim \mathcal A _P$ and $\mathcal B _P$ are closed under pure submodules. If $p \in P$, then this means that $T_p$ is $p$-divisible, and hence divisible (= injective). So $T_p$ is isomorphic to a direct sum of copies of $E(R/p)$. If $q \in \mSpec R \setminus P$ and $0 \neq T_q \in \varinjlim \mathcal A _P = {}^\intercal (\mathcal S _P ^\intercal)$, then since the latter class is closed under submodules, $R/q \in \mathcal A _P ^{< \omega}$, in contradiction with $q \notin P$. Thus $T_q = 0$ for all $q \notin P$, and $T(M)$ is isomorphic to a direct sum of copies of $E(R/p)$ for $p \in P$. Being injective, $T(M)$ splits in $M$, so $M = T(M) \oplus N$ where $N$ is torsion-free and $p$-divisible for each $p \in P$.  

Consider the short exact sequence $0 \to R \to R_P \to \bigoplus_{p \in P} E(R/p) \to 0$. Applying the exact functor $- \otimes_R N$, we get 
$0 \to N \to R_P \otimes _R N \to {\bigoplus_{p \in P} E(R/p)} \otimes _R N = 0$. The latter tensor product is zero because $N$ is $p$-divisible for each $p \in P$. Thus $N$ is a torsion-free $R_P$-module. 
\end{proof}

Notice that if $P = \mSpec R$, then $T_P = Q \oplus Q/R$ is $\sum$-injective, and $\mathcal C_P = \Add T_P = \mathcal I_0$. If $P \neq \mSpec R$, then the tilting module $T_P$ is not $\sum$-pure split, because there is a non-split pure exact sequence of the form $0 \to X \to R_P^{(\kappa)} \to Q \to 0$ for some infinite cardinal $\kappa$. In particular, $Q \in \widetilde{\Add T _P} \setminus \Add T_P$.  

\medskip

\section{A covering class of modules not closed under pure quotients}\label{npureq}

If $\mathcal A$ is a class of modules closed under direct sums, but $\varinjlim \mathcal A$ is not closed under direct limits, then $\varinjlim \mathcal A \subsetneq \widetilde{\mathcal A}$ (see Examples \ref{indec} and \ref{Bergman} above). The aim of this section is to construct a ring $R$ and an $R$-module $F$ such that the class $\mathcal A = \Add F$ is closed under direct limits, i.e., $\Add F=\varinjlim\Add F$, but it is still \emph{not} closed under pure epimorphic images in $\rmod{R}$. Then it is clear that $\Add F$ is a precovering class closed under direct limits, hence a covering class in $\rmod{R}$.
 
 In fact, for any given field~$k$, we will construct an associative, unital
$k$-algebra $R$ and an $R$-module $F_R$ with the following properties:
\begin{itemize}
\item $F$ is a Bass flat $R$-module;
\item $\End(F_R)=k$;
\item $P$ is a projective $R$-module and a pure submodule in~$F$;
\item $F/P$ is also a Bass flat $R$-module;
\item $\End(P_R)=k=\End((F/P)_R)$;
\item $\Sum F_R=\Add F_R=\varinjlim\Add F_R$;
\item but $F/P\notin\Add F_R$.
\end{itemize}
 Here by a \emph{Bass flat $R$-module} we mean a countable direct limit of
copies of the free $R$-module~$R$.

 The idea of the construction can be explained as follows.
 The $R$-module $F$ is produced as the direct limit of a countable chain
$F=\varinjlim_\omega(P_0\to P_1\to P_2\to\dotsb)$ where $P_i$ are cyclic
projective $R$-modules.
 The compositions $P_0\to P_1\to\dotsb\to P_i$ are split monomorphisms for
all $i>0$, but the splittings for $P_0\to P_i$ and $P_0\to P_{i+1}$ do
\emph{not} agree.
 In other words, in the diagram
\begin{equation} \label{direct-system-with-splittings}
\begin{gathered}
\xymatrix{
 P_0\ar[r] & \ \dotsb\ \ar[r] & P_i \ar@/^1pc/[ll]
 \ar[r] & P_{i+1} \ar@/^2.5pc/[lll] \ar[r] & \ \dotsb
}
\vspace*{2.5pc}
\end{gathered}
\end{equation}
the curvilinear triangle is not commutative.
 So $P=P_0$ is a pure submodule, but not a split submodule in
$\varinjlim_{i\in\omega}P_i=F$.

\begin{construction} \label{the-quiver}
 Consider the following infinite quiver with relations.  The vertices
of the quiver are indexed by~$\omega$.
 For every $n\in\omega$, $n\ge1$, there is an arrow $\phi_n$ going from
the vertex~$n-1$ to the vertex~$n$.
 For every $n\in\omega$, $n\ge1$, there is also an arrow $\pi_n$ going
from the vertex~$n$ to the vertex~$0$.
 The relations $\pi_n\phi_n\phi_{n-1}\dotsm\phi_1=\mathrm{id}$ for all
$n\ge1$ are imposed in the quiver.

 A \emph{quiver representation} $V$ is a collection of $k$-vector spaces
$V_i$, $i\in\omega$, endowed with linear maps
$\phi_n\colon V_{n-1}\to V_n$ and $\pi_n\colon V_n\to V_0$ such that
$\pi_n\phi_n\dotsm\phi_1=\mathrm{id}_{V_0}$ for all $n\ge1$.
\end{construction}

 A \emph{$k$-algebra with many objects} $A=(A_{i,j})_{i,j\in\omega}$ is
a collection of $k$-vector spaces $A_{i,j}$, identity elements
$e_i\in A_{i,i}$ given for all $i\in\omega$, and $k$-linear multiplication
maps $A_{i,j}\otimes_k A_{j,l}\to A_{i,l}$ defined for all $i$, $j$,
$l\in\omega$, satisfying the associativity and unitality equations.
 Here we presume the objects to be indexed by~$\omega$, as this will be
the case in the situation we are interested in.
 A $k$-algebra with many objects is the same thing as a small $k$-linear
category $A$ with the set of objects~$\omega$ and the vector spaces of
morphisms $\Hom Aji=A_{i,j}$.
 To a $k$-algebra with many objects one can assign a nonunital associative
$k$-algebra $A=\bigoplus_{i,j\in\omega}A_{i,j}$, which is a ring with enough
idempotents.

 Let $A$ be a $k$-algebra with many objects.
 A \emph{left $A$-module} $V$ is a collection of vector spaces $V_i$,
$i\in\omega$, and $k$-linear multiplication maps $A_{i,j}\otimes_k V_j
\to V_i$ satisfying the associativity and unitality equations with
the multiplication maps in $A$ and the identity elements $e_i\in A_{i,i}$.
 Similarly, a \emph{right $A$-module} $N$ is a collection of vector spaces
$N_i$, $i\in\omega$, and multiplication maps $N_i\otimes_k A_{i,j}
\to N_j$ satisfying the similar equations.
 A left $A$-module is the same thing as a covariant $k$-linear functor
from the category $A$ to the category of $k$-vector spaces; a right
$A$-module is a similar contravariant $k$-linear functor.

 To any right $A$-module $N$ and any left $A$-module $V$, one can assign
their \emph{tensor product}, which can be simply defined as the tensor
product of the right module $\bigoplus_{i\in\omega}N_i$ and the left
module $\bigoplus_{j\in\omega}V_j$ over the nonunital ring
$\bigoplus_{i,j\in\omega}A_{i,j}$.
 Explicitly, the tensor product $N\otimes_AV$ is the cokernel of
the difference of the natural pair of maps
$$
 \bigoplus_{i,j\in\omega} N_i\otimes_k A_{i,j}\otimes_k V_j
 \,\rightrightarrows\, \bigoplus_{i\in\omega} N_i\otimes_k V_i.
$$

 The category of right $A$-modules $\rmod A$ can be recovered as
the category of colimit-preserving covariant $k$-linear functors from
the category of left $A$-modules $\lmod A$ to the category of $k$-vector
spaces.
 To every right $A$-module $N$, the tensor product functor
$T_N\colon\lmod A\to\rmod k$ defined by the rule $T_N(V)=N\otimes_AV$ 
is assigned.
 For the specific algebra with many objects $A$ which we are interested in,
based on the following Construction~\ref{path-algebra}, we will use this
point of view on left and right $A$-modules: the left $A$-modules are
our quiver representations, and the right $A$-modules are functors from
quiver representations to vector spaces.

\begin{construction} \label{path-algebra}
 The \emph{path algebra} $A$ of the quiver from
Construction~\ref{the-quiver}, taken modulo the relations imposed in
the quiver, is a $k$-algebra with many objects such that
a left $A$-module is the same thing as a quiver representation~$V$.
 Explicitly, $A_{i,j}=\Hom Aji$ is the vector space of linear combinations
of paths from the vertex~$j$ to the vertex~$i$ up to the relations imposed
in the quiver.
 The vector space $A_{i,j}$ can be computed as $A_{i,j}=A'_{i,j}\oplus
A''_{i,j}$, where
$$
 A'_{i,j}=
 \begin{cases}
  k.\phi_i\phi_{i-1}\dotsm \phi_{j+1} & \text{if $i\ge j$}, \\
  0, & \text{if $i<j$},
 \end{cases}
$$
and
$$
 A''_{i,j}=
 \begin{cases}
 \bigoplus_{m\ge j} k.\phi_i\dotsm\phi_2\phi_1\pi_m\phi_m\phi_{m-1}
 \dotsm\phi_{j+1} & \text{if $j>0$}, \\
 0, & \text{if $j=0$}.
 \end{cases}
$$
So the first summand $A'_{i,j}$ is either a one-dimensional vector space,
or zero.
The second summand $A''_{i,j}$ is either a vector space of countable
dimension, or zero.
\end{construction}

 Let $Q_i$, $i\in\omega$, denote the free right $A$-module with
the generator sitting at the vertex~$i$.
 Explicitly, $Q_i=((Q_i)_j)_{j\in\omega}$, where $(Q_i)_j=A_{i,j}$
and the right action map $(Q_i)_j\otimes_k A_{j,l}\to (Q_i)_l$ is equal
to the multiplication map $A_{i,j}\otimes_k A_{j,l}\to A_{i,l}$.

 For every $n\in\omega$, $n\ge1$, the element $\phi_n\in A_{n,n-1}$
induces a homomorphism of right $A$-modules $Q_{n-1}\to Q_n$.
 Let $G\in\rmod A$ be the direct limit of this sequence of morphisms,
$$
 G=\varinjlim(Q_0\to Q_1\to Q_2\to\dotsb).
$$
 We also put $Q=Q_0$.
 Then, for every $i\in\omega$, the functor $T_{Q_i}=Q_i\otimes_A{-}$
takes a quiver representation $V$ to the vector space $V_i$; in
particular, $T_Q(V)=V_0$.
 The functor $T_G$ takes a quiver representation $V$ to the vector space
$$
 T_G(V)=\varinjlim(V_0\overset{\phi_1}\longrightarrow V_1
 \overset{\phi_2}\longrightarrow V_2\longrightarrow\dotsb).
$$
 There is a natural right $A$-module morphism $Q\to G$ which, being
tensored with a left $A$-module (quiver representation) $V$, induces
the canonical map of vector spaces
$$
 V_0\longrightarrow\varinjlim(V_0\overset{\phi_1}\longrightarrow V_1
 \overset{\phi_2}\longrightarrow V_2\longrightarrow\dotsb).
$$

 By \emph{Bass flat right $A$-modules} we mean the countable direct
limits of $\omega$-indexed sequences of right $A$-modules such that each
module in the sequence is isomorphic to the free right $A$-module with
one generator $Q_i$ for some $i\in\omega$.

\begin{lemma}
 The natural map $Q\to G$ is a pure monomorphism of right $A$-modules.
 Both $G$ and $G/Q$ are Bass flat right $A$-modules.
\end{lemma}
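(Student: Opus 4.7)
The plan is to dispatch the two easier assertions quickly and then concentrate on $G/Q$, which is the main obstacle. For purity of $\iota\colon Q=Q_0\to G$, I would use the tensor criterion. For an arbitrary left $A$-module (quiver representation) $V$, one has $Q\otimes_A V\cong V_0$ and $G\otimes_A V\cong\varinjlim(V_0\xrightarrow{\phi_1}V_1\xrightarrow{\phi_2}\cdots)$, with $\iota\otimes_A V$ being the canonical map to this colimit. If $v\in V_0$ has trivial image there, then $\phi_n\cdots\phi_1(v)=0$ in $V_n$ for some $n\ge 1$; the representation relation $\pi_n\phi_n\cdots\phi_1=\mathrm{id}_{V_0}$, applied via the structure map $\pi_n\colon V_n\to V_0$, then forces $v=\pi_n(0)=0$. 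So $\iota\otimes_A V$ is injective for every $V$, hence $\iota$ is pure. Moreover, $G$ is Bass flat by construction, being the countable direct limit $\varinjlim(Q_0\to Q_1\to Q_2\to\cdots)$ of the representable (hence free) modules $Q_i$.

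For the Bass flatness of $G/Q$, purity already yields flatness and countable generation, but Bass flatness requires a presentation by a chain of \emph{single} $Q_i$'s, not arbitrary finite direct sums of them. My plan is to perturb the original transitions in the tail $(Q_n)_{n\ge 1}$ so that each modified transition annihilates the canonical copy of $Q=Q_0$, while still producing $G/Q$ as the colimit. Concretely, I would define $c_n=\phi_{n+1}-\phi_{n+1,0}\,\pi_n\colon Q_n\to Q_{n+1}$ for $n\ge 1$, where $\phi_{n+1,0}=\phi_{n+1}\phi_n\cdots\phi_1$ is the iterated composition $Q\to Q_{n+1}$ and $\pi_n$ is the retraction of $\phi_{n,0}\colon Q\to Q_n$ furnished by the quiver relation. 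A one-line check using $\pi_n\phi_{n,0}=\mathrm{id}$ shows $c_n\circ\phi_{n,0}=0$. The natural projections $\beta_n\colon Q_n\to G\twoheadrightarrow G/Q$ are compatible with the $c_n$, since the correction term $\phi_{n+1,0}\pi_n$ takes values in the image of $Q\subseteq G$, which is killed in $G/Q$. This produces an induced morphism $\beta\colon L\to G/Q$, where $L=\varinjlim(Q_n,c_n)_{n\ge 1}$, and surjectivity of $\beta$ is immediate from the $\beta_n$ jointly hitting $G/Q$.

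The subtle step, and the main technical obstacle, is injectivity of $\beta$. For this I would establish by induction on $m-n$ the closed-form identity $c_{m,n}=\phi_{m,n}-\phi_{m,0}\,\pi_{m-1}\phi_{m-1,n}$ for the iterated transitions in the modified system, the key simplifications in the induction being $\pi_k\phi_{k,0}=\mathrm{id}$ and $\phi_{m+1,m}\phi_{m,0}=\phi_{m+1,0}$. Then, given $\hat x\in Q_n$ with $\beta_n(\hat x)=0$, the image of $\hat x$ in $G$ lies in $\gamma_0(Q)$, so standard direct-limit bookkeeping produces some $y\in Q$ and some $N\ge n$ with $\phi_{N,n}(\hat x)=\phi_{N,0}(y)$ already inside $Q_N$. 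Substituting into the closed form and using $\pi_{m-1}\phi_{m-1,0}=\mathrm{id}$ yields $c_{m,n}(\hat x)=\phi_{m,0}(y)-\phi_{m,0}(y)=0$ for all $m\ge N+1$, so $\hat x$ vanishes in $L$. This establishes $L\cong G/Q$ and exhibits $G/Q$ as the Bass flat module $\varinjlim(Q_n,c_n)_{n\ge 1}$.
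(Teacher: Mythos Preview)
Your proof is correct. The purity argument and the observation that $G$ is Bass flat by construction coincide with the paper's proof.

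For $G/Q$, the paper takes a slightly different and shorter route. It observes that $G/Q=\varinjlim_{n\ge 1}(Q_n/Q)$, where each $Q_n/Q$ is a direct summand of $Q_n$ because $\phi_n\dotsm\phi_1\colon Q\to Q_n$ is split by~$\pi_n$. It then simply invokes (the construction in the proof of) Lemma~\ref{summands}, which says that a direct limit of summands of modules in a class $\mathcal E$ is already a direct limit of modules in $\mathcal E$ itself, to rewrite $G/Q$ as a direct limit of a sequence $Q_1\to Q_2\to Q_3\to\dotsb$. Your approach unpacks essentially the same idea by hand: your maps $c_n=\phi_{n+1}-\phi_{n+1,0}\pi_n=\phi_{n+1}(\mathrm{id}-\phi_{n,0}\pi_n)$ first project $Q_n$ onto the complement $\ker\pi_n\cong Q_n/Q$ and then apply $\phi_{n+1}$, which is precisely the Lemma~\ref{summands} recipe (up to the harmless choice of projecting before rather than after $\phi_{n+1}$). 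Your closed-form identity for $c_{m,n}$ and the explicit injectivity verification are a nice bonus that the paper does not need because it can cite the earlier lemma; conversely, your argument is self-contained and does not rely on that lemma.
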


\begin{proof}
 To prove the first assertion, we have to show that the induced map
$Q\otimes_AV\to G\otimes_AV$ is injective for any left $A$-module~$V$.
 Viewing $V$ as a quiver representation, the map in question becomes
the natural map $V_0\to\varinjlim(V_0\overset{\phi_1}\longrightarrow V_1
\overset{\phi_2}\longrightarrow V_2\longrightarrow\dotsb)$.
 Now, for every $n\ge1$, the map $\phi_n\dotsm\phi_1\colon V_0\to V_n$
is a monomorphism of $k$-vector spaces, since (by the definition of
a quiver representation) it is retracted by the map $\pi_n\colon
V_n\to V_0$.
 Hence the map to the direct limit $V_0\to\varinjlim_{n\in\omega}V_n$
is also a monomorphism.
 
 $G$ is a Bass flat right $A$-module by definition, and it remains to
explain why $G/Q$ is a Bass flat right $A$-module.
 We have $G/Q=\varinjlim(0\to Q_1/Q\to Q_2/Q\to\dotsb)$.
 The point is that the composition $\phi_n\dotsm\phi_1\colon Q\to Q_n$
is a split monomorphism of free right $A$-modules with one generator
(the morphism $\pi_n\colon Q_n\to Q$ providing the related retraction) for
every $n\ge1$.
 It remains to use (the construction from the proof of)
Lemma~\ref{summands} in order to present $G/Q$ as a direct limit of
a certain sequence of morphisms $Q_1\to Q_2\to Q_3\to\dotsb$.
\end{proof}

\begin{proposition} \label{endomorphisms-computed}
 The endomorphism rings of the $A$-modules $G$, $Q$, and $G/Q$ are
\begin{enumerate}
\item $\End(G_A)=k$;
\item $\End(Q_A)=k$;
\item $\End((G/Q)_A)=k$.
\end{enumerate}
\end{proposition}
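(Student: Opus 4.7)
The backbone is the Yoneda-type identification: since $Q_i$ is the free right $A$-module on the generator $e_i\in(Q_i)_i$, we have $\Hom{A}{Q_i}{X}\cong X_i$ for every right $A$-module $X$, and under this identification the precomposition $\Hom{A}{Q_n}{X}\to\Hom{A}{Q_{n-1}}{X}$ induced by $Q_{n-1}\xrightarrow{\phi_n}Q_n$ becomes right multiplication $X_n\to X_{n-1}$ by $\phi_n\in A_{n,n-1}$. Part~(ii) is then immediate: $\End(Q_A)=(Q_0)_0=A_{0,0}$, which equals~$k$ by Construction~\ref{path-algebra} (the summand $A'_{0,0}$ is spanned by the empty path $e_0$, while $A''_{0,0}=0$).

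For part~(i), since $G=\varinjlim_n Q_n$, we have $\End(G_A)=\varprojlim_n G_n$ with transitions given by right multiplication by~$\phi_n$. Writing $G_n=\varinjlim_N A_{N,n}$, the transitions $A_{N-1,n}\to A_{N,n}$ are left multiplication by~$\phi_N$; inspection of the bases in Construction~\ref{path-algebra} shows these are injective and send basis vectors to basis vectors, so $G_n$ has the natural basis $\{[\alpha_n]\}\cup\{[\beta_{n,m}] : m\ge n\}$, where $[\alpha_n]$ is the stable class of $\phi_N\cdots\phi_{n+1}$ and $[\beta_{n,m}]$ is the stable class of $\phi_N\cdots\phi_1\pi_m\phi_m\cdots\phi_{n+1}$ (for $n=0$ the $\beta$-part is empty and $G_0=k$). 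Right multiplication by~$\phi_n$ then sends $[\alpha_n]\mapsto[\alpha_{n-1}]$ and $[\beta_{n,m}]\mapsto[\beta_{n-1,m}]$ for each $m\ge n$; the key point is that the basis vector $[\beta_{n-1,n-1}]\in G_{n-1}$ is \emph{not} in the image. Writing a coherent sequence as $g_n=c_\alpha^{(n)}[\alpha_n]+\sum_{m\ge n}c_m^{(n)}[\beta_{n,m}]$, compatibility forces $c_\alpha^{(n)}=c_\alpha^{(n-1)}$, $c_m^{(n)}=c_m^{(n-1)}$ for $m\ge n$, and $c_{n-1}^{(n-1)}=0$. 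Combining the last two relations, for each fixed $m\ge 0$ we obtain the chain $c_m^{(0)}=c_m^{(1)}=\cdots=c_m^{(m)}=0$, so every $\beta$-coefficient vanishes, leaving only the single $\alpha$-parameter $c_\alpha\in k$. Hence $\varprojlim_n G_n=k$.

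For part~(iii), apply $\Hom{A}{-}{G/Q}$ to $0\to Q\to G\to G/Q\to 0$. Part~(i) already shows that the inclusion $(Q)_0=A_{0,0}\hookrightarrow G_0$ is an isomorphism, so $(G/Q)_0=0$ and $\Hom{A}{Q}{G/Q}=0$. Therefore $\End((G/Q)_A)\cong\Hom{A}{G}{G/Q}=\varprojlim_n(G/Q)_n$. For $n\ge 1$ the inclusion $A_{0,n}\hookrightarrow G_n$ sends each basis element $\pi_m\phi_m\cdots\phi_{n+1}$ precisely to $[\beta_{n,m}]$, so $(G/Q)_n$ is one-dimensional, generated by the class of $[\alpha_n]$. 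The transitions $(G/Q)_n\to(G/Q)_{n-1}$ are isomorphisms for $n\ge 2$ and vanish for $n=1$ (since $(G/Q)_0=0$), so the inverse limit is a single copy of~$k$.

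\textbf{Main obstacle.} The calculations are almost entirely mechanical once the bases of $A_{i,j}$, $G_n$ and $(G/Q)_n$ are fixed. The genuinely delicate point is spotting the single missing basis vector $[\beta_{n-1,n-1}]$ in the image of each transition $G_n\to G_{n-1}$ and running the resulting fixed-$m$ descent on the coefficients $c_m^{(n)}$. This interplay between the quiver relations $\pi_m\phi_m\cdots\phi_1=\mathrm{id}$ and the absence of the ``diagonal'' $\beta$-vectors is what pins each endomorphism ring down to the ground field~$k$.
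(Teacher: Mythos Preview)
Your proof is correct and follows essentially the same computation as the paper: identify $\End(G_A)=\varprojlim_n G_n$, write down explicit bases for the $G_n$, and observe that the $\beta$-part of the inverse limit vanishes (the paper phrases this as the intersection $\bigcap_{j\ge 1}\bigoplus_{m\ge j}k=0$ inside $k^{(\omega)}$, which is exactly your coefficient descent); for (iii) the paper instead uses that $Q_j/Q$ is finitely generated projective to pull the $\varinjlim$ out of $\Hom{A}{Q_j/Q}{-}$, but this lands on the same one-dimensional inverse system you compute. One small caveat: your transition formula $[\beta_{n,m}]\mapsto[\beta_{n-1,m}]$ and the resulting compatibility relations are only valid for $n\ge 2$, since at $n=1$ the quiver relation $\pi_m\phi_m\cdots\phi_1=\mathrm{id}$ forces $[\beta_{1,m}]\mapsto[\alpha_0]$ (and there is no $c_m^{(0)}$); however, your descent $c_m^{(1)}=\cdots=c_m^{(m)}=0$ only uses the steps $n=2,\dots,m+1$, so the conclusion is unaffected.
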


\begin{proof}
(i) We have
\begin{multline*}
 \Hom A{\varinjlim_{j\in\omega}Q_j}{\varinjlim_{i\in\omega}Q_i}=
 \varprojlim_{j\in\omega}\Hom A{Q_j}{\varinjlim_{i\in\omega}Q_i}=
 \varprojlim_{j\in\omega}\varinjlim_{i\in\omega}\Hom A{Q_j}{Q_i} \\
 = \varprojlim_{j\ge1}(k.\tau_j\oplus\bigoplus_{m\ge j}
 k.\tau_0\pi_m\phi_m\dotsm\phi_{j+1})=\varprojlim_{j\ge1}(k\oplus
 \bigoplus_{m\ge j}k)=k\oplus 0=k,
\end{multline*}
where $\tau_j\colon Q_j\to\varinjlim_{i\in\omega}Q_i$ is
the canonical morphism.
 Here the $j$-indexed projective system of infinite-dimensional vector
spaces and injective transition maps $\bigoplus_{m\ge j} k$ is
a projective system of subspaces of the vector space~$k^{(\omega)}$.
 The projective limit, i.e.\ the intersection of this family of
vector subspaces in $k^{(\omega)}$, is zero.

(ii) Holds because $A_{0,0}=k$.

(iii) We have
\begin{multline*}
 \Hom A{\varinjlim_{j\ge1}Q_j/Q}{\varinjlim_{i\ge1}Q_i/Q}=
 \varprojlim_{j\ge1}\Hom A{Q_j/Q}{\varinjlim_{i\ge1}Q_i/Q}= \\
 \varprojlim_{j\ge1}\varinjlim_{i\ge1}\Hom A{Q_j/Q}{Q_i/Q} 
 = \varprojlim_{j\ge1}(k.\bar\tau_j)=k,
\end{multline*}
where $\bar\tau_j\colon Q_j/Q\to\varinjlim_{i\in\omega}Q_i/Q$ is
the canonical morphism.
 This computation uses the fact that $Q_j/Q$ is a finitely presented
(in fact, a finitely generated projective) object in $\rmod A$.
\end{proof}

\begin{lemma} \label{G/Q-not-in-SumG}
 The $A$-module $G/Q$ does not belong to the class\/ $\Sum G$.
\end{lemma}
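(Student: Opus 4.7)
The plan is to argue by contradiction. Suppose $G/Q\cong G^{(X)}$ for some set~$X$; the contradiction will come from applying the functor $\Hom{A}{Q}{-}$ to both sides and finding that they give incompatible answers.

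First I would note that $\Hom{A}{Q}{-}$ coincides with the evaluation functor $M\mapsto M_0$ on $\rmod A$, since $Q=Q_0$ is the free right $A$-module on one generator at vertex~$0$; thus it is exact and commutes with arbitrary direct sums (equivalently, $Q$ is finitely generated projective). Applied to the short exact sequence $0\to Q\to G\to G/Q\to 0$ it yields the exact sequence $0\to Q_0\to G_0\to(G/Q)_0\to 0$ of $k$-vector spaces, and applied to $G^{(X)}$ it produces $G_0^{(X)}$.

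Then I would carry out the two needed computations. We have $Q_0=A_{0,0}=k\cdot e_0=k$. As for $G_0=\varinjlim_i(Q_i)_0=\varinjlim_i A_{i,0}$, the description in Construction~\ref{path-algebra} shows that each $A_{i,0}$ equals its summand $A'_{i,0}=k\cdot \phi_i\phi_{i-1}\dotsm\phi_1$ (with $A''_{i,0}=0$ because $j=0$), and hence is one-dimensional; and the transition map $A_{i,0}\to A_{i+1,0}$, being left multiplication by $\phi_{i+1}$, carries the basis element of $A_{i,0}$ to the basis element of $A_{i+1,0}$. Therefore $G_0=k$ and the canonical map $Q_0\to G_0$ is an isomorphism, whence $(G/Q)_0=0$.

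Combining these, $\Hom{A}{Q}{G/Q}=0$ whereas $\Hom{A}{Q}{G^{(X)}}=k^{(X)}$, which forces $X=\emptyset$ and hence $G/Q=0$. This contradicts Proposition~\ref{endomorphisms-computed}(iii), which asserts $\End((G/Q)_A)=k$ and in particular $G/Q\ne 0$. The argument is essentially a direct calculation; the only step worth highlighting is the vanishing of $(G/Q)_0$, which reflects the fact that, although the inclusion $Q\hookrightarrow G$ fails to split and is not surjective on most components of the quiver representation, it does become an isomorphism after evaluating at vertex~$0$.
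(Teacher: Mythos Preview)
Your proof is correct and follows essentially the same line as the paper's. Both arguments show that the vertex-$0$ component of $G/Q$ vanishes while that of $G$ is $k$, forcing $X=\emptyset$: you do this via $\Hom{A}{Q}{-}=(-)_0$, while the paper tensors with the quiver representation $V'=(k\overset{=}{\to}k\overset{=}{\to}\dotsb)$, but since $V'\cong Ae_0$ as left $A$-modules these two functors coincide. The only genuine difference is in establishing $G/Q\ne 0$: you invoke Proposition~\ref{endomorphisms-computed}(iii), whereas the paper exhibits a second test representation $V''=(0\to k\overset{=}{\to}k\overset{=}{\to}\dotsb)$ with $T_{G/Q}(V'')=k$, making that step self-contained rather than relying on an earlier computation.
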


\begin{proof}
 Consider the following two quiver representations $V'$ and $V''$:
$$
 V'=(k\overset=\to k\overset=\to k\to\dotsb)
 \quad\text{and}\quad
 V''=(0\to k\overset=\to k\overset=\to k\to\dotsb).
$$
 Then we have $T_G(V')=k$ and $T_{G/Q}(V')=k/k=0$, while on the other
hand $T_G(V'')=k$ and $T_{G/Q}(V'')=k/0=k$.

 Suppose that there is an isomorphism $G/Q\cong G^{(\lambda)}$ for
some cardinal~$\lambda$.
 Then $0=T_{G/Q}(V')\cong T_G(V')^{(\lambda)}=k^{(\lambda)}$, hence
$\lambda=0$ is the zero cardinal and $G/Q=0$.
 However, $T_{G/Q}(V'')\ne0$.
 The contradiction proves that $G/Q\notin\Sum G$.
\end{proof}

\begin{construction}
 Let $A=(A_{i,j})_{i,j\in\omega}$ be an arbitrary $k$-algebra with
$\omega$~objects.
 We construct a unital $k$-algebra $R$ by adjoining a unit formally to
the nonunital $k$-algebra $\bigoplus_{i,j\in\omega}A_{i,j}$: so
$R=k.1\oplus\bigoplus_{i,j\in\omega}A_{i,j}$.
 Define the functor
$$
 \Theta\colon\rmod A\to\rmod R
$$
by the rule $\Theta(N_A)=\bigoplus_{i\in\omega}N_i=M_R$.
 Then $\Theta\colon\rmod A\to\rmod R$ is an exact, colimit-preserving,
fully faithful functor.
 The identity elements $e_i\in A_{i,i}$ form a family of orthogonal
idempotents $e_i\in R$, and the components of the $A$-module $N$ can
be recovered from the $R$-module $M$ by the rule $N_i=Me_i$.
 The essential image of the functor $\Theta$ consists of all the $R$-modules
$M$ such that $M=\sum_{i\in\omega}Me_i$.
\end{construction}

\begin{lemma}
\begin{enumerate}
\item The functor $\Theta$ takes (finitely generated) projective
$A$-modules to (finitely generated) projective $R$-modules.
\item The functor $\Theta$ takes (Bass) flat $A$-modules to (Bass)
flat $R$-modules.
\end{enumerate}
\end{lemma}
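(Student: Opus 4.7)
My approach is to begin by identifying $\Theta$ on the basic projective generators. The free right $A$-module $Q_i$ has components $(Q_i)_j=A_{i,j}$, so $\Theta(Q_i)=\bigoplus_{j\in\omega}A_{i,j}$. Working out the product in $R=k.1\oplus\bigoplus_{i',j'}A_{i',j'}$, one sees that $e_iR=\bigoplus_{j\in\omega}A_{i,j}$: the summand $k.e_i$ coming from the contribution of $e_i\cdot1$ is absorbed into $A_{i,i}$, while $e_iA_{i',j'}=0$ for $i'\neq i$ and $e_iA_{i,j'}=A_{i,j'}$. A direct comparison of right actions shows $\Theta(Q_i)\cong e_iR$ as right $R$-modules. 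In particular, $\Theta(Q_i)$ is a direct summand of~$R$, with complement $(1-e_i)R$, and hence a cyclic projective right $R$-module.

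For part~(i), recall that $\Theta$ is exact and colimit-preserving, so it preserves arbitrary direct sums and direct summands. An arbitrary projective right $A$-module is a direct summand of some $\bigoplus_{\alpha}Q_{i_\alpha}$, so $\Theta$ of it is a direct summand of $\bigoplus_{\alpha}e_{i_\alpha}R$, hence projective over~$R$. If moreover the $A$-module is finitely generated, it is a summand of a finite sum $\bigoplus_{i\in F}Q_i$, whose image under $\Theta$ is $(\sum_{i\in F}e_i)R$, a finitely generated projective $R$-module, settling the ``finitely generated'' part of~(i).

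For part~(ii), I would invoke the Govorov--Lazard theorem in the Grothendieck category $\rmod A$, whose compact projective generators are the $Q_i$: every flat right $A$-module is a direct limit of finite direct sums of the~$Q_i$. Since $\Theta$ preserves direct limits and sends finitely generated projective $A$-modules to finitely generated projective $R$-modules by~(i), it sends flat $A$-modules to direct limits of finitely generated projective $R$-modules, hence to flat $R$-modules. For the Bass refinement, if $F=\varinjlim_{n\in\omega}(Q_{i_0}\to Q_{i_1}\to Q_{i_2}\to\dotsb)$ is Bass flat over~$A$, then $\Theta(F)=\varinjlim_{n}e_{i_n}R$ is a countable direct limit of direct summands of~$R$; applying the construction from the proof of Lemma~\ref{summands}, which replaces each $e_{i_n}R$ by $R$ and extends the transition maps by zero on the complementary summands $(1-e_{i_n})R$, we exhibit $\Theta(F)$ as a countable direct limit of copies of~$R$, i.e.\ a Bass flat $R$-module. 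The only genuine computation is the identification $\Theta(Q_i)\cong e_iR$; everything else follows formally from preservation properties of $\Theta$ together with the Lemma~\ref{summands} trick, so I expect no real obstacle.
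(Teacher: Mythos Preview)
Your proof is correct and follows essentially the same approach as the paper: both arguments rest on the identification $\Theta(Q_i)\cong e_iR$, then use preservation of direct sums/summands for~(i) and Govorov--Lazard plus preservation of direct limits for~(ii). You are in fact slightly more careful than the paper in the Bass flat case: the paper says this ``follows immediately'' from $\Theta$ preserving direct limits, but $\Theta(Q_{i_n})=e_{i_n}R$ is only a summand of $R$, not $R$ itself, so the extra step via Lemma~\ref{summands} that you spell out is genuinely needed to match the paper's definition of a Bass flat $R$-module as a countable direct limit of copies of~$R$.
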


\begin{proof}
 In part~(i), it suffices to check that $\Theta$ takes the free $A$-module
with one generator $Q_i$, $i\in\omega$, to a finitely generated projective
$R$-module.
 Indeed, one has $\Theta(Q_i)=e_iR_R$.
 In part~(ii), the assertion concerning Bass flat modules now follows
immediately from the fact that the functor $\Theta$ preserves direct limits.
 The assertion about arbitrary flat modules can be deduced from their
Govorov--Lazard characterization as direct limits of (finitely generated)
projective ones, which is provable for rings with many objects similarly
to the classical case.
\end{proof}

 Finally, in order to produce the desired pair of $R$-modules, we put
$F=\Theta(G)$ and $P=\Theta(Q)\subset F$.
 To justify the notation in
the diagram~\eqref{direct-system-with-splittings}, we also put
$P_i=\Theta(Q_i)$ for all $i\in\omega$.
 All the properties of the modules $F$ and $P$ itemized in the beginning
of this section follow from the lemmas and proposition above, with
the exception of the last two ones, which still need to be checked.
 The following lemma does the job.

\begin{lemma}
 Let $R$ be a ring and $M_R$ be a module.
 Suppose that $D=\End(M_R)$ is a division ring.
 Then\/ $\Sum M_R=\Add M_R=\varinjlim\Add M_R$.
\end{lemma}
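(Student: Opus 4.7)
The plan is to close the chain of inclusions $\Sum M\subseteq\Add M\subseteq\varinjlim\Add M$ by establishing the remaining inclusion $\varinjlim\Add M\subseteq\Sum M$, which I will deduce from the two earlier results Corollary~\ref{selfsmall}(i) and Theorem~\ref{limadd}. The only preparation needed is to verify that $M$ itself is self-small; after that, the rest is a short computation exploiting the fact that every module over a division ring is free.

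First I will check that $M$ is self-small. Let $f\in\Hom R M{M^{(X)}}$ be nonzero, and for each $x\in X$ let $\pi_x\colon M^{(X)}\to M$ denote the canonical projection, so that $\pi_x f\in\End M_R=D$. Pick any $m_0\in M$ with $f(m_0)\neq 0$, and let $F=\{x\in X : \pi_x f(m_0)\neq 0\}$, a finite subset of $X$ by definition of the direct sum. For $x\notin F$ the element $\pi_x f\in D$ vanishes at the nonzero vector $m_0$, so it is not an automorphism of $M$; since $D$ is a division ring, this forces $\pi_x f=0$ in $D$, and hence $\pi_x f(m)=0$ for every $m\in M$. Consequently $\mathrm{im}(f)\subseteq M^{(F)}$, so $M$ is self-small.

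With self-smallness established, Corollary~\ref{selfsmall}(i) yields $\varinjlim\Add M=\varinjlim\add M$, and Theorem~\ref{limadd} identifies this common class with $\{G\otimes_D M : G\in\mathcal F_D\}$, where $\mathcal F_D$ denotes the class of all flat right $D$-modules. Since $D$ is a division ring, every right $D$-module is free, so each such $G$ is isomorphic to $D^{(Y)}$ for some set $Y$, and $G\otimes_D M\cong M^{(Y)}\in\Sum M$. Combining this with the evident inclusions $\Sum M\subseteq\Add M\subseteq\varinjlim\Add M$ closes the chain and produces the desired equalities.

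I do not foresee any real obstacle: the only substantive step is the self-smallness verification, and there the point is simply that a nonzero element of a division ring acts invertibly, which forces the supports of the various $\pi_x f$ all to coincide with the (finite) support of $f(m_0)$ for any single nonzero~$m_0$.
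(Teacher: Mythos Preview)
Your proof is correct. Both your argument and the paper's ultimately rest on the same observation---that every $D$-module is free and the functor $-\otimes_D M$ carries $D$-vector spaces to direct sums of copies of $M$---but you route through a different pair of earlier results. The paper bypasses self-smallness entirely: it invokes Lemma~\ref{summands} to reduce to showing $\Sum M=\varinjlim\Sum M$, and then argues directly that any direct system in $\Sum M$ is the image under $-\otimes_D M$ of a direct system of $D$-vector spaces, whose direct limit is again a $D$-vector space, hence free. Your detour through self-smallness and Corollary~\ref{selfsmall}(i) is what allows you to invoke Theorem~\ref{limadd} (which is stated only for $\varinjlim\add M$, not $\varinjlim\Add M$); in effect you make explicit the ingredient---that $\Hom R M{M^{(Y)}}\cong D^{(Y)}$---which the paper's one-line ``any direct system in $\Sum M_R$ comes from a direct system of $D$-vector spaces'' uses implicitly. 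Your version is a bit longer but more modular, reusing the machinery already developed; the paper's is shorter and self-contained.
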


\begin{proof}
 By Lemma~\ref{summands}, we have $\varinjlim\Sum M_R=\varinjlim\Add M_R$,
so it suffices to check that $\Sum M_R=\varinjlim\Sum M_R$.
 Here one simply observes that any direct system in $\Sum M_R$ comes from
a direct system of $D$-vector spaces via the tensor product functor
${-}\otimes_D M$.
 The tensor product functor commutes with the direct sums and direct limits,
and it remains to recall that $\Sum D_D=\rmod D=\varinjlim\Sum D_D$.
\end{proof}

 We have proved that $\Sum F_R=\Add F_R=\varinjlim\Add F_R$; and the fact
that $F/P\notin\Add F_R$ now follows from Lemma~\ref{G/Q-not-in-SumG}
(as the functor $\Theta$ is fully faithful and preserves direct sums).

\begin{remark}
 Our example of a $k$-algebra $R$ and an $R$-module $F$ can be thought of
as universal in the following sense.
 Let $R'$ be a $k$-algebra with a family of orthogonal idempotents
$e'_i\in R'$, $i\in\omega$.
 Put $P_i'=e'_iR'\in\rmod{R'}$.
 Suppose that we are given an $\omega$-indexed direct system of $R'$-modules
$$
 P_0'\overset{\phi_1'}\longrightarrow P_1'\overset{\phi_2'}\longrightarrow
 P_2'\longrightarrow\dotsb
$$
together with retractions $\pi'_n\colon P_n'\to P_0'$ for the compositions
$\phi'_n\dotsm\phi'_1\colon P_0'\to P_n'$.

 Then there exists a unique $k$-algebra morphism $R\to R'$ such that
the direct system of $R'$-modules $P_0'\overset{\phi_1'}\longrightarrow
P_1'\overset{\phi_2'}\longrightarrow P_2'\longrightarrow\dotsb$ with
the splittings $\pi'_n\colon P_n'\to P_0'$ is obtained by applying
the functor ${-}\otimes_RR'$ to the direct system of $R$-modules
$P_0\overset{\phi_1}\longrightarrow P_1\overset{\phi_2}\longrightarrow 
P_2\longrightarrow\dotsb$ with the splittings $\pi_n\colon P_n\to P_0$.

 Indeed, the $k$-algebra $R$ is generated by the elements $e_i$, $\phi_n$,
and~$\pi_n\in R$.
 The $R'$-module maps $\phi'_n\colon P_{n-1}'\to P_n'$ and $\pi'_n\colon
P_n'\to P_0'$ give rise to naturally defined elements $\phi'_n\in R'$ and
$\pi'_n\in R'$.
 The $k$-algebra homomorphism $R\to R'$ takes the elements $e_i$ to $e_i'$,
the elements $\phi_n$ to $\phi_n'$, and the elements $\pi_n$ to $\pi_n'$.

 So, one can say that our aim in the constructions of this section was to
force the existence of the diagram~\eqref{direct-system-with-splittings},
which guaranteed that $P=P_0$ is a projective pure submodule in a Bass flat
module $F=\varinjlim_{i\in\omega}P_i$.
 We did so in a universal way.
 However, we also had to compute the endomorphism ring $\End(F_R)$ and
make sure that $\End(F_R)=k$.
 To simplify this computation, we assumed additionally that
$P_i=e_iR$ for a family of orthogonal idempotents $e_i\in R$, and we built
this additional assumption into our universal construction.
 Then Proposition~\ref{endomorphisms-computed}(i) became our main
computation, which showed that our universal construction does indeed
produce a module with the desired properties.
\end{remark}

\medskip

\section{Contramodule methods}
\label{contramodule-methods}

 The aim of this section is to formulate and prove
Theorem~\ref{lim-Add}, which is a version of Theorem~\ref{limadd} for
$\varinjlim\Add M$.
 Various applications of this theorem will be presented in
the subsequent sections.

 The exposition in this section, as well as in
Sections~\ref{contra-versus}--\ref{deconstructibility},
is based on the theory of contramodules over topological rings.
 We recall the basic concepts of this theory and the main results which
we need, and then proceed to the desired applications.
 Our main reference for contramodules is~\cite[Sections~6--7]{PS};
see also~\cite[Section~1.2]{Pweak}, \cite[Sections~1.1--1.2 and~5]{PR},
\cite[Section~2]{Pcoun}, \cite[Section~2]{Pproperf},
\cite[Sections~1--2]{BP}.

 The main difference between these papers and the exposition below is
that in the cited papers the general convention is to consider \emph{left}
contramodules, while in this section we work with \emph{right}
contramodules.
 The aim of this change of notation is to comply with the commonly
accepted convention in the ring and module theory, where scalars act
on the right and (endo)morphisms act on the left.

 A topological ring $S$ is said to be \emph{left linear} if
open left ideals form a base of neighborhoods of zero in $S$.
 The \emph{completion} $\mathfrak S$ of a left linear topological ring
$S$ is defined as the projective limit $\varprojlim_{I\subset S}S/I$,
where $I$ ranges over the open left ideals of $S$.
 A topological ring $S$ is said to be \emph{complete} if
the completion map $S\to\mathfrak S$ is surjective and \emph{separated}
if this map is injective.
 The completion $\mathfrak S=\varprojlim_{I\subset S}S/I$ is endowed
with a topological ring structure as explained
in~\cite[Section~2.2]{Pcoun}, making $\mathfrak S$ a complete,
separated left linear topological ring.
 The completion map $S\to\mathfrak S$ is a continuous ring homomorphism.

 A left module $M$ over a topological ring $S$ is said to be
\emph{discrete} if, for every element $b\in M$, the annihilator of $b$
is an open left ideal in $S$.
 The discrete left $S$-modules form a hereditary pretorsion
class $\ldiscr S$ in the category of all left $S$-modules $\lmod S$
(see Section~\ref{gabrieltopol} below for a discussion of pretorsion
and torsion classes).
 The module structure of any discrete left $S$-module can be uniquely
extended to a structure of discrete module over the completion
$\mathfrak S$ of $S$.

 Let $\mathfrak S$ be a complete, separated left linear topological ring.
 For every set $X$ and abelian group $A$, we will denote by $[X]A=A^{(X)}$
the direct sum of $X$ copies of~$A$.
 The elements of the group $[X]A$ are interpreted as finite linear
combinations $\sum_{x\in X}xa_x$ of elements of $X$ with the coefficients
in $A$ (so $a_x=0$ for all but a finite subset of the indices $x\in X$).
 Furthermore, we denote by $[[X]]\mathfrak S$ the projective limit
$\varprojlim_{\mathfrak I\subset\mathfrak S}[X](\mathfrak S/\mathfrak I)$,
where $\mathfrak I$ ranges over the open left ideals of $\mathfrak S$.
 The elements of the abelian group $[[X]]\mathfrak S$ are interpreted as
infinite linear combinations $\sum_{x\in X}xs_x$ of elements of $X$ with
the families of coefficients $(s_x\in\mathfrak S\mid x\in X)$ converging
to zero in the topology of $\mathfrak S$.
 Here the convergence means that, for every open left ideal $\mathfrak I
\subset\mathfrak S$, the set $\{x\in X\mid s_x\notin\mathfrak I\}$
is finite.

 The map assigning to every set $X$ the underlying set of the group
$[[X]]\mathfrak S$ is a covariant endofunctor on the category of sets,
and in fact, this functor is a \emph{monad} on the category of
sets~\cite[Section~6]{PS}, \cite[Section~5]{PR}, \cite[Section~2.7]{Pcoun}.
 This means that for every map of sets $f\colon X\to Y$ there is
the induced map $[[f]]\mathfrak S\colon [[X]]\mathfrak S\to
[[Y]]\mathfrak S$, and moreover, for every set $X$ there are natural maps
$\epsilon_X\colon X\to[[X]]\mathfrak S$ and $\phi_X\colon [[[[X]]
\mathfrak S]]\mathfrak S\to [[X]]\mathfrak S$ satisfying the associativity
and unitality equations of a monad.
 Here the monad unit $\epsilon_X$ is the ``point measure'' map defined
in terms of the unit element of the ring $\mathfrak S$, while
the monad multiplication $\phi_X$ is the ``opening of parentheses'' map
defined in terms of the multiplication and addition in the ring
$\mathfrak S$ and assigning a formal linear combination to a formal linear
combination of formal linear combinations.
 Infinite sums, which have to be computed in the constructions of
the maps $[[f]]\mathfrak S$ and $\phi_X$ (as the linear combinations
are infinite), are understood as the limits of finite partial sums in
the topology of $\mathfrak S$.
 The assumptions imposed above on the topology of $\mathfrak S$
are designed to guarantee the convergence.

 We are interested in modules over this monad (usually called ``algebras
over the monad'', but we prefer to call them modules because our context
is additive).
 Modules over the monad $X\mapsto [[X]]\mathfrak S$ on the category of
sets are called \emph{right\/ $\mathfrak S$-contramodules}.
 Explicitly, a right $\mathfrak S$-contramodule $\mathfrak C$ is a set
endowed with a \emph{right contraaction} map $\pi_{\mathfrak C}\colon
[[\mathfrak C]]\mathfrak S\to\mathfrak C$ satisfying the following
contraassociativity and contraunitality equations.
 The two compositions
\[
\xymatrix{
 {[[[[\mathfrak C]]\mathfrak S]]\mathfrak S}
 \ar@<0,5ex>[rr]^-{[[\pi_{\mathfrak C}]]\mathfrak S}
 \ar@<-0,5ex>[rr]_-{\phi_{\mathfrak C}}
 && [[\mathfrak C]]\mathfrak S
 \ar[r]^-{\pi_{\mathfrak C}}
 & {\mathfrak C}
}
\]
must be equal to each other,
$\pi_{\mathfrak C}\circ[[\pi_{\mathfrak C}]]\mathfrak S=
\pi_{\mathfrak C}\circ\phi_{\mathfrak C}$;
and the composition
\[
\xymatrix{
 {\mathfrak C} \ar[r]^-{\epsilon_{\mathfrak C}}
 & {[[\mathfrak C]]\mathfrak S} \ar[r]^-{\pi_{\mathfrak C}}
 & {\mathfrak C}
}
\]
must be equal to the identity map,
$\pi_{\mathfrak C}\circ\epsilon_{\mathfrak C}=\mathrm{id}_{\mathfrak C}$.

 In particular, given an associative ring $S$, one can endow $S$ with
the discrete topology (which is always complete, separated, and left
linear).
 Then the map $X\mapsto [X]S$ is a monad on the category of sets, and
modules over this monad are the same thing as right
$S$-modules~\cite[Section~6.1]{PS}.
 This is a fancy category-theoretic way to define modules over a ring
in terms of the forgetful functor assigning to the module its underlying
set and the related monad on the category of sets.

 For any set $X$, there is the obvious inclusion map $[X]\mathfrak S
\hookrightarrow [[X]]\mathfrak S$.
 Given a right $\mathfrak S$-contramodule $\mathfrak C$ with
the contraaction map $\pi_{\mathfrak C}\colon [[\mathfrak C]]\mathfrak S
\to \mathfrak C$, one can consider the composition $[\mathfrak C]
\mathfrak S\hookrightarrow [[\mathfrak C]]\mathfrak S
\xrightarrow{\pi_{\mathfrak C}}\mathfrak C$.
 The resulting map $[\mathfrak C]\mathfrak S\to\mathfrak C$ endows
$\mathfrak C$ with a right $\mathfrak S$-module structure.
 Thus the underlying $\mathfrak S$-module structure of
an $\mathfrak S$-contramodule is constructed.

 The category of right $\mathfrak S$-contramodules is denoted by
$\rcontra{\mathfrak S}$.
 So we have the forgetful functor $\rcontra{\mathfrak S}\to
\rmod{\mathfrak S}$.
 The category $\rcontra{\mathfrak S}$ is abelian and locally
$\mu^+$-presentable (in the sense of~\cite[Definition~1.17]{AR}),
where $\mu$ is the cardinality of a base of neighborhoods of zero
in $\mathfrak S$.
 The forgetful functor $\rcontra{\mathfrak S}\to\rmod{\mathfrak S}$
is exact and faithful, and preserves all products (but not coproducts).
 In particular, the forgetful functor, generally speaking, does
\emph{not} preserve direct limits (though it preserves all projective
limits and all $\mu^+$-direct limits, where $\mu$~is as above).
 Here the assertions involving~$\mu$ are based on the observation
that any zero-convergent family of nonzero elements in $\mathfrak S$
has cardinality~$\le\mu$ (if $\mu$~is infinite).

 The abelian category $\rcontra{\mathfrak S}$ has enough projective
objects (but, generally speaking, no nonzero injectives).
 The projective objects in $\rcontra{\mathfrak S}$ are described as
follows.
 For any set $X$, put $\mathfrak P=[[X]]\mathfrak S$ and
$\pi_{\mathfrak P}=\phi_X$.
 This defines an $\mathfrak S$-contramodule structure on $\mathfrak P$;
the resulting $\mathfrak S$-contramodule $\mathfrak P=[[X]]\mathfrak S$ is
called the \emph{free right\/ $\mathfrak S$-contramodule spanned by $X$}.
 For any right $\mathfrak S$-contramodule $\mathfrak C$, the group of
all $\mathfrak S$-contramodule morphisms $[[X]]\mathfrak S\to\mathfrak C$
is naturally isomorphic to the group of all maps of sets $X\to\mathfrak C$.
 The projective $\mathfrak S$-contramodules are precisely the direct
summands of the free ones.

 Let $\mathfrak C$ be a right $\mathfrak S$-contramodule and
$M$ be a discrete left $\mathfrak S$-module.
 The \emph{contratensor product} $\mathfrak C\odot_{\mathfrak S}M$
of $\mathfrak C$ and $M$ over $\mathfrak S$ \,\cite[Section~7.2]{PS},
\cite[Definition~5.4]{PR}, \cite[Section~2.8]{Pcoun} is an abelian group
constructed as the cokernel of the difference of two natural maps of
abelian groups
\[
\xymatrix{
 {[[\mathfrak C]]\mathfrak S\otimes_{\mathbb Z}M}
 \ar@<0,5ex>[rr]^-{t_{\mathfrak S}}
 \ar@<-0,5ex>[rr]_-{\pi_{\mathfrak C}\otimes_{\mathbb Z}M}
 && {\mathfrak C\otimes_{\mathbb Z}M},
}
\]
where $t_{\mathfrak S}\colon
[[\mathfrak C]]\mathfrak S\otimes_{\mathbb Z}M
\to \mathfrak C\otimes_{\mathbb Z}M$ is the map defined by the formula
\[
\xymatrix{
 {(\sum_{c\in\mathfrak C}cs_c)\otimes_{\mathbb Z} b}
 \ar@{|->}[r]^-{t_{\mathfrak S}}
 & {\sum_{c\in\mathfrak C}(c\otimes_{\mathbb Z}s_cb)}.
}
\]
 Here $(s_c\in\mathfrak S\mid c\in\mathfrak C)$ is a family of elements
converging to zero in the topology of $\mathfrak S$, and $b\in M$
is an element.
 The sum $\sum_{c\in\mathfrak C}cs_c$ denotes a formal linear
combination belonging to $[[\mathfrak C]]\mathfrak S$, while
the sum in the right-hand side is actually a finite sum of elements
of the tensor product $\mathfrak C\otimes_{\mathbb Z}M$.
 This sum is finite because one has $s_cb=0$ for all but a finite
subset of elements $c\in\mathfrak C$, since the annihilator of $b$
is an open left ideal in $\mathfrak S$, so all but a finite subset
of elements $s_c$ belong to this ideal.

 For any discrete left $\mathfrak S$-module $M$ and any right
$\mathfrak S$-contramodule $\mathfrak C$, there is a natural surjective
homomorphism of abelian groups
\begin{equation} \label{tensor-contratensor-surjection}
 \mathfrak C\otimes_{\mathfrak S}M\twoheadrightarrow
 \mathfrak C\odot_{\mathfrak S}M.
\end{equation}
 For any discrete left $\mathfrak S$-module $M$ and any set $X$, there
is a natural isomorphism of abelian groups
\begin{equation} \label{contratensor-with-free-contramodule}
 [[X]]\mathfrak S\odot_{\mathfrak S}M\cong [X]M=M^{(X)}.
\end{equation}
 The functor of contratensor product $-\odot_{\mathfrak S}-$ is right
exact and preserves coproducts in both its arguments.

 We refer to~\cite[Section~6]{PR} or~\cite[Section~5]{Pcoun} for
the definitions and discussion of \emph{complete} and \emph{separated}
contramodules.

 Let $M$ be a right $R$-module.
 Then the endomorphism ring $\mathfrak S=\End{M_R}$ carries a natural
complete, separated, left linear topology, called the \emph{finite
topology} (see~\cite[Section~7.1]{PS} and the references therein).
 By definition, a base of neighborhoods of zero in the finite topology on
$\mathfrak S$ is formed by the annihilators of finitely generated
$R$-submodules of $M$.
 It follows immediately from the definitions that $M$ is a discrete left
$\mathfrak S$-module.

 For any abelian category $\mathsf B$, let us denote by
$\mathsf B_{\mathrm{proj}}$ the class of all projective objects in
$\mathsf B$.
 We will consider $\mathsf B_{\mathrm{proj}}$ as a full subcategory in
$\mathsf B$; then $\mathsf B_{\mathrm{proj}}$ becomes an (additive)
category.
 Similarly, any class of right $R$-modules can be considered as a full
subcategory in $\rmod R$.
 In particular, viewing $\Add M$ as a full subcategory in $\rmod R$ makes
$\Add M$ an additive category.

 The following result was obtained in~\cite[Theorem~7.1 and
Proposition~7.3]{PS} in connection with applications to
infinitely-generated tilting theory.
 This result is called ``generalized tilting theory''
in~\cite[Section~2]{BP}.

\begin{theorem} \label{gentilt}
 Let $M$ be a right $R$-module and $\mathfrak S=\End{M_R}$ be its
endomorphism ring, endowed with the finite topology.
 Then there is a natural equivalence of additive categories
\begin{equation} \label{gentilt-equivalence}
\xymatrix{
 {\Psi_M\colon\Add M} \ar@{=}[r]
 & {(\rcontra{\mathfrak S})_{\mathrm{proj}}\,:\!\Phi_M}.
}
\end{equation}
 The functors $\Psi_M$ and $\Phi_M$ can be extended naturally to
a pair of adjoint functors between the whole abelian categories of
right $R$-modules and right $\mathfrak S$-contramodules:
\[
\xymatrix{
 {\Psi_M\colon\rmod R} \ar@<0,5ex>[r]
 & \ar@<0,5ex>[l] {\rcontra{\mathfrak S}\,:\!\Phi_M}.
}
\]
 Here the right adjoint functor $\Psi_M$ takes any $R$-module $N$ to
the abelian group $\Hom RMN$, which has a natural right
$\mathfrak S$-contramodule structure induced by the discrete left
$\mathfrak S$-module structure on $M$.
 The left adjoint functor $\Phi_M$ takes any $\mathfrak S$-contramodule
$\mathfrak C$ to the $R$-module $\mathfrak C\odot_{\mathfrak S}M$. \qed
\end{theorem}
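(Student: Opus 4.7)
The plan is to build the two functors explicitly, verify the adjunction at the level of the whole module/contramodule categories, and then show the adjunction unit and counit are isomorphisms on the relevant subcategories.

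First, I would make the functor $\Psi_M(N) = \Hom RMN$ into a right $\mathfrak S$-contramodule. Given a zero-convergent family $(s_f \in \mathfrak S \mid f \in \Hom RMN)$ and an element $m\in M$, the set $\{f : s_f m \ne 0\}$ is finite, because the annihilator of the finitely generated submodule $mR\subseteq M$ is an open left ideal in~$\mathfrak S$. Thus the rule $\bigl(\sum_f f\,s_f\bigr)(m) := \sum_f f(s_f m)$ defines a finite sum in $N$ and produces a well-defined $R$-linear map $M\to N$; standard checks show this gives a contraaction $[[\Hom RMN]]\mathfrak S\to\Hom RMN$ satisfying contraassociativity and contraunitality. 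On the other side, $\Phi_M(\mathfrak C)=\mathfrak C\odot_{\mathfrak S}M$ is naturally a right $R$-module via the right $R$-action on the tensor factor $M$, and $\Phi_M$ is right exact and preserves coproducts by the corresponding properties of the contratensor product.

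Next I would establish the adjunction $\Phi_M\dashv\Psi_M$, i.e.\ $\Hom R{\mathfrak C\odot_{\mathfrak S}M}{N}\cong\Hom{\mathfrak S\text{-ctra}}{\mathfrak C}{\Hom RMN}$, natural in $\mathfrak C$ and $N$. For free contramodules this reduces to $\Hom R{[[X]]\mathfrak S\odot_{\mathfrak S}M}{N}=\Hom R{M^{(X)}}{N}\cong\Hom RMN^X$ via the identification~\eqref{contratensor-with-free-contramodule}, which matches $\Hom{\mathfrak S\text{-ctra}}{[[X]]\mathfrak S}{\Hom RMN}$ because morphisms from a free contramodule into any contramodule are exactly maps of the generating set. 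The general case follows by choosing a presentation of $\mathfrak C$ by free contramodules and using right exactness of $\Phi_M$ together with left exactness of $\Psi_M$ (applied to the cokernel calculation defining the contraaction, which behaves exactly like an ordinary Hom over a ring).

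Now I would verify the equivalence on the projective subcategories. By~\eqref{contratensor-with-free-contramodule}, $\Phi_M([[X]]\mathfrak S)\cong M^{(X)}$. Conversely, one has a natural map $\Psi_M(M^{(X)})=\Hom RM{M^{(X)}}\to[[X]]\mathfrak S$ sending $f$ to the formal sum $\sum_x x\,\pi_x f$, where $\pi_x\colon M^{(X)}\to M$ is the $x$th projection; this family is zero-convergent, since for any finitely generated $N\subseteq M$ one has $\pi_x f(N)=0$ for all but finitely many~$x$. It is then routine to check that this map is an isomorphism of right $\mathfrak S$-contramodules inverse to the map induced by the adjunction counit, and that it is natural in~$X$. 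Hence the unit and counit of the adjunction are isomorphisms on the free objects. Since both $\Add M$ and $(\rcontra{\mathfrak S})_{\mathrm{proj}}$ are the classes of direct summands of the free objects on each side, and both functors preserve direct summands (being additive), the restrictions of $\Psi_M$ and $\Phi_M$ give the desired equivalence.

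The main obstacle is the technical check that the natural map $\Hom RM{M^{(X)}}\to[[X]]\mathfrak S$ really is an isomorphism of contramodules, compatibly with the adjunction, for \emph{arbitrary} sets~$X$; this is the place where the convergence condition built into the finite topology on $\mathfrak S$ and into the definition of $[[-]]\mathfrak S$ must be used delicately. Once this is in hand, everything else --- additivity, preservation of direct summands, and the passage from the equivalence on free objects to the equivalence on projective objects --- is formal.
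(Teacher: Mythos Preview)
Your sketch is correct and follows precisely the approach of the cited source~\cite[Theorem~7.1 and Proposition~7.3]{PS}; note however that the present paper does not actually prove this theorem but simply quotes it (hence the bare~\qed). The construction of the contraaction on $\Hom RMN$, the adjunction argument via free presentations, and the identification $\Hom RM{M^{(X)}}\cong [[X]]\mathfrak S$ using zero-convergence of the family $(\pi_xf)_{x\in X}$ are exactly the ingredients of the proof in~\cite{PS}, and the technical point you flag as the main obstacle is indeed the crux there as well.
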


 Now we can use the contramodule theory in order to obtain
the following variation on the theme of Theorem~\ref{limadd}.
 Given a cocomplete category $\mathsf B$ and a direct system
$(B_i,g_{ji}\mid i\le j\in I)$ in $\mathsf B$, we denote by
$\varinjlim^{\mathsf B}B_i$ the direct limit of the given direct
system \emph{computed in the category $\mathsf B$}.
 Given a class of objects $\mathsf C$ in $\mathsf B$, let us denote by
$\varinjlim^{\mathsf B}{\mathsf C}$ the class of all objects in
$\mathsf B$ which can be obtained as the direct limits of direct
systems of objects from $\mathsf C$ indexed by directed posets.
 Here, once again, the direct limit is presumed to be taken in
the category $\mathsf B$.

\begin{theorem} \label{lim-Add}
 Let $R$ be a ring, $M_R$ be a module and\/ $\mathfrak S=\End{M_R}$ be
its endomorphism ring, endowed with the finite topology.
 Then\/ $\varinjlim\Add M$ coincides with the class of all $R$-modules of
the form\/ $\mathfrak F\odot_{\mathfrak S}M$, where\/ $\mathfrak F\in
\varinjlim^{\rcontra{\mathfrak S}}
(\rcontra{\mathfrak S})_{\mathrm{proj}}$.
\end{theorem}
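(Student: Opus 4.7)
The plan is to deduce the theorem directly from the generalized tilting equivalence of Theorem~\ref{gentilt}, in the same way that Theorem~\ref{limadd} is essentially a consequence of the equivalence $\add M\simeq(\rfmod S)_{\mathrm{proj}}$ together with Govorov--Lazard. The two key ingredients are: the equivalence of additive categories $\Psi_M\colon\Add M\rightleftarrows(\rcontra{\mathfrak S})_{\mathrm{proj}}\colon\!\Phi_M$ on projective contramodules, and the observation that $\Phi_M\colon\rcontra{\mathfrak S}\to\rmod R$ is a left adjoint (to $\Psi_M$), hence preserves all colimits, and in particular all direct limits.

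For the forward inclusion, let $L\in\varinjlim\Add M$, so $L=\varinjlim_{i\in I}N_i$ in $\rmod R$ for a direct system $(N_i,g_{ji}\mid i\le j\in I)$ with $N_i\in\Add M$. Applying the functor $\Psi_M$ componentwise, I obtain a direct system $(\mathfrak P_i,\psi_{ji}\mid i\le j\in I)$ in $(\rcontra{\mathfrak S})_{\mathrm{proj}}$, with $\mathfrak P_i=\Psi_M(N_i)$ and $\psi_{ji}=\Psi_M(g_{ji})$. Form its direct limit $\mathfrak F=\varinjlim^{\rcontra{\mathfrak S}}\mathfrak P_i\in\varinjlim^{\rcontra{\mathfrak S}}(\rcontra{\mathfrak S})_{\mathrm{proj}}$. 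Since $\Phi_M$ is a left adjoint, it commutes with direct limits, so
\[
\mathfrak F\odot_{\mathfrak S}M=\Phi_M(\mathfrak F)=\varinjlim^{\rmod R}\Phi_M(\mathfrak P_i)\cong\varinjlim^{\rmod R}N_i=L,
\]
where the middle isomorphism uses that $\Phi_M\circ\Psi_M$ is naturally isomorphic to the identity on $\Add M$ by the equivalence~\eqref{gentilt-equivalence}.

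For the reverse inclusion, let $\mathfrak F\in\varinjlim^{\rcontra{\mathfrak S}}(\rcontra{\mathfrak S})_{\mathrm{proj}}$, so $\mathfrak F=\varinjlim^{\rcontra{\mathfrak S}}\mathfrak P_i$ for some direct system $(\mathfrak P_i,\psi_{ji}\mid i\le j\in I)$ of projective right $\mathfrak S$-contramodules. Setting $N_i=\Phi_M(\mathfrak P_i)\in\Add M$ and $g_{ji}=\Phi_M(\psi_{ji})$, the colimit-preserving property of $\Phi_M$ yields
\[
\mathfrak F\odot_{\mathfrak S}M=\Phi_M(\mathfrak F)=\varinjlim^{\rmod R}N_i\in\varinjlim\Add M,
\]
which completes the proof.

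There is no real obstacle here beyond correctly invoking Theorem~\ref{gentilt}; the entire content lies in that equivalence and in the general categorical fact that left adjoints preserve colimits. It is worth noting that, in contrast to Theorem~\ref{limadd}, one cannot simply replace $\varinjlim^{\rcontra{\mathfrak S}}(\rcontra{\mathfrak S})_{\mathrm{proj}}$ by the class of all flat $\mathfrak S$-contramodules, because the Govorov--Lazard theorem fails for contramodules in general (as mentioned in the introduction). This is precisely why the statement is phrased in terms of direct limits of projective contramodules rather than flat ones, and why the more delicate comparison between these two classes will be taken up in the later sections.
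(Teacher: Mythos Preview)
Your proof is correct and follows essentially the same approach as the paper's own argument: both inclusions are derived from the equivalence~\eqref{gentilt-equivalence} of Theorem~\ref{gentilt} together with the fact that $\Phi_M=-\odot_{\mathfrak S}M$, being a left adjoint, preserves direct limits. Your closing remark about the failure of Govorov--Lazard for contramodules also matches the paper's subsequent discussion in Definition~\ref{flat-contramodules-defn} and Remark~\ref{flat-contramodules-rem}.
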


\begin{proof}
 Notice that the functor $\Phi_M=-\odot_{\mathfrak S}M$ preserves direct
limits (since it is a left adjoint).
 More explicitly, this means that $\Phi_M$ takes direct limits
computed in the category $\rcontra{\mathfrak S}$ to the conventional
direct limits of modules.
 Hence it follows immediately from Theorem~\ref{gentilt} that any module
of the form $\mathfrak F\odot_{\mathfrak S}M$ belongs to
$\varinjlim \Add M$.
 Conversely, let $(D_i,f_{ji}\mid i\le j\in I)$ be a direct system in
$\rmod R$ such that $D_i\in\Add M$ for all $i\in I$.
 Then, since \eqref{gentilt-equivalence}~is an equivalence of categories,
there exists a direct system $(\mathfrak P_i,g_{ji}\mid i\le j\in I)$
in $\rcontra{\mathfrak S}$ such that $\mathfrak P_i\in
(\rcontra{\mathfrak S})_{\mathrm{proj}}$ and there are isomorphisms
$D_i\cong\Phi_M(\mathfrak P_i)$ given for all $i\in I$, identifying
the morphisms $f_{ji}\colon D_i\to D_j$ with the morphisms
$\Phi_M(g_{ji})\colon \Phi_M(\mathfrak P_i)\to\Phi_M(\mathfrak P_j)$.
 Put $\mathfrak F=\varinjlim^{\rcontra{\mathfrak S}}{\mathfrak P_i}$;
then $\varinjlim D_i=\Phi_M(\mathfrak F)$.
\end{proof}

 It may be tempting to use the term ``flat contramodules'' for
$\mathfrak S$-con\-tra\-mod\-ules from the class
$\varinjlim^{\rcontra{\mathfrak S}}
(\rcontra{\mathfrak S})_{\mathrm{proj}}$.
 However, this term is already busy as a name for a (generally speaking)
wider class of contramodules~\cite{PR,Pproperf,BPS}.

\begin{definition} \label{flat-contramodules-defn}
 A right $\mathfrak S$-contramodule $\mathfrak F$ is called \emph{flat}
if the functor $\mathfrak F\odot_{\mathfrak S}\nobreak{-}\,\colon
\allowbreak\ldiscr{\mathfrak S}\to\rmod{\mathbb Z}$ is exact.
 All projective contramodules are flat
by~\eqref{contratensor-with-free-contramodule}.
 Since flatness of contramodules is preserved by direct limits, it follows
that all contramodules from $\varinjlim^{\rcontra{\mathfrak S}}
(\rcontra{\mathfrak S})_{\mathrm{proj}}$ are
flat~\cite[Lemmas~5.6 and~6.9]{PR}.
\end{definition}

\begin{remark} \label{flat-contramodules-rem}
 The theory of flat contramodules over complete, separated topological
rings \emph{with a countable base of neighborhoods of zero} is
rather well developed~\cite[Sections~5--7]{PR}.
 Still it is not known whether every flat $\mathfrak S$-contramodule
belongs to $\varinjlim^{\rcontra{\mathfrak S}}
(\rcontra{\mathfrak S})_{\mathrm{proj}}$ for topological rings
$\mathfrak S$ with a countable base of neighborhoods of zero.
 For topological rings without a countable base, this need \emph{not}
be true in general, as we will see in
Example~\ref{bad-flat}.
 But overall there are more questions than answers in the theory of
flat contramodules over such general topological rings at present
(see~\cite[Section~3]{Pproperf} and~\cite[Sections~2--3 and~7]{BPS},
or Section~\ref{deconstructibility} below).
 We will continue this discussion
in Remark~\ref{govorov-lazard-for-contra}.
\end{remark}

\medskip

\section{Contramodules in
the $\protect\varinjlim\add M$ versus
$\protect\varinjlim\Add M$ problem}
\label{contra-versus}

 In this section we deduce a corollary of Theorem~\ref{lim-Add} which will be
used in the proofs of Propositions~\ref{prufer-inverted-element}
and~\ref{prufer-inverted-subset} given in the next section.
 We also prove a couple of other (related) corollaries, one of which
will be used in order to obtain a more generally formulated application
to the $\varinjlim\add M$ versus $\varinjlim\Add M$ problem
in Section~\ref{gabrieltopol}.

 We start with a discussion of finitely presented contramodules.
 Let $\mathfrak S$ be a complete, separated, left linear topological ring.
 If $X$ is a finite set, then we will say that the free right
$\mathfrak S$-contramodule $[[X]]\mathfrak S$ is a \emph{finitely
generated free\/ $\mathfrak S$-contramodule}.
 The direct summands of finitely generated free contramodules are
called \emph{finitely generated projective}.
 If $f\colon\mathfrak C\to \mathfrak D$ is a morphism of finitely
generated free $\mathfrak S$-contramodules, then the cokernel of~$f$
is said to be a \emph{finitely presented} $\mathfrak S$-contramodule.
 We will denote the full subcategory of finitely presented contramodules
by $\rfcontra{\mathfrak S}\subset\rcontra{\mathfrak S}$ and
the full subcategory of finitely generated projective contramodules
by $(\rfcontra{\mathfrak S})_{\mathrm{proj}}\subset
(\rcontra{\mathfrak S})_{\mathrm{proj}}$.

 By the definition, one has $[[X]]\mathfrak S=[X]\mathfrak S$ for
a finite set~$X$.
 In other words, the forgetful functor $\rcontra{\mathfrak S}\to
\rmod{\mathfrak S}$ takes finitely generated free
$\mathfrak S$-contramodules to finitely generated free
$\mathfrak S$-modules.
 Hence it also takes finitely generated projective
$\mathfrak S$-contramodules to finitely generated projective
$\mathfrak S$-modules, and finitely presented
$\mathfrak S$-contramodules to finitely presented $\mathfrak S$-modules.

 Moreover, for a finitely generated free $\mathfrak S$-contramodule
$\mathfrak P$ and any $\mathfrak S$-con\-tra\-mod\-ule $\mathfrak C$,
the forgetful functor induces an isomorphism between the group of
all morphisms $\mathfrak P\to\mathfrak C$ in $\rcontra{\mathfrak S}$
and the group of all morphisms $\mathfrak P\to\mathfrak C$ in
$\rmod{\mathfrak S}$.
 Consequently, the same holds when $\mathfrak P$ is a finitely presented
$\mathfrak S$-contramodule.
 It follows that the forgetful functor $\rcontra{\mathfrak S}\to
\rmod{\mathfrak S}$ restricts to an equivalence between
the full subcategories of finitely presented $\mathfrak S$-contramodules
and finitely presented $\mathfrak S$-modules, $\rfcontra{\mathfrak S}
\simeq\rfmod{\mathfrak S}$.

 The latter equivalence, in turn, restricts to an equivalence between
the full subcategories of finitely generated projective
$\mathfrak S$-contramodules and finitely generated projective
$\mathfrak S$-modules.
 Denoting by $(\rfmod S)_{\mathrm{proj}}$ the category of finitely
generated projective right modules over an arbitrary ring $S$, we have
$(\rfcontra{\mathfrak S})_{\mathrm{proj}}\simeq
(\rfmod{\mathfrak S})_{\mathrm{proj}}$ for the topological
ring~$\mathfrak S$.

\begin{remark}
 The reader should be warned that the notion of a finitely presented
contramodule, as defined above, has nothing to do with
the category-theoretic concept of a finitely presentable object.
 In fact, the free $\mathfrak S$-contramodule with one generator
$\mathfrak S=[[\{0\}]]\mathfrak S$ is usually \emph{not} finitely
presentable as an object of the category $\rcontra{\mathfrak S}$,
because the functor $\Hom{\rcontra{\mathfrak S}}{\mathfrak S}{-}$,
which is isomorphic to the forgetful functor from
$\rcontra{\mathfrak S}$ to the category of sets, does not preserve
direct limits.
\end{remark}

 The next proposition is essentially Theorem~\ref{limadd} translated
into the contramodule language for the convenience of comparison with
Theorem~\ref{lim-Add}.

\begin{proposition} \label{lim-add-contra}
 Let $R$ be a ring, $M_R$ be a module and\/ $\mathfrak S=\End{M_R}$ be
its endomorphism ring, endowed with the finite topology.
 Then\/ $\varinjlim\add M$ coincides with the class of all $R$-modules of
the form\/ $\mathfrak F\odot_{\mathfrak S}M$, where\/ $\mathfrak F\in
\varinjlim^{\rcontra{\mathfrak S}}
(\rfcontra{\mathfrak S})_{\mathrm{proj}}$.
\end{proposition}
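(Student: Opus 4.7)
The strategy is to translate Theorem~\ref{limadd} through the equivalence of additive categories noted just before the proposition, namely the restriction of the forgetful functor to an equivalence $(\rfcontra{\mathfrak S})_{\mathrm{proj}} \simeq (\rfmod{\mathfrak S})_{\mathrm{proj}}$. Since $\mathfrak S$ and $S=\End M_R$ coincide as abstract rings, Theorem~\ref{limadd} already describes $\varinjlim\add M$ as the class of $R$-modules $F\otimes_SM$ with $F$ a flat right $S$-module, so it remains to match this with the contratensor product description in the statement.

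For the inclusion $\supseteq$, I would start with $\mathfrak F=\varinjlim^{\rcontra{\mathfrak S}}\mathfrak P_i$ for a direct system $(\mathfrak P_i)_{i\in I}$ in $(\rfcontra{\mathfrak S})_{\mathrm{proj}}$. Being a left adjoint by Theorem~\ref{gentilt}, $\Phi_M=-\odot_{\mathfrak S}M$ preserves direct limits, so $\mathfrak F\odot_{\mathfrak S}M\cong\varinjlim (\mathfrak P_i\odot_{\mathfrak S}M)$ in $\rmod R$. The identity $[[X]]\mathfrak S\odot_{\mathfrak S}M=M^X$ for finite $X$, together with additivity, identifies $\mathfrak P_i\odot_{\mathfrak S}M$ with $P_i\otimes_SM$, where $P_i$ denotes the underlying finitely generated projective $S$-module of $\mathfrak P_i$. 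Hence $P_i\otimes_SM\in\add M$, and $\mathfrak F\odot_{\mathfrak S}M\in\varinjlim\add M$.

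For the reverse inclusion, take $L\in\varinjlim\add M$. By Theorem~\ref{limadd}, $L\cong F\otimes_SM$ for some flat right $S$-module $F$, and by the Govorov--Lazard theorem $F=\varinjlim^{\rmod S}P_i$ with $P_i$ finitely generated projective right $S$-modules. The equivalence $(\rfmod{\mathfrak S})_{\mathrm{proj}}\simeq(\rfcontra{\mathfrak S})_{\mathrm{proj}}$ lifts this to a direct system $(\mathfrak P_i)$ in $(\rfcontra{\mathfrak S})_{\mathrm{proj}}$, and setting $\mathfrak F=\varinjlim^{\rcontra{\mathfrak S}}\mathfrak P_i$ places $\mathfrak F$ in the target class. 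The same colimit preservation of $\Phi_M$ and the identification $\mathfrak P_i\odot_{\mathfrak S}M\cong P_i\otimes_SM$ yield $\mathfrak F\odot_{\mathfrak S}M\cong\varinjlim(P_i\otimes_SM)\cong F\otimes_SM\cong L$.

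The one point that requires care is that direct limits in $\rcontra{\mathfrak S}$ generally differ from the corresponding direct limits of underlying $\mathfrak S$-modules, since the forgetful functor preserves only sufficiently directed limits. This never enters the argument, however, because both sides of the claimed equality are fed through $\Phi_M$, which is left adjoint and preserves all colimits. Alternatively, one may avoid both Theorem~\ref{limadd} and Govorov--Lazard and deduce the proposition by the same method as Theorem~\ref{lim-Add}, simply restricting attention to the full subcategory $(\rfcontra{\mathfrak S})_{\mathrm{proj}}\subset(\rcontra{\mathfrak S})_{\mathrm{proj}}$, which corresponds under $\Psi_M$ and $\Phi_M$ to $\add M\subset\Add M$ (since $\Psi_M(M)=\mathfrak S$ is the free $\mathfrak S$-contramodule on one generator).
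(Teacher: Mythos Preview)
Your proposal is correct and essentially follows the paper's approach. The paper's proof is more streamlined: it establishes the equivalence $\add M\simeq(\rfcontra{\mathfrak S})_{\mathrm{proj}}$ as the restriction of~\eqref{gentilt-equivalence} and then simply mirrors the proof of Theorem~\ref{lim-Add} using that $\Phi_M$ preserves direct limits---precisely your ``alternative'' route at the end---rather than detouring through Theorem~\ref{limadd} and Govorov--Lazard for the reverse inclusion.
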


\begin{proof}
 There are natural equivalences of additive categories
\begin{equation} \label{small-gentilt-equivalence}
\xymatrix{
 {\add M} \ar@{=}[r]
 & {(\rfmod{\mathfrak S})_{\mathrm{proj}}} \ar@{=}[r]
 & {(\rfcontra{\mathfrak S})_{\mathrm{proj}}},
}
\end{equation}
where $\add M$ is viewed as a full subcategory in $\rmod R$ or in
$\Add M$.
 Here the functor $(\rfmod{\mathfrak S})_{\mathrm{proj}}\to
\add M$ takes a finitely generated projective right $\mathfrak S$-module
$P$ to the right $R$-module $P\otimes_{\mathfrak S}M$.
 The inverse functor $\add M\to(\rfmod{\mathfrak S})_{\mathrm{proj}}$
takes an $R$-module $N\in\add M$ to the finitely generated projective
$\mathfrak S$-module $\Hom RMN$ (see~\cite{Dr}).
 The equivalence $(\rfmod{\mathfrak S})_{\mathrm{proj}}\simeq
(\rfcontra{\mathfrak S})_{\mathrm{proj}}$ was explained in
the discussion above.
 The equivalence $\add M\simeq(\rfcontra{\mathfrak S})_{\mathrm{proj}}$
is obtained by restricting the equivalence of
categories~\eqref{gentilt-equivalence} from Theorem~\ref{gentilt}; so
it is given by the functors $\Psi_M$ and $\Phi_M$.
 The rest of the proof is very similar to the proof of
Theorem~\ref{lim-Add} and based on the fact that the functor
$\Phi_M=-\odot_{\mathfrak S}M$ preserves direct limits.
\end{proof}

\begin{remark} \label{govorov-lazard-for-contra}
 The classical Govorov--Lazard description of flat modules~\cite{Gov,La0}
can be thought of as the conjunction of two assertions: for any ring $S$,
\begin{enumerate}[label={(GL\arabic*)},
labelindent=\parindent,itemindent=*]
\item any flat $S$-module is a direct limit of projective $S$-modules,
and
\item any direct limit of projective $S$-modules is a direct limit of
finitely generated projective $S$-modules.
\end{enumerate}
 Let us consider the two analogous properties for contramodules over
a topological ring~$\mathfrak S$:
\begin{enumerate}[label={(C-GL\arabic*)},
labelindent=\parindent,itemindent=*]
\item all flat $\mathfrak S$-contramodules can be obtained as
direct limits of projective $\mathfrak S$-contramodules, that is
$(\rcontra{\mathfrak S})_{\mathrm{flat}}=
\varinjlim^{\rcontra{\mathfrak S}}
(\rcontra{\mathfrak S})_{\mathrm{proj}}$;
\item all direct limits of projective $\mathfrak S$-contramodules
can be obtained as direct limits of finitely generated projective
$\mathfrak S$-contramodules, that is
$\varinjlim^{\rcontra{\mathfrak S}}
(\rcontra{\mathfrak S})_{\mathrm{proj}}=
\varinjlim^{\rcontra{\mathfrak S}}
(\rfcontra{\mathfrak S})_{\mathrm{proj}}$.
\end{enumerate}
 Here $(\rcontra{\mathfrak S})_{\mathrm{flat}}\subseteq
\rcontra{\mathfrak S}$ denotes the class of all flat right
$\mathfrak S$-con\-tra\-mod\-ules, as defined in
Definition~\ref{flat-contramodules-defn}.
 When one of the conditions (C-GL1) or (C-GL2) holds,
it has consequences for module theory, as we will see
in Corollary~\ref{C-GL2-implies-cor} (for (CGL2)) and
in Section~\ref{deconstructibility} (for (C-GL1)).
\end{remark}

\begin{example} \label{bad-flat}
 The following counterexample, developing the idea
of~\cite[Remark~6.3]{Pcoun}, shows that the condition (C-GL1) need not hold
for a complete, separated, left linear topological ring $\mathfrak S$
in general.

 Let $\mathfrak S$ be the ring of (commutative) polynomials in
an uncountable set of variables $x_i$ over a field~$k$, and let
$T\subset\mathfrak S$ be the multiplicative subset generated by
the elements~$x_i$.
 We endow $\mathfrak S$ with the \emph{$T$-topology}, in which
the ideals $\mathfrak S t$, \,$t\in T$, form a base of
neighborhoods of zero.
 By~\cite[Proposition~1.16]{GT}, \,$\mathfrak S$ is a complete,
separated topological ring.
 It is easy to see that no infinite family of nonzero elements
in $\mathfrak S$ converges to zero in the $T$-topology; so
$\mathfrak S$-contramodules are the same thing as the usual
$\mathfrak S$-modules (formally speaking, the forgetful functor
$\rcontra{\mathfrak S}\to\rmod{\mathfrak S}$ is an equivalence
of categories).

 An $\mathfrak S$-module is discrete if and only if each element in it
is annihilated by some element from~$T$.
 The natural morphism from the tensor to the contratensor product
$\mathfrak C\otimes_{\mathfrak S}N\to\mathfrak C\odot_{\mathfrak S}N$
is an isomorphism for any $\mathfrak S$-contramodule $\mathfrak C$
and any discrete $\mathfrak S$-module $N$; so the contratensor product
over $\mathfrak S$ agrees with the tensor product.
 It follows easily that an $\mathfrak S$-contramodule $\mathfrak C$
is flat (in the sense of the definition in
Definition~\ref{flat-contramodules-defn}) if and only if
the $\mathfrak S/\mathfrak S t$-module $\mathfrak C/\mathfrak C t$
is flat for every $t\in T$.

 On the other hand, the projective $\mathfrak S$-contramodules
are the same thing as the projective $\mathfrak S$-modules, and
the class of all direct limits of projective
$\mathfrak S$-contramodules, coinciding with the class of all
direct limits of finitely generated projective
$\mathfrak S$-contramodules, is simply the class of all flat
$\mathfrak S$-modules.
 So condition (C-GL2) holds for the topological ring $\mathfrak S$,
but condition (C-GL1) does \emph{not} hold.
 For example, the cokernel $T^{-1}\mathfrak S/\mathfrak S$ of
the localization map $\mathfrak S\to T^{-1}\mathfrak S$ is a flat
$\mathfrak S$-contramodule, but not a flat $\mathfrak S$-module.
 The discussion of flat contramodules over this topological ring
$\mathfrak S$ will be continued in Example~\ref{bad-flat-continued}.
\end{example}

 We are not aware of any counterexamples to (C-GL2), however.

\begin{corollary} \label{C-GL2-implies-cor}
 Let $R$ be a ring, $M$ be a module and\/ $\mathfrak S=\End{M_R}$ be
its endomorphism ring, endowed with the finite topology.
 Assume that condition (C-GL2) holds for
right\/ $\mathfrak S$-contramodules.
 Then\/ $\varinjlim\add M_R=\varinjlim\Add M_R$. 
\end{corollary}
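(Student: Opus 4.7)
My plan is to derive this as an essentially immediate consequence of the two characterizations already established in the paper, namely Theorem~\ref{lim-Add} and Proposition~\ref{lim-add-contra}, together with hypothesis (C-GL2).

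First I would recall that the inclusion $\varinjlim\add M\subseteq\varinjlim\Add M$ is trivial, so only the reverse inclusion requires proof. Let $N\in\varinjlim\Add M$. By Theorem~\ref{lim-Add}, there exists a contramodule $\mathfrak F\in\varinjlim^{\rcontra{\mathfrak S}}(\rcontra{\mathfrak S})_{\mathrm{proj}}$ such that $N\cong\mathfrak F\odot_{\mathfrak S}M$. Hypothesis (C-GL2) says precisely that
\[
\varinjlim{}^{\rcontra{\mathfrak S}}(\rcontra{\mathfrak S})_{\mathrm{proj}}
=\varinjlim{}^{\rcontra{\mathfrak S}}(\rfcontra{\mathfrak S})_{\mathrm{proj}},
\]
so $\mathfrak F$ also lies in the right-hand class. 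Applying Proposition~\ref{lim-add-contra}, we conclude that $N\cong\mathfrak F\odot_{\mathfrak S}M\in\varinjlim\add M$, which finishes the argument.

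The proof really is this short because all the substantial work is packaged into Theorem~\ref{gentilt} and its two direct-limit consequences; no obstacle of any depth remains once these are in hand. The only thing worth double-checking is the symmetry of the two characterizations: both Theorem~\ref{lim-Add} and Proposition~\ref{lim-add-contra} describe their respective direct-limit classes through the \emph{same} left adjoint $\Phi_M=-\odot_{\mathfrak S}M$, which preserves direct limits computed in $\rcontra{\mathfrak S}$. This uniformity is what allows the implication from (C-GL2) on the contramodule side to be transported verbatim to an equality of module-theoretic direct-limit closures, without any further compatibility being required.
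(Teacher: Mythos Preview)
Your proof is correct and follows exactly the same approach as the paper, which simply says ``Compare Theorem~\ref{lim-Add} with Proposition~\ref{lim-add-contra}.'' You have merely spelled out what that comparison entails.
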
 

\begin{proof}
 Compare Theorem~\ref{lim-Add} with Proposition~\ref{lim-add-contra}.
\end{proof}

 Let $S$ be a ring, $\mathfrak S$ be a complete, separated left linear
topological ring, and $\sigma\colon S\to\mathfrak S$ be
a ring homomorphism.
 Then the composition of forgetful functors $\rcontra{\mathfrak S}
\to\rmod{\mathfrak S}\to\rmod S$ defines an exact, faithful forgetful
functor $\rcontra{\mathfrak S}\to\rmod S$.
 The functor $\rcontra{\mathfrak S}\to\rmod S$ has a left adjoint
functor $\Delta_\sigma\colon\rmod S\to\rcontra{\mathfrak S}$, which
can be constructed as follows.

 First of all, as any left adjoint functor between abelian categories,
$\Delta_\sigma$ is right exact and preserves coproducts (so, in
particular, it preserves direct limits).
 Secondly, the action of $\Delta_\sigma$ on free modules is defined by
the rule $\Delta_\sigma([X]S)=[[X]]\mathfrak S$ (so $\Delta_\sigma$
takes the free $S$-module spanned by a set $X$ to the free
$\mathfrak S$-contramodule spanned by $X$).
 The action of $\Delta_\sigma$ on morphisms of free modules is easily
recovered from the adjunction property.
 Finally, in order to compute the image of an arbitrary right
$S$-module $E$ under $\Delta_\sigma$, one can choose a right exact
sequence $P_1\overset f\to P_0\to E\to 0$ with free $S$-modules
$P_0$, $P_1$; then the $\mathfrak S$-contramodule $\Delta_\sigma(E)$
is obtained from the right exact sequence $\Delta_\sigma(P_1)
\xrightarrow{\Delta_\sigma(f)}\Delta_\sigma(P_0)\to
\Delta_\sigma(E)\to0$.

 For any right $S$-module $E$, there is a natural adjunction morphism
$\delta_{\sigma,E}\colon E\to\Delta_\sigma(E)$ in the category of
right $S$-modules.
 Here the $S$-module structure on $\Delta_\sigma(E)$ is obtained by
applying the forgetful functor to the $\mathfrak S$-contramodule
structure.

\begin{lemma} \label{delta-contratensor}
 For any right $S$-module $E$ and any discrete left $\mathfrak S$-module
$M$, there is a natural isomorphism of abelian groups
\[
 \Delta_\sigma(E)\odot_{\mathfrak S}M\cong
 E\otimes_SM.
\]
\end{lemma}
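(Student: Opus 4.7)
The plan is to define a natural morphism $\phi_E\colon E\otimes_SM\to\Delta_\sigma(E)\odot_{\mathfrak S}M$ and verify it is an isomorphism by reducing to the case of free $S$-modules $E$. Since $M$ is a discrete left $\mathfrak S$-module, the homomorphism $\sigma$ endows it with a left $S$-module structure, so the tensor product $E\otimes_SM$ is defined. Composing the tensor product of the adjunction unit $\delta_{\sigma,E}\colon E\to\Delta_\sigma(E)$ with $M$ over $S$, the natural map $\Delta_\sigma(E)\otimes_SM\to\Delta_\sigma(E)\otimes_{\mathfrak S}M$ (arising from the change-of-rings given by $\sigma$, and well-defined because the $S$-actions on both tensor factors factor through $\mathfrak S$), and the canonical surjection from \eqref{tensor-contratensor-surjection}, one obtains $\phi_E$; its naturality in $E$ is inherited from the naturality of these three ingredients.

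Next, I would observe that both $E\mapsto E\otimes_SM$ and $E\mapsto\Delta_\sigma(E)\odot_{\mathfrak S}M$ are right exact and preserve arbitrary coproducts. For the first, this is classical; for the second, $\Delta_\sigma$ is a left adjoint (hence right exact and coproduct-preserving), and the functor ${-}\odot_{\mathfrak S}M$ is right exact and coproduct-preserving in its first argument. By choosing a presentation $[X_1]S\to[X_0]S\to E\to 0$ and invoking the five lemma, it suffices to prove that $\phi_{[X]S}$ is an isomorphism for every set $X$, and that the family $(\phi_{[X]S})_X$ is natural with respect to morphisms between free $S$-modules.

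For the free case, $\Delta_\sigma([X]S)=[[X]]\mathfrak S$, and by the standard identifications $[X]S\otimes_SM\cong M^{(X)}\cong[[X]]\mathfrak S\odot_{\mathfrak S}M$ (the second from~\eqref{contratensor-with-free-contramodule}), unwinding the construction of $\phi_{[X]S}$ shows it coincides with the composition of these canonical isomorphisms. Naturality with respect to a morphism of free modules $f\colon[X]S\to[Y]S$, encoded by a family $(f_{y,x})\in S$ such that for each fixed $x$ only finitely many $f_{y,x}$ are nonzero, reduces to the fact that the left $S$-module structure of $M$ factors through $\sigma$, so both $f\otimes_SM$ and $\Delta_\sigma(f)\odot_{\mathfrak S}M$ act on $M^{(X)}$ by the same formula $(m_x)_{x\in X}\mapsto\bigl(\sum_{x}f_{y,x}\cdot m_x\bigr)_{y\in Y}$. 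The only delicate step is this last verification, but the convergence condition on the coefficients of $\Delta_\sigma(f)$ in $[[Y]]\mathfrak S$ corresponds precisely to the finite support in $Y$ of each column of $f$, and no genuine obstacle arises once the diagrammatic setup is in place.
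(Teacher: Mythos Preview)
Your proof is correct and follows essentially the same approach as the paper: construct the natural map via the adjunction unit and the tensor-to-contratensor surjection, observe both functors are right exact, and reduce to the free case using~\eqref{contratensor-with-free-contramodule}. Your version is slightly more explicit (factoring the surjection through $\Delta_\sigma(E)\otimes_{\mathfrak S}M$ and spelling out naturality on morphisms of free modules), but these are exactly the steps the paper leaves implicit.
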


\begin{proof}
 A natural map of abelian groups $E\otimes_SM\to
\Delta_\sigma(E)\odot_{\mathfrak S}M$ is constructed as the composition
$E\otimes_SM\to\Delta_\sigma(E)\otimes_SM\to\Delta_\sigma(E)
\odot_{\mathfrak S}M$, where the map $E\otimes_SM\to\Delta_\sigma(E)
\otimes_SM$ is induced by the adjunction morphism $\delta_{\sigma,E}$,
while $\Delta_\sigma(E)\otimes_SM\to\Delta_\sigma(E)
\odot_{\mathfrak S}M$ is the natural
surjection~\eqref{tensor-contratensor-surjection}.
 The resulting map $E\otimes_SM\to\Delta_\sigma(E)\odot_{\mathfrak S}M$
is an isomorphism for \emph{free} $S$-modules $E$ in view of
the natural isomorphism~\eqref{contratensor-with-free-contramodule}.
 Since both $-\otimes_SM$ and $\Delta({-})\odot_{\mathfrak S}M$ are
right exact functors, it follows that the natural morphism between
them is an isomorphism for all $S$-modules.
\end{proof}

 Given a topological ring $\mathfrak S$, one can always take the ring
$S=\mathfrak S$ and the identity morphism $\sigma=\mathrm{id}$.
 This special case of the constructions above appears in part~(ii) of
the next proposition.

\begin{proposition} \label{Delta-varinjlim-smallprojcontra}
 Let\/ $\mathfrak S$ be a complete, separated left linear topological
ring, $S$ be a ring, and $\sigma\colon S\to\mathfrak S$ be a ring
homomorphism.
\begin{enumerate}
\item For any flat right $S$-module $F$, the right\/
$\mathfrak S$-contramodule\/ $\Delta_\sigma(F)$ belongs to the class\/
$\varinjlim^{\rcontra{\mathfrak S}}
(\rfcontra{\mathfrak S})_{\mathrm{proj}}$.
\item A right\/ $\mathfrak S$-contramodule\/ $\mathfrak F$ belongs to\/
$\varinjlim^{\rcontra{\mathfrak S}}
(\rfcontra{\mathfrak S})_{\mathrm{proj}}$ \emph{if and only if}
there exists a flat right\/ $\mathfrak S$-module $F$ for which\/
$\Delta_{\mathrm{id}}(F)\cong\mathfrak F$.
\end{enumerate}
\end{proposition}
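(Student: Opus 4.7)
The plan is to exploit that $\Delta_\sigma$ is a left adjoint, hence preserves direct limits, together with the Govorov--Lazard description of flat modules and the equivalence $(\rfcontra{\mathfrak S})_{\mathrm{proj}}\simeq(\rfmod{\mathfrak S})_{\mathrm{proj}}$ induced by the forgetful functor, as established in the preceding discussion.

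For part~(i), I would start with a flat right $S$-module $F$ and apply the classical Govorov--Lazard theorem to write $F=\varinjlim_{i\in I}P_i$ as a direct limit in $\rmod S$ of finitely generated free right $S$-modules $P_i=[X_i]S$ indexed by a directed poset $I$. Applying the colimit-preserving functor $\Delta_\sigma$ yields
\[
\Delta_\sigma(F)\cong\varinjlim{}^{\rcontra{\mathfrak S}}\Delta_\sigma(P_i),
\]
and by the construction of $\Delta_\sigma$ on free modules one has $\Delta_\sigma([X_i]S)=[[X_i]]\mathfrak S$, which is a finitely generated free (in particular, projective) $\mathfrak S$-contramodule since $X_i$ is finite. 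Thus $\Delta_\sigma(F)$ is realized as a direct limit in $\rcontra{\mathfrak S}$ of objects from $(\rfcontra{\mathfrak S})_{\mathrm{proj}}$, as required.

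For part~(ii), the ``if'' direction is immediate from part~(i) with $S=\mathfrak S$ and $\sigma=\mathrm{id}$. For the ``only if'' direction, assume $\mathfrak F=\varinjlim^{\rcontra{\mathfrak S}}\mathfrak P_i$ for some direct system $(\mathfrak P_i,g_{ji}\mid i\le j\in I)$ of finitely generated projective right $\mathfrak S$-contramodules. Using the equivalence $(\rfcontra{\mathfrak S})_{\mathrm{proj}}\simeq(\rfmod{\mathfrak S})_{\mathrm{proj}}$ furnished by the forgetful functor, I would lift this diagram to a direct system $(P_i,\bar g_{ji})$ of finitely generated projective right $\mathfrak S$-modules, and set $F=\varinjlim_{i\in I}P_i$ computed in $\rmod{\mathfrak S}$; then $F$ is flat by Govorov--Lazard. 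The key point is that $\Delta_{\mathrm{id}}$, restricted to $(\rfmod{\mathfrak S})_{\mathrm{proj}}$, is a quasi-inverse to the forgetful functor onto $(\rfcontra{\mathfrak S})_{\mathrm{proj}}$, so there are natural isomorphisms $\Delta_{\mathrm{id}}(P_i)\cong\mathfrak P_i$ compatible with the transition maps. Since $\Delta_{\mathrm{id}}$ preserves direct limits,
\[
\Delta_{\mathrm{id}}(F)=\Delta_{\mathrm{id}}\bigl(\varinjlim P_i\bigr)\cong\varinjlim{}^{\rcontra{\mathfrak S}}\Delta_{\mathrm{id}}(P_i)\cong\varinjlim{}^{\rcontra{\mathfrak S}}\mathfrak P_i=\mathfrak F.
\]

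The main technical point to verify is that $\Delta_{\mathrm{id}}$ really provides the promised quasi-inverse on finitely generated projectives in a functorial way. For a finitely generated free $\mathfrak S$-module $\mathfrak S^n$ one has $\Delta_{\mathrm{id}}(\mathfrak S^n)=[[n]]\mathfrak S=\mathfrak S^n$ (both as a contramodule and, upon forgetting, as a module), so the adjunction morphism $\delta_{\mathrm{id},\mathfrak S^n}$ is an isomorphism; right exactness of $\Delta_{\mathrm{id}}$ then extends this to direct summands, and functoriality on morphisms follows from the adjunction property combined with the faithfulness of the forgetful functor on the subcategories involved. Once this bookkeeping is in place, the rest of the argument is essentially formal, and I do not expect any serious obstacle beyond keeping careful track of which direct limits are computed in which category.
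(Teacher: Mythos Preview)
Your proposal is correct and follows essentially the same approach as the paper's proof: both parts rely on $\Delta_\sigma$ preserving direct limits and taking finitely generated free (hence projective) $S$-modules to finitely generated free $\mathfrak S$-contramodules, together with the equivalence $(\rfcontra{\mathfrak S})_{\mathrm{proj}}\simeq(\rfmod{\mathfrak S})_{\mathrm{proj}}$ via the forgetful functor and $\Delta_{\mathrm{id}}$. The paper's argument is slightly terser in that it simply takes $P_i$ to be the underlying $\mathfrak S$-module of $\mathfrak P_i$ (which is exactly your ``lifting'' via the forgetful functor), but the logic is identical.
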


\begin{proof}
 (i) It is clear from the construction that the functor $\Delta_\sigma$
takes finitely generated free $S$-modules to finitely generated free
$\mathfrak S$-contramodules, and more generally finitely presented
$S$-modules to finitely presented $\mathfrak S$-contramodules.
 In particular, $\Delta_\sigma$ takes finitely generated projective
$S$-modules to finitely generated projective $\mathfrak S$-contramodules.
 Since the functor $\Delta_\sigma$ also preserves direct limits,
and $F$ is a direct limit of finitely generated projective $S$-modules,
the assertion follows.

 (ii) One observes that the restriction of $\Delta_{\mathrm{id}}$ to
$\rfmod{\mathfrak S}$ is the inverse functor to the forgetful
functor $\rfcontra{\mathfrak S}\to\rfmod{\mathfrak S}$.
 So the forgetful functor and the functor $\Delta_{\mathrm{id}}$, restricted
to $\rfmod{\mathfrak S}$ and $\rfcontra{\mathfrak S}$, provide
the equivalence between these two categories that was discussed in
the beginning of this section.
 In particular, these two functors restrict to mutually inverse equivalences
between the categories of finitely generated projective (contra)modules
$(\rfcontra{\mathfrak S})_{\mathrm{proj}}$ and
$(\rfmod{\mathfrak S})_{\mathrm{proj}}$.

 The ``if'' implication in~(ii) is a particular case of~(i); so we only
have to prove the ``only if''.
 Let $(\mathfrak P_i,f_{ji}\mid i\le j\in I)$ be a direct system in
$\rcontra{\mathfrak S}$ such that $\mathfrak P_i\in
(\rfcontra{\mathfrak S})_{\mathrm{proj}}$ for every $i\in I$
and $\mathfrak F=\varinjlim^{\rcontra{\mathfrak S}}\mathfrak P_i$.
 Denote by $P_i$ the underlying right $\mathfrak S$-module of
the right $\mathfrak S$-contramodule~$\mathfrak P_i$; then we have
$P_i\in(\rfmod{\mathfrak S})_{\mathrm{proj}}$ and
$\mathfrak P_i=\Delta_{\mathrm{id}}(P_i)$.
 Now $F=\varinjlim P_i$ is a flat right $\mathfrak S$-module
and $\Delta_{\mathrm{id}}(F)=\varinjlim^{\rcontra{\mathfrak S}}
\Delta_{\mathrm{id}}(P_i)=\mathfrak F$.
\end{proof}

\begin{corollary}
 Let $R$ be a ring, $M$ be a module and\/ $\mathfrak S=\End{M_R}$ be
its endomorphism ring, endowed with the finite topology.
 Let $S$ be a ring and\/ $\sigma\colon S\to\mathfrak S$ be a ring
homomorphism.
 Assume that for each\/ $\mathfrak F\in \varinjlim^{\rcontra{\mathfrak S}}
(\rcontra{\mathfrak S})_{\mathrm{proj}}$ there exists a flat right
$S$-module $F$ such that\/ $\mathfrak F\cong\Delta_\sigma(F)$.
 Then\/ $\varinjlim\add M_R=\varinjlim\Add M_R$.
\end{corollary}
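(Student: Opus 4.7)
The plan is to compare the two characterizations of $\varinjlim\add M$ and $\varinjlim\Add M$ provided respectively by Proposition~\ref{lim-add-contra} and Theorem~\ref{lim-Add}, and to use the homomorphism $\sigma$ together with the hypothesis to bridge between them. The inclusion $\varinjlim\add M\subseteq\varinjlim\Add M$ is trivial, so only the reverse inclusion requires attention.

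First, I would start from an arbitrary module $N\in\varinjlim\Add M$. By Theorem~\ref{lim-Add}, there exists a contramodule $\mathfrak F\in\varinjlim^{\rcontra{\mathfrak S}}(\rcontra{\mathfrak S})_{\mathrm{proj}}$ with $N\cong\mathfrak F\odot_{\mathfrak S}M$. By the hypothesis of the corollary, I may pick a flat right $S$-module $F$ such that $\mathfrak F\cong\Delta_\sigma(F)$. The key input from Lemma~\ref{delta-contratensor} is the natural isomorphism
\[
 \Delta_\sigma(F)\odot_{\mathfrak S}M\;\cong\;F\otimes_S M,
\]
which already identifies $N$ with a right $R$-module built from a flat $S$-module by tensoring with~$M$ (in analogy with Theorem~\ref{limadd}).

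Next, I would invoke Proposition~\ref{Delta-varinjlim-smallprojcontra}(i): since $F$ is a flat right $S$-module, the contramodule $\Delta_\sigma(F)$ lies in $\varinjlim^{\rcontra{\mathfrak S}}(\rfcontra{\mathfrak S})_{\mathrm{proj}}$. Consequently, $N\cong\Delta_\sigma(F)\odot_{\mathfrak S}M$ is of the form prescribed by Proposition~\ref{lim-add-contra}, and therefore $N\in\varinjlim\add M$. This establishes the desired inclusion $\varinjlim\Add M\subseteq\varinjlim\add M$ and completes the proof.

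There should be no real obstacle here: all the ingredients have been assembled in the preceding section. The only small point worth double-checking is that the adjunction morphism $\delta_{\sigma,F}$ and the surjection from tensor product to contratensor product compose to the isomorphism of Lemma~\ref{delta-contratensor}, but that lemma is already proved. So the corollary is really a one-line chase through Lemma~\ref{delta-contratensor}, Proposition~\ref{Delta-varinjlim-smallprojcontra}(i), Theorem~\ref{lim-Add} and Proposition~\ref{lim-add-contra}, once one notices that the assumption converts the witnessing contramodule for $\varinjlim\Add M$ into the $\Delta_\sigma$-image of a flat $S$-module, and thereby into a witnessing contramodule for $\varinjlim\add M$.
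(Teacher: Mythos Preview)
Your proof is correct and follows essentially the same approach as the paper: start from $N\cong\mathfrak F\odot_{\mathfrak S}M$ via Theorem~\ref{lim-Add}, write $\mathfrak F\cong\Delta_\sigma(F)$ for a flat $S$-module $F$, and then use Proposition~\ref{Delta-varinjlim-smallprojcontra}(i) together with Proposition~\ref{lim-add-contra} (the paper also records the alternative of concluding directly from $N\cong F\otimes_S M$ as in Theorem~\ref{limadd}). Your invocation of Lemma~\ref{delta-contratensor} is harmless but not actually needed for the route you take, since Proposition~\ref{Delta-varinjlim-smallprojcontra}(i) and Proposition~\ref{lim-add-contra} already suffice once $\mathfrak F\cong\Delta_\sigma(F)$ is known.
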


\begin{proof}
 Let $N_R\in\varinjlim\Add M$.
 By Theorem~\ref{lim-Add}, there exists an $\mathfrak S$-contramodule
$\mathfrak F\in\varinjlim^{\rcontra{\mathfrak S}}
(\rcontra{\mathfrak S})_{\mathrm{proj}}$ such that $N\cong
\mathfrak F\odot_{\mathfrak S}M$.
 By assumption, there is a flat $S$-module $F$ such that
$\mathfrak F\cong\Delta_\sigma(F)$.
 Applying Lemma~\ref{delta-contratensor}, we obtain an isomorphism
$\mathfrak F\odot_{\mathfrak S}M\cong F\otimes_SM$.
 Since $F$ is a direct limit of finitely generated free $S$-modules,
it follows that $N\in\varinjlim\ssum M$ (cf.\
the proof of Theorem~\ref{limadd}).

 Alternatively, by Proposition~\ref{Delta-varinjlim-smallprojcontra}(i),
the assumption of the corollary implies property (C-GL2), so
it remains to invoke Corollary~\ref{C-GL2-implies-cor}.
\end{proof}

 Suppose that we are given a contramodule $\mathfrak F\in
\varinjlim^{\rcontra{\mathfrak S}}
(\rcontra{\mathfrak S})_{\mathrm{proj}}$.
 Where does one get a flat $S$-module $F$ such that $\Delta_\sigma(F)
\cong\mathfrak F$\,?
 In the rest of this section, as well as in
Sections~\ref{genprufer}--\ref{gabrieltopol}, we use (essentially)
one obvious approach: take $F=\mathfrak F$.
 This means that $F$ is the underlying $S$-module of the
$\mathfrak S$-contramodule~$\mathfrak F$.

 Generally speaking, it is far from obvious that this approach works
at all.
 Most importantly, there is \emph{no reason} for the underlying $S$-module
of $\mathfrak F$ to be flat.
 It is also not necessarily true that $\Delta_\sigma(\mathfrak F)=
\mathfrak F$.
 So we restrict ourselves to several special cases in which this
particular approach provides a solution.

 When does one have $\Delta_\sigma(\mathfrak F)=\mathfrak F$\,?
 Notice that the adjunction morphism $\Delta_\sigma(\mathfrak C)\to
\mathfrak C$ in $\rcontra{\mathfrak S}$ is an isomorphism for all
$\mathfrak S$-contramodules $\mathfrak C$ if and only if this morphism
is an isomorphism for all \emph{free} $\mathfrak S$-contramodules
(because both the forgetful functor and the functor $\Delta_\sigma$
are right exact).
 Furthermore, the composition of two adjoint functors
$\rcontra{\mathfrak S}\to\rmod S\to\rcontra{\mathfrak S}$ is the identity
functor if and only if the forgetful functor $\rcontra{\mathfrak S}\to
\rmod S$ is fully faithful.

 The forgetful functor $\rcontra{\mathfrak S}\to\rmod S$ need not be
fully faithful, of course (generally speaking), but surprisingly often
it is; see, e.~g., \cite[Theorem~6.10 and Example~7.10]{PS}
or~\cite[Section~6]{Pcoun}.

\begin{remark}
 The following results from the papers~\cite{PS,Pcoun} clarify
the situation a bit.
 For any right $\mathfrak S$-contramodule $\mathfrak C$ and any
discrete left $\mathfrak S$-module $K$, consider the composition of
natural surjective maps of abelian groups
\[
 \mathfrak C\otimes_S K
 \twoheadrightarrow\mathfrak C\otimes_{\mathfrak S}K
 \twoheadrightarrow  \mathfrak C\odot_{\mathfrak S}K,
\]
where the leftmost map is the obvious one and the rightmost map
is~\eqref{tensor-contratensor-surjection}.

 By~\cite[Lemma~7.11]{PS} or~\cite[proof of
Theorem~6.2(iii)$\Rightarrow$(ii)]{Pcoun}, if the forgetful functor
$\rcontra{\mathfrak S}\to\rmod S$ is fully faithful, then the map
$\mathfrak C\otimes_S K\to\mathfrak C\odot_{\mathfrak S}K$
is an isomorphism for all right $\mathfrak S$-contramodules $\mathfrak C$
and discrete left $\mathfrak S$-modules $K$.
 Conversely, if the map $\mathfrak C\otimes_S K\to\mathfrak C
\odot_{\mathfrak S}K$ is an isomorphism for all $\mathfrak C\in
\rcontra{\mathfrak S}$ and $K\in\ldiscr{\mathfrak S}$ \emph{and
the topological ring $\mathfrak S$ has a countable base of neighborhoods
of zero}, then the forgetful functor $\rcontra{\mathfrak S}\to\rmod S$
is fully faithful~\cite[Theorem~6.2]{Pcoun}.

 Hence we see that the condition about the map
$\mathfrak C\otimes_S K\to\mathfrak C\odot_{\mathfrak S}K$ being
an isomorphism is important for our purposes.
 One could arrive to the same conclusion much more directly, as
the formulation of the next corollary illustrates.
\end{remark}

\begin{corollary} \label{contramodules-flat-as-modules-cor}
 Let $R$ be an associative ring and $M$ be a right $R$-module.
 Let $\mathfrak S=\End(M_R)$ be the endomorphism ring of $M$, endowed
with the finite topology.
 Let $S$ be an associative ring and\/ $\sigma\colon S\to\mathfrak S$
be a ring homomorphism such that
\begin{enumerate}
\item for any right\/ $\mathfrak S$-contramodule\/ $\mathfrak C$ and
any discrete left\/ $\mathfrak S$-module $K$, the natural map from
the tensor product to the contratensor product
\[
 \mathfrak C\otimes_SK\to\mathfrak C\odot_{\mathfrak S}K
\]
is an isomorphism; and
\item for any right\/ $\mathfrak S$-contramodule\/ $\mathfrak F$ which
can be obtained as a direct limit of projective\/
$\mathfrak S$-contramodules in the category\/ $\rcontra{\mathfrak S}$,
the underlying right $S$-module of\/ $\mathfrak F$ is flat.
\end{enumerate}
 Then\/ $\varinjlim\add M_R=\varinjlim\Add M_R$.
\end{corollary}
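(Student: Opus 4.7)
The plan is to translate the problem into the contramodule language via Theorem~\ref{lim-Add}, apply hypothesis~(ii) to pass from the contramodule structure to an underlying flat right $S$-module, then apply hypothesis~(i) to replace contratensor products with ordinary tensor products, and finally invoke the Govorov--Lazard theorem for $S$-modules.

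More precisely, one inclusion is trivial: $\varinjlim\add M\subseteq\varinjlim\Add M$. For the reverse, take any $N\in\varinjlim\Add M$. By Theorem~\ref{lim-Add}, there exists $\mathfrak F\in\varinjlim^{\rcontra{\mathfrak S}}(\rcontra{\mathfrak S})_{\mathrm{proj}}$ with $N\cong\mathfrak F\odot_{\mathfrak S}M$. Setting $\mathfrak C=\mathfrak F$ and $K=M$ (which is a discrete left $\mathfrak S$-module by the very definition of the finite topology on $\mathfrak S$), hypothesis~(i) yields an isomorphism
\[
 \mathfrak F\otimes_SM\;\overset{\sim}{\longrightarrow}\;\mathfrak F\odot_{\mathfrak S}M,
\]
where the left-hand side uses the underlying right $S$-module structure on $\mathfrak F$ (obtained by restriction of scalars along $\sigma$) and the induced left $S$-module structure on $M$. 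Let $F$ denote this underlying right $S$-module. By hypothesis~(ii), $F$ is a flat right $S$-module.

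By the classical Govorov--Lazard theorem, $F=\varinjlim_{i\in I}S^{n_i}$ for some direct system of finitely generated free right $S$-modules. Since the tensor product commutes with direct limits,
\[
 N\;\cong\;\mathfrak F\odot_{\mathfrak S}M\;\cong\;F\otimes_SM\;\cong\;\varinjlim_{i\in I}\bigl(S^{n_i}\otimes_SM\bigr)\;\cong\;\varinjlim_{i\in I}M^{n_i}.
\]
As $M^{n_i}\in\ssum M\subseteq\add M$ for each $i$, this shows $N\in\varinjlim\ssum M=\varinjlim\add M$, completing the proof.

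There is no substantial obstacle; the only points that require care are (a) checking that the isomorphism $\mathfrak F\otimes_SM\cong\mathfrak F\odot_{\mathfrak S}M$ from hypothesis~(i) is compatible with the right $R$-module structures on both sides (which is clear since the contratensor product and tensor product inherit their $R$-module structures from $M$ in the same way), and (b) ensuring that the transition morphisms in the direct system $F=\varinjlim S^{n_i}$ remain $S$-linear after applying $-\otimes_SM$, so that the resulting direct system $(M^{n_i})_{i\in I}$ really is a direct system in $\rmod R$ with the claimed colimit. Alternatively, one may observe that hypotheses~(i) and~(ii), together with Proposition~\ref{Delta-varinjlim-smallprojcontra}, imply property~(C-GL2) for $\mathfrak S$, whence the result follows directly from Corollary~\ref{C-GL2-implies-cor}; indeed, given $\mathfrak F$ as above, (ii) provides a flat right $S$-module $F$ whose image $\Delta_\sigma(F)$ under Proposition~\ref{Delta-varinjlim-smallprojcontra}(i) lies in $\varinjlim^{\rcontra{\mathfrak S}}(\rfcontra{\mathfrak S})_{\mathrm{proj}}$, and (i) combined with Lemma~\ref{delta-contratensor} identifies $\Delta_\sigma(F)\odot_{\mathfrak S}M$ with $F\otimes_SM\cong\mathfrak F\odot_{\mathfrak S}M$.
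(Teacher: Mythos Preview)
Your main argument is correct and is exactly the approach the paper takes: apply Theorem~\ref{lim-Add} to write $N\cong\mathfrak F\odot_{\mathfrak S}M$, use hypothesis~(i) with $K=M$ to rewrite this as $F\otimes_SM$ for the underlying $S$-module $F$ of $\mathfrak F$, which is flat by~(ii), and then invoke Theorem~\ref{limadd}. One small caveat about your alternative route: hypotheses~(i) and~(ii) do not obviously imply (C-GL2) itself, since you only identify $\Delta_\sigma(F)\odot_{\mathfrak S}M$ with $\mathfrak F\odot_{\mathfrak S}M$ rather than $\Delta_\sigma(F)$ with $\mathfrak F$; but this does not affect the conclusion, and your primary argument stands.
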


\begin{proof}
 Take $K=M$, $\mathfrak C=\mathfrak F$, and compare
Theorem~\ref{lim-Add} with Theorem~\ref{limadd}.
\end{proof}

\medskip

\section{Generalized Pr\"ufer modules}
\label{genprufer}

 In this section we prove Propositions~\ref{prufer-inverted-element}
and~\ref{prufer-inverted-subset}.
 The arguments are based on the theory of contramodules over topological
rings, and more specifically on
Corollary~\ref{contramodules-flat-as-modules-cor}.

 We start with formulating the Artin--Rees lemma for centrally generated
ideals in left noetherian rings in the form suitable for our purposes.

\begin{lemma} \label{artin-rees}
 Let $R$ be a left noetherian ring and $I\subset R$ be an ideal generated
by central elements.
 Let $M$ be a finitely generated left $R$-module with a submodule
$N\subseteq M$.
 Then there exists an integer $m\ge0$ such that for all $n\ge0$
the following two submodules in $N$ coincide:
\[
 I^{n+m}M\cap N=I^n(I^mM\cap N).
\]
 Hence the inclusion $I^{n+m}M\cap N\subseteq I^nN$ holds for all
$n\ge0$.
\end{lemma}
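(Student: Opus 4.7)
The plan is to adapt the classical Rees algebra proof of the Artin--Rees lemma to our noncommutative left noetherian setting, exploiting the central-generation hypothesis on $I$ to ensure the relevant graded ring is left noetherian.

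First I would reduce to the case when $I$ is generated by \emph{finitely many} central elements. Since $R$ is left noetherian, the left ideal $I$ is finitely generated, and by expressing each generator as a left $R$-linear combination of elements of the given central generating set, we obtain finitely many central elements $c_1,\dots,c_k\in R$ with $I=Rc_1+\dots+Rc_k$. Because the $c_i$ are central, we have $I^n=\sum_{|\alpha|=n}Rc^\alpha$ for every $n\ge 0$, and $I^nI^{n'}=I^{n+n'}$ on the nose.

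Next I would introduce the Rees ring $R^{*}=\bigoplus_{n\ge0}I^{n}t^{n}\subseteq R[t]$, which as a subring of the central polynomial extension $R[t]$ coincides with $R[c_1t,\dots,c_kt]$. The elements $c_it\in R[t]$ are central (the $c_i$ are central in $R$, and $t$ is central by construction), so by iterated application of the Hilbert basis theorem for adjunction of central indeterminates to a left noetherian ring, $R^{*}$ is left noetherian. Consider the graded left $R^{*}$-modules
\[
M^{*}=\bigoplus_{n\ge0}I^{n}M,\qquad N^{*}=\bigoplus_{n\ge0}(I^{n}M\cap N).
\]
That $N^{*}$ is an $R^{*}$-submodule of $M^{*}$ is immediate: if $x\in I^{j}M\cap N$ and $r\in I^{a}$, then $rx\in I^{a+j}M$ and $rx\in I^{a}N\subseteq N$. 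Since $M$ is a finitely generated left $R$-module, $M^{*}=R^{*}\cdot M$ is finitely generated over $R^{*}$, and so by left noetherianness of $R^{*}$ the submodule $N^{*}$ is finitely generated.

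Choose $m\ge 0$ so that $N^{*}$ is generated by homogeneous elements of degree $\le m$. Then for every $n\ge 0$,
\[
I^{n+m}M\cap N \;=\; (N^{*})_{n+m} \;=\; \sum_{j=0}^{m}I^{n+m-j}\bigl(I^{j}M\cap N\bigr).
\]
The key step is to observe that each summand on the right-hand side lies in $I^{n}(I^{m}M\cap N)$: using centrality of the generators, $I^{n+m-j}=I^{n}\cdot I^{m-j}$, and
\[
I^{m-j}\bigl(I^{j}M\cap N\bigr)\;\subseteq\; I^{m}M\cap N,
\]
because the left side is contained both in $I^{m-j}I^{j}M=I^{m}M$ and in $I^{m-j}N\subseteq N$. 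Hence
\[
I^{n+m}M\cap N\;\subseteq\; I^{n}(I^{m}M\cap N),
\]
while the reverse inclusion is trivial, giving the desired equality. The final assertion $I^{n+m}M\cap N\subseteq I^{n}N$ is then immediate.

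The only genuine delicacy in the argument is the noncommutative Hilbert basis step, which is why the hypothesis that $I$ be generated by \emph{central} elements (not merely a two-sided ideal) is essential: it is exactly what makes the Rees ring $R^{*}$ a left noetherian central extension of $R$. Everything else is bookkeeping with the graded decomposition.
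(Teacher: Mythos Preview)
Your proof is correct and is the standard Rees ring argument; the paper itself does not give a proof at all but simply cites \cite[Exercise~1ZA(c) and Theorem~13.3]{GW}, so your write-up spells out exactly what that reference contains.
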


\begin{proof}
 See~\cite[Exercise~1ZA(c) and Theorem~13.3]{GW}.
\end{proof}

\begin{lemma} \label{noetherian-flatness-lemma}
 Let $R$ be a left noetherian ring and $I\subset R$ be an ideal
generated by central elements.
 Let $(F_n)_{n\ge1}$ be a projective system of flat right
$R/I^n$-modules indexed by the integers $n\ge1$.
 Suppose that the transition map $F_n\to F_m$ is surjective for
all $n\ge m\ge 1$.
 Then $F=\varprojlim_{n\ge1}F_n$ is a flat right $R$-module.
\end{lemma}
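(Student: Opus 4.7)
The plan is to verify the $R$-flatness of $F=\varprojlim_n F_n$ via the ideal criterion: for every finitely generated left ideal $J\subseteq R$, the map $F\otimes_R J\to F$ is injective. The two main ingredients are (a) the $R/I^n$-flatness of each $F_n$, which controls kernels of tensor products after reduction modulo $I^n$, and (b) Lemma~\ref{artin-rees} applied to $J\subseteq R$, which provides the filtration-compatibility needed to pass between levels. Throughout I use that the surjectivity of the transition maps forces every $F\to F_n$ to be surjective, by a direct Mittag--Leffler argument.

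Let $\xi\in F\otimes_R J$ have image zero in $F$, and let $\xi_n$ denote its image in $F_n\otimes_R J=F_n\otimes_{R/I^n}J/I^nJ$. Since $F_n$ is $R/I^n$-flat, the kernel of $F_n\otimes_{R/I^n}J/I^nJ\to F_n$ is $F_n\otimes_{R/I^n}(I^n\cap J)/I^nJ$, so $\xi_n$ lies there; Lemma~\ref{artin-rees} provides for each $n$ some $m=m(n)\ge n$ with $I^m\cap J\subseteq I^nJ$, and composing with the transition $F_m\to F_n$ shows $\xi_n=0$ for every~$n$. Left-noetherianity of $R$ makes $J$ finitely presented, say via $R^a\xrightarrow{\phi}R^b\twoheadrightarrow J$ with $K=\im\phi$; then $F\otimes_R J\cong F^b/F\cdot K$ and $F_n\otimes_R J\cong F_n^b/F_n\cdot K$, and the inverse system $(F_n\cdot K)_n$ has surjective transitions (inherited from those of $(F_n^a)_n$). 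Mittag--Leffler therefore yields the exact sequence
\[
 0\longrightarrow\varprojlim_n F_n\cdot K\longrightarrow F^b\longrightarrow\varprojlim_n(F_n\otimes_R J)\longrightarrow 0,
\]
so any lift $\tilde\xi\in F^b$ of $\xi$ lies in $\varprojlim_n F_n\cdot K$.

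The main obstacle is the identification $F\cdot K=\varprojlim_n F_n\cdot K$ inside $F^b$. To prove it, I plan to construct, for any $v\in\varprojlim_n F_n\cdot K$, a compatible sequence of lifts $g^{(n)}\in F_n^a$ with $\phi_n(g^{(n)})=v_n$ by induction on~$n$: surjectivity of $F_{n+1}^a\to F_n^a$ gives a preliminary lift at level $n+1$ whose discrepancy from a correct lift lies in $(\ker(F_{n+1}\to F_n))^b\cap F_{n+1}\cdot K$, and a second application of Artin--Rees---now to the pair $K\subseteq R^b$, where centrality of the generators of $I$ is essential so that $I^n$ acts symmetrically on both sides---together with $R/I^{n+1}$-flatness of $F_{n+1}$, lets me absorb the discrepancy into $\ker(F_{n+1}\to F_n)\cdot K$. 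The assembled compatible family yields $g\in F^a=\varprojlim_n F_n^a$ with $\phi(g)=v$, realizing $v\in F\cdot K$ and closing the argument.
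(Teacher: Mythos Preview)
Your overall strategy---reduce to the ideal criterion, pass to the levels $F_n$, and then show that $F\otimes_R J\to\varprojlim_n(F_n\otimes_R J)$ is injective---is sound, and the first half (showing $\xi_n=0$ via Artin--Rees for $J\subseteq R$) is correct. The difficulty is entirely in your last paragraph, and the sketch there does not go through as written.

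The step-by-step lifting asks you to show that the discrepancy $w=\phi_{n+1}(h)-v_{n+1}$, which indeed lies in $N_{n+1}^b\cap F_{n+1}K$ with $N_{n+1}=\ker(F_{n+1}\to F_n)$, can be written as $\phi_{n+1}(\delta)$ for some $\delta\in N_{n+1}^a$. But nothing in the hypotheses controls $N_{n+1}$: it is an arbitrary $R/I^{n+1}$-submodule of $F_{n+1}$ containing $I^nF_{n+1}$, with no flatness or purity assumed. Flatness of $F_{n+1}$ lets you compute intersections in $F_{n+1}^b$ of submodules of the form $F_{n+1}\cdot A$ for $A\subseteq(R/I^{n+1})^b$, but $N_{n+1}^b$ is not of that form, so Artin--Rees for $K\subseteq R^b$ has nothing to act on. The claim that the discrepancy can be ``absorbed into $\ker(F_{n+1}\to F_n)\cdot K$'' is therefore unjustified.

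What does work is a Mittag--Leffler argument for the system $L_n=\ker(\phi_n\colon F_n^a\to F_n^b)$. By flatness of $F_n$ over $R/I^n$ one has $L_n=F_n\otimes_{R/I^n}\bigl(\phi^{-1}(I^nR^b\cap K)/I^nR^a\bigr)$, and Artin--Rees for $K\subseteq R^b$ gives $m_0$ with $I^mR^b\cap K\subseteq\phi(I^{m-m_0}R^a)$ for $m\ge m_0$; one then checks that the image of $L_m$ in $L_n$ stabilises (to $F_n\otimes_{R/I^n}\overline{\ker\phi}$) for all $m\ge n+m_0$. Hence $\varprojlim^1 L_n=0$ and $F^a\to\varprojlim_n F_nK$ is surjective. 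This is essentially the paper's argument in different clothing: the paper defines $G(N)=\varprojlim_n(F_n\otimes_R N)$, proves $G$ is exact on $\lfmod R$ via exactly this Artin--Rees cofinality (comparing the systems $F_n\otimes_R K/I^nK$ and $F_n\otimes_R K/(I^nL\cap K)$ for an arbitrary short exact sequence $0\to K\to L\to M\to 0$), and then concludes $F\otimes_R{-}\cong G$ because both functors are right exact and agree on finitely generated free modules. That functorial route is cleaner: it yields the isomorphism $F\otimes_R J\cong\varprojlim_n(F_n\otimes_R J)$ directly, making your separate computation of $\xi_n=0$ unnecessary.
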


\begin{proof}
 This is a noncommutative version of~\cite[Theorem~6.11]{Y}.
 We follow the argument spelled out in~\cite[Lemma~B.9.2]{Pweak},
where commutativity is (unnecessarily) assumed.
 It suffices to show that the tensor product functor $F\otimes_R{-}$ is
exact on the abelian category of finitely generated left $R$-modules
$\lfmod R$ (i.~e., as a functor from $\lfmod R$ to abelian groups).

 Consider the functor $G$, also acting from $\lfmod R$ to abelian groups
and defined by the rule $G(N)=\varprojlim_{n\ge1}(F_n\otimes_R N)$.
 Let us show that this functor is exact.
 Indeed, for any short exact sequence $0\to K\to L\to M
\to 0$ in $\lfmod R$ there are short exact sequences of left
$R/I^n$-modules $0\to K/(I^nL\cap K)\to L/I^nL\to M/I^nM\to0$.
 Taking the tensor products with $F_n$ over $R/I^n$ preserves exactness
of these short exact sequences, since $F_n$ is a flat $R/I^n$-module.
 The passage to the projective limits over~$n$ preserves exactness of
the resulting sequences of tensor products, because these are
countable directed projective systems of surjective maps.

 On the other hand, by Lemma~\ref{artin-rees}, the projective system of 
abelian groups $F_n\otimes_RK/(I^nL\cap K)$ is mutually cofinal with
the projective system $F_n\otimes_RK/I^nK$.
 This means that there are natural maps
\[
 F_{n+m}\otimes_R K/(I^{n+m}L\cap K)\to
 F_n\otimes_R K/I^nK \to
 F_n\otimes_R K/(I^nL\cap K),
\]
which form commutative diagrams with the transition maps in
the projective systems.
 After the passage to the projective limits over $n\ge1$, these two
maps become mutually inverse isomorphisms.
 So the natural morphism between the two projective limits
$\varprojlim_{n\ge1} F_n\otimes_R K/I^nK\to\varprojlim_{n\ge1}
F_n\otimes_R K/(I^nL\cap K)$ is an isomorphism, and we have shown that
the functor $G$ is exact.

 Now we have a natural morphism $F\otimes_RN\to
\varprojlim_{n\ge1}(F_n\otimes_R N)$ for all $N\in\lfmod R$.
 For finitely generated \emph{free} left $R$-modules $N$, this morphism
is obviously an isomorphism.
 Any morphism of right exact functors on $\lfmod R$ which is
an isomorphism for finitely generated free modules is an isomorphism
for all finitely generated modules.
 So the two functors $F\otimes_R{-}$ and $G$ are isomorphic.
 Since we have shown that the functor $G$ is exact, it follows that
the functor $F\otimes_R{-}$ is exact on $\lfmod R$;
so $F$ is a flat right $R$-module.
\end{proof}

\begin{proof}[Proof of Proposition~\ref{prufer-inverted-element}]
 Let $J\subset R$ denote the two-sided ideal of all elements $r\in R$
for which there exists $n\ge1$ such that $rt^n=0$.
 Then $R[t^{-1}]=(R/J)[t^{-1}]$.
 Passing from $R$ to $R/J$, we can assume without loss of generality
that $t$~is a nonzero-divisor in $R$.

 Then the endomorphism ring
$\mathfrak S=\End{(R[t^{-1}]/R)_R}$ can be computed as $\mathfrak S=
\varprojlim_{n\ge1}R/Rt^n$, and the finite topology on $\mathfrak S$ is
the topology of projective limit of the discrete rings $R/Rt^n$.
 So $\mathfrak S$ is simply the $t$-adic completion of $R$, with
the $t$-adic topology.
 Put $S=R$, and let $\sigma\colon S\to\mathfrak S$ be the completion
morphism.
 It suffices to check conditions~(i) and~(ii) from
Corollary~\ref{contramodules-flat-as-modules-cor}.

 Condition~(i) is almost obvious.
 By the definition $\mathfrak C\odot_{\mathfrak S}K$ is the quotient
group of $\mathfrak C\otimes_{\mathbb Z}K$ by the subgroup generated by
all elements of the form $\pi_{\mathfrak C}(\sum_{i=0}^\infty c_is_i)
\otimes\nobreak k-\sum_{i=0}^\infty c_i\otimes s_ik$, where
$s_i\in\mathfrak S$, $i<\omega$ is a sequence of elements converging
to zero in the topology of $\mathfrak S$, \ $c_i\in\mathfrak C$ is
an arbitrary sequence of elements, and $k\in K$.
 The definition of the tensor product $\mathfrak C\otimes_SK$ is similar
except that only finite sequences of elements $s_i\in S$ are allowed.
 We have to show that every element of the former form is, in fact,
equal to a certain element of the latter form in
$\mathfrak C\otimes_{\mathbb Z}K$.

 Now, since $K$ is a discrete $\mathfrak S$-module, for any $k\in K$
there exists $n\ge1$ such that $t^nk=0$.
 Any sequence of elements $s_i\in\mathfrak S$ converging to zero in
$\mathfrak S$ is the sum of a finite sequence of elements coming from
$S$ and an infinite sequence of elements from $\mathfrak S t^n$.
 Hence without loss of generality we can assume that
$s_i\in \mathfrak S t^n$ for all $i<\omega$.

 The point is that, for any sequence of elements $s_i\in \mathfrak S t^n$,
\,$i<\omega$, converging to zero in the topology of $\mathfrak S$, there
exists a sequence of elements $r_i\in\mathfrak S$ such that $s_i=r_it^n$
and the sequence $r_i$ also converges to zero in the topology of
$\mathfrak S$.
 In fact, $t^n$ is a non-zerodivisor in $\mathfrak S$ and the sequence $r_i=s_i/t^n$ converges to zero in $\mathfrak S$ whenever the sequence
$s_i$ does.
 Since $t^nk=0$, it follows that
\[
 \pi_{\mathfrak C}\left(\sum_{i=0}^\infty c_is_i\right)\otimes k-
 \sum_{i=0}^\infty c_i\otimes s_ik
 = \pi_{\mathfrak C}\left(\sum_{i=0}^\infty c_ir_i\right)t^n\otimes k-0=
 ct^n\otimes k-c\otimes t^nk,
\]
where $c=\pi_{\mathfrak C}(\sum_{i=0}^\infty c_ir_i)\in\mathfrak C$.
 The right-hand side is an element of the desired form (i.~e., an element
of the kernel of the map $\mathfrak C\otimes_{\mathbb Z}K\to
\mathfrak C\otimes_SK$).

 A more general approach, not relying on any non-zerodivisor arguments
or assumptions, can be found in~\cite[Corollary~6.7]{Pcoun}.

 We have checked condition~(i).
 Now Lemma~\ref{noetherian-flatness-lemma} together with the standard
theory of flat contramodules over topological rings with a countable
base of neighborhoods of zero (\cite[Section~D.1]{Pcosh}
or~\cite[Sections~5--6]{PR}; see the discussion in
Remark~\ref{flat-contramodules-rem}) yields condition~(ii).
 In particular, the standard theory tells that the direct limits of
projective contramodules are flat (as contramodules); that all
contramodules are complete (though not necessarily separated)
\cite[Lemma~D.1.1]{Pcosh} or~\cite[Lemma~6.3(b)]{PR},
while flat contramodules are complete and
separated~\cite[Section~D.1]{Pcosh} or~\cite[Corollary~6.15]{PR}.

 Essentially by definition, a right $\mathfrak S$-contramodule
$\mathfrak F$ is flat if and only if the right $S/St^n$-module
$\mathfrak F\odot_\mathfrak S S/St^n=\mathfrak F/\mathfrak F t^n$
is flat for every $n\ge1$.
 Since $\mathfrak F$ is complete and separated, we have $\mathfrak F=
\varprojlim_{n\ge1}\mathfrak F/\mathfrak F t^n$ (this is also explained
in the final paragraphs of the proof of~\cite[Lemma~B.9.2]{Pweak}).
 By Lemma~\ref{noetherian-flatness-lemma} (applied to the principal ideal
$I=St\subset S$), we can conclude that $\mathfrak F$ is a flat right
$S$-module.
\end{proof}

 In order to prove Proposition~\ref{prufer-inverted-subset}, we will use
the following version of Artin--Rees lemma for multiplicative subsets.
 It is obtained from Lemma~\ref{artin-rees} by specializing
from arbitrary (finitely centrally generated) ideals to principal ideals
generated by central elements, and then generalizing from multiplicative
subsets generated by a single element to arbitrary countable
multiplicative subsets.

 The argument in the proof of
Lemma~\ref{multiplicative-subset-Artin-Rees}, as well as the discussion
of a $T$-indexed projective system in the subsequent
Lemma~\ref{multiplicative-subset-noetherian-flatness}, will presume
the \emph{partial preorder of divisibility} on a central
multiplicative subset $T\subset R$: given two elements $s\in T$ and
$t\in T$, we say that $t\preceq s$ if $t$ divides $s$ in $R$, that is
$Rs\subseteq Rt$, or equivalently, there exists $r\in R$ such that $rt=s$.
 It is possible that $t\preceq s$ and $s\preceq t$; in this case,
the elements $s$ and $t$ are considered to be equivalent.

\begin{lemma} \label{multiplicative-subset-Artin-Rees}
 Let $R$ be a left noetherian ring and $T\subset R$ be a central
multiplicative subset.
 Let $M$ be a finitely generated left $R$-module with a submodule
$N\subseteq M$.
 Then there exists an element $t\in T$ such that for all $s\in T$
the following two submodules in $N$ coincide:
\[
 stM\cap N=s(tM\cap N).
\]
 Hence the inclusion $stM\cap N\subseteq sN$ holds for all $s\in T$.
\end{lemma}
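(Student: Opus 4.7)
The plan is to reduce the question to controlling the single module
$\tilde N = \{ m \in M \mid sm \in N \text{ for some } s \in T \}$, the \emph{$T$-saturation} of $N$ in $M$, rather than to appeal directly to Lemma~\ref{artin-rees}.

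First I would verify that $\tilde N$ is an $R$-submodule of $M$ containing $N$. Closure under addition uses that a product of two witnesses in $T$ is again a witness: if $s_i m_i \in N$ for $i=1,2$, then centrality gives $s_1 s_2 (m_1 + m_2) = s_2(s_1 m_1) + s_1(s_2 m_2) \in N$, with $s_1 s_2 \in T$ because $T$ is multiplicative. Closure under the $R$-action uses centrality again, via $s(rm) = r(sm) \in rN \subseteq N$.

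Since $R$ is left noetherian and $M$ is finitely generated, $\tilde N$ is finitely generated; let $n_1,\dots,n_k$ be generators and choose witnesses $t_i\in T$ with $t_i n_i\in N$. I will set $t = t_1 t_2 \cdots t_k \in T$. Centrality of each $t_j$ then yields
$t n_i = \bigl(\prod_{j \neq i} t_j\bigr)(t_i n_i) \in N$
for every $i$, so $t\tilde N \subseteq N$.

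Finally I would verify the stated equality for this $t$ and an arbitrary $s \in T$. The inclusion $s(tM \cap N) \subseteq stM \cap N$ is clear. For the converse, take $x \in stM \cap N$ and write $x = stm$ with $m \in M$; since $(st)m = x \in N$ and $st \in T$, the very definition of $\tilde N$ forces $m \in \tilde N$, hence $tm \in t\tilde N \subseteq N$, giving $tm \in tM \cap N$ and $x = s\cdot(tm) \in s(tM \cap N)$. The asserted consequence $stM \cap N \subseteq sN$ is immediate from $tM \cap N \subseteq N$.

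The main conceptual point---which I expect to be the one step that has to be spotted---is that although $\tilde N$ is defined by an \emph{existential} quantifier over $T$, noetherianness together with the multiplicativity and centrality of $T$ upgrade this into a single uniform witness $t\in T$ that carries all of $\tilde N$ into $N$; once that is in hand, the same $t$ automatically works for every $s\in T$. In particular, this argument sidesteps Lemma~\ref{artin-rees} entirely and places no countability hypothesis on~$T$.
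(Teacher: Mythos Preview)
Your proof is correct and is essentially the same as the paper's. The paper defines, for each $s\in T$, the submodule $P_s=\{m\in M\mid sm\in N\}$, observes that these form a directed family in the noetherian module $M$, and takes $t$ with $P_t$ maximal; your $\tilde N$ is precisely $\bigcup_{s\in T}P_s=P_t$, and your construction of $t$ as a product of witnesses for a finite generating set is just an explicit way of locating this maximal $P_t$. The verification of $stM\cap N=s(tM\cap N)$ then proceeds identically in both arguments.
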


\begin{proof}
 This proof is taken from~\cite{MO-Mohan}.
 For every $s\in T$, denote by $P_s\subseteq M$ the submodule consisting
of all elements $m\in M$ such that $sm\in N$.
 Clearly, $N\subseteq P_s\subseteq M$ and $P_{s'}\subseteq P_{s''}$
whenever $s'$ divides~$s''$ in~$R$.
 So $P_s$, \,$s\in T$ form an upwards directed family
of submodules in~$M$.
 Since the $R$-module $M$ is noetherian, there exists $t\in T$
such that $P_s\subseteq P_t$ for all $s\in T$.

 Let us show that $stM\cap N=s(tM\cap N)$.
 Indeed, the inclusion $s(tM\cap N)\subseteq stM\cap N$ is obvious.
 Now let $x\in stM\cap N$.
 Then $x=stm$ for some $m\in M$, and it follows that $m\in P_{st}$.
 Hence $m\in P_t$ and $tm\in N$ by the choice of~$t$.
 Therefore, $tm\in tM\cap N$ and thus $x=stm\in s(tM\cap N)$.
\end{proof}

 The next lemma is likewise obtained from
Lemma~\ref{noetherian-flatness-lemma} by specializing to principal ideals and then generalizing to countable multiplicative subsets.

\begin{lemma} \label{multiplicative-subset-noetherian-flatness}
 Let $R$ be a left noetherian ring and $T\subset R$ be a countable
multiplicative subset consisting of central elements.
 Let $(F_t)_{t\in T}$ be a projective system of flat right $R/Rt$-modules
indexed by~$T$.
 Suppose that the transition map $F_s\to F_t$ is surjective for all $t$
and $s\in T$ such that $t$ divides $s$ in $R$.
 Then $F=\varprojlim_{t\in T}F_t$ is a flat right $R$-module.
\end{lemma}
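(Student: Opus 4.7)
The plan is to imitate the proof of Lemma~\ref{noetherian-flatness-lemma} almost verbatim, replacing the use of Artin--Rees for powers of a principal ideal (which was the content of Lemma~\ref{artin-rees}) with the ``multiplicative-subset'' version in Lemma~\ref{multiplicative-subset-Artin-Rees}, and observing that the projective system indexed by $(T,\preceq)$ still behaves as nicely as the $(\mathbb N,\le)$-indexed system in the single-element case.

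First I would note that $(T,\preceq)$ is a directed preorder (since $st$ is divisible by both $s$ and $t$), and that $T$ is countable by hypothesis. To prove $F$ is flat it suffices to show that $F\otimes_R{-}$ is exact as a functor on $\lfmod R$. Following the pattern of Lemma~\ref{noetherian-flatness-lemma}, define the auxiliary functor
\[
 G(N)=\varprojlim_{t\in T}(F_t\otimes_R N),\qquad N\in\lfmod R.
\]
Given a short exact sequence $0\to K\to L\to M\to0$ in $\lfmod R$, for every $t\in T$ there is a short exact sequence of $R/Rt$-modules
\[
 0\to K/(tL\cap K)\to L/tL\to M/tM\to0,
\]
and tensoring with $F_t$ preserves exactness by flatness of $F_t$ over $R/Rt$. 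The transition maps in all three of the resulting $T$-indexed projective systems are surjective (being induced by the surjective transitions $F_s\to F_t$ tensored with surjections of modules), so by Mittag--Leffler, applied to a countable directed system, passage to $\varprojlim_{t\in T}$ preserves exactness. Thus the sequence
\[
 0\to\varprojlim_{t\in T}F_t\otimes_R K/(tL\cap K)\to G(L)\to G(M)\to0
\]
is exact.

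Next I would use Lemma~\ref{multiplicative-subset-Artin-Rees} (applied to the pair $K\subseteq L$) to produce some $t_0\in T$ such that $st_0L\cap K\subseteq sK$ for every $s\in T$. This inclusion furnishes natural surjections $K/(st_0L\cap K)\twoheadrightarrow K/sK$, and, composing with the transition $F_{st_0}\to F_s$, natural maps
\[
 F_{st_0}\otimes_R K/(st_0L\cap K)\longrightarrow F_s\otimes_R K/sK,
\]
forming commutative diagrams with the transition maps in the two $T$-indexed projective systems. Together with the obvious maps $F_s\otimes_R K/sK\to F_s\otimes_R K/(sL\cap K)$ in the opposite direction (coming from the inclusion $sK\subseteq sL\cap K$), this shows the two projective systems $(F_t\otimes_R K/(tL\cap K))_{t\in T}$ and $(F_t\otimes_R K/tK)_{t\in T}$ are mutually cofinal. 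Consequently their projective limits are canonically isomorphic, so $\varprojlim_{t\in T}F_t\otimes_R K/(tL\cap K)\cong G(K)$, and the previous exact sequence becomes $0\to G(K)\to G(L)\to G(M)\to 0$. Hence $G$ is exact on $\lfmod R$.

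Finally, I would compare $G$ with $F\otimes_R{-}$. There is an obvious natural morphism $F\otimes_R N\to\varprojlim_{t\in T}(F_t\otimes_R N)=G(N)$ for every $N\in\lfmod R$; and for $N=R^n$ finitely generated free it is manifestly an isomorphism ($F^n\cong(\varprojlim F_t)^n\cong \varprojlim F_t^n$). Since both $F\otimes_R{-}$ and $G$ are right exact functors on $\lfmod R$, any natural transformation between them which is an isomorphism on finitely generated free modules is an isomorphism on all finitely generated modules. Therefore $F\otimes_R{-}\cong G$ is exact on $\lfmod R$, and $F$ is a flat right $R$-module. The only genuinely delicate step is the cofinality argument via Artin--Rees; everything else is a bookkeeping adaptation of the principal-ideal case.
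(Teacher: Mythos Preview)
Your proposal is correct and follows essentially the same approach as the paper's own proof: define the auxiliary functor $G(N)=\varprojlim_{t\in T}(F_t\otimes_R N)$, use flatness of each $F_t$ and the Mittag--Leffler property of countable directed surjective systems to see that $G$ is exact on $\lfmod R$ once the Artin--Rees cofinality argument (via Lemma~\ref{multiplicative-subset-Artin-Rees}) identifies $\varprojlim_t F_t\otimes_R K/(tL\cap K)$ with $G(K)$, and then compare $G$ with $F\otimes_R{-}$ via finitely generated free modules. The only cosmetic difference is that the paper phrases the limit-exactness step as ``countable directed projective systems of surjective maps'' rather than invoking Mittag--Leffler by name.
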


\begin{proof}
 As in the proof of Lemma~\ref{noetherian-flatness-lemma}, it suffices
to show that the tensor product functor $F\otimes_R{-}$ is exact on
the category of finitely generated left $R$-modules.

 Consider the functor $G$, also acting from $\lfmod R$ to abelian groups and defined by the formula
$N\longmapsto\varprojlim_{s\in T}(F_s\otimes_RN)$.
 Let us show that this functor is exact.

 Indeed, for any short exact sequence $0\to K\to L\to M\to0$
in $\lfmod R$ there are short exact sequences of $R/Rs$-modules
$0\to K/(sL\cap K)\to L/sL\to M/sM\to0$.
 Taking the tensor products with $F_s$ over $R/Rs$ preserves exactness of
these short exact sequences, since $F_s$ is a flat $R/Rs$-module.
 The passage to the projective limits over $s\in T$ preserves exactness of
the resulting sequences of tensor products, because there are countable
directed projective systems of surjective maps.

 On the other hand, Lemma~\ref{multiplicative-subset-Artin-Rees} implies
that the projective system of abelian groups $F_s\otimes_R K/(sL\cap K)$
is mutually cofinal with the projective system $F_s\otimes_R K/sK$.
 Let us explain how this follows.
 We have a natural surjective map of abelian groups $F_s\otimes_R K/sK
\to F_s\otimes_R K/(sL\cap K)$ induced by the natural epimorphism of
left $R$-modules $K/sK\to K/(sL\cap K)$ for all $s\in T$.
 Choosing an element $t\in T$ as
in Lemma~\ref{multiplicative-subset-Artin-Rees} for the submodule
$K\subseteq L$, we have $stL\cap K\subseteq sK$ for all $s\in T$.
 Hence there is also a surjective map of abelian groups
$F_{st}\otimes_R K/(stL\cap K)\to F_s\otimes_R K/sK$
induced by the epimorphisms $F_{st}\to F_s$ and $K/(stL\cap K)\to K/sK$
for all $s\in T$.
 All these maps of tensor products form commutative diagrams with
the transition maps in the two projective systems, in the obvious sense.
 Therefore, the related projective limits coincide,
$\varprojlim_{s\in T}(F_s\otimes_R K/(sL\cap K))\cong
\varprojlim_{s\in T}(F_s\otimes_R K/sK)$, and we have shown that
the functor $G$ is exact.

 Now we have a natural morphism $F\otimes_RN\to
\varprojlim_{s\in T}(F_s\otimes_RN)$ for all $N\in\lfmod R$.
 For finitely generated \emph{free} $R$-modules $N$, this morphism
is obviously an isomorphism.
 The argument finishes similarly to the proof of
Lemma~\ref{noetherian-flatness-lemma}.
\end{proof}

\begin{proof}[Proof of Proposition~\ref{prufer-inverted-subset}]
 Let $J\subset R$ denote the two-sided ideal of all elements $r\in R$
for which there exists $t\in T$ such that $rt=0$.
 Then $T^{-1}R=T^{-1}(R/J)$.
 Passing from $R$ to $R/J$, we can assume that all the elements of $T$
are nonzero-divisors in~$R$.

 Then the endomorphism ring $\mathfrak S=\End{(T^{-1}R/R)_R}$ can be
computed as $\mathfrak S=\varprojlim_{t\in T}R/Rt$, and the finite
topology on $\mathfrak S$ is the topology of projective limit of
the discrete rings $R/Rt$.
 So $\mathfrak S$ is simply the $T$-completion of~$R$
(see e.g.~\cite[Chapter~1]{GT}).
 Put $S=R$, and let $\sigma\colon S\to\mathfrak S$ be the completion
morphism.
 Let us check conditions~(i) and~(ii) from
Corollary~\ref{contramodules-flat-as-modules-cor}.

 Once again, condition~(i) is almost obvious.
 For any $k\in K$ there exists $t\in T$ such that $tk=0$.
 So it suffices to check that the map $\mathfrak C/\mathfrak C t\to
\mathfrak C\odot_{\mathfrak S} R/Rt$ is an isomorphism.
 The point is that, for any sequence of elements $s_i\in\mathfrak S t$ 
converging to zero in the topology of $\mathfrak S$, there exists
a sequence of elements $r_i\in\mathfrak S$ such that $s_i=r_it$ and
the sequence $r_i$ also converges to zero in the topology of~$\mathfrak S$.
 In fact, $t$ is a non-zerodivisor in $\mathfrak S$ and the sequence
$r_i=s_i/t$ converges to zero in $\mathfrak S$ whenever the sequence
$s_i$ does.
 A more general approach is provided by~\cite[Corollary~6.7]{Pcoun}.

 Similarly to the final paragraphs of the proof of
Proposition~\ref{prufer-inverted-element} above, the standard theory of
flat contramodules over topological rings with a countable base of
neighborhoods of zero (\cite[Section~D.1]{Pcosh}
or~\cite[Sections~5--6]{PR}) together with
Lemma~\ref{multiplicative-subset-noetherian-flatness}
yields condition~(ii).
 Essentially by definition, a right $\mathfrak S$-contramodule
$\mathfrak F$ is flat if and only if the right $S/St$-module
$\mathfrak F\odot_{\mathfrak S} S/St=\mathfrak F/\mathfrak F t$ is flat
for all $t\in T$.
 Any direct limit of projective contramodules is flat.
 Any flat contramodule $\mathfrak F$ is complete and separated
by~\cite[Section~D.1]{Pcosh} or~\cite[Lemma~6.3(b) and
Corollary~6.15]{PR}, so we have
$\mathfrak F=\varprojlim_{t\in T}\mathfrak F/\mathfrak F t$.
 By Lemma~\ref{multiplicative-subset-noetherian-flatness}, we can
conclude that $\mathfrak F$ is a flat right $S$-module.
\end{proof}

\medskip

\section{Gabriel topologies}
\label{gabrieltopol}

 The aim of this section is to formulate and prove a generalization of
Propositions~\ref{prufer-inverted-element}
and~\ref{prufer-inverted-subset} to modules $M_R$ whose endomorphism
ring $\mathfrak S$ is left noetherian and the finite topology on
$\mathfrak S$ satisfies a certain list of conditions.
 In fact, we will consider a more general setting in which the ring
$\mathfrak S$ itself is not necessarily noetherian, but it has a dense
noetherian subring on which the additional conditions are imposed.
 In addition to the application to the $\varinjlim\add M$ versus
$\varinjlim\Add M$ problem, we will show that properties (C-GL1)
and (C-GL2) from Remark~\ref{govorov-lazard-for-contra} hold for
some topological rings.

 Let $\mathfrak S$ be a left linear topological ring, $S$ be a ring,
and $\sigma\colon S\to\mathfrak S$ be a ring homomorphism.
 Then the ring $S$ can be endowed with the induced topology: the open
subsets (or open left ideals) in $S$ are the full preimages
under~$\sigma$ of the open subsets (respectively, open left ideals)
in $\mathfrak S$.
 This makes $S$ a left linear topological ring.
 When $\mathfrak S$ is separated and complete, and the image of~$\sigma$
is dense in $\mathfrak S$, the original topological ring $\mathfrak S$
can be recovered as the completion of the topological ring $S$, and
$\sigma$~is the completion map.

 Let $S$ be a ring.
 A class of modules $\mathcal T\subseteq \lmod S$ is called
a \emph{pretorsion class} if $\mathcal T$ is closed under direct sums
and epimorphic images in $\lmod S$.
 A pretorsion class $\mathcal T$ is said to be \emph{hereditary} if it
is closed under submodules.
 A pretorsion class is called a \emph{torsion class}~\cite{D} if it is
closed under extensions.

 Let $S$ be a left linear topological ring.
 Then the class of all discrete left $S$-modules $\ldiscr S$ is
a hereditary pretorsion class in $\lmod S$.
 One says that the topology on $S$ is a \emph{Gabriel topology}
if $\ldiscr S$ is a (hereditary) torsion class. 
 A left linear topology is Gabriel if and only if it satisfies
(the left version of) the axiom~T4 from~\cite[Section~VI.5]{S}.

 Let $S$ be a ring and $J\subset S$ be a two-sided ideal.
 One says that the ideal $J$ has the (left) \emph{Artin--Rees property}
if for any finitely generated left $S$-module $M$ with a submodule
$N\subseteq M$ there exists an integer $m>0$ such that $J^mM\cap N
\subseteq JN$.
 For other equivalent characterizations of ideals with the Artin--Rees
property, see~\cite[Theorem~2.1]{Sm}.
 Any ideal generated by central elements in a left noetherian ring $R$
has the left Artin--Rees property by Lemma~\ref{artin-rees}, and
moreover the same applies to so-called polycentral ideals and sums of
polycentral ideals~\cite[Corollary~2.8 and Theorem~6.3]{Sm}, but
generally speaking a two-sided ideal in a left noetherian ring need not
have the Artin--Rees property~\cite{Bor}.

 The following theorem is the main result of this section.
 
\begin{theorem} \label{gabriel-main-theorem}
 Let\/ $\mathfrak S$ be a complete, separated left linear topological ring.
 Let $S$ be a ring and $\sigma\colon S\to\mathfrak S$ be a ring
homomorphism with dense image; consider the induced topology on $S$.
 Assume that the ring $S$ is left noetherian, the induced topology on it
is a (left) Gabriel topology, and this topology has a countable base
consisting of two-sided ideals having the left Artin--Rees property.
 Then both the conditions (C-GL1) and (C-GL2) hold for the topological
ring\/~$\mathfrak S$.
 In other words, any flat right\/ $\mathfrak S$-contramodule belongs to
the class\/ $\varinjlim^{\rcontra{\mathfrak S}}
(\rfcontra{\mathfrak S})_{\mathrm{proj}}$.
\end{theorem}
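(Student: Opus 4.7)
The goal is to prove the stronger assertion that every flat right $\mathfrak S$-contramodule $\mathfrak F$ lies in $\varinjlim^{\rcontra{\mathfrak S}}(\rfcontra{\mathfrak S})_{\mathrm{proj}}$; this subsumes both (C-GL1) and (C-GL2) at once. Fix a countable base $(I_n)_{n\ge1}$ of open two-sided ideals of~$S$ with the Artin--Rees property, arranged WLOG in a chain $I_1\supseteq I_2\supseteq\cdots$ (replace $I_n$ by $I_1\cap\cdots\cap I_n$, which does not have the Artin--Rees property itself but still satisfies all that will be needed below).

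The first step is an auxiliary flatness lemma generalizing Lemma~\ref{noetherian-flatness-lemma}: if $(F_n)_{n\ge1}$ is a projective system of flat right $S/I_n$-modules with surjective transitions, then $F=\varprojlim_n F_n$ is a flat right $S$-module. I would follow the argument of Lemma~\ref{noetherian-flatness-lemma} almost verbatim; the only new input needed is that for every $n$ and every submodule $K$ of a finitely generated left $S$-module $L$, some $I_m$ in the base satisfies $I_mL\cap K\subseteq I_nK$. This follows by combining the Artin--Rees property of~$I_n$ (giving $I_n^kL\cap K\subseteq I_nK$ for some~$k$) with axiom~T4 of a Gabriel topology: since open two-sided ideals are closed under products, $I_n^k$ is open, so there exists $m$ with $I_m\subseteq I_n^k$. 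The cofinality step of Lemma~\ref{noetherian-flatness-lemma} then carries through, yielding an isomorphism $F\otimes_S{-}\cong\varprojlim_n(F_n\otimes_S{-})$ on finitely generated left $S$-modules and exactness of the right-hand side.

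Next, let $\mathfrak F$ be a flat right $\mathfrak S$-contramodule. The topology on $\mathfrak S$ has a countable base of neighborhoods of zero (inherited from~$S$), so the standard theory of flat contramodules over topological rings with a countable base (\cite[Section~D.1]{Pcosh}, \cite[Lemma~6.3(b) and Corollary~6.15]{PR}) tells us that $\mathfrak F$ is complete and separated; in particular $\mathfrak F\cong\varprojlim_n\mathfrak F/\mathfrak F I_n$. Each $\mathfrak F/\mathfrak F I_n\cong\mathfrak F\odot_{\mathfrak S}(S/I_n)$ is a flat right $S/I_n$-module by flatness of $\mathfrak F$, so the auxiliary lemma shows that the underlying right $S$-module $F$ of $\mathfrak F$ is flat. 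By Govorov--Lazard, $F=\varinjlim_i P_i$ for a directed system of finitely generated projective right $S$-modules~$P_i$. Applying the left adjoint $\Delta_\sigma\colon\rmod S\to\rcontra{\mathfrak S}$ gives $\Delta_\sigma(F)=\varinjlim^{\rcontra{\mathfrak S}}\Delta_\sigma(P_i)$, and each $\Delta_\sigma(P_i)$ is a finitely generated projective $\mathfrak S$-contramodule.

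It remains to show that the adjunction counit $\epsilon\colon\Delta_\sigma(F)\to\mathfrak F$ is an isomorphism of contramodules. By Lemma~\ref{delta-contratensor}, the map $\epsilon\odot_{\mathfrak S}(S/I_n)$ is identified for each~$n$ with the identity $F\otimes_S(S/I_n)=\mathfrak F/\mathfrak F I_n$. Both $\Delta_\sigma(F)$ and $\mathfrak F$ are flat contramodules, hence complete and separated, and the forgetful functor $\rcontra{\mathfrak S}\to\rmod S$ preserves projective limits, so passing to $\varprojlim_n$ shows that $\epsilon$ itself is an isomorphism. Therefore $\mathfrak F\cong\Delta_\sigma(F)$ lies in $\varinjlim^{\rcontra{\mathfrak S}}(\rfcontra{\mathfrak S})_{\mathrm{proj}}$. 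The main obstacle is the auxiliary flatness lemma of the first step: one must verify that the delicate cofinality argument of Lemma~\ref{noetherian-flatness-lemma} survives the passage from powers of a single centrally generated ideal to a countable base of Artin--Rees two-sided ideals, crucially exploiting the product-closure of open ideals in a Gabriel topology to pass from powers $I_n^k$ back into the given base.
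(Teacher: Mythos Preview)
Your overall strategy is the same as the paper's: prove an Artin--Rees flatness lemma (the paper's Proposition~\ref{with-AR-property-proposition}), deduce that the underlying $S$-module of a flat $\mathfrak S$-contramodule $\mathfrak F$ is flat, and conclude via $\Delta_\sigma$. Two points deserve comment.

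First, your intersection trick has a genuine gap. After replacing $I_n$ by $J_n:=I_1\cap\cdots\cap I_n$, the cofinality step requires, for each $n$, some $m$ with $J_mL\cap K\subseteq J_nK$. You propose to get this from Artin--Rees for the original $I_i$'s plus Gabriel's T4. But that only yields $J_mL\cap K\subseteq I_1K\cap\cdots\cap I_nK$, which in general strictly contains $(I_1\cap\cdots\cap I_n)K=J_nK$. The paper avoids this by never intersecting: a countable directed poset contains a cofinal chain, so one simply extracts a descending chain $J_1\supseteq J_2\supseteq\cdots$ from within the given base of two-sided Artin--Rees ideals, and then each $J_k$ itself has the Artin--Rees property. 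With this adjustment your auxiliary lemma goes through exactly as you describe.

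Second, you silently use the identification $\mathfrak F\odot_{\mathfrak S}(S/I_n)\cong\mathfrak F/\mathfrak F I_n$ (equivalently, that the map $\mathfrak F\otimes_S K\to\mathfrak F\odot_{\mathfrak S}K$ is an isomorphism for discrete $K$). This is not automatic; the paper obtains it from~\cite[Corollary~6.7]{Pcoun}, which in fact gives more: under the hypotheses (Gabriel topology, countable base of finitely generated open left ideals), the forgetful functor $\rcontra{\mathfrak S}\to\rmod S$ is fully faithful, so the adjunction counit $\Delta_\sigma(C)\to\mathfrak C$ is an isomorphism for \emph{every} contramodule $\mathfrak C$, not only flat ones. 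This single citation replaces your entire final paragraph. Your hands-on argument via completeness/separatedness of flat contramodules is also valid, but it still needs the tensor/contratensor identification as input, so you do not actually escape invoking~\cite{Pcoun}.
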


 The proof of the theorem is based on the following generalization of
Lemmas~\ref{noetherian-flatness-lemma}
and~\ref{multiplicative-subset-noetherian-flatness}.

\begin{proposition} \label{with-AR-property-proposition}
 Let $S$ be a left noetherian ring and $S\supset J_1\supseteq J_2
\supseteq J_3\supseteq\dotsb$ be a descending chain of two-sided ideals, indexed by the integers $k\ge1$, such that all the ideals $J_k\subset S$
have the left Artin--Rees property and for each $k$, $m\ge1$ there exists
$l\ge k$ such that $J_l\subseteq J_k^m$.
 Let $(F_k)_{k\ge1}$ be a projective system of flat right
$S/J_k$-modules such that the transition map $F_l\to F_k$ is surjective
for all $l\ge k\ge1$.
 Then $F=\varprojlim_{k\ge1}F_k$ is a flat right $S$-module.
\end{proposition}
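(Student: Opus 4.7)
The plan is to adapt the argument of Lemmas \ref{noetherian-flatness-lemma} and \ref{multiplicative-subset-noetherian-flatness} to the present more general setting. The only new ingredient is the combined use of the Artin--Rees property of each $J_k$ together with the cofinality condition that every power $J_k^m$ contains some $J_l$; everything else is a routine transcription.

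First I would reduce the claim to showing that the tensor product functor $F\otimes_S -$ is exact on the abelian category $\lfmod S$ of finitely generated (equivalently, finitely presented, since $S$ is left noetherian) left $S$-modules. To handle this, I introduce the auxiliary functor $G\colon\lfmod S\to\rmod{\mathbb Z}$ defined by $G(N)=\varprojlim_{k\ge1}(F_k\otimes_S N)=\varprojlim_{k\ge1}(F_k\otimes_{S/J_k}N/J_kN)$, and break the proof into three steps.

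Step~1 (exactness of $G$): given a short exact sequence $0\to K\to L\to M\to 0$ in $\lfmod S$, I derive for each $k$ the short exact sequence $0\to K/(J_kL\cap K)\to L/J_kL\to M/J_kM\to 0$ of $S/J_k$-modules, and tensor with $F_k$ over $S/J_k$; flatness of $F_k$ preserves exactness. Because $J_l\subseteq J_k$ for $l\ge k$, the transition map $K/(J_lL\cap K)\to K/(J_kL\cap K)$ is surjective, and the same holds for $F_l\to F_k$; so the countable projective system $\{F_k\otimes_SK/(J_kL\cap K)\}_k$ satisfies the Mittag-Leffler condition, and the same applies to the middle and right-hand terms. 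Passing to $\varprojlim_k$ therefore preserves exactness.

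Step~2 (cofinal comparison): I must identify $\varprojlim_k(F_k\otimes_S K/(J_kL\cap K))$ with $G(K)=\varprojlim_k(F_k\otimes_S K/J_kK)$. On one hand there are obvious epimorphisms $K/J_kK\twoheadrightarrow K/(J_kL\cap K)$ for all $k$. In the opposite direction, the Artin--Rees property of $J_k$ applied to the submodule $K\subseteq L$ yields an integer $m_k\ge1$ with $J_k^{m_k}L\cap K\subseteq J_kK$, and by the cofinality hypothesis there exists $l_k\ge k$ with $J_{l_k}\subseteq J_k^{m_k}$, whence $J_{l_k}L\cap K\subseteq J_kK$; composing with $F_{l_k}\to F_k$ one obtains morphisms $F_{l_k}\otimes_SK/(J_{l_k}L\cap K)\to F_k\otimes_SK/J_kK$ compatible with the transition maps of both projective systems. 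This displays the two systems as mutually cofinal under the divisibility order on~$k$, so their limits coincide. This is the only step using Artin--Rees and the cofinality of the chain, and is the main place where something could go wrong in bookkeeping.

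Step~3 (comparison of $F\otimes_S-$ with $G$): both functors are right exact on $\lfmod S$ and agree on the free module of rank one, since $G(S)=\varprojlim_kF_k=F=F\otimes_SS$. By right exactness and additivity they agree on every finitely generated free module, and presenting an arbitrary $N\in\lfmod S$ as the cokernel of a morphism $S^m\to S^n$ (possible because $S$ is left noetherian) the standard five-lemma-style argument gives a natural isomorphism $F\otimes_SN\cong G(N)$. Combining Steps~1--3, $F\otimes_S-$ is exact on $\lfmod S$, and hence $F$ is a flat right $S$-module, completing the proof. The main obstacle is really only the careful execution of Step~2; once the two projective systems are shown to be cofinal, the rest is essentially the argument already carried out for principal-ideal chains in the preceding lemmas.
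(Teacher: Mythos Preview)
Your proposal is correct and follows essentially the same approach as the paper's proof: reduce to exactness on $\lfmod S$, define the auxiliary functor $G(N)=\varprojlim_k(F_k\otimes_S N)$, show it is exact via the Artin--Rees/cofinality comparison, and then identify $G$ with $F\otimes_S-$ via right exactness. The only bookkeeping point the paper makes explicit that you leave implicit is that the indices $l_k$ in Step~2 should be chosen so that $l_{k+1}>l_k$, which is what actually guarantees compatibility with the transition maps; this is a trivial adjustment and you clearly anticipated such details.
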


\begin{proof}
 Similarly to the proofs of Lemmas~\ref{noetherian-flatness-lemma}
and~\ref{multiplicative-subset-noetherian-flatness}, it suffices to
show that the tensor product functor $F\otimes_S{-}$ is exact on
the abelian category $\lfmod S$ of finitely generated left $S$-modules.
 For this purpose, we consider the functor $G$ defined by the rule
$G(N)=\varprojlim_{k\ge1}(F_k\otimes_S N)$, and check that this functor
is exact on $\lfmod S$.
 The question reduces to showing that, for any short exact sequence
$0\to K\to L\to N\to 0$ in $\lfmod S$ the map of projective limits
\begin{equation} \label{cofinality-one-direction}
 \varprojlim_{k\ge1} F_k\otimes_S K/J_kK \to
 \varprojlim_{k\ge1} F_k\otimes_S K/(J_kL\cap K)
\end{equation}
induced by the natural epimorphisms $K/J_kK\to K/(J_kL\cap K)$
is an isomorphism.

 Given an integer $k\ge1$, there exists $m=m(k)\ge1$ such that
$J_k^mL\cap K\subseteq J_kK$ (since the ideal $J_k$ has the Artin--Rees
property).
 Then, by assumption, there exists $l=l(k)\ge k$ such that
$J_l\subseteq J_k^m$.
 Hence $J_lL\cap K\subseteq J_kK$.
 We can choose the integers $l(k)$ in such a way that
$l(k+1)>l(k)$ for all $k\ge1$.
 Then the epimorphisms $F_{l(k)}\to F_k$ and the natural epimorphisms
$K/(J_{l(k)}L\cap K)\to K/J_kK$ induce a map of projective limits
\begin{equation} \label{cofinality-other-direction}
 \varprojlim_{k\ge1} F_{l(k)}\otimes_S K/(J_{l(k)}L\cap K) \to
 \varprojlim_{k\ge1} F_k\otimes_S K/J_kK.
\end{equation}
 It is clear that the maps~\eqref{cofinality-one-direction}
and~\eqref{cofinality-other-direction} are mutually inverse
isomorphisms.
 It follows that the functor $G$ is exact, and the argument finishes
similarly to the proofs of Lemmas~\ref{noetherian-flatness-lemma}
and~\ref{multiplicative-subset-noetherian-flatness}.
\end{proof}

 The next lemma provides a useful characterization of flat contramodules
over a topological ring with a base of open two-sided ideals.

\begin{lemma} \label{flat-contra-over-base-of-two-sided}
 Let $S$ be a left linear topological ring and\/ $\mathfrak S$ be
the completion of~$S$.
\begin{enumerate}
\item Let\/ $\mathfrak F$ be a flat right\/ $\mathfrak S$-contramodule
and $J\subset S$ be an open two-sided ideal.
 Then the right $S/J$-module\/ $\mathfrak F\odot_{\mathfrak S}S/J$ is flat.
\item Suppose that $S$ has a base of neighborhoods of zero consisting of
open two-sided ideals.
 Let\/ $\mathfrak F$ be a right\/ $\mathfrak S$-contramodule.
 Then\/ $\mathfrak F$ is a flat\/ $\mathfrak S$-contramodule if
and only if, for every open two-sided ideal $J\subset S$, the right
$S/J$-module\/ $\mathfrak F\odot_{\mathfrak S}S/J$ is flat.
\end{enumerate}
\end{lemma}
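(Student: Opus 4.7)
My plan is to reduce both parts to a single base-change identity: for any open two-sided ideal $J\subset S$ and any left $S/J$-module $M$ (regarded, via $\sigma$ and the unique extension of discrete $S$-modules to discrete $\mathfrak S$-modules, as an object of $\ldiscr{\mathfrak S}$), I will establish a natural isomorphism
\[
 \mathfrak F\odot_{\mathfrak S}M\;\cong\;(\mathfrak F\odot_{\mathfrak S}S/J)\otimes_{S/J}M.
\]
Both sides are additive, right-exact functors of $M$ on $\lmod{S/J}$ commuting with arbitrary coproducts. For $M=S/J$ they both evaluate to $\mathfrak F\odot_{\mathfrak S}S/J$, and hence on every free $S/J$-module $(S/J)^{(X)}$ both yield $(\mathfrak F\odot_{\mathfrak S}S/J)^{(X)}$; comparing on a free presentation $M_1\to M_0\to M\to 0$ propagates the isomorphism to all $M\in\lmod{S/J}$.

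Part (i) is then immediate. Since $\mathfrak F$ is flat, $\mathfrak F\odot_{\mathfrak S}-$ is exact on $\ldiscr{\mathfrak S}$, in particular on the full subcategory $\lmod{S/J}\subseteq\ldiscr{\mathfrak S}$; the base-change identity translates this into exactness of $(\mathfrak F\odot_{\mathfrak S}S/J)\otimes_{S/J}-$ on $\lmod{S/J}$, i.e.\ $\mathfrak F\odot_{\mathfrak S}S/J$ is a flat right $S/J$-module.

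For part (ii), the ``only if'' direction is (i). For the ``if'' direction, fix a short exact sequence $0\to K\to L\to N\to 0$ in $\ldiscr{\mathfrak S}$; as $\mathfrak F\odot_{\mathfrak S}-$ is always right exact, only the injectivity of $\mathfrak F\odot_{\mathfrak S}K\to\mathfrak F\odot_{\mathfrak S}L$ needs to be verified. Let $\{J_\alpha\}$ be a base of open two-sided ideals in $S$ directed by reverse inclusion, and for any $M\in\ldiscr{\mathfrak S}$ set $M[J_\alpha]=\{m\in M\mid\sigma(J_\alpha)\,m=0\}$. The closure $\overline{\sigma(J_\alpha)}$ is a closed two-sided ideal in $\mathfrak S$ by continuity of multiplication, and annihilates $M[J_\alpha]$ (since $M$ is discrete), so $M[J_\alpha]$ is an $\mathfrak S$-submodule of $M$. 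As every open left ideal in $S$ contains some $J_\alpha$, one has $M=\varinjlim_\alpha M[J_\alpha]$, and $\mathfrak F\odot_{\mathfrak S}M=\varinjlim_\alpha(\mathfrak F\odot_{\mathfrak S}M[J_\alpha])$ by commutation of $\odot_{\mathfrak S}$ with direct limits. Given $\xi\in\mathfrak F\odot_{\mathfrak S}K$ mapping to zero in $\mathfrak F\odot_{\mathfrak S}L$, lift $\xi$ to some $\xi_\alpha\in\mathfrak F\odot_{\mathfrak S}K[J_\alpha]$; passing to some $J_\beta\subseteq J_\alpha$ we may assume that the image of $\xi_\alpha$ in $\mathfrak F\odot_{\mathfrak S}L[J_\beta]$ is zero. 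The inclusion $K[J_\beta]\hookrightarrow L[J_\beta]$ is a monomorphism of $S/J_\beta$-modules, which remains a monomorphism after tensoring with the flat $S/J_\beta$-module $\mathfrak F\odot_{\mathfrak S}S/J_\beta$; identifying the resulting map with $\mathfrak F\odot_{\mathfrak S}K[J_\beta]\to\mathfrak F\odot_{\mathfrak S}L[J_\beta]$ through the base-change identity, the image of $\xi_\alpha$ in $\mathfrak F\odot_{\mathfrak S}K[J_\beta]$ must vanish, whence $\xi=0$.

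The main delicacy is the base-change isomorphism of the first paragraph: although its construction as a natural transformation of right-exact coproduct-preserving functors is mechanical, checking the compatibility with the coequaliser definitions of $\odot_{\mathfrak S}$ and $\otimes_{S/J}$ requires care. This identity is, however, a standard piece of the contramodule calculus developed in \cite{PS,PR,BP}. The secondary point, that $M[J_\alpha]$ is an $\mathfrak S$-submodule and that the closures $\overline{\sigma(J_\alpha)}$ behave well, is routine once one recalls that the $\mathfrak S$-action on a discrete module factors through $\mathfrak S/\overline{\sigma(J_\alpha)}$ on any element of $M[J_\alpha]$.
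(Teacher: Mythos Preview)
Your proof is correct and follows essentially the same approach as the paper's: both arguments hinge on the base-change isomorphism $(\mathfrak F\odot_{\mathfrak S}S/J)\otimes_{S/J}M\cong\mathfrak F\odot_{\mathfrak S}M$ for $M\in\lmod{S/J}$, deduce~(i) immediately from it, and handle~(ii) by writing every discrete $\mathfrak S$-module as a direct union of its $J$-torsion submodules (what you call $M[J_\alpha]$). Your version is more detailed---you spell out the element chase through the direct limit and justify why $M[J_\alpha]$ is an $\mathfrak S$-submodule---but the structure is identical.
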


\begin{proof}
 (i) One observes that, for any left $S/J$-module $K$ and any right
$\mathfrak S$-contramodule $\mathfrak F$, there is a natural isomorphism
of abelian groups
\[
 (\mathfrak F\odot_{\mathfrak S} S/J)
 \otimes_{S/J}K \cong \mathfrak F\odot_{\mathfrak S}K.
\]
 It follows immediately that $\mathfrak F\odot_{\mathfrak S}S/J$ is
flat as an $S/J$-module whenever $\mathfrak F$ is flat as
an $\mathfrak S$-contramodule (in the sense
of Definition~\ref{flat-contramodules-defn}).

 (ii) Use the same natural isomorphism as in~(i), together with
the observation that, whenever open two-sided ideals $J$ form a base of neighborhoods of zero in $S$, all discrete left $S$-modules (or, which is
the same, discrete left $\mathfrak S$-modules) are direct unions of
$S/J$-modules.
\end{proof}

\begin{proof}[Proof of Theorem~\ref{gabriel-main-theorem}]
 Let $\mathfrak F$ be a flat right $\mathfrak S$-contramodule; we have
to show that $\mathfrak F\in\varinjlim^{\rcontra{\mathfrak S}}
(\rfcontra{\mathfrak S})_{\mathrm{proj}}$.
 By Proposition~\ref{Delta-varinjlim-smallprojcontra}(i), it suffices
to find a flat right $S$-module $F$ for which
$\mathfrak F\simeq\Delta_\sigma(F)$.
 Following the approach outlined in Section~\ref{contra-versus}, we
take $F$ to be underlying $S$-module of the $\mathfrak S$-contramodule
$\mathfrak F$.

 According to~\cite[Corollary~6.7]{Pcoun}, the forgetful functor
$\rcontra{\mathfrak S}\to\rmod S$ is fully faithful for the completion
map $\sigma\colon S\to\mathfrak S$ of any left linear topological
ring $S$ whose topology is Gabriel and has a countable base of
neighborhoods of zero consisting of finitely generated left ideals.
 It follows that the adjunction morphism $\Delta_\sigma(C)\to\mathfrak C$
is an isomorphism for any right $\mathfrak S$-contramodule $\mathfrak C$
and its underlying right $S$-module~$C$ (see~\cite[Proposition~I.1.3]{GZ}).
 Another assertion from~\cite[Corollary~6.7]{Pcoun} tells that
the natural map $\mathfrak C\otimes_SK\to\mathfrak C
\odot_{\mathfrak S}K$ is an isomorphism for all right
$\mathfrak S$-contramodules $\mathfrak C$ and discrete left
$\mathfrak S$-modules~$K$.

 It remains to show that, under the assumptions of the theorem,
the $S$-module $F$ is flat.
 By Lemma~\ref{flat-contra-over-base-of-two-sided}(i), the right $S/J$-module
$\mathfrak F\odot_{\mathfrak S}S/J$ is flat for every open two-sided
ideal $J\subset S$.
 As we have seen in the previous paragraph, in the situation at hand we
actually have $\mathfrak F\odot_{\mathfrak S}S/J=\mathfrak F\otimes_S S/J$;
so one can simply say that the right $S/J$-module
$\mathfrak F\otimes_S S/J$ is flat.

 Now any base of neighborhoods of zero is a directed poset by inverse
inclusion; and any countable directed poset has a cofinal chain inside it.
 Hence there exists a descending chain of open two-sided ideals
$S\supset J_1\supseteq J_2\supseteq J_3\supseteq\dotsb$ such that all
the ideals $J_k\subset S$ have the left Artin--Rees property and
the collection of all the ideals $(J_k\mid k\ge1)$ is a topology base
in $S$.

 Put $F_k=\mathfrak F\otimes_S S/J_k$.
 Then $F_k$ is a flat right $S/J_k$-module and the natural maps $F_l\to F_k$
are surjective for $l\ge k\ge 1$.
 By~\cite[Lemma~VI.5.3]{S}, the ideal $J_k^m$ is open in $S$ for each
$m\ge1$ (since it is a Gabriel topology); so there exists $l\ge k$
for which $J_l\subseteq J_k^m$.
 Thus Proposition~\ref{with-AR-property-proposition} tells that
$\varprojlim_{k\ge1}F_k$ is a flat right $S$-module.
 Finally, the natural map $\mathfrak F\to\varprojlim_{k\ge1}F_k$ is
an isomorphism, as all flat right contramodules over a left linear
topological ring with a countable base of neighborhoods of zero are
complete and separated by~\cite[Lemma~6.3(b) and Corollary~6.15]{PR}
(the particular case when the topological ring has a base of two-sided
ideals is also covered by the preceding exposition
in~\cite[Section~D.1]{Pcosh}).
\end{proof}

\begin{corollary} \label{gabriel-cor}
 Let $R$ be a ring, $M$ be a right $R$-module, and\/ $\mathfrak S=
\End{M_R}$ be its endomorphism ring, endowed with the finite topology.
 Let $S$ be a ring and $\sigma\colon S\to\mathfrak S$ be a ring
homomorphism with dense image.
 Assume that the ring $S$ is left noetherian, the induced topology on it
is a (left) Gabriel topology, and this topology has a countable base
consisting of two-sided ideals having the left Artin--Rees property.
 Then\/ $\varinjlim\add M_R=\varinjlim\Add M_R$.
\end{corollary}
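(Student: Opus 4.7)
The plan is to derive this as an immediate consequence of the two preceding results in the paper. The hypotheses on $R$, $M$, $\mathfrak S$, $S$, and $\sigma$ are designed to match precisely the assumptions of Theorem~\ref{gabriel-main-theorem}, so that theorem applies directly to the topological ring $\mathfrak S = \End{M_R}$ equipped with the finite topology.

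First, I would invoke Theorem~\ref{gabriel-main-theorem} to conclude that condition (C-GL2) holds for $\mathfrak S$, i.e.\ that $\varinjlim^{\rcontra{\mathfrak S}}(\rcontra{\mathfrak S})_{\mathrm{proj}} = \varinjlim^{\rcontra{\mathfrak S}}(\rfcontra{\mathfrak S})_{\mathrm{proj}}$. (In fact the theorem gives the stronger statement that every flat $\mathfrak S$-contramodule lies in this class, but only (C-GL2) is needed here.) Then I would apply Corollary~\ref{C-GL2-implies-cor}, which asserts that whenever (C-GL2) holds for the topological endomorphism ring of $M$, one has $\varinjlim \add M_R = \varinjlim \Add M_R$. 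This yields the desired equality.

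In short, the proof is a two-step composition: Theorem~\ref{gabriel-main-theorem} provides the contramodule-theoretic input (C-GL2), and Corollary~\ref{C-GL2-implies-cor}, which in turn rests on comparing Theorem~\ref{lim-Add} with Proposition~\ref{lim-add-contra}, translates this input into the module-theoretic conclusion. There is no real obstacle here, since all the substantive work---the Artin--Rees argument in Proposition~\ref{with-AR-property-proposition}, the full faithfulness of the forgetful functor from $\rcontra{\mathfrak S}$ to $\rmod S$, and the categorical equivalence underlying Theorem~\ref{gentilt}---has already been carried out in the preceding sections; this corollary is simply the packaging of those results in a form directly applicable to the $\varinjlim\add M$ versus $\varinjlim\Add M$ problem.
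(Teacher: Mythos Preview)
Your proposal is correct and matches the paper's own proof essentially verbatim: the paper simply states that the corollary follows immediately from Theorem~\ref{gabriel-main-theorem} and Corollary~\ref{C-GL2-implies-cor}, which is precisely the two-step composition you describe.
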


\begin{proof}
 Follows immediately from Theorem~\ref{gabriel-main-theorem}
and Corollary~\ref{C-GL2-implies-cor}.
\end{proof}

\begin{remark}
 The following example illustrates the utility of considering a ring
homomorphism with dense image $\sigma\colon S\to \mathfrak S$ in
Theorem~\ref{gabriel-main-theorem} and Corollary~\ref{gabriel-cor},
rather than just always taking $S=\mathfrak S$.
 Let $R=\mathbb Z$ be the ring of integers and $T\subset R$ be
the multiplicative subset of all nonzero elements.
 Put $M=T^{-1}R/R=\mathbb Q/\mathbb Z$ (so this is even a particular
case of Proposition~\ref{prufer-inverted-subset}, as well as of
Lemma~\ref{injectlim}).
 Then $\mathfrak S=\End(M_R)$ is the product $\prod_p\mathbb J_p$
of the (topological) rings of $p$-adic integers, taken over all
the prime numbers~$p$.
 The ring $\mathfrak S$ is \emph{not} noetherian (indeed,
$\bigoplus_p\mathbb J_p$ is an infinitely generated ideal in
$\prod_p\mathbb J_p$).
 However, the topological ring $\mathfrak S$ has a dense noetherian
subring $S=R=\mathbb Z$, making Corollary~\ref{gabriel-cor} applicable.

 In fact, both Proposition~\ref{prufer-inverted-element} and
Proposition~\ref{prufer-inverted-subset} are special cases of
Corollary~\ref{gabriel-cor} (with $S=R$).
 The following corollary illustrates the applicability of the results
of this section in the context of Section~\ref{genprufer}.
\end{remark}

\begin{corollary}
 Let $R$ be a left noetherian ring.
\begin{enumerate}
\item Let $I\subset R$ be an ideal generated by central elements.
 Then the conditions (C-GL1) and (C-GL2) hold for right contramodules
over the topological ring\/ $\mathfrak S=\varprojlim_{n\ge1}R/I^n$ with
the projective limit (equivalently, $I$-adic) topology.
\item Let $T\subset R$ be a countable multiplicative subset
consisting of central elements.
 Then the conditions (C-GL1) and (C-GL2) hold for right contramodules
over the topological ring\/ $\mathfrak S=\varprojlim_{t\in T}R/Rt$ with
the projective limit topology (equivalently,
the $T$-topology~\cite[Section~1]{GT}).
\end{enumerate}
\end{corollary}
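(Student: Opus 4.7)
The plan is to obtain both parts as direct applications of Theorem~\ref{gabriel-main-theorem}, taking $S = R$ and letting $\sigma : R \to \mathfrak S$ be the canonical map into the completion. The topological ring $\mathfrak S$ is complete and separated by construction, and $\sigma$ has dense image because each projection $\mathfrak S \to R/I^n$ (respectively $\mathfrak S \to R/Rt$) is induced from $\sigma$ and surjective, so $\sigma(R)$ meets every open coset in $\mathfrak S$. The induced topology on $R$ thus has base $\{I^n : n \ge 1\}$ in part~(i) and $\{Rt : t \in T\}$ in part~(ii); both bases are countable and consist of two-sided ideals. Left noetherianness of $S = R$ is given by hypothesis.

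Next I would verify that each ideal in the base has the left Artin--Rees property. In part~(i), since $R$ is left noetherian, $I$ is generated as a left ideal by finitely many central elements $a_1,\dots,a_k$, so each power $I^n$ is finitely generated by the central $n$-fold products $a_{i_1}\cdots a_{i_n}$; Lemma~\ref{artin-rees} then applies. In part~(ii), each $Rt$ is principal and centrally generated, and Lemma~\ref{artin-rees} again supplies the property.

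The only real step of substance is the Gabriel condition, which, for a topology with a base of two-sided ideals, reduces to showing that the class of discrete left $R$-modules is closed under extensions (axiom~T5). The key observation is that the base is multiplicatively closed up to containment: $I^m\cdot I^n = I^{m+n}$ in~(i), and $R(st)\subseteq Rs\cap Rt$ with $st\in T$ in~(ii). Given a short exact sequence $0\to M'\to M\to M''\to 0$ with $M',M''$ discrete and $m\in M$, I would pick a base ideal $J$ with $Jm\subseteq M'$; since $R$ is left noetherian, $J$ is finitely generated, so $Jm$ is a finitely generated submodule of the discrete module $M'$, and hence is annihilated by a single base ideal $J'$ (the base, being directed under reverse inclusion, accommodates finite joins). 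Then $J'J$ kills $m$, and $J'J$ is contained in a base ideal by multiplicative closure. This shows that $M$ is discrete, so T5 holds.

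With all four hypotheses verified, Theorem~\ref{gabriel-main-theorem} yields both (C-GL1) and (C-GL2) for right $\mathfrak S$-contramodules in each case. The real obstacle sits entirely in Theorem~\ref{gabriel-main-theorem} itself; the corollary is essentially a checklist.
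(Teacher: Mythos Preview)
Your proof is correct and follows essentially the same route as the paper: both apply Theorem~\ref{gabriel-main-theorem} with $S=R$ and $\sigma$ the completion map, verifying the countable base of centrally generated two-sided ideals (hence Artin--Rees via Lemma~\ref{artin-rees}) and the Gabriel condition. The only differences are cosmetic: the paper cites \cite[Lemma~2.3]{H} or \cite[Lemma~3.1]{Pcoun} for the Gabriel condition where you give a direct extension-closure argument, and the relevant axiom in Stenstr\"om's numbering is~T4, not~T5.
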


\begin{proof}
 In both cases, we only have to check applicability of
Theorem~\ref{gabriel-main-theorem}.
 In both cases, we put $S=R$, and let $\sigma\colon S\to\mathfrak S$
be the completion map.
 In case (i), the induced topology on $R$ is the $I$-adic topology, which
is a left Gabriel topology with a countable base of centrally
generated ideals $I^n\subset R$.
 In case (ii), the induced topology on $R$ is the $T$-topology, which is
a left Gabriel topology with a countable base of centrally
generated ideals $Rt\subset R$.
 In both cases, \cite[Lemma~2.3]{H} or~\cite[Lemma~3.1]{Pcoun} can be used
to show that the topology is Gabriel.
\end{proof}

\medskip

\section{Quasi-deconstructibility of flat contramodules}
\label{deconstructibility}

 The aim of this section is to improve the cardinality estimate for
deconstructibility of the class $\varinjlim\Add M$ in
Corollary~\ref{AddM-deconstr} under an additional assumption of
condition (C-GL1) from Remark~\ref{govorov-lazard-for-contra}.
 In fact, the cardinality estimate for deconstructibility of
$\varinjlim\Add M$ which we obtain under the assumption of
(C-GL1) is even better that the one for the class $\varinjlim\add M$
in Corollary~\ref{deconstr}.
 However, there is a \emph{caveat} that the estimates in this section
are only for the cardinalities of the sets of generators of the modules
involved, while the estimates in Corollaries~\ref{deconstr}
and~\ref{AddM-deconstr} are for the cardinalities of both the sets
of generators and relations.

 Let $\mathfrak S$ be a complete, separated, left linear topological ring.
 The derived functor of contratensor product $\Ctrtor*{\mathfrak S}--$
is constructed as follows.
 Given a right $\mathfrak S$-contramodule $\mathfrak C$, choose a resolution
of $\mathfrak C$ by projective $\mathfrak S$-contramodules $\mathfrak P_n$,
\[
 \dotsb\to\mathfrak P_3\to \mathfrak P_2\to\mathfrak P_1\to\mathfrak P_0
 \to\mathfrak C\to 0.
\]
 For any discrete left $\mathfrak S$-module $N$, set $\Ctrtor n{\mathfrak S}
{\mathfrak C}N$ to be the degree~$n$ homology group of the complex of
abelian groups $(\dotsb\to\mathfrak P_n\odot_{\mathfrak S}N\to
\mathfrak P_{n-1}\odot_{\mathfrak S}N\to\dotsb\mid n\ge0)$.
 Since the functor $-\odot_{\mathfrak S}N$ is right exact on
the abelian category $\rcontra{\mathfrak S}$, there is a natural isomorphism
of abelian groups $\Ctrtor0{\mathfrak S}{\mathfrak C}N\cong
\mathfrak C\odot_{\mathfrak S}N$.

 As any left derived functor, the functor $\Ctrtor*{\mathfrak S}--$ takes
short exact sequences in the resolved argument to long exact sequences of
the homology.
 So, for any short exact sequence of right $\mathfrak S$-contramodules
$0\to\mathfrak C\to\mathfrak D\to\mathfrak E\to0$ and any discrete left
$\mathfrak S$-module $N$, there is a long exact sequence of abelian groups
\begin{multline} \label{contramodule-argument-exact-seq}
 \dotsb\to\Ctrtor2{\mathfrak S}{\mathfrak E}N
 \to\Ctrtor1{\mathfrak S}{\mathfrak C}N
 \to\Ctrtor1{\mathfrak S}{\mathfrak D}N \\
 \to\Ctrtor1{\mathfrak S}{\mathfrak E}N
 \to\mathfrak C\odot_{\mathfrak S}N
 \to\mathfrak D\odot_{\mathfrak S}N
 \to\mathfrak E\odot_{\mathfrak S}N\to0. 
\end{multline}

 Furthermore, for any projective right $\mathfrak S$-contramodule
$\mathfrak P$, the functor $\mathfrak P\odot_{\mathfrak S}-$ is
exact on the abelian category $\ldiscr{\mathfrak S}$.
 Therefore, for any short exact sequence of discrete left
$\mathfrak S$-modules $0\to K\to L\to N\to 0$ and any right
$\mathfrak S$-contramodule $\mathfrak C$, there is a long exact sequence
of abelian groups
\begin{multline} \label{discrete-module-argument-exact-seq}
 \dotsb\to\Ctrtor2{\mathfrak S}{\mathfrak C}N
 \to\Ctrtor1{\mathfrak S}{\mathfrak C}K
 \to\Ctrtor1{\mathfrak S}{\mathfrak C}L \\
 \to\Ctrtor1{\mathfrak S}{\mathfrak C}N
 \to\mathfrak C\odot_{\mathfrak S}K
 \to\mathfrak C\odot_{\mathfrak S}L
 \to\mathfrak C\odot_{\mathfrak S}N\to0. 
\end{multline}

 An $\mathfrak S$-contramodule $\mathfrak F$ is said to be
\emph{$1$-strictly flat} if $\Ctrtor1{\mathfrak S}{\mathfrak F}N=0$
for all discrete $\mathfrak S$-modules~$N$.
 Equivalently, $\mathfrak F$ is $1$-strictly flat if and only if, for
any short exact sequence of $\mathfrak S$-contramodules $0\to\mathfrak C
\to\mathfrak D\to\mathfrak F\to0$ and any discrete $\mathfrak S$-module
$N$, the induced map of abelian groups $\mathfrak C\odot_{\mathfrak S}N
\to\mathfrak D\odot_{\mathfrak S}N$ is injective.
 Moreover, $\mathfrak F$ is called \emph{$\infty$-strictly flat} if
$\Ctrtor n{\mathfrak S}{\mathfrak F}N=0$ for all discrete
$\mathfrak S$-modules~$N$ and all integers $n>0$.
 It is clear from the exact
sequence~\eqref{discrete-module-argument-exact-seq} that any
$1$-strictly flat $\mathfrak S$-contramodule is flat.

\begin{remark}
 It is a basic fact of the classical theory of flat modules over a ring
$S$ that, for a given right $S$-module $F$, the functor of tensor product
$F\otimes_S-$ is exact on the category of left $S$-modules if and only if,
for every short exact sequence of right $S$-modules $0\to C\to D\to F\to0$
and every left $S$-module $N$, the induced map of abelian groups
$C\otimes_SN\to D\otimes_SN$ is injective.
 This is provable because, besides the flat right $S$-modules which
this assertion describes, there also exist enough flat left $S$-modules
(so the left $S$-module $N$ has a flat resolution).
 The point is that, in the theory of contratensor products over
a topological ring $\mathfrak S$, flat objects exist only in
the contramodule argument: \emph{nonzero flat discrete modules usually
do not exist}.
 This is one reason why the theory of flat contramodules is complicated.
\end{remark}

 Over a topological ring $\mathfrak S$ with a countable base of
neighborhoods of zero, all flat contramodules are $\infty$-strictly
flat~\cite[Remark~6.11 and Corollary~6.15]{PR}.
 The following lemma lists some properties of the class of all
$1$-strictly flat contramodules in the general case.

\begin{lemma} \label{1-strictly-flat-lemma}
\begin{enumerate}
\item The class of all\/ $1$-strictly flat\/ $\mathfrak S$-contramodules
is closed under extensions in\/ $\rcontra{\mathfrak S}$.
\item The class of all\/ $1$-strictly flat\/ $\mathfrak S$-contramodules
is closed under direct limits in\/ $\rcontra{\mathfrak S}$.
\item The kernel of any epimorphism from a flat contramodule to
a\/ $1$-strictly flat contramodule is flat.
\end{enumerate}
\end{lemma}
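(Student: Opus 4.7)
For all three parts, my strategy is to apply the long exact sequences~\eqref{contramodule-argument-exact-seq} and~\eqref{discrete-module-argument-exact-seq} to suitably chosen short exact sequences; part~(ii) will additionally exploit that $-\odot_{\mathfrak S}N$ preserves direct limits and that direct limits in the category of abelian groups are exact.

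Part~(i) is immediate: for any short exact sequence $0\to\mathfrak F'\to\mathfrak F\to\mathfrak F''\to 0$ with outer terms $1$-strictly flat and any discrete $\mathfrak S$-module $N$, the segment $\Ctrtor{1}{\mathfrak S}{\mathfrak F'}{N}\to\Ctrtor{1}{\mathfrak S}{\mathfrak F}{N}\to\Ctrtor{1}{\mathfrak S}{\mathfrak F''}{N}$ of~\eqref{contramodule-argument-exact-seq} has vanishing outer terms by hypothesis, so the middle term vanishes and $\mathfrak F$ is $1$-strictly flat.

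For part~(iii), I will show that $\mathfrak K\odot_{\mathfrak S}K\to\mathfrak K\odot_{\mathfrak S}L$ is injective for every monomorphism $K\hookrightarrow L$ of discrete $\mathfrak S$-modules. Tensoring the short exact sequence $0\to\mathfrak K\to\mathfrak F\to\mathfrak G\to 0$ with $K$ and with $L$ via~\eqref{contramodule-argument-exact-seq} and using $1$-strict flatness of $\mathfrak G$ to kill $\Ctrtor{1}{\mathfrak S}{\mathfrak G}{K}$ and $\Ctrtor{1}{\mathfrak S}{\mathfrak G}{L}$ produces monomorphisms $\mathfrak K\odot_{\mathfrak S}K\hookrightarrow\mathfrak F\odot_{\mathfrak S}K$ and $\mathfrak K\odot_{\mathfrak S}L\hookrightarrow\mathfrak F\odot_{\mathfrak S}L$. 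Combined with the monomorphism $\mathfrak F\odot_{\mathfrak S}K\hookrightarrow\mathfrak F\odot_{\mathfrak S}L$ coming from flatness of $\mathfrak F$, the obvious commutative square exhibits $\mathfrak K\odot_{\mathfrak S}K\to\mathfrak K\odot_{\mathfrak S}L$ as the first factor of the injective composition $\mathfrak K\odot_{\mathfrak S}K\to\mathfrak K\odot_{\mathfrak S}L\hookrightarrow\mathfrak F\odot_{\mathfrak S}L$, forcing it to be injective.

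Part~(ii) is the technical heart. Given a direct system $(\mathfrak F_i)_{i\in I}$ of $1$-strictly flat contramodules with $\mathfrak F=\varinjlim\mathfrak F_i$, any short exact sequence $0\to\mathfrak C\to\mathfrak D\to\mathfrak F\to 0$, and any discrete~$N$, I aim to show $\mathfrak C\odot_{\mathfrak S}N\hookrightarrow\mathfrak D\odot_{\mathfrak S}N$. The plan is to form the pullbacks $\mathfrak D_i=\mathfrak D\times_{\mathfrak F}\mathfrak F_i$, yielding compatible short exact sequences $0\to\mathfrak C\to\mathfrak D_i\to\mathfrak F_i\to 0$ that remain exact after $\odot_{\mathfrak S}N$ by $1$-strict flatness of each $\mathfrak F_i$. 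Since $-\odot_{\mathfrak S}N$ preserves colimits and direct limits of abelian groups are exact, passing to the direct limit over $i$ will produce an exact sequence $0\to\mathfrak C\odot_{\mathfrak S}N\to(\varinjlim\mathfrak D_i)\odot_{\mathfrak S}N\to\mathfrak F\odot_{\mathfrak S}N\to 0$. The hard part will be to identify this with the sequence obtained by applying $\odot_{\mathfrak S}N$ to the original short exact sequence, which reduces to showing that the canonical map $\varinjlim\mathfrak D_i\to\mathfrak D$ is an isomorphism in $\rcontra{\mathfrak S}$. Since direct limits in $\rcontra{\mathfrak S}$ need not be exact, this does not follow from the standard Grothendieck-category principle that pullback commutes with filtered colimits; I expect to verify it by directly analyzing the coequalizer presentation of $\varinjlim\mathfrak D_i$, exploiting that the pullback is taken along the fixed epimorphism $\mathfrak D\to\mathfrak F$ whose kernel $\mathfrak C$ remains constant throughout the direct system.
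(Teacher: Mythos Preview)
Your arguments for (i) and (iii) are correct and essentially match the paper's. For (iii) you use a $2\times 2$ commutative square of injections, whereas the paper works with a full short exact sequence $0\to K\to L\to M\to 0$ of discrete modules and observes that the $\mathfrak K$-row is the kernel of a termwise surjection between the exact $\mathfrak F$-row and the exact $\mathfrak G$-row; the two arrangements are equivalent in content.

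For (ii), the paper does not give its own proof but simply cites \cite[Corollary~7.1]{BPS} (based on \cite[Lemma~3.1]{Pproperf}), so your pullback argument is a genuinely different, self-contained route. The overall strategy is sound, but the step you flag as ``the hard part''---showing that the canonical map $\varinjlim^{\rcontra{\mathfrak S}}\mathfrak D_i\to\mathfrak D$ is an isomorphism---is much easier than you suggest, and you should not attempt to analyse coequalizer presentations. Observe that for each $i$ the composite $\mathfrak C\to\mathfrak D_i\to\mathfrak D$ is the fixed inclusion $\mathfrak C\hookrightarrow\mathfrak D$; hence the composite $\mathfrak C\to\varinjlim\mathfrak D_i\to\mathfrak D$ is that same inclusion, and in particular $\mathfrak C\to\varinjlim\mathfrak D_i$ is a monomorphism. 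Right exactness of direct limits in $\rcontra{\mathfrak S}$ gives that $\varinjlim\mathfrak D_i\to\mathfrak F$ is an epimorphism whose kernel is precisely the image of $\mathfrak C$, so $0\to\mathfrak C\to\varinjlim\mathfrak D_i\to\mathfrak F\to 0$ is short exact. The five lemma, applied to the obvious map from this sequence to $0\to\mathfrak C\to\mathfrak D\to\mathfrak F\to 0$, then yields $\varinjlim\mathfrak D_i\cong\mathfrak D$. The failure of exactness of filtered colimits in $\rcontra{\mathfrak S}$ is harmless here: injectivity on the left is supplied externally by the map to $\mathfrak D$, not by any property of the colimit itself.
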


\begin{proof}
 (i) Follows immediately from the exact
sequence~\eqref{contramodule-argument-exact-seq}.

 (ii) This is~\cite[Corollary~7.1]{BPS} (based
on~\cite[Lemma~3.1]{Pproperf}).
 
 (iii) Let $0\to\mathfrak F\to\mathfrak G\to\mathfrak H\to0$ be a short
exact sequence of $\mathfrak S$-contramodules.
 Assume that the $\mathfrak S$-contramodule $\mathfrak H$ is $1$-strictly
flat; then it is clear from the exact
sequence~\eqref{contramodule-argument-exact-seq} that the short
sequence $0\to\mathfrak F\odot_{\mathfrak S}N
\to\mathfrak G\odot_{\mathfrak S}N\to\mathfrak H\odot_{\mathfrak S}N\to0$
is exact for any discrete $\mathfrak S$-module $N$.
 Let $0\to K\to L\to M\to0$ be a short exact sequence of discrete
$\mathfrak S$-modules; then the sequence $0\to\mathfrak F\odot_{\mathfrak S}K
\to\mathfrak F\odot_{\mathfrak S}L\to\mathfrak F\odot_{\mathfrak S}M\to0$ is
the kernel of the natural termwise surjective morphism from the sequence
$0\to\mathfrak G\odot_{\mathfrak S}K\to\mathfrak G\odot_{\mathfrak S}L\to
\mathfrak G\odot_{\mathfrak S}M\to0$ to the sequence
$0\to\mathfrak H\odot_{\mathfrak S}K\to\mathfrak H\odot_{\mathfrak S}L\to
\mathfrak H\odot_{\mathfrak S}M\to0$.
 The latter sequence is exact, since $\mathfrak H$ is a flat
$\mathfrak S$-contramodule.
 Assuming that $\mathfrak G$ is a flat $\mathfrak S$-contramodule,
the sequence $0\to\mathfrak G\odot_{\mathfrak S}K\to\mathfrak G
\odot_{\mathfrak S}L\to\mathfrak G\odot_{\mathfrak S}M\to0$ is exact
as well.
 Now the kernel of any termwise surjective morphism from a short exact
sequence to a short exact sequence is a short exact sequence.
 Hence the sequence $0\to\mathfrak F\odot_{\mathfrak S}K\to
\mathfrak F\odot_{\mathfrak S}L\to\mathfrak F\odot_{\mathfrak S}M\to0$
is exact, too, and the $\mathfrak S$-contramodule $\mathfrak F$ is flat.
\end{proof}

\begin{corollary} \label{flat-contra-implications-cor}
\begin{enumerate}
\item If all flat right\/ $\mathfrak S$-contramodules are\/ $1$-strictly
flat, then the class of all flat right\/ $\mathfrak S$-contramodules is
closed under extensions and the kernels of epimorphisms in\/
$\rcontra{\mathfrak S}$.
\item If all flat right\/ $\mathfrak S$-contramodules are\/ $1$-strictly
flat, then all of them are\/ $\infty$-strictly flat.
\item If condition (C-GL1) holds for right\/ $\mathfrak S$-contramodules,
then all flat right\/ $\mathfrak S$-contramodules are\/ $1$-strictly flat.
\end{enumerate}
\end{corollary}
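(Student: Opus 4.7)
My plan is to derive all three parts of the corollary as essentially formal consequences of Lemma \ref{1-strictly-flat-lemma}, together with the two long exact sequences \eqref{contramodule-argument-exact-seq} and \eqref{discrete-module-argument-exact-seq}; no real obstacle is expected, so the main task is to organize the implications in the right order.

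For part~(i), I would take a short exact sequence $0\to\mathfrak F\to\mathfrak G\to\mathfrak H\to0$ in $\rcontra{\mathfrak S}$. If $\mathfrak F$ and $\mathfrak H$ are flat, the hypothesis of (i) upgrades this to: $\mathfrak F$ and $\mathfrak H$ are $1$-strictly flat, so by Lemma~\ref{1-strictly-flat-lemma}(i), $\mathfrak G$ is $1$-strictly flat, hence flat. If instead $\mathfrak G$ and $\mathfrak H$ are flat, then $\mathfrak H$ is $1$-strictly flat by hypothesis, and Lemma~\ref{1-strictly-flat-lemma}(iii) applies directly to conclude that $\mathfrak F$ is flat.

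For part~(ii), I would prove by induction on $n\ge1$ that $\Ctrtor n{\mathfrak S}{\mathfrak F}N=0$ for every flat $\mathfrak F\in\rcontra{\mathfrak S}$ and every $N\in\ldiscr{\mathfrak S}$, the case $n=1$ being the hypothesis. For the inductive step, choose a short exact sequence $0\to\mathfrak K\to\mathfrak P\to\mathfrak F\to0$ with $\mathfrak P$ a projective $\mathfrak S$-contramodule; then $\mathfrak P$ is flat, and part~(i) (closure of flats under kernels of epimorphisms) gives that $\mathfrak K$ is flat. The long exact sequence \eqref{discrete-module-argument-exact-seq} applied to this short exact sequence, together with $\Ctrtor k{\mathfrak S}{\mathfrak P}N=0$ for $k\ge1$, yields an isomorphism $\Ctrtor{n+1}{\mathfrak S}{\mathfrak F}N\cong\Ctrtor n{\mathfrak S}{\mathfrak K}N$, and the right-hand side vanishes by the inductive hypothesis.

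For part~(iii), I would observe that a projective contramodule $\mathfrak P$ has $\Ctrtor n{\mathfrak S}{\mathfrak P}N=0$ for all $n\ge1$ by the very definition of $\Ctrtor*{\mathfrak S}--$ (one can resolve by $\mathfrak P$ itself in a single step), so in particular every projective $\mathfrak S$-contramodule is $1$-strictly flat. By Lemma~\ref{1-strictly-flat-lemma}(ii), the class of $1$-strictly flat contramodules is closed under direct limits in $\rcontra{\mathfrak S}$. Hence every object of $\varinjlim^{\rcontra{\mathfrak S}}(\rcontra{\mathfrak S})_{\mathrm{proj}}$ is $1$-strictly flat, and under (C-GL1) this exhausts the flat contramodules, completing the proof.
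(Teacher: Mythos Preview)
Your proposal is correct and follows essentially the same approach as the paper's proof: part~(i) is derived from Lemma~\ref{1-strictly-flat-lemma}(i) and~(iii) exactly as in the paper; part~(ii) is the standard dimension-shifting argument using part~(i) to see that syzygies stay flat, which the paper phrases as ``splicing short exact sequences of $1$-strictly flat contramodules'' but is the same computation; and part~(iii) matches the paper verbatim. One small slip: in part~(ii) you cite the long exact sequence~\eqref{discrete-module-argument-exact-seq}, but the short exact sequence $0\to\mathfrak K\to\mathfrak P\to\mathfrak F\to0$ is in the contramodule argument, so the correct reference is~\eqref{contramodule-argument-exact-seq}.
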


\begin{proof}
 (i) Follows immediately from Lemma~\ref{1-strictly-flat-lemma}(i)
and~(iii).

 (ii) It is clear from (i) that all the syzygy contramodules in
a projective resolution of a flat contramodule are flat.
 So the projective resolution is obtained by splicing short exact
sequences of $1$-strictly flat contramodules.
 Considering the long exact sequence~\eqref{contramodule-argument-exact-seq}
for each of these short exact sequences, we conclude that the projective
resolution stays exact after applying ${-}\odot_{\mathfrak S}N$.

 (iii) For any complete, separated, left linear topological ring
$\mathfrak S$, all projective right $\mathfrak S$-contramodules
are $1$-strictly flat by the definition of $\Ctrtor*{\mathfrak S}--$;
hence all direct limits of projective contramodules are
$1$-strictly flat by Lemma~\ref{1-strictly-flat-lemma}(ii).
\end{proof}

 We will say that a right $\mathfrak S$-contramodule $\mathfrak Z$ is
\emph{contratensor-negligible} if $\mathfrak Z\odot_{\mathfrak S}N=0$
for all discrete left $\mathfrak S$-modules $N$.
 By the definition, any contratensor-negligible contramodule is flat.
 One of the versions of \emph{contramodule Nakayama lemma} tells that,
over a complete, separated left linear topological ring with a countable
base of neighborhoods of zero, any contratensor-negligible contramodule
vanishes~\cite[Lemma~6.14]{PR}.
 Without the assumption of a countable topology base, this form of
contramodule Nakayama lemma does \emph{not} hold, and an example of
a nonvanishing contratensor-negligible contramodule can be found
in~\cite[Remark~6.3]{Pcoun} (see also
Example~\ref{bad-flat-continued} below).
 Still, we are not aware of any example of a nonzero
contratensor-negligible contramodule over a topological ring $\mathfrak S$
over which all flat contramodules are $1$-strictly flat.

 It was shown in~\cite[Corollary~7.6]{PR} that, whenever a topological ring
$\mathfrak S$ has a countable base of neighborhoods of zero, the class of
all flat $\mathfrak S$-contramodules is deconstructible in
$\rcontra{\mathfrak S}$.
 Without assuming a countable topology base, our next aim in this section
is to show that if all flat $\mathfrak S$-contramodules are
$1$-strictly flat, then the class of all flat $\mathfrak S$-contramodules
is \emph{quasi-deconstructible modulo the class of contratensor-negligible
contramodules}, in the sense of the definition below.

 We need to use this notion of quasi-deconstructibility for flat
contramodules rather than the usual deconstructibility in our argument,
because the direct limit functors in the category of contramodules
are not exact.
 In particular, the direct limit of a well-ordered chain of subobjects
need not be a subobject, generally speaking.
 Because of this nonexactness issue, the usual construction
of filtrations runs into a problem which is resolved by introducing
\emph{quasi-filtrations}.

 The following lemma lists the properties of the class of all
contratensor-negligible contramodules.

\begin{lemma} \label{contratensor-negligible-lemma}
 Let\/ $\mathfrak S$ be a complete, separated, left linear topological ring.
\begin{enumerate}
\item The class of all contratensor-negligible contramodules is closed
under extensions, coproducts, and epimorphic images in\/
$\rcontra{\mathfrak S}$.
\item If\/ $\mathfrak D\twoheadrightarrow\mathfrak E$ is an epimorphism of
right\/ $\mathfrak S$-contramodules with a contratensor-negligible kernel,
and $N$ is a discrete left\/ $\mathfrak S$-module, then the induced map
of abelian groups $\mathfrak D\odot_{\mathfrak S}N\to
\mathfrak E\odot_{\mathfrak S}N$ is an isomorphism.
\end{enumerate}
 Now assume that all flat right\/ $\mathfrak S$-contramodules are\/
$1$-strictly flat.
\begin{enumerate}
\setcounter{enumi}{2}
\item If\/ $\mathfrak C\rightarrowtail\mathfrak D$ is an monomorphism of
right\/ $\mathfrak S$-contramodules with a con\-tra\-ten\-sor-negligible
cokernel, and $N$ is a discrete left\/ $\mathfrak S$-module, then
the induced map of abelian groups $\mathfrak C\odot_{\mathfrak S}N\to
\mathfrak D\odot_{\mathfrak S}N$ is an isomorphism.
\item The class of all contratensor-negligible contramodules is closed
under subobjects in\/ $\rcontra{\mathfrak S}$.
\item Let $(0\to\mathfrak F_i\to\mathfrak G_i\to\mathfrak H_i\to0
\mid i\in I)$ be a direct system of short exact sequences of right
$\mathfrak S$-contramodules, indexed by a direct poset~$I$, such
that all the contramodules\/ $\mathfrak F_i$, $\mathfrak G_i$,
$\mathfrak H_i$ are flat.
 Then the kernel of the induced morphism of direct limits\/
$\varinjlim^{\rcontra{\mathfrak S}}\mathfrak F_i\to
\varinjlim^{\rcontra{\mathfrak S}}\mathfrak G_i$ is
a contratensor-negligible $\mathfrak S$-contramodule.
\end{enumerate}
\end{lemma}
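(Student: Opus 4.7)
\textit{Plan.} Parts (i) and (ii) should follow by routine bookkeeping with the long exact sequence~\eqref{contramodule-argument-exact-seq} and the right exactness of $-\odot_{\mathfrak S}N$. For (i), I would observe that closure under coproducts and epimorphic images is immediate (the functor $-\odot_{\mathfrak S}N$ preserves coproducts and is right exact), and that closure under extensions follows from the right exact tail $\mathfrak Z'\odot N\to\mathfrak Z\odot N\to\mathfrak Z''\odot N\to 0$: when both ends vanish, so does the middle. For (ii), applying \eqref{contramodule-argument-exact-seq} to $0\to\mathfrak Z\to\mathfrak D\to\mathfrak E\to 0$ with contratensor-negligible $\mathfrak Z$, the image of $\mathfrak Z\odot N=0$ is the kernel of $\mathfrak D\odot N\to\mathfrak E\odot N$, while surjectivity comes from right exactness.

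For (iii), the observation to make is that any contratensor-negligible contramodule is trivially flat (the functor $\mathfrak Z\odot_{\mathfrak S}-$ is identically zero on $\ldiscr{\mathfrak S}$, hence exact), so by the standing hypothesis that all flat contramodules are $1$-strictly flat, the cokernel $\mathfrak D/\mathfrak C$ satisfies $\Ctrtor 1{\mathfrak S}{\mathfrak D/\mathfrak C}N=0$. The long exact sequence \eqref{contramodule-argument-exact-seq} applied to $0\to\mathfrak C\to\mathfrak D\to\mathfrak D/\mathfrak C\to 0$ then forces $\mathfrak C\odot N\to\mathfrak D\odot N$ to be injective, while $(\mathfrak D/\mathfrak C)\odot N=0$ makes it surjective. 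Part (iv) is then an immediate application of (iii): if $\mathfrak C\subseteq\mathfrak Z$ with $\mathfrak Z$ contratensor-negligible, then $\mathfrak Z/\mathfrak C$ is contratensor-negligible by (i), whence (iii) gives $\mathfrak C\odot N\cong\mathfrak Z\odot N=0$.

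The substantial part is (v). Let $\phi\colon\mathfrak F=\varinjlim^{\rcontra{\mathfrak S}}\mathfrak F_i\to\mathfrak G=\varinjlim^{\rcontra{\mathfrak S}}\mathfrak G_i$ denote the induced map, and set $\mathfrak K=\ker\phi$, $\mathfrak L=\im\phi$. Since cokernels commute with colimits in $\rcontra{\mathfrak S}$, the cokernel of $\phi$ is $\mathfrak H=\varinjlim^{\rcontra{\mathfrak S}}\mathfrak H_i$, which is flat as a direct limit of flat contramodules. Under our hypothesis, Corollary~\ref{flat-contra-implications-cor}(i) tells us that the flat class is closed under kernels of epimorphisms, so $\mathfrak L=\ker(\mathfrak G\twoheadrightarrow\mathfrak H)$ is flat and therefore $1$-strictly flat. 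The long exact sequence \eqref{contramodule-argument-exact-seq} for $0\to\mathfrak K\to\mathfrak F\to\mathfrak L\to 0$ then yields an exact sequence $0\to\mathfrak K\odot N\to\mathfrak F\odot N\to\mathfrak L\odot N\to 0$. To conclude $\mathfrak K\odot N=0$ it remains to check that $\mathfrak F\odot N\to\mathfrak L\odot N$ is injective; this I would do by factoring it through $\mathfrak G\odot N$ and verifying that $\mathfrak F\odot N\to\mathfrak G\odot N$ is injective. The latter comes from taking the direct limit in abelian groups (where direct limits are exact) of the short exact sequences $0\to\mathfrak F_i\odot N\to\mathfrak G_i\odot N\to\mathfrak H_i\odot N\to 0$, which are exact because each $\mathfrak H_i$ is $1$-strictly flat, and using that $-\odot_{\mathfrak S}N$ preserves direct limits.

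The main obstacle, and the reason for the delicacy of (v), is the non-exactness of $\varinjlim^{\rcontra{\mathfrak S}}$: the induced map $\phi$ need not be injective, which is precisely why its kernel needs to be controlled. The strategy bypasses this by passing through abelian groups via $-\odot_{\mathfrak S}N$, which does preserve direct limits, and leans essentially on the $1$-strict flatness hypothesis (both directly for the $\mathfrak H_i$, and via Corollary~\ref{flat-contra-implications-cor}(i) to ensure that $\mathfrak L$ is flat).
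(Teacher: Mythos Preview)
Your proposal is correct and follows essentially the same route as the paper's proof: parts (i)--(iv) are handled identically, and for (v) both arguments set $\mathfrak L=\ker(\mathfrak G\twoheadrightarrow\mathfrak H)$, use that $\mathfrak H$ is flat to deduce $\mathfrak L$ is ($1$-strictly) flat, establish that $\mathfrak F\odot_{\mathfrak S}N\to\mathfrak G\odot_{\mathfrak S}N$ is injective via the direct limit of the short exact sequences $0\to\mathfrak F_i\odot N\to\mathfrak G_i\odot N\to\mathfrak H_i\odot N\to0$, and conclude $\mathfrak K\odot_{\mathfrak S}N=0$ from the long exact sequence for $0\to\mathfrak K\to\mathfrak F\to\mathfrak L\to0$. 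The only cosmetic difference is that you cite Corollary~\ref{flat-contra-implications-cor}(i) where the paper cites Lemma~\ref{1-strictly-flat-lemma}(iii) directly.
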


\begin{proof}
 (i), (ii) All the assertions hold because the functor
$-\odot_{\mathfrak S}N$ is right exact and preserves coproducts for
every $N\in\ldiscr{\mathfrak S}$.

 (iii) Since the cokernel $\mathfrak E=\mathfrak D/\mathfrak C$ is
contratensor-negligible, it is flat, hence by assumption $\mathfrak E$
is $1$-strictly flat.
 Now the assertion follows from the long exact
sequence~\eqref{contramodule-argument-exact-seq}.

 (iv) Let $\mathfrak Z$ be a contratensor-negligible contramodule
and $\mathfrak K\subseteq\mathfrak Z$ be a subcontramodule.
 By~(i), the quotient contramodule $\mathfrak Z/\mathfrak K$ is
contratensor-negligible; hence it is flat, and by assumption it
follows that $\mathfrak Z/\mathfrak K$ is $1$-strictly flat.
 Now one can see from the long exact
sequence~\eqref{contramodule-argument-exact-seq} associated with
the short exact sequence $0\to\mathfrak K\to\mathfrak Z\to
\mathfrak Z/\mathfrak K\to0$ that the contramodule $\mathfrak K$
is contratensor-negligible.

 (v) The direct limits are right exact in any cocomplete abelian
category; so we have a right exact sequence
$\varinjlim^{\rcontra{\mathfrak S}}\mathfrak F_i\to
\varinjlim^{\rcontra{\mathfrak S}}\mathfrak G_i\to
\varinjlim^{\rcontra{\mathfrak S}}\mathfrak H_i\to0$ in
$\rcontra{\mathfrak S}$.
 The direct limits of flat contramodules are flat, so both
the contramodules $\mathfrak G=
\varinjlim^{\rcontra{\mathfrak S}}\mathfrak G_i$ and
$\mathfrak H=\varinjlim^{\rcontra{\mathfrak S}}\mathfrak H_i$ are flat.
 By Lemma~\ref{1-strictly-flat-lemma}(iii) and by assumption, it follows
that the kernel $\mathfrak L$ of the epimorphism $\mathfrak G
\twoheadrightarrow\mathfrak H$ is also a ($1$-strictly) flat contramodule.
 Put $\mathfrak F=\varinjlim^{\rcontra{\mathfrak S}}\mathfrak F_i$;
we have to show that the kernel of the epimorphism $\mathfrak F
\twoheadrightarrow\mathfrak L$ is contratensor-negligible.

 Let $N$ be a discrete left $\mathfrak S$-module.
 Then the sequence of abelian groups $0\to\mathfrak F\odot_{\mathfrak S}N
\to\mathfrak G\odot_{\mathfrak S}N\to\mathfrak H\odot_{\mathfrak S}N\to0$
is exact, since it is the direct limit of the sequences of abelian
groups $0\to\mathfrak F_i\odot_{\mathfrak S}N\to
\mathfrak G_i\odot_{\mathfrak S}N\to\mathfrak H_i\odot_{\mathfrak S}N\to0$,
which are exact in view of the long exact
sequence~\eqref{contramodule-argument-exact-seq}.
 The sequence of abelian groups $0\to\mathfrak L\odot_{\mathfrak S}N
\to\mathfrak G\odot_{\mathfrak S}N\to\mathfrak H\odot_{\mathfrak S}N\to0$
is also exact by~\eqref{contramodule-argument-exact-seq}.

 Thus the epimorphism $\mathfrak F\twoheadrightarrow\mathfrak L$
induces an isomorphism $\mathfrak F\odot_{\mathfrak S}N
\cong\mathfrak L\odot_{\mathfrak S}N$.
 Finally, the long exact sequence~\eqref{contramodule-argument-exact-seq}
associated with the short exact sequence of contramodules
$0\to\mathfrak K\to\mathfrak F\to\mathfrak L\to0$ implies that
the contramodule $\mathfrak K$ is contratensor-negligible (because
the contramodule $\mathfrak L$ is $1$-strictly flat).
\end{proof}

\begin{example} \label{bad-flat-continued}
 The topological ring $\mathfrak S$ from Example~\ref{bad-flat}
exhibits all kinds of bad behavior.
 Over this topological ring, one has
$\Ctrtor*{\mathfrak S}{\mathfrak C}N=\Tor*{\mathfrak S}
{\mathfrak C}N$ for all $\mathfrak S$-contramodules $\mathfrak C$
and discrete $\mathfrak S$-modules $N$.
 Hence, for example, the flat (and contratensor-negligible)
$\mathfrak S$-contramodule $\mathfrak F=T^{-1}\mathfrak S/\mathfrak S$
is \emph{not} $1$-strictly flat, as $\Ctrtor1{\mathfrak S}{\mathfrak F}{N}
\cong\mathfrak S/\mathfrak S t\ne0$ for the discrete $\mathfrak S$-module
$N=\mathfrak S/\mathfrak S t$, where $t$ is any element from
$T\setminus\{1\}$.
 Furthermore, the $\mathfrak S$-contramodule $T^{-1}\mathfrak S$
is contratensor-negligible, but it contains all kinds of
subcontramodules which are not contratensor-negligible (e.g.,
$\mathfrak S\subset T^{-1}\mathfrak S$) and even not flat
as contramodules.

 Moreover, let $\mathfrak H$ be any non-flat module over the ring
$T^{-1}\mathfrak S$ (such as, e.g., $\mathfrak H=
T^{-1}\mathfrak S/T^{-1}\mathfrak S(x_1+x_2)$, where $x_1$ and $x_2$
are two different variables).
 Let us view $\mathfrak H$ as an $\mathfrak S$-module, and consequently
as an $\mathfrak S$-contramodule.
 Then the contramodule $\mathfrak H$ is not only contratensor-negligible,
but also $\infty$-strictly flat.
 Still, $\mathfrak H$ is not a flat $\mathfrak S$-module, hence it is
not a direct limit of projective $\mathfrak S$-contramodules.
\end{example}

 An injective morphism of right $\mathfrak S$-contramodules $f\colon
\mathfrak C\rightarrowtail\mathfrak D$ is said to be a \emph{contratensor
pure} monomorphism (\emph{c-pure} monomorphism for brevity) \cite{BP,BPS}
if the induced map of abelian groups $f\odot_{\mathfrak S}N\colon
\mathfrak C\odot_{\mathfrak S}N\to\mathfrak D\odot_{\mathfrak S}N$
is injective for all discrete left $\mathfrak S$-modules $N$.
 In this case, $\mathfrak C$ is said to be a \emph{c-pure subcontramodule}
of $\mathfrak D$, the short exact sequence $0\to\mathfrak C\to\mathfrak D
\to\mathfrak E\to0$ is called \emph{c-pure}, and the surjective morphism
$\mathfrak D\twoheadrightarrow\mathfrak E$ is said to be
a \emph{c-pure epimorphism}.

\begin{lemma} \label{purity-in-contramodules}
 Let\/ $\mathfrak S$ be a complete, separated, left linear topological
ring, and let\/ $0\to\mathfrak F\to\mathfrak G\to\mathfrak H\to0$ be
a c-pure short exact sequence of right\/ $\mathfrak S$-contramodules.
\begin{enumerate}
\item If the contramodule\/ $\mathfrak G$ is\/ $1$-strictly flat,
then so is the contramodule\/ $\mathfrak H$.
\item If the contramodule\/ $\mathfrak G$ is\/ flat, then so are both
the contramodules\/ $\mathfrak F$ and\/ $\mathfrak H$.
\end{enumerate}
\end{lemma}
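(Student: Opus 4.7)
My plan is as follows. Both parts reduce to standard applications of the long exact sequence~\eqref{contramodule-argument-exact-seq} and the definition of c-purity; no contramodule Nakayama-type input is needed.

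For~(i), I would apply the long exact sequence~\eqref{contramodule-argument-exact-seq} associated with the short exact sequence $0\to\mathfrak{F}\to\mathfrak{G}\to\mathfrak{H}\to0$ and an arbitrary discrete left $\mathfrak{S}$-module $N$. The relevant five-term fragment reads
\[
\Ctrtor{1}{\mathfrak{S}}{\mathfrak{G}}{N}\to\Ctrtor{1}{\mathfrak{S}}{\mathfrak{H}}{N}\to\mathfrak{F}\odot_{\mathfrak{S}}N\to\mathfrak{G}\odot_{\mathfrak{S}}N.
\]
The leftmost group vanishes by the $1$-strict flatness of $\mathfrak{G}$, and the rightmost arrow is injective by the c-purity hypothesis. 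Exactness immediately forces $\Ctrtor{1}{\mathfrak{S}}{\mathfrak{H}}{N}=0$, so $\mathfrak{H}$ is $1$-strictly flat.

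For~(ii), I fix an arbitrary short exact sequence $0\to K\to L\to N\to 0$ in $\ldiscr{\mathfrak{S}}$ and aim to verify exactness of the top and bottom rows of the natural $3\times3$ diagram whose rows are obtained by applying $\mathfrak{F}\odot_{\mathfrak{S}}{-}$, $\mathfrak{G}\odot_{\mathfrak{S}}{-}$, $\mathfrak{H}\odot_{\mathfrak{S}}{-}$ to this sequence, and whose columns are obtained by applying ${-}\odot_{\mathfrak{S}}K$, ${-}\odot_{\mathfrak{S}}L$, ${-}\odot_{\mathfrak{S}}N$ to the original c-pure sequence. By c-purity, all three columns are short exact; by flatness of $\mathfrak{G}$, the middle row is short exact; and the top and bottom rows are always right exact. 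Flatness of $\mathfrak{F}$ follows at once from the factorisation: the composite $\mathfrak{F}\odot_{\mathfrak{S}}K\to\mathfrak{G}\odot_{\mathfrak{S}}K\to\mathfrak{G}\odot_{\mathfrak{S}}L$ is injective (its first arrow by c-purity at $K$, its second by flatness of $\mathfrak{G}$), and it equals the composite $\mathfrak{F}\odot_{\mathfrak{S}}K\to\mathfrak{F}\odot_{\mathfrak{S}}L\to\mathfrak{G}\odot_{\mathfrak{S}}L$, forcing its first arrow to be injective.

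It remains to show that $\mathfrak{H}\odot_{\mathfrak{S}}K\to\mathfrak{H}\odot_{\mathfrak{S}}L$ is injective. With the top two rows of the $3\times 3$ diagram now known to be short exact and all three columns short exact, this is a direct diagram chase (equivalently, an application of the $9$-lemma). Explicitly, given $x\in\mathfrak{H}\odot_{\mathfrak{S}}K$ mapping to zero in $\mathfrak{H}\odot_{\mathfrak{S}}L$, one lifts $x$ to $y\in\mathfrak{G}\odot_{\mathfrak{S}}K$, writes its image $y_L\in\mathfrak{G}\odot_{\mathfrak{S}}L$ as the image of some $z\in\mathfrak{F}\odot_{\mathfrak{S}}L$ (using c-purity at $L$), checks via the column at $L/K$ that $z$ lies in the image of $\mathfrak{F}\odot_{\mathfrak{S}}K\to\mathfrak{F}\odot_{\mathfrak{S}}L$ (using flatness of $\mathfrak{F}$), and concludes that $y$ itself is in the image of $\mathfrak{F}\odot_{\mathfrak{S}}K\to\mathfrak{G}\odot_{\mathfrak{S}}K$ (using injectivity of $\mathfrak{G}\odot_{\mathfrak{S}}K\to\mathfrak{G}\odot_{\mathfrak{S}}L$), whence $x=0$. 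I do not foresee any real obstacle; the only point that demands some care is the order of the steps in~(ii): flatness of $\mathfrak{F}$ must be established first, since it is needed to make the top row of the $3\times 3$ diagram fully short exact before the chase for $\mathfrak{H}$ can be carried out.
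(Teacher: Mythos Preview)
Your proposal is correct and follows essentially the same approach as the paper. For part~(i) both you and the paper invoke the long exact sequence~\eqref{contramodule-argument-exact-seq} in exactly the same way; for part~(ii) both arguments set up the same $3\times 3$ diagram, establish flatness of $\mathfrak{F}$ first via the factorisation through $\mathfrak{G}\odot_{\mathfrak{S}}K$, and then deduce flatness of $\mathfrak{H}$---the only cosmetic difference is that where you spell out the diagram chase, the paper phrases the last step as ``the cokernel of a termwise injective morphism of short exact sequences is a short exact sequence.''
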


\begin{proof}
 (i) Follows immediately from the long exact
sequence~\eqref{contramodule-argument-exact-seq}.

 (ii) Let $0\to K\to L\to M\to0$ be a short exact sequence of discrete
left $\mathfrak S$-modules.
 By the c-purity assumption, we have a short exact sequence of abelian
groups $0\to\mathfrak F\odot_{\mathfrak S}N\to
\mathfrak G\odot_{\mathfrak S}N\to\mathfrak H\odot_{\mathfrak S}N\to0$
for any discrete left $\mathfrak S$-module $N$, and in particular for
the discrete modules $K$, $L$, and $M$.
 On the other hand, we know that the short sequence of abelian groups
$0\to\mathfrak G\odot_{\mathfrak S}K\to\mathfrak G\odot_{\mathfrak S}L
\to\mathfrak G\odot_{\mathfrak S}N\to0$ is exact, while the short
sequences $\mathfrak F\odot_{\mathfrak S}K\to
\mathfrak F\odot_{\mathfrak S}L\to\mathfrak F\odot_{\mathfrak S}N\to0$
and $\mathfrak H\odot_{\mathfrak S}K\to\mathfrak H\odot_{\mathfrak S}L
\to\mathfrak H\odot_{\mathfrak S}N\to0$ are right exact.
 Now the map $\mathfrak F\odot_{\mathfrak S}K\to
\mathfrak F\odot_{\mathfrak S}L$ is injective, since so are the maps
$\mathfrak F\odot_{\mathfrak S}K\to
\mathfrak G\odot_{\mathfrak S}K\to\mathfrak G\odot_{\mathfrak S}L$;
hence the short sequence $0\to\mathfrak F\odot_{\mathfrak S}K\to
\mathfrak F\odot_{\mathfrak S}L\to\mathfrak F\odot_{\mathfrak S}N\to0$
is also exact.
 It remains to observe that the cokernel of a termwise injective
morphism of short exact sequences is a short exact sequence.
\end{proof}

\begin{lemma} \label{test-set-of-discrete-modules}
 Let\/ $\mathfrak S$ be a complete, separated left linear topological
ring and $S\subseteq\mathfrak S$ be a dense subring.
 Put\/ $\nu=\card S+\aleph_0$, and let\/ $\mu$ be the minimal cardinality
of a base of neighborhoods of zero in\/~$\mathfrak S$.
 Then there exists a set $\mathcal N$ of finitely generated discrete
left\/ $\mathfrak S$-modules such that\/ $\card\mathcal N\le\mu.\nu$
and\/ $\varinjlim\mathcal N=\ldiscr{\mathfrak S}$.
\end{lemma}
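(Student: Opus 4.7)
The plan is to exhibit $\mathcal N$ explicitly as a set of finitely generated discrete modules built from a base of open left ideals, bound its cardinality, and verify that every discrete module arises as a direct limit from it. First I will fix a base $\mathcal B=\{\mathfrak I_j\mid j\in\Lambda\}$ of open left ideals of $\mathfrak S$ with $\card\Lambda=\mu$, and use the density of $S$ to observe that, for each $\mathfrak I\in\mathcal B$, the composition $S\hookrightarrow\mathfrak S\twoheadrightarrow\mathfrak S/\mathfrak I$ is surjective, so $\card(\mathfrak S/\mathfrak I)\le\card S\le\nu$. I will then take $\mathcal N$ to be a set of representatives, one per isomorphism class, of modules of the form $(\bigoplus_{i=1}^{k}\mathfrak S/\mathfrak I_i)/K$, where $k\ge 0$, $\mathfrak I_1,\dots,\mathfrak I_k\in\mathcal B$, and $K$ is a finitely generated submodule of $\bigoplus_{i=1}^{k}\mathfrak S/\mathfrak I_i$.

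The cardinality count is routine: the finite tuples $(\mathfrak I_1,\dots,\mathfrak I_k)$ contribute at most $\mu\cdot\aleph_0$ possibilities, and each $M=\bigoplus_{i=1}^{k}\mathfrak S/\mathfrak I_i$ has cardinality at most $\nu$, hence at most $\nu^{<\omega}=\nu$ finitely generated submodules (each determined by a finite generating subset). Multiplying yields $\card\mathcal N\le\mu\cdot\nu$.

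For the identity $\varinjlim\mathcal N=\ldiscr{\mathfrak S}$ the inclusion $\subseteq$ is automatic because $\ldiscr{\mathfrak S}$ is a hereditary pretorsion class in $\lmod{\mathfrak S}$ and thus closed under direct limits. The substantive direction is $\supseteq$: given $N\in\ldiscr{\mathfrak S}$ with a generating set $\{n_j\mid j\in J\}$, I will pick $\mathfrak I_j\in\mathcal B$ with $\mathfrak I_j\subseteq\Ann{n_j}$, obtaining a surjection $F=\bigoplus_{j\in J}\mathfrak S/\mathfrak I_j\twoheadrightarrow N$ with kernel~$K$. I will then index a direct system by the componentwise-ordered poset $P$ of pairs $(T,\alpha)$, where $T\subseteq J$ is finite and $\alpha$ is a finitely generated submodule of $F_T:=\bigoplus_{j\in T}\mathfrak S/\mathfrak I_j$ contained in~$K$. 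Directedness follows from $(T_1\cup T_2,\alpha_1+\alpha_2)$; the assignment $(T,\alpha)\mapsto F_T/\alpha$ lands in $\mathcal N$, and the natural maps $F_T/\alpha\to N$ verify conditions (C1)--(C3) of Section~\ref{prelim}. The only step requiring a moment's thought is (C3): if $x\in F_T/\alpha$ is killed in $N$ then a lift $y\in F_T$ lies in $K$, so replacing $\alpha$ by $\alpha+\mathfrak S y\subseteq K$ keeps us in $P$ and kills~$x$. Hence $N=\varinjlim_{(T,\alpha)\in P}F_T/\alpha\in\varinjlim\mathcal N$. No substantial obstacle is anticipated; the essential leverage comes from the density of $S$ in $\mathfrak S$, which converts the base-size bound $\mu$ into a genuine bound on the cardinality of each cyclic discrete quotient and thence on the combinatorial count.
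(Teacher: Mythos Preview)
Your proof is correct and follows essentially the same approach as the paper: both take $\mathcal N$ to consist of quotients of finite direct sums $\bigoplus_{i=1}^k\mathfrak S/\mathfrak I_i$ (with the $\mathfrak I_i$ from a fixed base of size $\mu$) by finitely generated submodules, bound $\card\mathcal N$ via the density of $S$, and realize an arbitrary discrete module as a direct limit over a poset of pairs (finite set of generators, finitely generated piece of the kernel). The only cosmetic differences are that the paper indexes by finite subsets of the module itself rather than of a chosen generating set, and it takes the actual quotient modules rather than isomorphism-class representatives.
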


\begin{proof}
 Notice first of all that for every finitely generated discrete
left $\mathfrak S$-module $N$ one has $\card N\le\nu$.
 Indeed, if $x_1$,~\dots, $x_m\in N$ is a set of generators of $N$ and
$\mathfrak I_j$ is the annihilator of $x_j$ in $\mathfrak S$, then
$S+\mathfrak I_j=\mathfrak S$ for every $1\le j\le m$,
hence $N=Sx_1+\dotsb+Sx_m$.

 Let $\{\mathfrak I_b\mid b\in B\}$ be a set of open left ideals forming
a base of neighborhoods of zero in $\mathfrak S$ with $\card B=\mu$.
 Denote by $\mathcal N_0$ the set of all left $\mathfrak S$-modules
of the form $\mathfrak S/\mathfrak I_{b_1}\oplus\dotsb\oplus
\mathfrak S/\mathfrak I_{b_m}$, where $m<\omega$ and $b_1$,~\dots,
$b_m\in B$.
 Then $\card\mathcal N_0\le\mu+\aleph_0$.
 Furthermore, let $\mathcal N$ be the set of all quotient modules $N/K$,
where $N\in\mathcal N_0$ and $K\subseteq N$ is a finitely
generated submodule.
 Then $\mathcal N$ is a set of finitely generated discrete left
$\mathfrak S$-modules and $\card\mathcal N\le\mu.\nu$.

 Let $L$ be a discrete left $\mathfrak S$-module.
 For every element $x\in L$, choose an index $b_x\in B$ such that
$\mathfrak I_{b_x}x=0$.
 For every finite subset $X\subseteq L$, put
$N_{0,X}=\bigoplus_{x\in X}\mathfrak S/\mathfrak I_{b_x}$.
 Then there is a natural $\mathfrak S$-module map $h_X\colon
N_{0,X}\to L$ taking any element $(s_x+\mathfrak I_{b_x}\mid
x\in X)$ to the element $\sum_{x\in X}s_xx\in L$.
 Put $K_{0,X}=\ker h_X$.
 Note that we have $N_{0,X}\subseteq N_{0,Y}$ and $K_{0,X}\subseteq
K_{0,Y}$ for any two finite subsets $X\subseteq Y$ in~$L$.

 For any finite subset $U\subseteq K_{0,X}$, consider the submodule
$K_U=\sum_{u\in U}\mathfrak S u$ generated by $U$ in $K_{0,X}$.
 Denote by $P$ the set of all pairs $(X,U)$, where $X\subseteq L$
and $U\subseteq K_{0,X}$ are finite subsets.
 For each $p=(X,U)\in P$, put $N_p=N_{0,X}/K_U$.
 Denote the composition of the natural $\mathfrak S$-module maps
$N_p\twoheadrightarrow N_{0,X}/K_{0,X}\rightarrowtail L$ by~$f_p$.

 Define a partial order on the set $P$ by the rule that
$p=(X,U)\preceq q=(Y,V)$ if and only if $X\subseteq Y$
and $U\subseteq V$.
 Clearly, for any $p\preceq q\in P$ there exists a unique
$\mathfrak S$-module map $f_{qp}\colon N_p\to N_q$ forming
a commutative square diagram with the inclusion $N_{0,X}\hookrightarrow
N_{0,Y}$ and the natural surjections $N_{0,X}\twoheadrightarrow
N_p$ and $N_{0,Y}\twoheadrightarrow N_q$.
 Then $(N_p,f_{qp} \mid p\preceq q\in P)$ is a direct system of
$\mathfrak S$-modules from the class $\mathcal N$, and $\varinjlim N_p=
(L, f_p (p\in P))$, hence $L\in\varinjlim\mathcal N$.
\end{proof}

 Let $\gamma$~be a cardinal.
 An $\mathfrak S$-contramodule $\mathfrak C$ is said to be
\emph{$\le\gamma$-generated} if it is a quotient contramodule of
a free contramodule $[[X]]\mathfrak S$ with $\card X\le\gamma$.
 Notice that, for any $\le\gamma$-generated right
$\mathfrak S$-contramodule $\mathfrak C$ and any discrete left
$\mathfrak S$-module $N$ the group $\mathfrak C\odot_{\mathfrak S}N$
is an epimorphic image of the group $N[X]$
(in view of~\eqref{contratensor-with-free-contramodule}), so
$\card(\mathfrak C\odot_{\mathfrak S}N)\le \gamma\cdot\card N+\aleph_0$.

 Given an $\mathfrak S$-contramodule $\mathfrak D$ and a subset
$Y\subseteq\mathfrak D$, the \emph{subcontramodule of\/ $\mathfrak D$
generated by $Y$} can be constructed as the image of
the contramodule morphism $[[Y]]\mathfrak S\to\mathfrak D$ induced
by the inclusion map $Y\to\mathfrak D$.
 This is the unique minimal $\mathfrak S$-subcontramodule of
$\mathfrak D$ containing~$Y$.

\begin{proposition} \label{enochs-for-contra}
 Let\/ $\mathfrak S$ be a complete, separated, left linear topological ring
and $S\subseteq\mathfrak S$ be a dense subring.
 Put\/ $\nu=\card S+\aleph_0$, and let\/ $\mu$~be the minimal cardinality
of a base of neighborhoods of zero in\/~$\mathfrak S$.
 Then any nonzero right\/ $\mathfrak S$-contramodule has a nonzero\/
$\le\mu.\nu$-generated c-pure subcontramodule.
 Moreover, for any cardinal $\gamma\ge\mu.\nu$, any right
$\mathfrak S$-contramodule\/ $\mathfrak D$, and any subset $G\subseteq
\mathfrak D$ with $\card G\le\gamma$ there exists a c-pure\/
$\le\gamma$-generated subcontramodule $\mathfrak C\subseteq\mathfrak D$
such that $G\subseteq\mathfrak C$.
\end{proposition}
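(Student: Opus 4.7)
The plan is to build $\mathfrak C$ by an $\omega$-stage iteration in which, at each step, I adjoin enough new elements of $\mathfrak D$ to kill every kernel element of the contratensor-product map $\mathfrak C_n\odot_{\mathfrak S}N\to\mathfrak D\odot_{\mathfrak S}N$, where $N$ ranges over a suitable small test family of discrete modules. For the test family I would invoke Lemma~\ref{test-set-of-discrete-modules} to fix a set $\mathcal N$ of finitely generated discrete left $\mathfrak S$-modules with $|\mathcal N|\le\mu.\nu$ and $\varinjlim\mathcal N=\ldiscr{\mathfrak S}$; recall from its proof that each $N\in\mathcal N$ also satisfies $|N|\le\nu$. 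Because the contratensor product preserves direct limits in the discrete-module argument and filtered colimits are exact in $\rmod{\mathbb Z}$, a subcontramodule will be c-pure in $\mathfrak D$ as soon as it is c-pure tested against every $N\in\mathcal N$, so only these $\le\mu.\nu$ tests need to be controlled.

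Next I would construct inductively an ascending chain $\mathfrak C_0\subseteq\mathfrak C_1\subseteq\dots$ of $\le\gamma$-generated subcontramodules of $\mathfrak D$, together with nested generating sets $X_n\subseteq\mathfrak D$ of cardinality $\le\gamma$, starting from $X_0=G$ and $\mathfrak C_0=\langle G\rangle$. At step $n\to n+1$, for every $N\in\mathcal N$ and every $x\in\ker(\mathfrak C_n\odot_{\mathfrak S}N\to\mathfrak D\odot_{\mathfrak S}N)$, I would select a lift $\tilde x\in\mathfrak C_n\otimes_{\mathbb Z}N$ together with an explicit expression of $\tilde x$ in $\mathfrak D\otimes_{\mathbb Z}N$ as a finite sum of contratensor relators. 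Each such relator is witnessed by a zero-convergent family $(s_d)_{d\in D}$ in $\mathfrak S$, so the associated set $D\subseteq\mathfrak D$ of contramodule elements involved has cardinality $\le\mu$. I would then take $X_{n+1}$ to be $X_n$ together with the union of all these $D$'s and set $\mathfrak C_{n+1}=\langle X_{n+1}\rangle$. The counting is straightforward: the surjection $[[X_n]]\mathfrak S\twoheadrightarrow\mathfrak C_n$ induces a surjection $N^{(X_n)}\twoheadrightarrow\mathfrak C_n\odot_{\mathfrak S}N$ via~\eqref{contratensor-with-free-contramodule}, so each such kernel has cardinality $\le\gamma.\nu=\gamma$; multiplying by $\mu$ witnesses per kernel element and by $|\mathcal N|\le\mu.\nu$ test modules still gives $\le\gamma$ new generators, so $\mathfrak C_{n+1}$ remains $\le\gamma$-generated.

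Finally I would let $\mathfrak C$ be the subcontramodule of $\mathfrak D$ generated by $\bigcup_n X_n$, which is $\le\gamma$-generated and contains $\bigcup_n\mathfrak C_n$. To verify c-purity against $N\in\mathcal N$, suppose $y\in\mathfrak C\odot_{\mathfrak S}N$ maps to $0$ in $\mathfrak D\odot_{\mathfrak S}N$, and write $y=[\sum_i\tilde c_i\otimes n_i]$ with $\tilde c_i=\pi_{\mathfrak D}(\sum_{e\in E_i}e\cdot t_{i,e})$ for zero-convergent families supported on $E_i\subseteq\bigcup_n X_n$. Using the contratensor relation inside $\mathfrak C$---legitimate because $E_i\subseteq\mathfrak C$ and $\mathfrak C$ is a subcontramodule---I rewrite $[\tilde c_i\otimes n_i]$ as $[\sum_{e\in E_i}e\otimes t_{i,e}n_i]$, and the discreteness of $N$ forces $t_{i,e}n_i=0$ for all but finitely many $e\in E_i$. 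Consequently $y$ admits a \emph{finitely} supported representative in $\mathfrak C\otimes_{\mathbb Z}N$ with support in $\bigcup_n X_n$, hence in some single $X_{n_0}$, which in turn gives a kernel element $y'\in\mathfrak C_{n_0}\odot_{\mathfrak S}N$ mapping to $y$. By construction, $\mathfrak C_{n_0+1}$ already contains the witnesses killing $y'$, so $y'$ vanishes in $\mathfrak C_{n_0+1}\odot_{\mathfrak S}N$, hence also in $\mathfrak C\odot_{\mathfrak S}N$, forcing $y=0$. The first assertion then follows by taking $\gamma=\mu.\nu$ and $G=\{g\}$ for any nonzero $g\in\mathfrak D$. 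The main subtlety---and the one step that distinguishes this from the classical Enochs construction for pure submodules of modules---is precisely this last verification: unions of subcontramodules are not in general closed under the contraaction, so it is not \emph{a priori} clear that c-purity survives passage to a union, and the argument relies crucially on the discreteness of $N$ to collapse the relevant contratensor expressions to finite sums that eventually live inside a single stage $\mathfrak C_{n_0}$.
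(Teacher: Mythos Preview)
Your proof is correct and follows the same overall strategy as the paper: fix the test set $\mathcal N$ from Lemma~\ref{test-set-of-discrete-modules}, run an $\omega$-stage Enochs-type iteration adjoining witnesses for kernel elements, and take the subcontramodule generated by the union. The cardinality bookkeeping and the inductive construction of the chain $\mathfrak C_0\subseteq\mathfrak C_1\subseteq\dotsb$ are essentially identical to the paper's.

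The only substantive difference is in the final verification of c-purity of $\mathfrak C$ in $\mathfrak D$. You argue element-wise: given $y$ in the kernel, use discreteness of $N$ to rewrite a lift of $y$ via the contratensor relation as a \emph{finitely} supported tensor in the generators $\bigcup_n X_n$, land it in some $\mathfrak C_{n_0}$, and then invoke the witnesses added at stage $n_0+1$. The paper instead works categorically: it forms $\varinjlim^{\rcontra{\mathfrak S}}_i\mathfrak C_i$ (in the contramodule category, where this direct limit need not be a subobject of $\mathfrak D$), observes that $\bigl(\varinjlim_i\mathfrak C_i\bigr)\odot_{\mathfrak S}N\cong\varinjlim_i(\mathfrak C_i\odot_{\mathfrak S}N)$ injects into $\mathfrak D\odot_{\mathfrak S}N$ because the kernel at stage $i$ equals the kernel of the transition map to stage $i+1$, and then factors through the surjection $\varinjlim_i\mathfrak C_i\twoheadrightarrow\mathfrak C$ to conclude. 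Your hands-on argument avoids invoking the (non-exact) direct limit in $\rcontra{\mathfrak S}$ altogether, at the cost of the finite-support rewriting trick; the paper's argument is cleaner categorically but requires tracking that extra direct-limit object. Both reach the same conclusion by the same underlying mechanism.
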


\begin{proof}
 Let $\mathfrak D$ be a nonzero $\mathfrak S$-contramodule.
 To reduce the first assertion of the proposition to the second one,
choose a nonzero element $c_0\in\mathfrak D$ and put $G=\{c_0\}$
and $\gamma=\mu.\nu$.
 To prove the second assertion, denote by $\mathfrak C_0$
the subcontramodule generated by $G$ in $\mathfrak D$.
 By the definition, $\mathfrak C_0$ is $\le\gamma$-generated.

 Proceeding by induction on the integers $i<\omega$, we will construct
a chain of subcontramodules $\mathfrak C_0\subseteq\mathfrak C_1
\subseteq\mathfrak C_2\subseteq\dotsb\subseteq\mathfrak D$.
 For every $i<\omega$, the contramodule $\mathfrak C_i$ will be
generated by at most $\gamma$ elements.

 Let $\mathcal N$ be the set of finitely generated discrete left
$\mathfrak S$-modules from Lemma~\ref{test-set-of-discrete-modules}.
 We observe that an injective morphism of $\mathfrak S$-contramodules
$\mathfrak C\rightarrowtail\mathfrak D$ is c-pure if and only if
the induced map of abelian groups $\mathfrak  C\odot_{\mathfrak S}N
\to\mathfrak D\odot_{\mathfrak S}N$ is injective \emph{for all
the discrete\/ $\mathfrak S$-modules $N$ from the set~$\mathcal N$}
(since both the functors $\mathfrak C\odot_{\mathfrak S}-$ and
$\mathfrak D\odot_{\mathfrak S}-$ preserve direct limits).

 To construct the contramodule $\mathfrak C_{i+1}$ for $i<\omega$,
we consider, for every $N\in\mathcal N$, the kernel of the map
$g_{i,N}\colon\mathfrak C_i\odot_{\mathfrak S}N\to
\mathfrak D\odot_{\mathfrak S}N$ induced by the inclusion
$\mathfrak C_i\rightarrowtail\mathfrak D$.
 We have $\card N\le\nu\le\gamma$ (see the proof of
Lemma~\ref{test-set-of-discrete-modules}) and $\mathfrak C_i$ is
$\le\gamma$-generated, so the cardinality of the kernel of $g_{i,N}$
cannot exceed~$\gamma$.
 Denote by $K_i$ the disjoint union of the kernels of $g_{i,N}$
taken over all $N\in\mathcal N$.
 Then $\card K_i\le\gamma$.

 Every element $k\in K_i$ is an element of the contratensor
product $\mathfrak C_i\odot_{\mathfrak S}N_k$ for some
$\mathfrak S$-module $N_k\in\mathcal N$, so it comes from
an element $\tilde k\in\mathfrak C_i\otimes_{\mathbb Z}N_k$.
 The fact that the image of $\tilde k$ vanishes in
$\mathfrak D\odot_{\mathfrak S}N_k$ is witnessed by an element $r_k$ of
the tensor product $[[\mathfrak D]]\mathfrak S\otimes_{\mathbb Z}N_k$
(see the definition of the contratensor product in
Section~\ref{contramodule-methods}).
 Write $r_k=\sum_{v=1}^{u_k} s_{k,v}\otimes_{\mathbb Z} b_{k,v}$,
where $u_k\ge0$ is an integer, $s_{k,v}\in[[\mathfrak D]]\mathfrak S$,
and $b_{k,v}\in N_k$.
 Every element $s_{k,v}$ is an infinite formal linear combination of
elements of $\mathfrak D$ with a zero-convergent family of coefficients
in $\mathfrak S$; denote by $D_{k,v}$ the set of all elements of
$\mathfrak D$ which enter into this formal linear combination with
a nonzero coefficient.
 Any zero-convergent family of nonzero elements in $\mathfrak S$
has cardinality~$\le\mu+\aleph_0$, so $\card D_{k,v}\le\mu+\aleph_0
\le\gamma$.

 Denote by $D_i\subseteq\mathfrak D$ the union of the sets $D_{k,v}$
taken over all $k\in K_i$ and $1\le v\le u_k$.
 Then $\card D_i\le\gamma$.
 Let $\mathfrak C_{i+1}\subseteq\mathfrak D$ be the subcontramodule
generated by $\mathfrak C_i$ and $D_i$.
 Since $\mathfrak C_i$ is $\le\gamma$-generated, so is $\mathfrak C_{i+1}$.
 We observe that, by construction, for any $\mathfrak S$-module
$N\in\mathcal N$, the kernel of the map
$\mathfrak C_i\odot_{\mathfrak S}N\to\mathfrak D\odot_{\mathfrak S}N$
induced by the inclusion $\mathfrak C_i\rightarrowtail\mathfrak D$
is equal to the kernel of the map $\mathfrak C_i\odot_{\mathfrak S}N
\to\mathfrak C_{i+1}\odot_{\mathfrak S}N$ induced by the inclusion
$\mathfrak C_i\rightarrowtail\mathfrak C_{i+1}$.
 Passing to a direct limit of a direct system of $\mathfrak S$-modules
from $\mathcal N$ and using Lemma~\ref{test-set-of-discrete-modules},
it follows that the same property holds for all discrete left
$\mathfrak S$-modules~$N$.

 Let $\mathfrak C$ be the $\mathfrak S$-subcontramodule of $\mathfrak D$
generated by $\bigcup_{i<\omega}\mathfrak C_i$.
 Since $\mathfrak C_i$ is $\le\gamma$-generated for all $i<\omega$,
the $\mathfrak S$-contramodule $\mathfrak C$ is also $\le\gamma$-generated.

 Finally, in order to show that the map $\mathfrak C\odot_{\mathfrak S}N
\to\mathfrak D\odot_{\mathfrak S}N$ is injective for any discrete
left $\mathfrak S$-module $N$, we will consider the direct limit of
the chain of contramodule morphisms $\mathfrak C_i\to\mathfrak C_{i+1}$
and use the fact that the contratensor product functor preserves
direct limits.
 We have a surjective (but possibly non-injective)
$\mathfrak S$-contramodule map $\varinjlim^{\rcontra{\mathfrak S}}_i
\mathfrak C_i\twoheadrightarrow\mathfrak C$.
 The composition $\varinjlim^{\rcontra{\mathfrak S}}_i
\mathfrak C_i\twoheadrightarrow\mathfrak C\rightarrowtail\mathfrak D$
is the direct limit of the inclusion maps $\mathfrak C_i\rightarrowtail
\mathfrak D$.
 For any discrete left $\mathfrak S$-module $N$, the induced map of
abelian groups $\varphi\colon \bigl(\varinjlim^{\rcontra{\mathfrak S}}_i
\mathfrak C_i\bigr)\odot_{\mathfrak S}N\to\mathfrak D
\odot_{\mathfrak S}N$ is the direct limit of the maps of abelian groups
$\mathfrak C_i\odot_{\mathfrak S}N\to\mathfrak D\odot_{\mathfrak S}N$
induced by the inclusions $\mathfrak C_i\rightarrowtail\mathfrak D$.
 Since the kernel of the map
$\mathfrak C_i\odot_{\mathfrak S}N\to\mathfrak D\odot_{\mathfrak S}N$
is equal to the kernel of the map $\mathfrak C_i\odot_{\mathfrak S}N
\to\mathfrak C_{i+1}\odot_{\mathfrak S}N$, the map~$\varphi$ is injective.

 On the other hand, the map~$\varphi$ is the composition of the induced
maps $\bigl(\varinjlim^{\rcontra{\mathfrak S}}_i
\mathfrak C_i\bigr)\odot_{\mathfrak S}N\to\mathfrak C
\odot_{\mathfrak S}N\to\mathfrak D\odot_{\mathfrak S}N$.
 Since the contramodule morphism $\varinjlim^{\rcontra{\mathfrak S}}_i
\mathfrak C_i\twoheadrightarrow\mathfrak C$ is surjective, so is
the induced map of abelian groups
$\bigl(\varinjlim^{\rcontra{\mathfrak S}}_i
\mathfrak C_i\bigr)\odot_{\mathfrak S}N\to
\mathfrak C\odot_{\mathfrak S}N$.
 It follows that the latter map is an isomorphism, and the map
$\mathfrak C\odot_{\mathfrak S}N\to\mathfrak D\odot_{\mathfrak S}N$
is injective, as desired.
\end{proof}

 Let $\mathsf B$ be a cocomplete abelian category.
 Let $\mathcal T\subseteq\mathsf B$ and $\mathcal N\subseteq \mathsf B$
be two classes of objects; we will call the elements of $\mathcal N$
\emph{negligible}.

 Let $(f_{ji}\colon F_i\to F_j\mid i\le j\le\alpha)$ be a direct system of
objects in $\mathsf B$ indexed by an ordinal~$\alpha$.
 The direct system $(F_i\mid i\le\alpha)$ is called a \emph{continuous chain}
if the natural morphism $\varinjlim_{i<j}^{\mathsf B}F_i\to F_j$
is an isomorphism for all limit ordinals $j\le\alpha$.
 A continuous chain $(F_i\mid i\le\alpha)$ is said to be
a \emph{$\mathcal T$-quasi-filtration modulo $\mathcal N$} if $F_0=0$ and,
for every $i<\alpha$, the kernel of the morphism $f_{i+1,i}\colon F_i\to
F_{i+1}$ is isomorphic to an element of $\mathcal N$, while the cokernel
of $f_{i+1,i}$ is isomorphic to an element of~$\mathcal T$.

 An object $F\in\mathsf B$ is said to be \emph{$\mathcal T$-quasi-filtered
modulo $\mathcal N$} if there exists an ordinal~$\alpha$ and
a $\mathcal T$-quasi-filtration modulo $\mathcal N$, \
$(F_i\mid i\le\alpha)$, such that $F\cong F_\alpha$.
 A class of objects $\mathcal F\subseteq\mathsf B$ is said to be
\emph{quasi-deconstructible modulo $\mathcal N$} if there exists
a set of objects $\mathcal T\subseteq\mathcal F$ such that all
the objects of $\mathcal F$ are $\mathcal T$-quasi-filtered
modulo~$\mathcal N$.

 A category $\mathsf B$ is said to be \emph{well-powered} if
(representatives of equivalence classes of) subobjects of any given object
form a set (rather than a proper class).

\begin{lemma} \label{quasi-deconstructible}
 Let\/ $\mathsf B$ be cocomplete well-powered abelian category,
and let $\mathcal F\subseteq\mathsf B$ be a class of objects closed
under direct limits.
 Let $\mathcal T\subseteq\mathcal F$ be a subclass such that every nonzero
object from $\mathcal F$ has a nonzero subobject belonging to $\mathcal T$
for which the corresponding quotient object belongs to $\mathcal F$.
 Let $\mathcal N\subseteq\mathsf B$ be a class of objects.
 Assume that, for any direct system of short exact sequences
$(0\to F_i\to G_i\to H_i\to0 \mid i\in I)$ in\/ $\mathsf B$ with
$G_i\in\mathcal F$ and $H_i\in\mathcal F$ for all $i\in I$, the kernel of
the induced morphism\/ $\varinjlim^{\mathsf B}F_i\to
\varinjlim^{\mathsf B}G_i$ belongs to $\mathcal N$.
 Then all the objects of $\mathcal F$ are $\mathcal T$-quasi-filtered
modulo $\mathcal N$.
\end{lemma}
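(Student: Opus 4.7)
The plan is to construct, for each object $F\in\mathcal F$, a $\mathcal T$-quasi-filtration modulo $\mathcal N$ of $F$ by transfinite induction. At each stage $i$ I will build $F_i\in\mathsf B$ together with a morphism $g_i\colon F_i\to F$ whose cokernel $C_i:=\operatorname{coker}(g_i)$ lies in $\mathcal F$; for $i\ge 1$ the map $g_i$ will be a monomorphism at successor stages, so that $F_i$ embeds as a subobject $I_i\subseteq F$ with $C_i=F/I_i$.

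First, set $F_0:=0$ and $g_0:=0$, so $C_0=F\in\mathcal F$. At a successor stage $i+1$, if $C_i=0$ the construction terminates; otherwise invoke the hypothesis on $\mathcal T$ to find a nonzero subobject $T_{i+1}\subseteq C_i$ with $T_{i+1}\in\mathcal T$ and $C_i/T_{i+1}\in\mathcal F$. Let $I_{i+1}\subseteq F$ be the preimage of $T_{i+1}$ under the quotient map $F\twoheadrightarrow C_i$, and set $F_{i+1}:=I_{i+1}$ with $g_{i+1}$ the inclusion into $F$. Since $\operatorname{im}(g_i)=I_i\subseteq I_{i+1}$, the morphism $g_i$ factors uniquely through $g_{i+1}$ as $g_i=g_{i+1}\circ f_{i+1,i}$; because $g_{i+1}$ is monic, $\ker(f_{i+1,i})=\ker(g_i)$, while $\operatorname{coker}(f_{i+1,i})\cong I_{i+1}/I_i\cong T_{i+1}\in\mathcal T$.

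At a limit stage $i$, set $F_i:=\varinjlim^{\mathsf B}_{j<i}F_j$ with $g_i$ the induced morphism, which automatically makes the chain continuous. For $1\le j<i$, each $g_j$ is a monomorphism by construction, so $(0\to F_j\to F\to C_j\to 0)_{1\le j<i}$ is a direct system of short exact sequences with middle term $F\in\mathcal F$ and right term $C_j\in\mathcal F$. The hypothesis of the lemma then yields $\ker(g_i)\in\mathcal N$, which is exactly the kernel of the next successor transition map $f_{i+1,i}$ as computed above. Meanwhile, right exactness of direct limits gives $C_i\cong\varinjlim^{\mathsf B}C_j$, which lies in $\mathcal F$ since $\mathcal F$ is closed under direct limits, allowing the induction to continue.

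Since $\mathsf B$ is well-powered, the strictly increasing chain of subobjects $I_i\subseteq F$ cannot continue indefinitely, so there is a least ordinal $\alpha$ with $C_\alpha=0$. If $\alpha$ is a successor, then $F_\alpha=I_\alpha=F$ as desired. If $\alpha$ is a limit, $g_\alpha\colon F_\alpha\to F$ is an epimorphism with kernel in $\mathcal N$; appending one more step $F_{\alpha+1}:=F$ with $f_{\alpha+1,\alpha}:=g_\alpha$ finishes the quasi-filtration, provided $0\in\mathcal T$, an assumption one can freely impose. The main subtlety is at the limit stage, where the categorical direct limit $\varinjlim^{\mathsf B}F_j$ need not inject into $F$ even though each $F_j$ does; this defect is precisely what the hypothesis of the lemma is designed to control, by ensuring the resulting kernels fall within $\mathcal N$.
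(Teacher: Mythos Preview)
Your proof is correct and follows essentially the same transfinite-induction strategy as the paper's own proof: build a cocone $(g_i\colon F_i\to F)$ with images $I_i\subseteq F$, take $F_{i+1}$ to be the preimage of a nonzero $\mathcal T$-subobject of $C_i=\operatorname{coker}(g_i)$, take colimits at limit stages, and use well-poweredness to terminate.

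One small inaccuracy: you assert that ``for $1\le j<i$, each $g_j$ is a monomorphism by construction,'' but this fails at \emph{limit} ordinals $j<i$ (which appear once $i>\omega$), where $g_j$ may have a nonzero kernel in $\mathcal N$. The fix is immediate: successor ordinals are cofinal below any limit ordinal, so restrict the direct system $(0\to F_j\to F\to C_j\to 0)$ to successor $j$'s, for which $g_j$ really is the inclusion of $I_j$; the colimit is unchanged and the hypothesis of the lemma then yields $\ker(g_i)\in\mathcal N$. (The paper's proof glosses over the very same point.) Your explicit remark that one may freely adjoin $0$ to $\mathcal T$ to cover the final step when $\alpha$ is a limit is also well taken; the paper handles that case identically without comment.
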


\begin{proof}
 This is our version of~\cite[Lemma~4.14]{PR}.
 Let $F\in\mathcal F$ be an object.
 Choose a limit ordinal~$\alpha$ such that $F$ does not have a strictly
increasing chain of subobjects of length~$\alpha$.
 Proceeding by transfinite induction, we will construct
a $\mathcal T$-quasi-filtration $(f_{ji}\colon F_i\to F_j\mid
i\le j\le\alpha)$ modulo $\mathcal N$ and a cocone of morphisms
$(f_i\colon F_i\to F\mid i\le\alpha)$ such that $f_jf_{j,i}=f_i$
for all $i\le j$ and $f_\alpha\colon F_\alpha\to F$ is an isomorphism.
 The cokernel of the morphism~$f_i$ will belong to $\mathcal F$ and
the kernel of~$f_i$ will belong to $\mathcal N$ for all $i\le\alpha$.
 In fact, for successor ordinals~$i$, the morphism~$f_i$ will be
a monomorphism.

 Put $F_0=0$.
 On a successor step $i+1$, if the morphism~$f_i$ is an isomorphism,
put $F_{i+1}=F_i$, \,$f_{i+1}=f_i$, and $f_{i+1,i}=\mathrm{id}$.
 If $i=0$ or $i$~is a successor ordinal, then the morphism~$f_i$ is
a monomorphism.
 If $i$~is a limit ordinal, then the kernel of~$f_i$ belongs to~$\mathcal N$.
 In this case, if the morphism~$f_i$ is an epimorphism with a nonzero kernel,
we put $F_{i+1}=F$, \ $f_{i+1}=\mathrm{id}$, and $f_{i+1,i}=f_i$.

 Otherwise, the cokernel $\operatorname{coker}(f_i)$ is nonzero, so
there exists a nonzero subobject $t_i\colon T_i\rightarrowtail
\operatorname{coker}(f_i)$ with $T_i\in\mathcal T$ and 
$\operatorname{coker}(t_i)\in\mathcal F$.
 Let $f_{i+1}\colon F_{i+1}\to F$ be the pullback of the monomorphism~$t_i$
with respect to the epimorphism $F\twoheadrightarrow
\operatorname{coker}(f_i)$.
 Then $f_{i+1}$ is a monomorphism with $\operatorname{coker}(f_{i+1})
=\operatorname{coker}(t_i)$, and there exists a unique morphism
$f_{i+1,i}\colon F_i\to F_{i+1}$ with $f_{i+1}f_{i+1,i}=f_i$.
 The cokernel of $f_{i+1,i}$ is isomorphic to $T_i$, and the kernel of
$f_{i+1,i}$ is isomorphic to the kernel of~$f_i$ (so it belongs
to~$\mathcal N$).

 On a limit step~$j$, we put $F_j=\varinjlim_{i<j}^{\mathsf B} F_i$,
and let $f_j\colon F_j\to F$ be the unique morphism such that
$f_jf_{j,i}=f_i$ for all $i<j$.
 The cokernel of~$f_j$ is the direct limit of the cokernels of~$f_i$
taken over all $i<j$, so $\operatorname{coker}(f_j)\in\mathcal F$
as $\mathcal F$ is closed under direct limits.
 In the direct system of short exact sequences $(0\to F_i\to F\to
\operatorname{coker}(f_i)\to0\mid i<j)$, we have $F\in\mathcal F$ and
$\operatorname{coker}(f_i)\in\mathcal F$ for all $i<j$, hence
the kernel of the morphism $f_j\colon\varinjlim_{i<j}^{\mathsf B}F_i\to F$
belongs to~$\mathcal N$.

 Finally, since $F$ does not have an increasing chain of subobjects
of length~$\alpha$, there exists $j<\alpha$ such that $f_j\colon
F_j\to F$ is an epimorphism.
 Then $f_{j+1}\colon F_{j+1}\to F$ is an isomorphism, and so is
$f_\alpha\colon F_\alpha\to F$.
\end{proof}

\begin{corollary} \label{flat-contramodules-quasi-deconstructible}
 Let\/ $\mathfrak S$ be a complete, separated, left linear topological
ring such that all flat right\/ $\mathfrak S$-contramodules are\/
$1$-strictly flat.
 Then the class $\mathcal F$ of all flat\/ $\mathfrak S$-contramodules
is quasi-deconstructible modulo the class $\mathcal N$ of all
contratensor-negligible contramodules in the abelian category\/
$\rcontra{\mathfrak S}$.
\end{corollary}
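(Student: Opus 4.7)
The plan is to apply Lemma \ref{quasi-deconstructible} with $\mathsf B = \rcontra{\mathfrak S}$, $\mathcal F$ the class of flat right $\mathfrak S$-contramodules, $\mathcal N$ the class of contratensor-negligible contramodules, and $\mathcal T$ a representative set of all $\le\mu.\nu$-generated flat contramodules (one representative per isomorphism class), where $\mu$ is the minimal cardinality of a base of neighborhoods of zero in $\mathfrak S$ and $\nu = \card{S} + \aleph_0$ for a chosen dense subring $S \subseteq \mathfrak S$ (e.g.\ $S = \mathfrak S$). Three inputs need to be verified in order to invoke that lemma.

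First, I would note that $\rcontra{\mathfrak S}$ is a cocomplete well-powered abelian category: cocompleteness holds because $\rcontra{\mathfrak S}$ is locally $\mu^+$-presentable (as recalled in Section~\ref{contramodule-methods}), and well-poweredness likewise follows from local presentability. The class $\mathcal F$ is closed under direct limits by Definition~\ref{flat-contramodules-defn}. Also, $\mathcal T$ really is a set, since $\le\mu.\nu$-generated objects in a locally presentable category form a set up to isomorphism.

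Second, to check the subobject condition, let $\mathfrak G \in \mathcal F$ be nonzero. By Proposition~\ref{enochs-for-contra} there exists a nonzero $\le\mu.\nu$-generated c-pure subcontramodule $\mathfrak F \subseteq \mathfrak G$. The short exact sequence $0 \to \mathfrak F \to \mathfrak G \to \mathfrak G/\mathfrak F \to 0$ is c-pure, so by Lemma~\ref{purity-in-contramodules}(ii) both $\mathfrak F$ and $\mathfrak G/\mathfrak F$ are flat. Thus $\mathfrak F$ is isomorphic to an element of $\mathcal T$ and the quotient lies in $\mathcal F$, as required.

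Third, the hypothesis of Lemma~\ref{quasi-deconstructible} concerning direct limits of short exact sequences is exactly Lemma~\ref{contratensor-negligible-lemma}(v): given a direct system of short exact sequences $0 \to \mathfrak F_i \to \mathfrak G_i \to \mathfrak H_i \to 0$ in $\rcontra{\mathfrak S}$ with all three terms flat, the kernel of the induced morphism $\varinjlim^{\rcontra{\mathfrak S}}\mathfrak F_i \to \varinjlim^{\rcontra{\mathfrak S}}\mathfrak G_i$ is contratensor-negligible. (Note that our $\mathcal F$ consists of flat contramodules, so the $\mathfrak F_i$ in any such system are automatically flat, matching the hypothesis of \ref{contratensor-negligible-lemma}(v).) With all three inputs verified, Lemma~\ref{quasi-deconstructible} yields that every flat $\mathfrak S$-contramodule is $\mathcal T$-quasi-filtered modulo $\mathcal N$, which is the desired quasi-deconstructibility. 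The main nontrivial ingredient is the c-pure Enochs-type argument of Proposition~\ref{enochs-for-contra}, but that is already established; here the work is essentially assembling the pieces.
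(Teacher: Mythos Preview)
Your proposal follows the same overall route as the paper's proof: apply Lemma~\ref{quasi-deconstructible} with $\mathcal T$ the $\le\mu.\nu$-generated flat contramodules, produce the required c-pure subobject via Proposition~\ref{enochs-for-contra} and Lemma~\ref{purity-in-contramodules}(ii), and handle the direct-system condition via Lemma~\ref{contratensor-negligible-lemma}(v). The first two verifications are correct and match the paper.

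There is, however, a genuine gap in your third verification. The hypothesis of Lemma~\ref{quasi-deconstructible} only assumes $\mathfrak G_i\in\mathcal F$ and $\mathfrak H_i\in\mathcal F$; it does \emph{not} assume the kernels $\mathfrak F_i$ lie in $\mathcal F$. Your parenthetical remark ``our $\mathcal F$ consists of flat contramodules, so the $\mathfrak F_i$ in any such system are automatically flat'' is not a valid deduction: nothing in the setup places $\mathfrak F_i$ in $\mathcal F$ a priori. This is precisely where the standing hypothesis of the corollary (that all flat contramodules are $1$-strictly flat) must enter. Under that hypothesis, Corollary~\ref{flat-contra-implications-cor}(i) (equivalently, Lemma~\ref{1-strictly-flat-lemma}(iii)) shows the class of flat contramodules is closed under kernels of epimorphisms, and \emph{then} $\mathfrak F_i$ is flat, so Lemma~\ref{contratensor-negligible-lemma}(v) applies. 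The paper's proof makes this step explicit. Once you insert this citation, your argument is complete and essentially identical to the paper's.
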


\begin{proof}
 The class of flat contramodules is always closed under direct limits.
 By Corollary~\ref{flat-contra-implications-cor}(i), under our present
assumptions the class $\mathcal F$ is also closed under extensions
and kernels of epimorphisms in $\rcontra{\mathfrak S}$.

 Let $\mathfrak F$ be a flat $\mathfrak S$-contramodule.
 As above, we assume that $S\subset\mathfrak S$ is a dense subring and
put $\nu=\card S+\aleph_0$ (one can always take $S=\mathfrak S$).
 We also denote by $\mu$ the minimal cardinality of a base of neighborhoods
of zero in~$\mathfrak S$.
 By Proposition~\ref{enochs-for-contra}, there is a nonzero
$\le\nobreak\mu.\nu$-generated c-pure subcontramodule $\mathfrak C
\subseteq\mathfrak F$.
 By Lemma~\ref{purity-in-contramodules}, both
the $\mathfrak S$-contramodules $\mathfrak C$ and
$\mathfrak F/\mathfrak C$ are flat.

 Let $(0\to\mathfrak F_i\to\mathfrak G_i\to\mathfrak H_i\to0
\mid i\in I)$ be a direct system of short exact sequences in
$\rcontra{\mathfrak S}$ such that $\mathfrak G_i\in\mathcal F$ and
$\mathfrak H_i\in\mathcal F$ for all $i\in I$.
 Then we also have $\mathfrak F_i\in\mathcal F$.
 By Lemma~\ref{contratensor-negligible-lemma}(v), it follows that
the kernel of the morphism
$\varinjlim^{\rcontra{\mathfrak S}}\mathfrak F_i\to
\varinjlim^{\rcontra{\mathfrak S}}\mathfrak G_i$ is
contratensor-negligible.

 Thus Lemma~\ref{quasi-deconstructible} is applicable, and we can
conclude that all flat right $\mathfrak S$-contramodules are
$\mathcal T$-quasi-filtered modulo $\mathcal N$, where $\mathcal T$
is the set of (representatives of isomorphism classes) of
$\mu.\nu$-generated flat $\mathfrak S$-contramodules.
\end{proof}

 Finally we come to the main result of this section.

\begin{theorem} \label{C-GL1-implies-theorem}
 Let $R$ be a ring, $M$ be a module and\/ $\mathfrak S=\End{M_R}$ be
its endomorphism ring, endowed with the finite topology.
 Assume that condition (C-GL1) from Remark~\ref{govorov-lazard-for-contra}
holds for right\/ $\mathfrak S$-contramodules.
 Let $S\subseteq\mathfrak S$ be a dense subring;
put\/ $\nu=\card S+\aleph_0$. 
 Let\/ $\tau$ be the minimal cardinality of a set of generators of
the right $R$-module~$M$.
 Then all right $R$-modules from the class\/ $\varinjlim\Add M$
are filtered by\/ $\le\lambda$-generated modules from the same class,
where\/ $\lambda=\nu.\tau$.
\end{theorem}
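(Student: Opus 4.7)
The plan is to invoke the quasi-deconstructibility of flat $\mathfrak S$-contramodules supplied by Corollary~\ref{flat-contramodules-quasi-deconstructible} and transport the resulting contramodule quasi-filtration to the $R$-module side via the functor $\Phi_M=-\odot_{\mathfrak S}M$. Given $N\in\varinjlim\Add M$, assumption~(C-GL1) together with Theorem~\ref{lim-Add} writes $N\cong\mathfrak F\odot_{\mathfrak S}M$ for some flat right $\mathfrak S$-contramodule~$\mathfrak F$. By Corollary~\ref{flat-contra-implications-cor}(iii) every flat $\mathfrak S$-contramodule is $1$-strictly flat in the present setting, so Corollary~\ref{flat-contramodules-quasi-deconstructible} applies and yields a $\mathcal T$-quasi-filtration $(\mathfrak F_i\mid i\le\alpha)$ of~$\mathfrak F$ modulo the class~$\mathcal N$ of contratensor-negligible contramodules, where every member of~$\mathcal T$ is a $\le\mu.\nu$-generated flat contramodule and~$\mu$ denotes the minimal cardinality of a base of neighborhoods of zero in~$\mathfrak S$. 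Since the annihilators of the finitely generated submodules of $M$ form a base of neighborhoods of zero for the finite topology on~$\mathfrak S$, we have $\mu\le\tau$ and therefore $\mu.\nu.\tau\le\lambda$.

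The main step is to check that $N_i:=\mathfrak F_i\odot_{\mathfrak S}M$ defines a continuous chain of submodules of~$N$ whose consecutive factors have the desired properties. Since $\Phi_M$ is left adjoint to~$\Psi_M$ by Theorem~\ref{gentilt}, it preserves direct limits, so $N_j=\varinjlim_{i<j}N_i$ computed in $\rmod R$ at every limit ordinal~$j$. At a successor step, factor the transition morphism $f_{i+1,i}\colon\mathfrak F_i\to\mathfrak F_{i+1}$ through its image as $\mathfrak F_i\twoheadrightarrow X_i\hookrightarrow\mathfrak F_{i+1}$; the kernel $\mathfrak K_i$ of the epimorphism lies in~$\mathcal N$ and the cokernel $\mathfrak T_i$ of the monomorphism lies in~$\mathcal T$. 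Lemma~\ref{contratensor-negligible-lemma}(ii) identifies $\mathfrak F_i\odot M$ with $X_i\odot M$, and because $\mathfrak T_i$ is $1$-strictly flat, the long exact sequence~\eqref{contramodule-argument-exact-seq} applied to $0\to X_i\to\mathfrak F_{i+1}\to\mathfrak T_i\to 0$ produces a short exact sequence $0\to X_i\odot M\to\mathfrak F_{i+1}\odot M\to\mathfrak T_i\odot M\to 0$. Consequently each $N_i\to N_{i+1}$ is a monomorphism with cokernel $\mathfrak T_i\odot M$, and exactness of directed colimits in the Grothendieck category $\rmod R$ propagates monomorphicity through limit stages, so $N_i$ embeds in $N_\alpha\cong N$ for every~$i$.

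To finish, I would verify the two required properties of the consecutive factor $\mathfrak T_i\odot M$. First, $\mathfrak T_i\in\mathcal T$ is flat, so by~(C-GL1) it lies in $\varinjlim^{\rcontra{\mathfrak S}}(\rcontra{\mathfrak S})_{\mathrm{proj}}$, whence $\mathfrak T_i\odot M\in\varinjlim\Add M$ by Theorem~\ref{lim-Add}. Second, presenting $\mathfrak T_i$ as a quotient of $[[X]]\mathfrak S$ with $\card X\le\mu.\nu$, the isomorphism~\eqref{contratensor-with-free-contramodule} exhibits $\mathfrak T_i\odot M$ as a quotient of $M^{(X)}$, hence as a $\le\mu.\nu.\tau\le\lambda$-generated $R$-module. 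The main obstacle I anticipate is ensuring that all the intermediate $\mathfrak S$-contramodules $\mathfrak F_i$ and~$X_i$ along the chain remain flat, so that the $1$-strict flatness machinery may be invoked at each step; this should follow by transfinite induction from Corollary~\ref{flat-contra-implications-cor}(i), using that every contratensor-negligible contramodule is flat and that direct limits of flat contramodules are flat.
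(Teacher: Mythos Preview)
Your proposal is correct and follows essentially the same route as the paper's proof: quasi-deconstruct the flat contramodule $\mathfrak F$ via Corollary~\ref{flat-contramodules-quasi-deconstructible}, push the quasi-filtration through $\Phi_M=-\odot_{\mathfrak S}M$, and analyze successor steps by factoring $f_{i+1,i}$ through its image (the paper calls it $\mathfrak L_i$, you call it $X_i$), invoking Lemma~\ref{contratensor-negligible-lemma}(ii) for the epimorphic half and $1$-strict flatness of $\mathfrak T_i$ for the monomorphic half. The cardinality bound and the verification that $\mathfrak T_i\odot_{\mathfrak S}M\in\varinjlim\Add M$ are also handled the same way.

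The only remark worth making concerns your final paragraph: the ``obstacle'' you anticipate is not one. The argument never uses flatness of the intermediate contramodules $\mathfrak F_i$ or $X_i$; at each successor step the only contramodule whose $1$-strict flatness is invoked is $\mathfrak T_i$, and that is automatic since $\mathfrak T_i\in\mathcal T$ consists of flat contramodules and (C-GL1) forces all flat contramodules to be $1$-strictly flat by Corollary~\ref{flat-contra-implications-cor}(iii). So you can simply drop that last caveat.
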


\begin{proof}
 By Corollary~\ref{flat-contra-implications-cor}(iii), all flat right
$\mathfrak S$-contramodules are $1$-strictly flat under the assumptions
of the theorem.
 Hence Corollary~\ref{flat-contramodules-quasi-deconstructible} is
applicable, and all flat $\mathfrak S$-contramodules are
$\mathcal T$-quasi-filtered modulo contratensor-negligible contramodules,
where $\mathcal T$ is the set of all $\le\mu.\nu$-generated flat
$\mathfrak S$-contramodules (where $\mu$~is the minimal cardinality
of a base of neighborhoods of zero in~$\mathfrak S$).

 Let $G=\{x_i\in M\mid i<\tau\}$ be a set of generators of the right
$R$-module~$M$.
 Then the annihilators of finite subsets of $G$ form a base of
neighborhoods of zero in $\mathfrak S$ (since any finitely generated
$R$-submodule in $M$ is contained in the submodule generated by some
finite subset of~$G$).
 Hence we have $\mu\le\tau$ (in the trivial case when $\tau$~is
finite, the ring $\mathfrak S$ is discrete and $\mu=1$).

 Under (C-GL1), all contramodules from $\mathcal T$ belong to
$\varinjlim^{\rcontra{\mathfrak S}}
(\rcontra{\mathfrak S})_{\mathrm{proj}}$, i.e., they are direct limits
of projective $\mathfrak S$-contramodules.
 By Theorem~\ref{lim-Add}, it follows that the right $R$-module
$\mathfrak T\odot_{\mathfrak S}M$ belongs to $\varinjlim\Add M$
for all $\mathfrak T\in\mathcal T$.
 Denote by $\mathcal S$ the set of all right $R$-modules
$\mathfrak T\odot_{\mathfrak S}M$ with $\mathfrak T\in\mathcal T$;
so $\mathcal S\subseteq\varinjlim\Add M$.
 It is clear from~\eqref{contratensor-with-free-contramodule}
that all the $R$-modules from $\mathcal S$ are $\le\nu.\tau$-generated.
 We will show that all the $R$-modules from $\varinjlim\Add M$ are
$\mathcal S$-filtered.

 Let $N_R$ be a module from $\varinjlim\Add M$.
 By the other implication in Theorem~\ref{lim-Add}, there exists
a contramodule $\mathfrak F\in\varinjlim^{\rcontra{\mathfrak S}}
(\rcontra{\mathfrak S})_{\mathrm{proj}}$ such that
$N\cong\mathfrak F\odot_{\mathfrak S}M$.
 By construction, there exists a $\mathcal T$-quasi-filtration
modulo contratensor-negligible contramodules
$(f_{ji}\colon\mathfrak F_i\to\mathfrak F_j\mid i\le j\le\alpha)$
in $\rcontra{\mathfrak S}$ such that the contramodule $\mathfrak F_\alpha$
is isomorphic to~$\mathfrak F$.
 We will prove by transfinite induction that the induced map of right
$R$-modules $\mathfrak F_i\odot_{\mathfrak S}M\to\mathfrak F
\odot_{\mathfrak S}M\cong N$ is injective for all $i\le\alpha$.
 Furthermore, denoting the image of this map by $N_i\subseteq N$,
we will have $N_j=\bigcup_{i<j}N_i$ for all limit ordinals $j\le\alpha$,
and the quotient module $N_{i+1}/N_i$ will be isomorphic to a module
from $\mathcal S$ for all $i<\alpha$.

 Indeed, for a successor ordinal $i+1$, the kernel of the map
$f_{i+1,i}\colon\mathfrak F_i\to\mathfrak F_{i+1}$ is
contratensor-negligible, while the cokernel is isomorphic to
a contramodule $\mathfrak T_i\in\mathcal T$.
 Denote by $\mathfrak L_i$ the image of~$f_{i+1,i}$.
 Then, by Lemma~\ref{contratensor-negligible-lemma}(ii),
the map $\mathfrak F_i\odot_{\mathfrak S}M\to
\mathfrak L_i\odot_{\mathfrak S}M$ induced by the epimorphism
$\mathfrak F_i\twoheadrightarrow\mathfrak L_i$ is an isomorphism
of $R$-modules.
 Since $\mathfrak T_i$ is a $1$-strictly flat contramodule, it is clear
from the long exact sequence~\eqref{contramodule-argument-exact-seq}
that the map $\mathfrak L_i\odot_{\mathfrak S}M\to
\mathfrak F_{i+1}\odot_{\mathfrak S}M$ induced by the monomorphism
$\mathfrak L_i\rightarrowtail\mathfrak F_{i+1}$ is a monomorphism
of $R$-modules with the cokernel isomorphic to $\mathfrak T_i
\odot_{\mathfrak S}M$.
 Thus $f_{i+1,i}\odot_{\mathfrak S}M\colon\mathfrak F_i
\odot_{\mathfrak S}M\to\mathfrak F_{i+1}\odot_{\mathfrak S}M$
is an injective $R$-module map with the cokernel isomorphic
to a module from~$\mathcal S$.
 Finally, for a limit ordinal~$j\le\alpha$ we have $\varinjlim_{i<j}
(\mathfrak F_i\odot_{\mathfrak S}M)=\mathfrak F_j\odot_{\mathfrak S}M$,
since the functor $-\odot_{\mathfrak S}M$ preserves direct limits.
\end{proof}

\begin{remark}
 How much of a difference is there between the cardinality estimates
in Corollary~\ref{AddM-deconstr} or~\ref{deconstr}, on the one hand,
and in Theorem~\ref{C-GL1-implies-theorem}, on the other hand?
 Any complete, separated left linear topological ring can be obtained
as the endomorphism ring of a module, with the finite topology on
the endomorphism ring~\cite[Corollary~4.4]{PS4}.
 Let $\mathfrak S$ be a complete, separated left linear topological ring
and $S\subseteq\mathfrak S$ be a dense subring.
 Put $\kappa=\card\mathfrak S$ and $\nu=\card S$.
 How much bigger can be $\kappa$ as compared to~$\nu$\,?

 More generally, let $\mathfrak X$ be a Hausdorff topological space and
$X\subset\mathfrak X$ be a dense subset.
 Then the following map $\mathfrak X\to 2^{2^X}$ is injective.
 To every point $x\in\mathfrak X$, the set of all subsets in $X$ of
the form $U\cap X$, where $U$ is an open neighborhood of $x$ in
$\mathfrak X$, is assigned.
 Put $\kappa=\card\mathfrak X$ and $\nu=\card X$; we have shown that
$\kappa\le 2^{2^\nu}$.

 More precisely, suppose that every point of $\mathfrak X$ has a base
of open neighborhoods of the cardinality~$\le\mu$.
 Then essentially the same construction produces an injective map
from $\mathfrak X$ to the set of all subsets of the cardinality~$\le\mu$
in~$2^X$.
 Thus $\kappa\le 2^{\mu.\nu}$.
 Notice that, whenever $\mathfrak X$ is a topological abelian group with
a base of neighborhoods of zero formed by open subgroups, and $X\subset
\mathfrak X$ is a dense subgroup, an open subgroup in $\mathfrak X$ is
determined by its intersection with~$X$.
 Hence one has $\mu\le 2^\nu$.

 The following example shows that the $2^{2^\nu}$ boundary is sharp.
 Let $k$ be a finite or countable field.
 Consider the $\nu$-dimensional vector space $V=k^{(\nu)}$ over~$k$;
then $\card V=\nu$ (assuming $\nu$~is infinite).
 Let $\mathfrak V$ be the pro-finite-dimensional completion of
the vector space $V$; so $\mathfrak V=\varprojlim_{W\subset V}V/W$,
where $W$ ranges over all the vector subspaces of finite codimension
in~$V$.
 Endow $\mathfrak V$ with the projective limit (i.~e., completion)
topology; then $V$ is a dense vector subspace in~$\mathfrak V$.
 On the other hand, as an abstract vector space, $\mathfrak V$ is
naturally isomorphic to the double dual vector space to $V$, i.~e.,
$\mathfrak V\simeq (V^*)^*$.
 Thus $\dim_k\mathfrak V=\card\mathfrak V=2^{2^\nu}$.

 One can endow $\mathfrak V$ with the zero multiplication and adjoin
a unit formally, to make it a ring (or $k$-algebra) with unit.
 This produces a complete, separated left linear (in fact, commutative)
topological ring $\mathfrak S$ with a dense subring $S\subset\mathfrak S$
such that $\card S=\nu$ and $\card\mathfrak S=2^{2^\nu}$.
\end{remark}

\begin{remark}
 Throughout Sections~\ref{contramodule-methods}--\ref{deconstructibility}
we have only considered the \emph{finite topology} on the endomorphism
ring $\End{M_R}$, but in fact there is some flexibility about
the choice of an endomorphism ring topology in these results.
 A complete, separated left linear topology $\theta$ on the ring
$\mathfrak S=\End{M_R}$ is said to be \emph{suitable}
\cite[Section~8.2]{BPS} if the left $\mathfrak S$-module $M$ is
discrete with respect to $\theta$ and, for any set $X$, a family of
elements $(s_x\in\mathfrak S\mid x\in X)$ converges to zero in
the topology $\theta$ if and only if it does in the finite topology.
 Then it follows that the sum $\sum_{x\in X}s_x\in\mathfrak S$
(understood as the limit of finite partial sums) is the same in $\theta$
and in the finite topology.
 The finite topology is suitable; but generally speaking, a suitable
topology is finer (has more open left ideals) than the finite topology.
 For any suitable topology on $\mathfrak S$, the related monad structure
on the functor $X\longmapsto [[X]]\mathfrak S$ coincides with the one
for the finite topology; so the related categories of contramodules
are the same.
 All the results of Sections~\ref{contramodule-methods}--\ref{contra-versus}
and~\ref{gabrieltopol}--\ref{deconstructibility} remain valid with
the finite topology on $\End{M_R}$ replaced with any suitable topology.

 There are several constructions of suitable topologies known for
the endomorphism ring of an arbitrary module $M$.
 In addition to the finite one, there is also the \emph{weakly finite}
topology~\cite[Theorem~9.9]{PS}, \cite[Example~3.10(2)]{PS4},
\cite[Example~2.2(2)]{BP}, \cite[Section~8.2]{BPS} and
the \emph{$M$-small topology}~\cite[Example~2.2(3)]{BP}.
 A base of neighborhoods of zero in the weakly finite (or ``small'')
topology is formed by the annihilators of those submodules of $M$
which are small as abstract $R$-modules (in the sense of
Section~\ref{versus}), while in the $M$-small topology these are
the annihilators of so-called $M$-small submodules.
 The weakly finite and $M$-small topologies are also suitable.
 For a self-small module, the discrete topology on the endomorphism
ring is suitable~\cite[Example~3.10(5)]{PS4}.
 One of the potential advantages of these alternative topologies
is that they may have a countable base of neighborhoods of zero when
the finite topology has not.
 For example, let $M=\sum_{i<\omega}M_i$ be a sum of a countable family
its submodules $M_i$ such that the $R$-module $M_i$ is small for
every $i<\omega$.
 Then the weakly finite topology on $\End{M_R}$ has a countable base
of neighborhoods of zero~\cite[Lemma~8.5]{BPS}, while the finite
topology on the endomorphism ring of a small module may be
uncountably based (use the example from~\cite[Example~3.10(3)]{PS4}).
\end{remark}

\medskip

\section{Open problems}\label{problems}

\medskip
{\it Problem 1}: \medskip  
Does the equality $\varinjlim \add \mathcal D = \varinjlim \Add \mathcal D$ hold for any class of modules $\mathcal D$? In particular, does $\varinjlim \add M = \varinjlim \Add M$ for any module $M$? 

See Corollary \ref{selfsmall}, Lemma \ref{injectlim}, Propositions \ref{eta-selfsmall}, \ref{prufer-inverted-element}, \ref{prufer-inverted-subset}, and Corollaries \ref{project} and \ref{gabriel-cor} for some positive answers.

Does $\varinjlim \add M = \varinjlim \Add M$ hold for any tilting module $M$? See Corollary \ref{artinfg} and Theorem \ref{dedekind} for partial positive answers. 

\medskip
{\it Problem 2}: Assume that $\mathcal C$ is a deconstructible class of modules. Is $\mathcal L = \varinjlim \mathcal C$ also deconstructible? 

Lemma \ref{fact} gives a positive answer in the particular case when $\mathcal C$ is closed under homomorphic images (there, the $\kappa$-deconstructibility of $\mathcal C$ even implies the $\kappa$-deconstructibility of $\mathcal L$). For other positive cases, see Corollaries \ref{deconstr} and~\ref{AddM-deconstr}, and Theorem~\ref{C-GL1-implies-theorem}. 

As another case, consider $\mathcal C = \mbox{Filt}(\mathcal S)$ where $\mathcal S$ is a set closed under direct summands, extensions, $R \in \mathcal S$, and $\mathcal S$ consists of FP$_2$-modules -- see Lemma \ref{lenzing}(ii). Then $\mathcal C$ is clearly $\aleph_0$-deconstructible, and $\mathcal L$ is $\kappa^+$-deconstructible for $\kappa = \card R + \aleph_0$. In this case $\kappa$ cannot be taken smaller in general, as seen on the particular case when $R$ is a PID and $\mathcal S$ is the set of all free modules of finite rank: then $\mathcal C$ is the class of all free modules, $\mathcal L$ the class of all torsion-free modules, and $\{ 0, Q \}$ is the only $\mathcal L$-filtration of the quotient field $Q$ of $R$.

\medskip
{\it Problem 3}: Is the trivial necessary condition of being closed under direct summands also sufficient for the class $\mathcal L = \varinjlim \mathcal C$ to be closed under direct limits? Cf.\ Examples \ref{indec} and \ref{Bergman}.

\medskip
{\it Problem 4}: Does $\widetilde{\Add T}$ equal $\overline{\Add T}$ for any (infinitely generated) tilting module? See Corollary \ref{artinfg} and Theorem \ref{dedekind} for some positive cases. 

\begin{acknowledgment}
We are grateful to Michal Hrbek and Jan \v S\v tov\'\i\v cek for very helpful consultations, and to Gene Abrams for providing us with a copy of the paper \cite{ON}. We also thank the referee for careful reading of the text and many helpful comments.  
\end{acknowledgment}

\end{document}